\numberwithin{equation}{section}
\newtheorem{maintheorem}{Theorem}
\newtheorem{theorem}{Theorem}[section]
\newtheorem*{theorem*}{Theorem}
\newtheorem{lemma}[theorem]{Lemma}
\newtheorem{claim}[theorem]{Claim}
\newtheorem{proposition}[theorem]{Proposition}
\newtheorem{corollary}[theorem]{Corollary}
\theoremstyle{definition}{
\newtheorem{example}[theorem]{Example}
\newtheorem{definition}[theorem]{Definition}
\newtheorem*{definition*}{Definition}

\newtheorem*{example*}{Example}
\newtheorem{remark}[theorem]{Remark}
\newtheorem*{remark*}{Remark}

}
\newcommand{\R}{\mathbb R}
\newcommand{\Z}{\mathbb Z}
\newcommand{\deq}{\stackrel{\scriptscriptstyle\triangle}{=}}
\newcommand{\E}{\mathbb{E}}
\renewcommand{\P}{\mathbb{P}}
\DeclareMathOperator{\var}{Var}
\DeclareMathOperator{\Cov}{Cov}
\newcommand{\gap}{\text{\tt{gap}}}
\newcommand{\tmix}{t_\textsc{mix}}
\newcommand{\tv}{{\textsc{tv}}}
\newcommand{\Po}{\operatorname{Po}}
\newcommand{\one}{\mathbbm{1}}
\newcommand{\red}{\textsc{Red}}
\newcommand{\blue}{\textsc{Blue}}
\newcommand{\green}{\textsc{Green}}
\newcommand{\conn}{\leftrightsquigarrow}
\renewcommand{\epsilon}{\varepsilon}
\renewcommand{\phi}{\varphi}
\newcommand{\cG}{\mathcal{G}}
\newcommand{\cS}{\mathcal{S}}
\newcommand{\cN}{\mathcal{N}}
\newcommand{\cA}{\mathcal{A}}
\newcommand{\cB}{\mathcal{B}}
\newcommand{\cC}{\mathcal{C}}
\newcommand{\cE}{\mathcal{E}}
\newcommand{\cI}{\mathcal{I}}
\newcommand{\cF}{\mathcal{F}}
\newcommand{\cR}{\mathcal{R}}
\newcommand{\cU}{\mathcal{U}}
\newcommand{\cW}{\mathcal{W}}
\newcommand{\cV}{\mathcal{V}}
\newcommand{\cX}{\mathcal{X}}
\DeclareMathOperator{\sign}{sign}
\newcommand{\sm}{{\mathfrak{m}}}
\newcommand{\anim}{{\mathfrak{W}}}
\newcommand{\oned}{1\textsc{d} }
\newcommand{\uni}{{(\textsc{u})}}
\newcommand{\sX}{\mathscr{X}}
\newcommand{\med}{\operatorname{med}}
\newcommand{\sH}{\mathscr{H}}
\newcommand{\fupd}{\mathscr{F}_\textsc{upd}}
\newcommand{\tfupd}{\tilde{\mathscr{F}}_\textsc{upd}}
\newcommand{\fsup}{\mathscr{F}_\textsc{sup}}
\newcommand{\tfsup}{\tilde{\mathscr{F}}_\textsc{sup}}
\newcommand{\tcut}{t_\sm}
\newcommand{\scut}{s_\star}
\newcommand{\tpluss}{t_\star}
\date{}
\begin{document}
\title[Information percolation for the Ising model]{Information percolation and cutoff \\ for the stochastic Ising model}

\author{Eyal Lubetzky}
\address{Eyal Lubetzky\hfill\break
Courant Institute 
\\ New York University\\
251 Mercer Street\\ New York, NY 10012, USA.}
\email{eyal@courant.nyu.edu}
\urladdr{}

\author{Allan Sly}
\address{Allan Sly\hfill\break
Department of Statistics\\
UC Berkeley\\
Berkeley, CA 94720, USA.}
\email{sly@stat.berkeley.edu}
\urladdr{}

\begin{abstract}
We introduce a new framework for analyzing Glauber dynamics for the Ising model.
The traditional approach for obtaining sharp mixing results
has been to appeal to estimates on spatial properties of the stationary measure
from within a multi-scale analysis of the dynamics.
Here we propose to study these simultaneously by examining ``information percolation''
clusters in the space-time slab.

Using this framework, we obtain new results for the Ising model on $(\Z/n\Z)^d$ throughout the high temperature regime:
total-variation mixing exhibits cutoff with an $O(1)$-window around the time at which the magnetization is the square-root of the volume.
(Previously, cutoff in the full high temperature regime was only known in dimensions $d\leq 2$, and only with an $O(\log\log n)$-window.)

Furthermore, the new framework opens the door to understanding the effect of the initial state on the mixing time.
We demonstrate this on the \oned Ising model, showing that starting from the uniform (``disordered'') initial distribution asymptotically
halves the mixing time, whereas almost every deterministic starting state is asymptotically as bad as starting from the (``ordered'') all-plus state.
\end{abstract}

\maketitle

\vspace{-0.5cm}

\section{Introduction}\label{sec:intro}

Glauber dynamics is one of the most common methods of sampling from the high-temperature Ising model 
(notable flavors are Metropolis-Hastings or Heat-bath dynamics),
 and at the same time provides a natural model for its evolution from any given initial configuration.

We introduce a new framework for analyzing the Glauber dynamics via ``information percolation'' clusters in the space-time slab, a unified approach to studying spin-spin correlations in $\Z^d$ over time
(depicted in Fig.~\ref{fig:clusters-t25}--\ref{fig:clusters-all} and described in~\S\ref{sec:methods}).
Using this framework, we make progress on the following.
\begin{asparaenum}
  [~~(i)]
\smallskip

  \item \emph{High-temperature vs.\ infinite-temperature:}
it is believed that when the inverse-temperature $\beta$ is below the critical $\beta_c$, the dynamics behaves qualitatively as if $\beta=0$ (the spins evolve independently).
 In the latter case, the continuous-time dynamics exhibits the \emph{cutoff phenomenon}\footnote{sharp transition in the
$L^1$-distance of a finite Markov chain from equilibrium, dropping quickly
from near 1 to near 0.} with an $O(1)$-window as shown by Aldous~\cite{Aldous}
  and refined in~\cites{DiSh2,DGM}; thus, the above paradigm suggests cutoff at any $\beta <\beta_c$.
 Indeed, it was conjectured by Peres in 2004 (see~\cite{LLP}*{Conjecture 1} and~\cite{LPW}*{Question 8, p316}) that cutoff occurs whenever there is $O(\log n)$-mixing\footnote
 {
 this pertains $\beta<\beta_c$, since at $\beta=\beta_c$ the mixing time for the Ising model on $(\Z/n\Z)^d$ is at least polynomial in $n$ by results of Aizenman and Holley~\cites{AH,Holley} (see~\cite{Holley}*{Theorem 3.3}) whereas for $\beta>\beta_c$ it is exponential in $n^{d-1}$ (cf.~\cite{Martinelli97}).
 }. Moreover, one expects the cutoff window to be $O(1)$.

Best-known results on $\Z^d$: cutoff for the Ising model in the full high-temperature regime $\beta <\beta_c$ was only confirmed in dimensions $d\leq 2$ (\cite{LS1}), and only with a bound of $O(\log\log n)$ on the cutoff window.

\smallskip

  \item \emph{Warm start (random, disordered) vs.\ cold start (ordered)}:
   within the extensive physics literature offering numerical experiments for spin systems, it is common to find
   Monte Carlo simulations at high temperature started at a random (warm) initial state where spins are i.i.d.\ (``disordered''); cf.~\cites{KLW,Sokal}.
   A natural question is whether this accelerates the
   mixing for the Ising model, and if so by how much.

Best-known results on $\Z^d$: none to our knowledge --- sharp upper bounds on total-variation mixing for the Ising model were only applicable to worst-case starting states (usually via coupling techniques).
\end{asparaenum}

\begin{figure}[t]
\vspace{-0.35cm}
\includegraphics[trim= 20cm 0mm 20cm 0mm, clip, width=1.\textwidth]{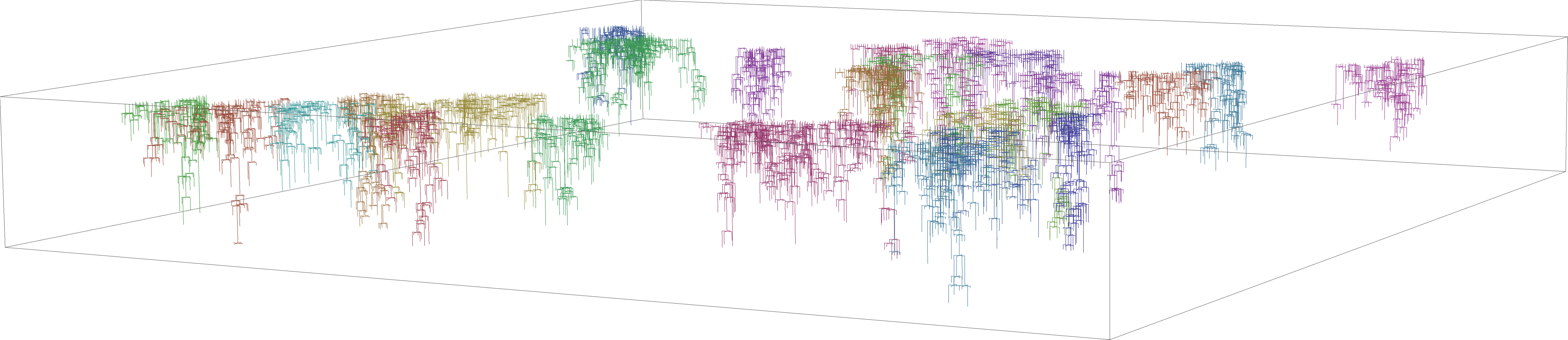}
\vspace{-0.85cm}
\caption{Information percolation clusters for the 2\textsc{d} stochastic Ising model:\\
 showing the largest 25 clusters on a $\{1,\ldots,200\}^2 \times [0,20]$ space-time slab.}
\label{fig:clusters-t25}
\vspace{-0.35cm}
\end{figure}

The cutoff phenomenon plays a role also in the second question above: indeed, whenever there is cutoff, one can compare the effect of different initial states $x_0$ on the asymptotics of the corresponding mixing time $\tmix^{(x_0)}(\epsilon)$ independently of $\epsilon$, the distance within which we wish to approach equilibrium. (For more on the cutoff phenomenon, discovered in the early 80's by Aldous and Diaconis, see~\cites{AD,Diaconis}.)

\subsection{Results}
Our first main result confirms the above conjecture by Peres that Glauber dynamics for the Ising model on $\Lambda=(\Z/n\Z)^d$, in \emph{any} dimension $d$, exhibits cutoff in the full high temperature regime $\beta<\beta_c$. Moreover, we establish cutoff within an $O(1)$-window (as conjectured) around the point
\begin{equation}\label{eq-t*-def}
\tcut = \inf\big\{ t>0 \;:\; \sm_t(v) \leq 1/\sqrt{|\Lambda|} \big\}\,,
\end{equation}
where $\sm_t(v)$ is the magnetization at the vertex $v\in\Lambda$ at time $t>0$, i.e.,
\begin{equation}\label{eq-mag-v}
\sm_t(v) = \E X_t^+(v)
\end{equation}
with $(X_t^+)$ being the dynamics started from the all-plus starting state (by translational invariance we may write $\sm_t$ for brevity).
Intuitively, at time $\tcut$ the expected value of $\sum_v X_t^+(v)$ becomes $O(\sqrt{|\Lambda|})$, within the normal deviations of the Ising distribution, and expect mixing to occur.
For instance, in the special case $\beta=0$ we have $\sm_t = e^{-t}$ and so $\tcut = \frac12\log |\Lambda|$, the known cutoff location from~\cites{AD,DiSh2,DGM}.

\begin{maintheorem}\label{mainthm-Zd}
Let $d\geq 1$ and let $\beta_c$ be the critical inverse-temperature for the Ising model on $\Z^d$. Consider continuous-time Glauber dynamics for the Ising model on the torus $(\Z/n\Z)^d$. Then for any inverse-temperature $\beta < \beta_c$ the dynamics exhibits cutoff at
$\tcut$ as given in~\eqref{eq-t*-def} with an $O(1)$-window. Moreover, there exists $C=C_{\beta,d}>0$ so that for any fixed $0<\epsilon<1$ and large $n$,
\begin{align*}
 \tmix(1-\epsilon)&\geq \tcut - C \log(1/\epsilon)  \,,\\
   \tmix(\epsilon) &\leq \tcut + C \log(1/\epsilon)\,.
 \end{align*}
\end{maintheorem}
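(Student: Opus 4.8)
The plan is to run the Glauber dynamics \emph{backwards}: fix a target time $t$ and, for each vertex $v$, trace the sequence of updates that determines $X_t(v)$, producing a random \emph{information percolation cluster} in the space-time slab $\Lambda\times[0,t]$ (as in Figs.~\ref{fig:clusters-t25}--\ref{fig:clusters-all}). I would classify these clusters by how deep they reach: a cluster is $\red$ if it survives all the way down to time $0$ (so its spins at time $t$ still depend on the initial configuration), $\blue$ if the last update at $v$ already erased every such dependence (a singleton, ``fully refreshed'' history), and $\green$ otherwise --- a medium cluster that neither touches the bottom nor is trivial. The heart of the matter is a structural estimate: at any time $t\ge C$, with the right constants the $\green$ clusters occupy a vanishing fraction of $\Lambda$ and, crucially, the aggregate weight of the $\red$ clusters is governed by the magnetization $\sm_t$. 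Quantitatively, using the heat-bath update rule together with high-temperature spatial mixing, a $\red$ cluster through $v$ contributes a distinguishing signal of order $\sm_t$, and the whole collection of clusters can be stochastically dominated by a subcritical branching/percolation process, so that the genuine correlations between distinct clusters (sharing update variables) are harmless. This is where the bulk of the technical work lies, and where the hypothesis $\beta<\beta_c$ enters --- through the exponential decay $\sm_t\le C e^{-ct}$ of the magnetization started from all-plus.

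For the upper bound $\tmix(\epsilon)\le \tcut+C\log(1/\epsilon)$, I would couple the all-plus chain $X^+$ with a stationary chain $X^\pi$ using the same update variables, so that they agree on every vertex whose history is not $\red$. Then $\|\P(X_t^+\in\cdot)-\pi\|_\tv$ is at most the probability that some vertex has a $\red$ history; a second-moment (or exponential-moment) estimate on the number of $\red$ vertices, via the branching domination and $\sm_t\le Ce^{-ct}$, bounds this by $C\,|\Lambda|\,\sm_t^2$. Plugging $t=\tcut+s$ and using $\sm_{\tcut}\asymp |\Lambda|^{-1/2}$ together with the exponential decay gives $|\Lambda|\,\sm_{\tcut+s}^2\le C e^{-cs}$, which is below $\epsilon$ once $s\ge C\log(1/\epsilon)$.

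For the lower bound $\tmix(1-\epsilon)\ge \tcut-C\log(1/\epsilon)$, I would use the magnetization $\anim_t=\sum_{v\in\Lambda}X_t^+(v)$ as a distinguishing statistic (a Wilson-type argument). Its mean under the dynamics is $|\Lambda|\,\sm_t$, which at $t=\tcut-s$ is of order $\sqrt{|\Lambda|}\,e^{cs}$, whereas under $\pi$ it is $O(\sqrt{|\Lambda|})$ by the high-temperature concentration of the Ising measure; the information percolation picture again bounds $\var(\anim_t)=O(|\Lambda|)$ since, away from the rare large clusters, the contributions $X_t^+(v)$ are essentially independent. Chebyshev then separates the two laws by a threshold, giving $\tv$-distance $\ge 1-\epsilon$ once $s\ge C\log(1/\epsilon)$. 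Combining the two estimates with a common window constant $C$ yields cutoff at $\tcut$ with an $O(1)$ window. The main obstacle, as noted, is the structural step --- proving that $\red$ and $\green$ clusters are as sparse and light as claimed, uniformly in $n$, with dependencies tamed by domination, and with error a clean $|\Lambda|\,\sm_t^2$ carrying no spurious $\log n$ loss that would inflate the window.
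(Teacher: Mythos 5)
Your lower bound matches the paper's: distinguish via the magnetization $\sum_v X_t^+(v)$, which at $t=\tcut-h$ has mean $\asymp e^{ch}\sqrt{|\Lambda|}$ by Lemma~\ref{l:MagnetizationLB}, while the variance is $O(|\Lambda|)$ both under the dynamics (via exponential decay of spatial correlations, Lemma~\ref{l:Correlation}) and under $\pi$; Chebyshev then separates the two laws with error $O(e^{-2ch})$. That part is sound.

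The upper bound, however, has a genuine gap in the very first step. You couple $X^+$ and a stationary chain $X^\pi$ with the same updates, observe they agree on all vertices whose history is not $\red$, and then bound $\|\P(X_t^+\in\cdot)-\pi\|_\tv$ by the probability that \emph{some} vertex has a $\red$ history. This is a valid application of the coupling inequality, but it is far too lossy to prove the theorem: by definition of $\tcut$, at $t=\tcut+s$ the \emph{expected} number of vertices whose history reaches time $0$ is roughly $|\Lambda|\,\sm_{\tcut+s}\asymp\sqrt{|\Lambda|}\,e^{-cs}$, which diverges. So the probability that at least one vertex is $\red$ is $1-o(1)$, not $O(|\Lambda|\sm_t^2)$ as you claim, and the coupling bound does not close the argument until $t\approx 2\tcut$. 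The term $|\Lambda|\sm_t^2$ is the correct quantity to target, but it does not bound $\P(\exists\ \red\ \text{vertex})$; it bounds something qualitatively different.

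The missing idea is precisely what makes this paper work: do not compare two coupled trajectories pointwise, but compare the law of the spins (conditioned on the green-cluster history $\sH_\green$) to a \emph{product} reference measure $\nu$ built from the blue clusters, and control the $L^2(\nu)$-distance via the Miller--Peres exponential-moment lemma (Lemma~\ref{lem:MP}). That lemma shows
\[
\left\|\,\cdot\,-\nu\right\|_{L^2(\nu)}^2 \le \E\Big[\textstyle\prod_{A_i\in\cA_\red\cap\cA_{\red'}}(\nu_i^*)^{-1}\,\big|\,\sH_\green,\cU\Big]-1,
\]
for two \emph{independent} copies of the red set, and the intersection $\cA_\red\cap\cA_{\red'}$ has expected size of order $|\Lambda|\sm_t^2\asymp e^{-cs}$. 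This is how the seemingly-large set of red vertices becomes a small quantity: you square $\sm_t$ by intersecting two independent copies, not by applying a second-moment bound to one copy. In addition, proving this for all $\beta<\beta_c$ (not just small $\beta$) requires the deferred/regular phase decomposition, cut-sets, and the block-component/lattice-animal estimates of \S\ref{sec:lattice-framework}--\S\ref{sec:cluster-analysis} (Lemmas~\ref{l:Ai-to-Bi}, \ref{l:exp-cut-set}, \ref{l:redConnection}); a direct subcritical-branching domination from the oblivious-update rate, as you sketch, only works for $\beta$ small. Your instinct that the final bound should be governed by $|\Lambda|\sm_t^2$ and that the conditioning on green clusters is the technical crux is correct, but the mechanism you propose for reaching that bound fails, and the enhancements needed to cover the whole high-temperature regime are absent from the sketch.
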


This improves on~\cite{LS1} in two ways: \begin{inparaenum}[(a)]
\item A prerequisite for the previous method of proving cutoff for the Ising model on lattices (and all of its extensions) was that the
stationary measure would satisfy the decay-of-correlation condition known as \emph{strong spatial
mixing}, valid in the full high temperature regime for $d\leq 2$. However, for $d\geq 3$ it is known to hold only for $\beta$ small enough\footnote{At low temperatures on $\Z^d$ (see the discussion above Theorem~\ref{mainthm-Zd-external-field}) there might not be \emph{strong} spatial mixing despite an exponential decay-of-correlations (\emph{weak} spatial mixing); however, one expects to have strong spatial mixing for all $\beta<\beta_c$.}; Theorem~\ref{mainthm-Zd} removes this limitation and covers all $\beta<\beta_c$ (see also Theorem~\ref{mainthm-Zd-external-field} below).
\item A main ingredient in the previous proofs was a reduction of $L^1$-mixing to very-fine $L^2$-mixing of sub-cubes of poly-logarithmic size, which was achieved via log-Sobolev inequalities in time $O(\log\log n)$. This led to a sub-optimal $O(\log\log n)$ bound on the cutoff window, which we now improve to the conjectured $O(1)$-window.
\end{inparaenum}

\begin{figure}[t]
\vspace{-0.25cm}
\includegraphics[width=.4\textwidth]{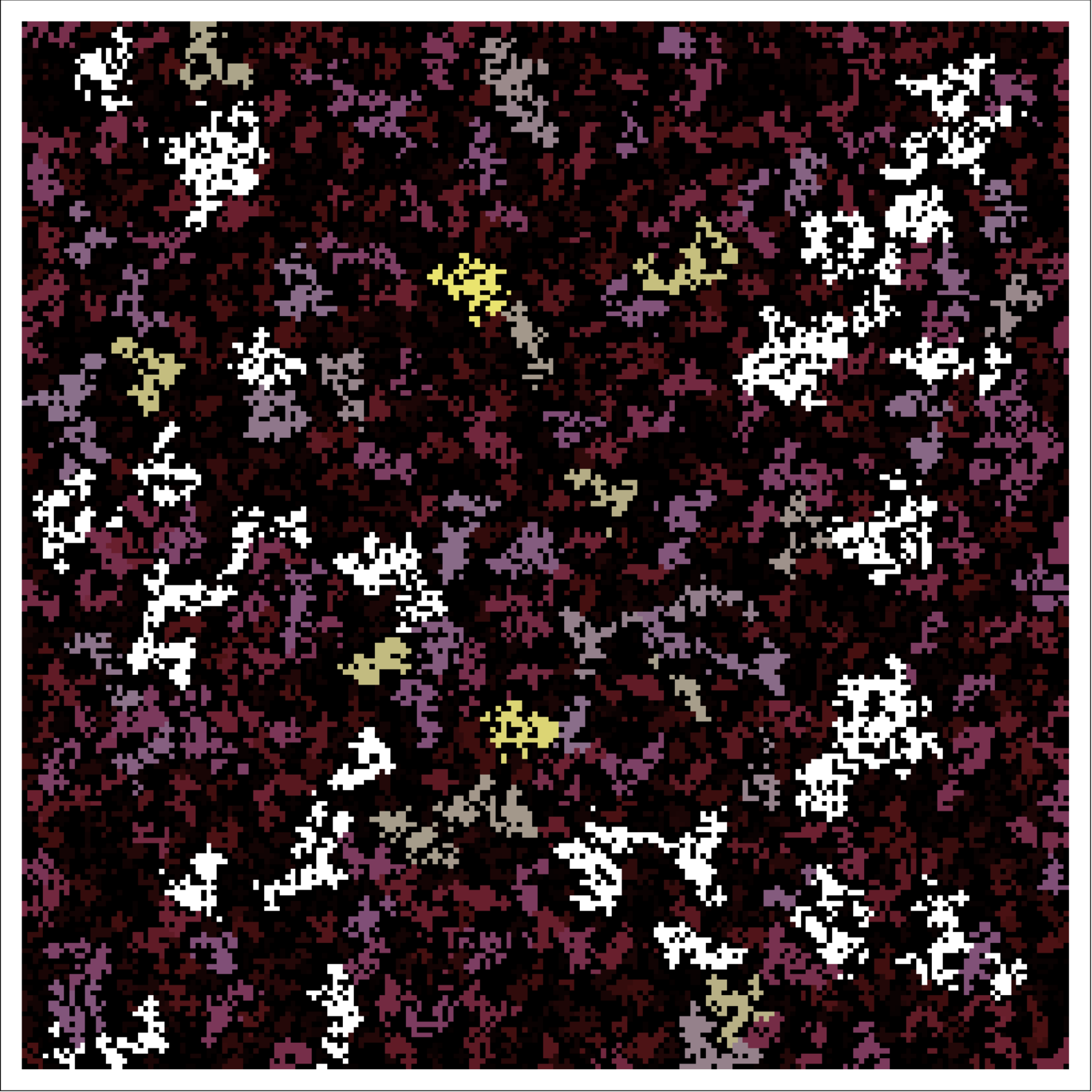}
\vspace{-0.15cm}
\caption{Top view of information percolation clusters for the 2\textsc{d} stochastic Ising model: \\
 sites of a $200\times 200$ square color-coded by their cluster size (increasing from red to white).}
\label{fig:clusters-all}
\vspace{-0.25cm}
\end{figure}

The lower bound on the $\tmix(1-\epsilon)$ in Theorem~\ref{mainthm-Zd} is realized from the all-plus starting configuration; hence, for any $d\geq 1$ this is (as expected) the worst-case starting state up to an additive $O(1)$-term:
\begin{align}\label{eq-tmix-all-plus}
 \tmix^{(+)}(\epsilon)& = \tcut + O\big(\log(1/\epsilon)\big)  \quad\mbox{  for any $\beta <\beta_c$ and $0<\epsilon<1$}\,.
 \end{align}
This brings us to the aforementioned question of understanding the mixing from specific initial states.
Here the new methods can be used to give sharp bounds, and in particular to compare the warm start using the uniform (i.i.d.)
distribution to various deterministic initial states. We next demonstrate this on the \oned Ising model (we treat higher dimensions, and more generally any bounded-degree geometry, in the companion paper~\cite{LS4}), where, informally, we show that
\begin{itemize}
  \item The uniform starting distribution is asymptotically \emph{twice faster} than the worst-case all-plus;
  \item Almost all deterministic initial states are asymptotically \emph{as bad} as the worst-case all-plus.
\end{itemize}
Formally, if $\mu_t^{(x_0)}$ is the distribution of the dynamics at time $t$ started from $x_0$ then $\tmix^{(x_0)}(\epsilon)$ is the minimal $t$ for which $\mu_t^{(x_0)}$ is within distance $\epsilon$ from equilibrium, and $\tmix^\uni(\epsilon)$ is the analogue for the average $2^{-|\Lambda|}\sum_{x_0}\mu_t^{(x_0)}$ (i.e., the \emph{annealed} version, as opposed to the \emph{quenched} $\tmix^{(x_0)}$ for a uniform $x_0$).

\begin{maintheorem}
  \label{mainthm-Z-ann-que}
Fix any $\beta>0$ and $0<\epsilon<1$, and consider continuous-time Glauber dynamics for the Ising model on $(\Z/nZ)$.
Letting $\tcut = \frac1{2\theta}\log n$ with $\theta=1-\tanh(2\beta)$, the following hold:
\begin{enumerate}[\it1.]
  \item\label{it-ann} (Annealed) Starting from a uniform initial distribution:  $\tmix^\uni(\epsilon) \sim \frac12 \tcut$.
  \item\label{it-que} (Quenched) Starting from a deterministic initial state: $\tmix^{(x_0)}(\epsilon) \sim \tmix^{(+)}(\epsilon) \sim \tcut$
  for almost every $x_0$.
\end{enumerate}
\end{maintheorem}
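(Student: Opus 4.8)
The plan is to make the 1\textsc{d} information-percolation picture fully explicit, deduce the worst-case bounds from Theorem~\ref{mainthm-Zd}, and then prove the quenched and annealed lower bounds by choosing the right distinguishing statistic, and the annealed upper bound via the information-percolation total-variation estimate. \emph{Step 1 (the 1\textsc{d} history is a killed, coalescing random walk).} For heat-bath dynamics on $\Z/n\Z$ a single-site update at $v$ has exactly the law ``with probability $\theta=1-\tanh(2\beta)$ replace $X(v)$ by a fresh fair coin; otherwise set $X(v)\leftarrow X(u)$ for $u$ a uniform neighbour of $v$'', since in 1\textsc{d} the heat-bath conditional mean is the affine function $\tfrac12\tanh(2\beta)(\sigma_{v-1}+\sigma_{v+1})$. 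Reading the update sequence $\cU$ backwards, the history of $(v,t)$ is thus a continuous-time simple random walk on $\Z/n\Z$ of jump rate $1-\theta$, killed at rate $\theta$; histories of distinct vertices are independent until they meet and then coalesce. Hence, given $\cU$, the spins $X_t$ are constant on the blocks of the coalescence partition $\mathcal P(\cU)$: each block killed before time $0$ carries an independent fair coin, and each block whose history survives to time $0$ carries $X_0$ at the time-$0$ position of that history (its \emph{source}). In particular $\sm_t=\P(\text{history of }(v,t)\text{ survives})=e^{-\theta t}$, so $\tcut=\tfrac1{2\theta}\log n$, and the surviving vertices number $\approx ne^{-\theta t}$. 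The bound $\tmix^{(x_0)}(\epsilon)\le\tmix(\epsilon)\le\tcut+O(\log(1/\epsilon))$ for all $x_0$, and $\tmix^{(+)}(\epsilon)\ge\tcut-O(\log(1/\epsilon))$, are Theorem~\ref{mainthm-Zd} with $d=1$ together with~\eqref{eq-tmix-all-plus}.

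\emph{Step 2 (quenched lower bound).} For fixed $x_0$ take the overlap $S(\sigma)=\sum_v x_0(v)\sigma(v)$. Under $\pi$, $\E_\pi S=0$ and $\var_\pi S\asymp n$ for $1-o(1)$ of the $x_0$'s (1\textsc{d} Ising has $O(1)$ correlation length). Under the $x_0$-dynamics, $\E[S\mid\cU]=\sum_{v\ \text{surviving}}x_0(v)x_0(W_t(v))$ by Step~1, and averaging the source displacement (a centred walk of variance $\asymp t$) gives $\E S=e^{-\theta t}\sum_{v,w}q_t(w-v)x_0(v)x_0(w)$ for a heat kernel $q_t$, so $\E_{x_0}\E S\asymp ne^{-\theta t}/\sqrt t$. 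A second-moment estimate over $x_0$ --- also bounding $\E_\cU\var(S\mid\cU)$ and $\var_\cU\E[S\mid\cU]$, both $\asymp n$ --- shows that for $1-o(1)$ of the $x_0$'s the statistic $S$ separates $\mu_t^{(x_0)}$ from $\pi$ as soon as $ne^{-\theta t}/\sqrt t\gg\sqrt n$, i.e.\ $t\le(1-o(1))\tcut$; with Step~1 this yields $\tmix^{(x_0)}(\epsilon)\sim\tmix^{(+)}(\epsilon)\sim\tcut$ for a.e.\ $x_0$.

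\emph{Step 3 (annealed lower bound).} The uniform start is symmetric under $x_0\mapsto-x_0$, so all linear statistics are useless and one turns to the energy $E(\sigma)=\sum_v\sigma(v)\sigma(v+1)$. By Step~1, $\E_{\uni}[\sigma(v)\sigma(v+1)]=p_t(1)$, the probability that the two adjacent histories coalesce within time $t$, whereas $\E_\pi[\sigma(v)\sigma(v+1)]=\tanh\beta=\lim_{t\to\infty}p_t(1)$. The difference $\tanh\beta-p_t(1)$ is the chance that the difference walk --- a rate-$2(1-\theta)$ walk on $\{1,2,\dots\}$, killed at rate $2\theta$, absorbed at $0$ --- has not been absorbed or killed by time $t$; the generator's spectrum has top $-2\theta$, so $\tanh\beta-p_t(1)\asymp e^{-2\theta t}$ up to polynomial factors in $t$. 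Hence $\E_\pi E-\E_{\uni}E\asymp ne^{-2\theta t}$ while $\var_\pi E\asymp n\asymp\var_{\uni}E$, so $E$ separates $\mu_t^{\uni}$ from $\pi$ exactly when $ne^{-2\theta t}\gg\sqrt n$, i.e.\ $t\le(\tfrac12-o(1))\tcut$.

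\emph{Step 4 (annealed upper bound --- the main difficulty).} One must show $\|\mu_t^{\uni}-\pi\|_\tv\to0$ once $t\ge(\tfrac12+o(1))\tcut$. Step~3 shows the only discrepancy is an $\asymp e^{-2\theta t}$ per-bond deficit of short-range correlation, from pairs of histories that would coalesce only after time $t$; so on any window of size $\polylog n$ the law of $X_t$ is within $\polylog n\cdot e^{-2\theta t}=o(1)$ of $\pi$. The plan is to globalise this with the information-percolation total-variation bound of \S\ref{sec:methods}: histories travel only $O(\sqrt{\log n})$, so with overwhelming probability those from well-separated regions are disjoint, making $\mu_t^{\uni}$ a near-product over widely separated blocks, as is $\pi$ by correlation decay; one then matches the two block by block, using that averaging over the uniform start recolours every surviving block with a genuinely fresh fair coin. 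The obstacle is quantitative: the naive bound $\|\mu_t^{\uni}-\pi\|_\tv\le\E_\cU\|\mu_t^{\uni}(\,\cdot\mid\cU)-\pi(\,\cdot\mid\cU)\|_\tv$ --- or, equally, a $\chi^2$ estimate, or a static coupling of the initial data --- loses a whole factor in the exponent, giving only $\tcut$, because a typical $\cU$ already has $\asymp ne^{-2\theta t}$ anomalous coincidences. The point is that these anomalies occur at translation-invariantly random locations and cancel on average over $\cU$, so that the true error is $\asymp\sqrt n\,e^{-2\theta t}$; implementing this self-averaging within the total-variation (not $L^2$) framework --- replacing $ne^{-2\theta t}$ by $\sqrt n\,e^{-2\theta t}$ --- is the heart of the proof.
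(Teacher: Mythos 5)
Your Steps 1--3 are sound and track the paper closely. Step 3 is precisely the paper's annealed lower bound: the relevant event is that two adjacent histories survive to time $0$ and would merge only at negative time, which has probability $\gtrsim e^{-2\theta t}/(\log n)^2$, and the nearest-neighbour correlation then separates the measures up to $t=\frac1{4\theta}\log n-O(\tfrac1\theta\log\log n)$. Step 2 is a harmless variant of the paper's quenched argument (Proposition~\ref{prop:quenched}): the paper uses the statistic $\sum_u\sign(R_t(u,x_0))X_t(u)$ with $R_t(u,x_0)=\sum_{u'}P_t(u,u')x_0(u')$ rather than the raw overlap $\sum_v x_0(v)X_t(v)$, which spares one the concentration of the off-diagonal quadratic form in $x_0$, but both routes give $t=\tcut-O(\log\log n)$ for almost every $x_0$; the matching upper bound is from Theorem~\ref{mainthm-Zd}, as you say.

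The genuine gap is Step 4: you state the target error $\asymp\sqrt n\,e^{-2\theta t}$ and appeal to an unimplemented ``self-averaging,'' while dismissing the very mechanism that delivers it. The paper's argument is: realize $\pi$ as $Y_t$ via Coupling From The Past on the \emph{same} update sequence extended to $(-\infty,0]$, and couple with $X_t$ started from uniform $x_0$. Any coalescence class whose history dies before time $0$, or reaches time $0$ as a \emph{single} strand, has the same (fair-coin, independent) conditional law in both chains and can be matched exactly; only classes with at least two strands surviving to time $0$ can differ (independent coins from $x_0$ versus possible merging at negative times under CFTP). Accordingly red is redefined by~\eqref{eq-red-def-ann}, so $\P(\cC\in\red)\lesssim e^{-2\theta\tcut'}$ (Lemma~\ref{l:redConnectionNoGreen2}) --- the per-cluster red probability is squared relative to the worst case --- and one reruns the Miller--Peres exponential-moment machinery of \S\ref{sec:inf-perc} verbatim. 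Contrary to your claim that ``a $\chi^2$ estimate loses a whole factor in the exponent,'' that $L^2$ bound \emph{is} the self-averaging: it controls the total-variation distance by $\big(\sum_{\cC\cap\cC'\neq\emptyset}\Psi_\cC\Psi_{\cC'}\cdots\big)^{1/2}\asymp\big(n\,e^{-4\theta t}\big)^{1/2}=\sqrt n\,e^{-2\theta t}$, because two independent copies of the red set rarely intersect. (Note also that the conditioning $\cU$ is only on the updates in the top window $[\tcut',\tpluss']$, not on the full history down to time $0$, so the red set remains random given $\cU$; the $\asymp\sqrt n/(\log n)^2$ red clusters of a typical realization therefore do not doom the conditional bound.) Without the CFTP representation and the doubled-exponent red definition, your Step 4 does not close.
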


Unlike the proof of Theorem~\ref{mainthm-Zd}, which coupled the distributions started at worst-case states, in order to analyze the uniform initial state one is forced to compare the distribution at time $t$ directly to the stationary measure. This delicate step is achieved via the \emph{Coupling From The Past} method~\cite{PW}.

\begin{remark*}
The bound on $\tmix^{(x_0)}(\epsilon)$ applies not only to a typical starting state $X_0$, but to any \emph{deterministic} $X_0$ which satisfies that $1/\sum_v (\E X_{\tcut}(v))^2$ is sub-polynomial in $n$ --- e.g., $O((\log n)^{100})$ --- a
 condition that can be expressed via $X_0(Y_{\tcut})$ where $Y_t$ is continuous-time random walk on $\Z_n$; see Proposition~\ref{prop:quenched}.
\end{remark*}

As noted earlier, the new framework relaxes the \emph{strong spatial mixing} hypothesis from previous works into \emph{weak spatial mixing} (i.e., exponential decay-of-correlation, valid for all $\beta<\beta_c$ in any dimension).
This has consequences also for \emph{low temperatures}:
there it is
strongly believed that in dimension $d\geq 3$
 (see~\cite{Martinelli97}*{\S5} and~\cite{CM})
 under certain non-zero external magnetic field (some fixed $h\neq 0$ for all sites) there would be weak but not strong spatial mixing.
 Using the periodic boundary conditions to preclude boundary effects, our arguments remain valid also in this situation, and again we obtain cutoff:

\begin{maintheorem}[low temperature with external field]\label{mainthm-Zd-external-field}
The conclusion of Theorem~\ref{mainthm-Zd} holds in $(\Z/n\Z)^d$ for any large enough fixed inverse-temperature $\beta$ in the presence of a non-zero external magnetic field.
\end{maintheorem}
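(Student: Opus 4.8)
The plan is to obtain Theorem~\ref{mainthm-Zd-external-field} by re-running the proof of Theorem~\ref{mainthm-Zd} with only two of its analytic inputs re-examined. The combinatorial core of that argument — the control of information-percolation clusters in the space-time slab and the coupling it yields — is insensitive to the value of $\beta$ and to the presence of a field: it uses only that each heat-bath update is \emph{oblivious} (returns a fixed value in $\{\pm1\}$, independent of the current configuration) with some probability $p_0=p_0(\beta,h,d)>0$. With an external field the update rule $\sigma(v)\mapsto +1$ with probability $\tfrac12(1+\tanh(\beta S_v+h))$, $S_v=\sum_{u\sim v}\sigma_u$, still admits this decomposition — if anything the oblivious probability grows, since the field biases the outcome toward a fixed sign — so the cluster-size tail estimates and the resulting coupling go through verbatim. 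What must be re-proved is therefore: (i) the stationary measure on $(\Z/n\Z)^d$ satisfies weak spatial mixing (an exponential bound on truncated two-point functions, uniform in $n$); and (ii) the $\pm$--discrepancy of the dynamics decays exponentially, so that $\tcut$ in~\eqref{eq-t*-def} — with $\sm_t(v)$ now read as $\P\big(X_t^+(v)\neq X_t^-(v)\big)$ under the monotone grand coupling, which reduces to~\eqref{eq-mag-v} when $h=0$ — is $\Theta_{\beta,h,d}(\log n)$.

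For (i): when $h\neq0$ the Lee--Yang circle theorem places all zeros of the finite-volume partition function away from the real field axis, so the pressure is analytic in $h$ at every $\beta$; classical estimates for the Ising model with a field (via the random-current representation, or, at large $\beta$, a convergent low-temperature / Pirogov--Sinai expansion around the dominant ground state) then give $\big|\pi(\sigma_x\sigma_y)-\pi(\sigma_x)\pi(\sigma_y)\big|\le Ce^{-c|x-y|}$ uniformly in the volume. This is exactly the decay-of-correlations hypothesis used in the proof of Theorem~\ref{mainthm-Zd}, and it is here that periodic boundary conditions are essential: they preclude the competing phase that a minus boundary would nucleate at low temperature, so that on the torus there is no boundary-driven long-range order and the decay is genuinely uniform in $n$.

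For (ii), and for the matching lower bound: the heat-bath rule is monotone in $S_v$ and in $\beta$, so the monotone grand coupling of $(X_t^+)$ and $(X_t^-)$ is available, and the lower bound on $\tmix(1-\epsilon)$ follows as before from the distinguishing statistic $\sum_v\big(X_t(v)-\E_\pi\sigma(v)\big)$, whose mean under the monotone-extremal starting state is of order $\sqrt{|\Lambda|}$ near $t=\tcut$ while its stationary standard deviation is $O(\sqrt{|\Lambda|})$ by (i). The upper bound via the information-percolation coupling needs $\sm_t\le Ce^{-ct}$ for $t\ge t_0$, and this last estimate is the main obstacle: at low temperature one must rule out an $n$-dependent slowdown of the worst-case dynamics, since from the minus extreme the plus phase has to nucleate. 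Here one uses that on the torus, with $h\neq0$ fixed, homogeneous nucleation followed by deterministic droplet growth relaxes the bulk on a timescale depending on $\beta,h,d$ but \emph{not} on $n$ (critical droplets appear at total rate $\Theta(|\Lambda|)$ and grow at speed $O(1)$, with the usual KJMA-type scaling controlling the completion time); after this $O(1)$ delay a contractive-coupling argument on the torus — valid because $h\neq0$ removes phase coexistence and, by (i), correlations decay uniformly — yields $\sm_t\le Ce^{-ct}$, whence $\tcut=\Theta_{\beta,h,d}(\log n)$. With (i) and (ii) in hand the information-percolation argument of Theorem~\ref{mainthm-Zd} applies unchanged, giving cutoff at $\tcut$ with an $O(1)$ window. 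I expect the bookkeeping of this nucleation-and-relaxation step, rather than any new conceptual ingredient, to be the technical crux.
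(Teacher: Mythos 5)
Your reduction has the same skeleton as the paper's: the only place the hypothesis $\beta<\beta_c$ enters the information-percolation argument is Lemma~\ref{l:BasicEstimates}, i.e.\ weak spatial mixing of the stationary measure and the uniform exponential decay of $\sm_t$ (read, with a field, as $\P(\fsup(v,0,t)\neq\emptyset)$, equivalently the $\pm$ discrepancy, exactly as you propose), and on the torus these survive at large $\beta$ with $h\neq0$. The paper's proof of Theorem~\ref{mainthm-Zd-external-field} consists of precisely this observation and nothing more. The genuine gap is in your item (ii). The bound $\sm_t\le Ce^{-ct}$ is \eqref{e:BasicSupport}, and in the paper it is obtained not by any dynamical nucleation analysis but by citing Martinelli--Olivieri \cite{MO} (see also Theorem 4.1 in \cite{Martinelli97}): for attractive dynamics in volumes with no boundary effects, weak spatial mixing alone implies a volume-independent spectral gap and hence exponential decay of $\E X_t^+(v)-\E X_t^-(v)$. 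Your proposed substitute --- homogeneous nucleation, deterministic droplet growth, ``KJMA-type scaling,'' followed by a ``contractive-coupling argument'' --- is a physical heuristic, not a proof: no contractive (Dobrushin-type) coupling is available at large $\beta$, and the rigorous metastability literature gives asymptotics in $h$ or $\beta$ rather than the uniform-in-$n$, all-$t$ exponential decay you need. Since the implication ``weak mixing $\Rightarrow$ exponential relaxation on the torus'' is exactly the content of the cited theorem, the correct move is to invoke it; as written, your step (ii) is not established, and you have flagged it yourself as the crux.

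Two smaller points. Your claim that the cluster-size tails ``go through verbatim'' because oblivious updates occur with positive probability is misleading at large $\beta$: there $\theta$ is tiny and the naive backward branching process is supercritical, so the subcriticality of the clusters rests entirely on \eqref{e:BasicSupport} together with the deferred-phase machinery of \S\ref{sec:lattice-framework} --- which is fine once (ii) is in place, but it is that estimate, not obliviousness, doing the work. Also, with $h\neq0$ the oblivious updates are biased coins, so blue clusters no longer carry Bernoulli($\tfrac12$) spins; this is harmless only because Lemma~\ref{lem:MP} is already stated for arbitrary product measures $\nu_i$, a point worth making explicit.
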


\medskip

We now discuss extensions of the framework towards showing  \emph{universality of cutoff}, whereby the cutoff phenomenon --- believed to be widespread, despite having been rigorously shown only in relatively few cases --- is not specific to the underlying geometry of the spin system, but instead occurs always at high temperatures (following the intuition behind the aforementioned conjecture of Peres from 2004).
Specializing this general principle to the Ising model, one expects the following to hold:
\begin{quote}\emph{
On any locally finite geometry the Ising model should exhibit cutoff at high temperature
(i.e., cutoff always occurs for $\beta < c_d$ where $c_d$ depends only on the maximum degree $d$).}\end{quote}
The prior technology for establishing cutoff for the Ising model fell well short of proving such a result.
Indeed, the approach in~\cite{LS1}, as well as its generalization in~\cite{LS3}, contained two major provisos:
\begin{compactenum}
  [\!\!\!(i)]
  \item heavy reliance on log-Sobolev constants to provide sharp $L^2$-bounds on local mixing (see \cites{DS1,DS2,DS,SaloffCoste});
  the required log-Sobolev bounds can in general be highly nontrivial to verify
 (see~\cites{HoSt1,MO,MO2,MOS,Martinelli97,SZ1,SZ3}).
\item an assumption on the geometry that the growth rate of balls (neighborhoods) is sub-exponential;
while satisfied on lattices (linear growth rate), this rules out trees, random graphs, expanders, etc.
\end{compactenum}
Demonstrating these limitations is the fact that the required log-Sobolev inequalities for the Ising model were established essentially only on lattices and regular trees,
whereas on the latter (say, a binary tree) it was unknown whether the Ising model exhibits cutoff at any small $\beta>0$, due to the second proviso.

In contrast with this, the above mentioned paradigm instead says that, at high enough temperatures, cutoff should occur without necessitating log-Sobolev inequalities, geometric expansion properties, etc.
Using the new framework of information percolation we can now obtain such a result.
Define the non-transitive analogue of the cutoff-location $\tcut$ from~\eqref{eq-t*-def} to be
\begin{equation}\label{eq-t*-def-non-trans}
\tcut = \inf\big\{ t>0 \;:\; \mbox{$\sum_v \sm_t(v)^2 \leq 1$} \big\}\,,
\end{equation}
with $\sm_t(v) = \E X_t^+(v)$ as in~\eqref{eq-mag-v}. 
The proof of the following theorem --- which, apart from the necessary adaptation of the framework to deal with a non-transitive geometry, required several novel ingredients to obtain the correct dependence of $\beta$ on the maximal degree --- appears in a companion paper~\cite{LS4}.
\begin{maintheorem}\label{mainthm-gen}
There exists an absolute constant $\kappa>0$ so that the following holds. Let $G$ be a graph on $n$ vertices with maximum degree at most $d$. For any fixed $0<\epsilon<1$ and large enough $n$, the continuous-time Glauber dynamics for the Ising model on $G$ with inverse-temperature $0\leq\beta<\kappa/ d$ has
\begin{align*}
 \tmix(1-\epsilon)&\geq \tcut - C \log(1/\epsilon)  \,,\\
   \tmix(\epsilon) &\leq \tcut + C \log(1/\epsilon)\,.
 \end{align*}
In particular, on any sequence of such graphs the dynamics has cutoff with an $O(1)$-window around $\tcut$.
\end{maintheorem}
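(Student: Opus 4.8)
\medskip
\noindent\textit{Proof proposal (for Theorem~\ref{mainthm-gen}).}
I would run the Glauber dynamics through a graphical representation and analyse its \emph{information percolation clusters}, as in the transitive case, but with an update rule engineered so that the clusters stay subcritical for $\beta<\kappa/d$ on an \emph{arbitrary} bounded-degree graph. Equip each vertex with a rate-$1$ Poisson clock and each ring with an independent uniform random variable, and realise the heat-bath update at a vertex $v$ by a \emph{monotone sequential} rule: inspect the current spins of the neighbours of $v$ one at a time, in a uniformly random order, stopping as soon as monotonicity forces the sign of the new spin and otherwise reading the outcome off the uniform variable alone. Then the update is \emph{oblivious} (its outcome independent of all neighbours) with probability $1-\tanh(\beta\deg(v))$, and, given that $k$ neighbours have already been inspected, the next one is inspected with probability at most $\beta(\deg(v)-k)$; hence an update inspects $O(\beta d)$ neighbours in expectation, and each fixed neighbour with probability $O(\beta)$. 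Tracing backwards from a space-time point $(v,t)$ every update and every neighbour-inspection that can influence the spin there produces the \emph{history cluster} of $(v,t)$; the bounds above show that, viewed as a branching process over the vertex time-segments it visits, this cluster has offspring mean $O(\beta d)$, hence is subcritical once $\beta<\kappa/d$, so $\P(|\text{cluster}|\ge m)$ decays exponentially in $m$, the event $\red_v$ that the cluster reaches the initial ($t=0$) slab has probability $\asymp\sm_t(v)$ (essentially the characterisation of $\sm_t(v)$ in this picture), and $\sm_t(v)$ itself decays exponentially in $t$. Crucially, subcriticality confines each cluster to a small, essentially tree-like region of the space-time slab, which is exactly what removes the sub-exponential-growth hypothesis of the earlier method and lets trees, expanders and random graphs be handled.

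For the \emph{upper bound}, couple the chain started from $X_0$ with a stationary chain --- realised by running the dynamics from time $-\infty$ --- using identical graphical data on $[0,t]$. A vertex whose history cluster becomes oblivious before reaching time $0$ has the same spin in both chains, so the two time-$t$ configurations can disagree only on the red set $\{v:\red_v\}$; this set, together with the shapes of the red clusters, is a function of the graphical data alone and is independent of $X_0$. Revealing this structure and the spins outside it, $\|\mu_t^{(X_0)}-\pi\|_\tv$ is at most the expected total-variation distance between the two conditional laws on the red vertices, which a second-moment ($L^2$) computation --- pairing the red-cluster structure against an independent copy of it, so that well-separated red clusters factorise and only \emph{interacting} pairs of red clusters contribute --- bounds by $\exp\big(O\big(\sum_v\sm_t(v)^2\big)\big)-1$. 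Since $\sum_v\sm_{\tcut}(v)^2\le1$ by the definition~\eqref{eq-t*-def-non-trans} of $\tcut$ and $\sm_t$ decays exponentially, $\sum_v\sm_t(v)^2\le e^{-c(t-\tcut)}$, and hence $\|\mu_t^{(X_0)}-\pi\|_\tv\le\epsilon$ once $t\ge\tcut+C\log(1/\epsilon)$.

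For the \emph{lower bound} it suffices, since $\tmix(1-\epsilon)\ge\tmix^{(+)}(1-\epsilon)$, to separate the all-plus dynamics from $\pi$ for $t\le\tcut-C\log(1/\epsilon)$. Here I would use Wilson's method with an approximate leading eigenfunction of the generator --- essentially a suitably weighted magnetisation such as $M_t=\sum_v\sm_{\tcut}(v)X_t(v)$, whose single-step variation is $O\big(\sum_v\sm_{\tcut}(v)^2\big)=O(1)$ and whose expectation under the all-plus start equals $\sum_v\sm_{\tcut}(v)\sm_t(v)\ge e^{c(\tcut-t)}\sum_v\sm_{\tcut}(v)^2$ and decays at the spectral rate --- while the equilibrium fluctuations of $M_t$ are governed by the exponential decay of spin-spin correlations (Dobrushin uniqueness, valid for $\beta<\kappa/d$). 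For $t$ below $\tcut$ this places the mean of $M_t$ many standard deviations outside its equilibrium window, forcing $\|\mu_t^{(+)}-\pi\|_\tv\ge1-\epsilon$.

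The \emph{main obstacle} --- and the source of the sharp range $\beta<\kappa/d$ --- is precisely the monotone sequential update rule and the ensuing subcriticality estimate: the naive graphical construction, in which a non-oblivious update branches to \emph{all} $\deg(v)$ neighbours, has expected branching $\Theta(\beta d^2)$ per update and is subcritical only for $\beta=O(1/d^2)$, whereas the sequential rule branches to $O(\beta d)$ neighbours in expectation and is subcritical already for $\beta=O(1/d)$. Making this quantitative (uniform control of cluster sizes with the correct exponential rates), classifying the clusters so that the ``green'' part couples and only interacting ``red'' clusters obstruct, and carrying out the second-moment estimate in the non-transitive setting --- where $\tcut$ is governed by $\sum_v\sm_t(v)^2$ rather than by a single value of $\sm_t$ --- is where essentially all of the work lies.
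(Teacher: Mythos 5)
The paper does not actually prove this theorem: the authors explicitly defer it to the companion paper \cite{LS4}, remarking that it ``required several novel ingredients to obtain the correct dependence of $\beta$ on the maximal degree.'' Your proposal captures the right high-level skeleton (information-percolation clusters in the space-time slab, a Miller--Peres type $L^2$ exponential-moment bound controlled by $\sum_v \sm_t(v)^2$, a weighted magnetisation for the lower bound), but the single device you rely on to reach the sharp range $\beta<\kappa/d$ --- the monotone sequential reveal --- does not deliver the branching bound you claim, and that bound is exactly where the difficulty lies.

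The estimate ``given that $k$ neighbours have already been inspected, the next one is inspected with probability at most $\beta(\deg(v)-k)$'' is incorrect as a \emph{conditional} probability. After revealing $k$ neighbour spins with partial sum $S_k$, the set of uniforms $U$ for which the outcome is still undetermined is an interval of width $\tfrac12\bigl(\tanh(\beta(S_k+d-k))-\tanh(\beta(S_k-d+k))\bigr)\approx\beta(d-k)$ for $\beta d$ small, and these windows are nested as $k$ increases; so $\P(\text{inspect}\ge k+1)\approx\beta(d-k)$ holds \emph{unconditionally}, but $\P(\text{inspect}\ge k+1\mid\text{inspect}\ge k)\approx(d-k)/(d-k+1)$, which is of order one, not $O(\beta d)$. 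Summing the unconditional tails gives $\E[\text{number inspected}]\approx\sum_{k\ge 1}\beta(d-k+1)=\Theta(\beta d^2)$, i.e.\ precisely the naive $\beta=O(1/d^2)$ threshold you yourself flag as inadequate. The discrepancy is already visible at $d=2$: your sequential rule reproduces the non-oblivious updates $\sigma(i)\mapsto\sigma(i-1)\wedge\sigma(i+1)$ and $\sigma(i)\mapsto\sigma(i-1)\vee\sigma(i+1)$ of Figure~\ref{fig:upd1d}, whose support always has size two, whereas the representation the paper uses in its one-dimensional example in \S\ref{sec:framework} --- copy one uniformly chosen neighbour --- has support of size one; both implement heat-bath, and the sequential one is strictly suboptimal. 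Whatever refined representation (or refined notion of support) achieves $O(\beta d)$ effective branching is the genuinely novel ingredient in \cite{LS4}, and it is absent from your sketch. As secondary points, you compare $\mu_t^{(x_0)}$ directly to $\pi$ via CFTP for a worst-case $x_0$, whereas the paper's worst-case argument (Theorem~\ref{mainthm-Zd}) couples two arbitrary initial states through $\bar d(t,\cU)$; and you omit the red/blue/green trichotomy, with the conditioning on $\sH_\green$ being what keeps large-but-$x_0$-independent clusters from polluting the $L^2$ computation. These are fixable, but the branching-rate claim is the load-bearing gap.
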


The companion paper further extends Theorem~\ref{mainthm-Z-ann-que} to any bounded-degree graph at high temperature: the mixing time is at least $(1-\epsilon_\beta)\tcut$ from almost every deterministic initial state $x_0$, yet from a uniform initial distribution it is at most $(\frac12+\epsilon_\beta)\tcut$, where $\epsilon_\beta$ can be made arbitrarily small for $\beta$ small enough.

In summary, on any locally-finite geometry (following  Theorems~\ref{mainthm-Zd}--\ref{mainthm-Z-ann-que} for $\Z^d$) one roughly has that
 \begin{inparaenum}
   \item the time needed to couple the dynamics from the extreme initial states, $X_t^+$ and $X_t^-$, via the \emph{monotone coupling} (a standard upper bound on the mixing time) overestimates $\tmix$ by a factor of 2;
   \item the worst-case mixing time $\tmix$, which is asymptotically the same as when starting from almost every deterministic state,
       is \emph{another} factor of 2 worse compared to starting from the uniform distribution.
 \end{inparaenum}

\vspace{-0.2cm}
\subsection{Methods: red, green and blue information percolation clusters}\label{sec:methods}
The traditional approach for obtaining sharp mixing results for the Ising model
has been two-fold: one would first derive certain properties of the stationary Ising measure (ranging from as fundamental as strong spatial mixing to as proprietary as interface fluctuations under specific boundary conditions); these static properties would then drive a dynamical multi-scaled analysis (e.g., recursion via  block-dynamics/censoring); see~\cite{Martinelli97}.

\begin{figure}[t]
\vspace{-0.2cm}
\includegraphics[trim= 0cm 0cm 0cm 2mm, clip, width=.6\textwidth]{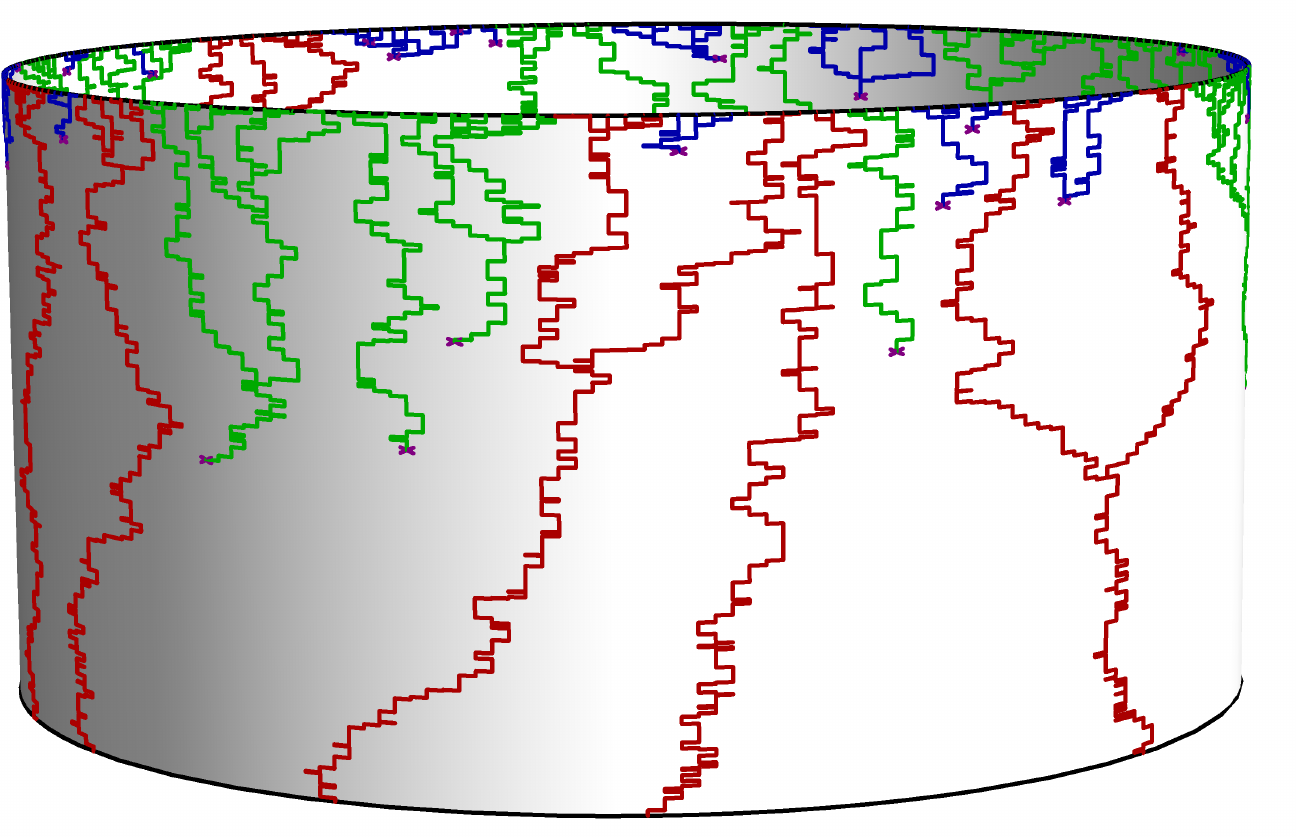}
\vspace{-0.5cm}
\caption{Information percolation clusters in Glauber dynamics for the 1\textsc{d} Ising model:\\
 red (reaching time zero), blue (dying out quickly) and green clusters on $n=256$ sites.}
\label{fig:clusters-1d}
\vspace{-0.35cm}
\end{figure}

We propose to analyze the spatial and temporal aspects of the Glauber dynamics simultaneously by tracking the update process for the Ising model on $(\Z/n\Z)^d$ in the $(d+1)$-dimensional space-time slab. Following is an outline of the approach for heat-bath dynamics\footnote{\vspace{-0.5cm}A single-site heat-bath update replaces a spin by a sample from the Ising measure conditioned on all other spins.}; formal definitions of the framework (which is valid for a class of Glauber dynamics that also includes, e.g., Metropolis) will be given in~\S\ref{sec:framework}.

As a first step, we wish to formulate the dynamics $(X_t)$ so that the update process, viewed backward in time, would behave as subcritical  percolation in $(\Z/n\Z)^d\times\R_+$;
crucially, establishing this subcritical behavior will build on the classical fact that the site magnetization $\sm_t$ (defined in~\eqref{eq-mag-v}), decays exponentially fast to 0 (the proof of which uses the monotonicity of the Ising model; see Lemma~\ref{l:BasicEstimates}).
Recall that each site of $(\Z/n\Z)^d$ is updated via a Poisson point process, whereby every update utilizes an independent unit variable to dictate the new spin, and the probability of both plus and minus is bounded away from 0 for any fixed $\beta>0$ even when all neighbors have the opposing spin.
Hence, we can say that with probability $\theta >0$ bounded away from 0 (explicitly given in~\eqref{eq-def-Xi}), the site is updated to a $\pm1$ fair coin flip independently of the spins at its neighbors, to be referred to as an \emph{oblivious update}.

\vspace{-0.2cm}
\subsubsection*{Clusters definition}
For simplicity, we first give a basic definition that will be useful only for small $\beta$.
Going backward in time from a given site $v$ at time $t$, we reveal the \emph{update history} affecting $X_t(v)$: in case of an oblivious update we ``kill'' the branch, and otherwise we split it into its neighbors, continuing until all sites die out or reach time 0  (see Figure~\ref{fig:clusters-t25}). The final cluster then allows one to recover $X_{t}(v)$ given the unit variables for the updates and the intersections of the cluster with the initial state $x_0$.

Note that the dependencies in the Ising measure show up in this procedure when update histories of different sites at time $t$ merge into a single cluster, turning the spins at time $t$ into a complicated function of the update variables and the initial state.  Of course, since the probability of an oblivious update $\theta$ goes to $1$ as $\beta\to 0$, for a small enough $\beta$ the aforementioned branching process is indeed subcritical, and so these clusters should have an exponential tail (see Figure~\ref{fig:clusters-all}). For $\beta$ close to the critical point in lattices, this is no longer the case, and one needs to refine the definition of an information percolation cluster --- roughly, it is the subset of the update history that the designated spin truly depends on (e.g., in the original procedure above, an update can cause the function determining $X_t(v)$ to become independent of another site in the cluster, whence the latter is removed without being directly updated).

The motivation behind studying these clusters is the following. Picture a typical cluster as a single strand, linking between ``sausages'' of branches that split and quickly dye out. If this strand dies before reaching time 0 then the spin atop would be uniform, and otherwise, starting e.g.\ from all-plus, that spin would be plus. Therefore, our definition of the cutoff time $\tcut$ has that about $\sqrt{|\Lambda|}$ of the sites reach time 0; in this way, most sites are independent of the initial state, and so $X_t$ would be well mixed.
Further seen now is the role of the initial state $x_0$, opening the door to non-worst-case analysis: one can analyze the distribution of the spins atop a cluster in terms of its intersection with $x_0$ at time $0$.

\subsubsection*{Red, green and blue clusters}
To quantify the above, we classify the clusters into three types: informally,
\begin{compactitem}
  \item a cluster is \blue\ if it dies out very quickly both in space and in time;
  \item a cluster is \red\ if the initial state affects the spins atop;
  \item a cluster is \green\ in all other situations.
\end{compactitem}
(See \S\ref{sec:framework} for formal definitions, and Figure~\ref{fig:clusters-1d} for an illustration of these for the Ising model on $\Z/n\Z$.)
Once we condition on the green clusters (to be thought of as having a negligible effect on mixing), what remains is a competition between red clusters --- embodying the dependence on the initial state $x_0$ --- and blue ones, the projection on which is just a product measure (independent of $x_0$).
Then, one wants to establish that red clusters are uncommon and ``lost within a sea of blue clusters''. This is achieved via a simple yet insightful lemma of Miller and Peres~\cite{MP}, bounding the total-variation distance in terms of a certain exponential moment; in our case, an exponential of the intersection of the set of vertices in \red\ clusters between two i.i.d.\ instances of the dynamics. Our main task --- naturally becoming increasingly more delicate as $\beta$ approaches $\beta_c$ --- will be to bound this exponential moment, by showing that each red set behaves essentially as a uniformly chosen subset of size $O(e^{-c s}\sqrt{|\Lambda|})$ at time $\tcut+s$; thus, the exponential moment will approach 1 as $s\to\infty$, implying mixing.

\subsubsection*{Flavors of the framework}
Adaptations of the general framework above can be used in different settings:
\begin{compactitem}[\noindent$\bullet$]
   \item To tackle arbitrary graphs at high enough temperatures (Theorem~\ref{mainthm-gen}), a blue cluster is one that dies out before reaching the bottom (time 0) and has a singleton spin at the top (the target time $t$), and a red cluster is one where the spins at the top have a nontrivial dependence on the initial state $x_0$.

   \item    For lattices at any $\beta<\beta_c$, the branching processes encountered are not sufficiently subcritical, and one needs to boost them via a phase in which (roughly) some of the oblivious updates are deferred, only to be sprinkled at the end of the analysis. This entails a more complicated definition of blue clusters, referring to whether history dies out quickly enough from the end of that special phase, whereas red clusters remain defined as ones where the top spins are affected by the initial state $x_0$.

   \item For random initial states (Theorem~\ref{mainthm-Z-ann-que}) we define a red cluster as one in which the intersection with $x_0$ is of size at least 2 and coalesces to a single point \emph{before} time 0 under \emph{Coupling From The Past}.
       The fact that pairs of sites surviving to time 0 are now the dominant term (as opposed to singletons) explains the factor of 2 between the annealed/worst-case settings (cf.\ the two parts of Theorem~\ref{mainthm-Z-ann-que}).
 \end{compactitem}

\subsection{Organization}
The rest of this paper is organized as follows.
In~\S\ref{sec:framework} we give the formal definitions of the above described framework, while \S\ref{sec:lattice-framework} contains the modification of the general framework tailored to lattices up to the critical point, including three lemmas analyzing the information percolation clusters. In~\S\ref{sec:inf-perc} we prove the cutoff results in Theorems~\ref{mainthm-Zd} and~\ref{mainthm-Zd-external-field} modulo these technical lemmas, which are subsequently proved in~\S\ref{sec:cluster-analysis}. The final section, \S\ref{sec:ann-que}, is devoted to the analysis of non-worst-case initial states (random vs.\ deterministic, annealed vs.\ quenched) and the proof of Theorem~\ref{mainthm-Z-ann-que}.

\section{Framework of information percolation}\label{sec:framework}

\subsection{Preliminaries}\label{sec:prelim}
In what follows we set up standard notation for analyzing the mixing of Glauber dynamics for the Ising model; see~\cite{LS1} and its references for additional information and background.

 \subsubsection*{Mixing time and the cutoff phenomenon}\label{sec:prelim-cutoff}
The total-variation distance between two probability measures $\nu_1,\nu_2$
on a finite space $\Omega$ --- one of the most important gauges in MCMC theory for measuring the convergence of a Markov chain to stationarity ---
is defined as
\[
\|\nu_1-\nu_2\|_\tv = \max_{A\subset \Omega} |\nu_1(A)-\nu_2(A)| = \tfrac12\sum_{\sigma\in\Omega} |\nu_1(\sigma)-\nu_2(\sigma)| \,,
\]
i.e., half the $L^1$-distance between the two measures.
Let $(X_t)$ be an ergodic finite Markov chain with stationary measure $\pi$. Its total-variation mixing-time, denoted $\tmix(\epsilon)$ for $0<\epsilon<1$, is defined to be
\[ \tmix(\epsilon) \deq \inf\Big\{t \;:\; \max_{x_0 \in \Omega} \| \P_{x_0}(X_t \in \cdot)- \pi\|_\tv \leq \epsilon \Big\}\,,\]
where here and in what follows $\P_{x_0}$ denotes the probability given $X_0=x_0$.
A family of ergodic finite Markov chains $(X_t)$, indexed by an implicit parameter $n$, is said to exhibit \emph{cutoff} (this concept going back to the works~\cites{Aldous,DiSh}) iff the following sharp transition in its convergence to stationarity occurs:
\begin{equation}
\label{eq-cutoff-def}
\lim_{n\to\infty} \frac{\tmix(\epsilon)}{\tmix(1-\epsilon)}=1 \quad\mbox{ for any $0 < \epsilon < 1$}\,.
\end{equation}
That is, $\tmix(\alpha)=(1+o(1))\tmix(\beta)$ for any fixed $0<\alpha<\beta<1$. The \emph{cutoff window} addresses the rate of convergence in~\eqref{eq-cutoff-def}: a sequence $w_n = o\big(\tmix(e^{-1})\big)$ is a cutoff window if $\tmix(\epsilon) = \tmix(1-\epsilon) + O(w_n)$ holds for any $0<\epsilon<1$ with an implicit constant that may depend on $\epsilon$.
Equivalently, if $t_n$ and $w_n$ are sequences with $w_n =o(t_n)$, we say that a sequence of chains exhibits cutoff at $t_n$ with window $w_n$ if
\[\left\{\begin{array}
  {r}\displaystyle{\lim_{\gamma\to\infty} \liminf_{n\to\infty}
 \max_{x_0 \in \Omega} \| \P_{x_0}(X_{t_n-\gamma w_n} \in \cdot)- \pi\|_\tv
  = 1}\,,\\
  \displaystyle{\lim_{\gamma\to \infty} \limsup_{n\to\infty}
 \max_{x_0 \in \Omega} \| \P_{x_0}(X_{t_n+\gamma w_n} \in \cdot)- \pi\|_\tv
  = 0}\,.
\end{array}\right.\]
Verifying cutoff is often quite challenging, e.g., even for the simple random walk on a bounded-degree graph, no examples were known prior to~\cite{LS2}, while this had been conjectured for almost all such graphs.

\subsubsection*{Glauber dynamics for the Ising model}\label{sec:prelim-ising}
Let $G$ be a finite graph $G$ with vertex-set $V$ and edge-set $E$. The Ising model on $G$ is a distribution over the set $\Omega=\{\pm1\}^V$
of possible configurations, each
corresponding to an assignment of plus/minus spins to the sites in $V$. The probability of $\sigma \in \Omega$ is given by
\begin{equation}
  \label{eq-Ising}
  \pi(\sigma)  = Z^{-1} e^{\beta \sum_{uv\in E} \sigma(u)\sigma(v) + h \sum_{u \in V} \sigma(u)} \,,
\end{equation}
where the normalizer $Z=Z(\beta,h)$ is the partition function.
The parameter $\beta$ is the inverse-temperature, which we always to take to be non-negative (ferromagnetic), and $h$ is the external field, taken to be 0 unless stated otherwise. These definitions extend to infinite locally finite graphs (see, e.g.,~\cites{Liggett,Martinelli97}).

The Glauber dynamics for the Ising model (the \emph{Stochastic Ising} model) is a family of continuous-time Markov chains on the state space $\Omega$, reversible w.r.t.\ the Ising measure $\pi$, given by the generator
\begin{equation}
  \label{eq-Glauber-gen}
  (\mathscr{L}f)(\sigma)=\sum_{u\in \Lambda} c(u,\sigma) \left(f(\sigma^u)-f(\sigma)\right)
\end{equation}
where $\sigma^u$ is the configuration $\sigma$ with the spin at $u$ flipped and $c(u,\sigma)$ is the rate of flipping (cf.~\cite{Liggett}).
We focus on the two most notable examples of Glauber dynamics, each having an intuitive and useful graphical interpretation where each site receives updates via an associated i.i.d.\ rate-one Poisson clock:
\begin{compactenum}[(i)]
\item \emph{Metropolis}: flip $\sigma(u)$ if the new state $\sigma^u$ has a lower energy (i.e., $\pi(\sigma^u)\geq \pi(\sigma)$), otherwise perform the flip with probability $\pi(\sigma^u)/\pi(\sigma)$.
    This corresponds to $  c(u,\sigma) = \exp\left(2\beta\sigma(u)\sum_{v \sim u}\sigma(y)\right)  \;\wedge\; 1 $.
\item \emph{Heat-bath}:  erase $\sigma(u)$ and replace it with a sample from the conditional distribution given the spins at its neighboring sites. This corresponds to $c(u,\sigma) = 1/\left[1+ \exp\left(-2\beta\sigma(u)\sum_{v \sim u}\sigma(v)\right)\right]$.
\end{compactenum}
It is easy to verify that these chains are indeed ergodic and reversible w.r.t.\ the Ising distribution $\pi$.
Until recently, sharp mixing results for this dynamics were obtained in relatively few cases, with cutoff only known for the complete graph~\cites{DLP,LLP} prior to the works~\cites{LS1,LS3}.

\begin{figure}[t]
\begin{center}
\vspace{-0.1cm}
 \includegraphics[width=.6\textwidth]{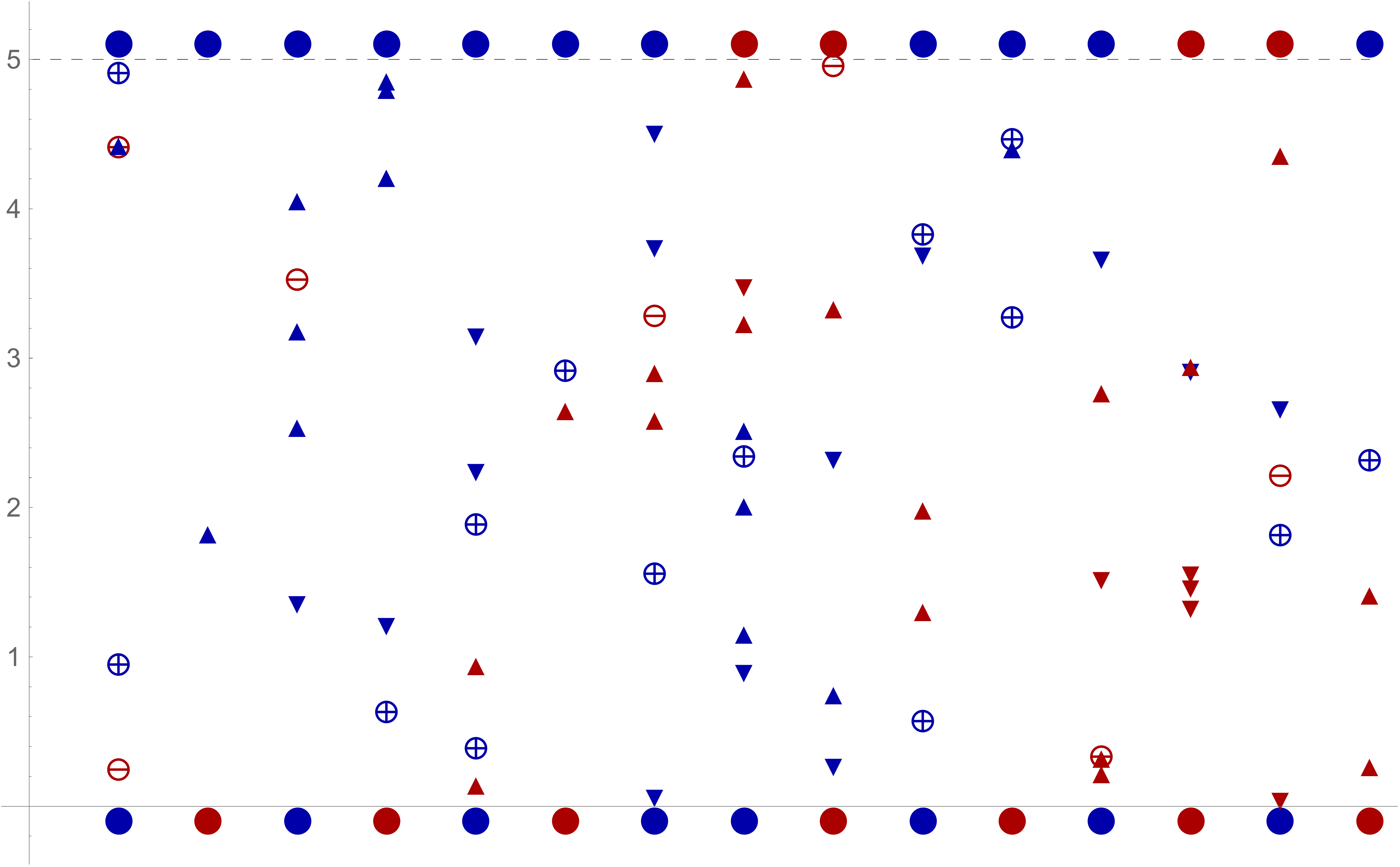}
\end{center}
\vspace{-0.4cm}
\caption{Heat-bath dynamics for the \oned Ising model, marking oblivious updates by $\oplus$ and $\ominus$ and non-oblivious updates by $\blacktriangle$ and $\blacktriangledown$ to denote $\sigma(i)\mapsto\sigma(i-1)\wedge \sigma(i+1)$ and $\sigma(i)\mapsto\sigma(i-1)\vee \sigma(i+1)$.}
\label{fig:upd1d}
\end{figure}

\subsection{Update history and support}\label{sec:update-hist}

The \emph{update sequence} along an interval $(t_0,t_1]$ is a set of tuples $(J,U,T)$, where $t_0< T\leq t_1$ is the update time,
$J\in \Lambda$ is the site to be updated and $U$ is a uniform unit variable.
Given this update sequence, $X_{t_1}$ is a deterministic function of $X_{t_0}$, right-continuous w.r.t.\ $t_1$.
(For instance, in heat-bath Glauber dynamics, if $s$ is the sum of spins at the neighbors of $J$ at time $T$ then the update $(J,U,T)$ results in a minus spin if $U \leq \frac12(1-\tanh(\beta s))$, and in a plus spin otherwise.)

We call a given update $(J,U,T)$ an \emph{oblivious update} iff $U \leq \theta$ for
\begin{equation}\label{eq-def-Xi}
  \theta = \theta_{\beta,d} := 1 - \tanh(\Delta\beta)\qquad\mbox{ where $\Delta=2d$ is the vertex degree}\,,
\end{equation}
since in that situation one can update the spin at $J$ to plus/minus with equal probability (that is, with probability $\theta/2$ each via the same $U$) independently of the spins at the neighbors of the vertex $J$, and a properly chosen rule for the case  $U > \theta$ legally extends this protocol to the Glauber dynamics. (For instance, in heat-bath Glauber dynamics, the update is oblivious if $U\leq 1-\theta/2$ or $U\geq 1-\theta/2$, corresponding to minus and plus updates, respectively; see Figure~\ref{fig:upd1d} for an example in the case $d=1$.)

The following functions will be used to unfold the update history of a set $A$ at time $t_2$ to time $t_1<t_2$:
\begin{itemize}[\noindent$\bullet$]
\item \emph{The update function $\fupd(A,t_1,t_2)$}: the random set that, given the update sequence along the interval $(t_1,t_2]$, contains every site $u\in \Lambda$ that $A$ ``reaches'' through the updates in reverse chronological order;
    that is, every $u\in\Lambda$ such that there exists a subsequence of the updates $(J_i,U_i,T_i)$ with increasing $T_i$'s in the interval $(t_1,t_2]$, such that $J_1,J_2,\ldots$ is a path in $\Lambda$ that connects $u$ to some vertex in $A$.

\item \emph{The update support function $\fsup(A,t_1,t_2)$}: the random set whose value, given the update sequence along the interval $(t_1,t_2]$, is the \emph{update support} of $X_{t_2}(A)$ as a function of $X_{t_1}$; that is, it is the minimal subset $S\subset \Lambda$ which determines the spins of $A$ given the update sequence (this concept from~\cite{LS1} extends more generally to random mapping representations of Markov chains, see Definition~\ref{def-mc-support}).

\end{itemize}

The following lemma establishes the exponential decay of both these update functions for any $\beta<\beta_c$. Of these, $\fsup$ is tied to the magnetization $\sm_t$ whose exponential decay, as mentioned in~\S\ref{sec:methods}, in a sense characterizes the one phase region $\beta<\beta_c$ and serves as a keystone to our analysis of the subcritical nature of the information percolation clusters.
Here and in what follows, for a subset $A \subset \Z^d$ and $r>0$, let $B(A,r)$ denote the set of all sites in $\Z^d$ with $L^\infty$ distance at most $r$ from $A$.

\begin{lemma}\label{l:BasicEstimates}
The update functions for the Ising model on $\Lambda=(\Z/n\Z)^d$ satisfy the following for any $\beta<\beta_c$.
There exist some constant $c_{\beta,d}>0$ such that for any $\Lambda'\subset\Lambda$, any vertex $v\in \Lambda'$ and any $h>0$,
\begin{equation}\label{e:BasicSupport}
\P(\fsup(v,t-h,t)\neq \emptyset) =  \sm_{h} \leq 2 e^{-c_{\beta,d} h}
\end{equation}
with $\sm_h=\sm_h(v)$ as defined in~\eqref{eq-t*-def}, whereas for $\ell> 20 d h$,
\begin{equation}\label{e:BasicUpdate}
\P(\fupd(v,t-h,t)\not\subset B(v, \ell)) \leq  e^{-\ell }\,.
\end{equation}
\end{lemma}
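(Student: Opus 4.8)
The plan is to prove the two estimates separately, using the monotonicity of the Ising model for the first and a direct union bound over branching-process paths for the second.

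For \eqref{e:BasicSupport}, first observe that the event $\{\fsup(v,t-h,t)\neq\emptyset\}$ is exactly the event that the spin $X_t(v)$ is \emph{not} yet determined by the updates in $(t-h,t]$ alone, i.e.\ it still depends on the configuration at time $t-h$. By the standard monotone (grand) coupling, running the dynamics from the all-plus and all-minus states at time $t-h$ with the same update sequence, the spin $X_t(v)$ agrees in both copies precisely when $\fsup(v,t-h,t)=\emptyset$. Hence
\[
\P\big(\fsup(v,t-h,t)\neq\emptyset\big) \;=\; \P\big(X_t^{+}(v)\neq X_t^{-}(v)\big)\;=\;\E X_t^+(v) - \E X_t^-(v) \;=\; 2\,\E X_h^+(v) \;=\; 2\sm_h,
\]
using $X_t^-(v)\stackrel{d}{=}-X_t^+(v)$ by the spin-flip symmetry (no external field), translation invariance in time, and $\sm_h = \E X_h^+(v)$ from \eqref{eq-t*-def}, \eqref{eq-mag-v}. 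Actually the cleanest route is to note $\{\fsup(v,t-h,t)\neq\emptyset\}$ is determined by the update sequence alone, so its probability does not depend on $t$; taking $t=h$ and comparing the all-plus and all-minus starts gives the identity with $\sm_h$. The bound $\sm_h\le 2e^{-c_{\beta,d}h}$ is then the classical exponential decay of the magnetization in the high-temperature phase $\beta<\beta_c$: it follows from the characterization of $\beta_c$ via exponential decay of the plus-boundary-condition magnetization together with the fact that under Glauber dynamics $\sm_h = \E X_h^+(v)$ decays at least as fast as the corresponding static quantity (see \cite{Martinelli97}); here the only subtlety is quoting the sharp threshold result so that the decay is available for \emph{all} $\beta<\beta_c$, which is exactly what recent equivalences (e.g.\ Aizenman--Barsky--Fern\'andez / sharp-threshold results and their Ising analogues) provide.

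For \eqref{e:BasicUpdate}, I would bound $\fupd(v,t-h,t)$ by the range of the backward update-history branching process started at $v$. A site $u$ lies in $\fupd(v,t-h,t)$ only if there is an increasing-time chain of updates $(J_1,U_1,T_1),(J_2,U_2,T_2),\dots,(J_k,U_k,T_k)$ inside $(t-h,t]$ with $J_1,\dots,J_k$ a nearest-neighbor path in $\Lambda$ from (a neighbor of) $v$ to $u$; in particular $u$ must be within graph-distance $k$ of $v$ for some such chain of length $k$, so if $u\notin B(v,\ell)$ then $k\ge\ell$. For a fixed lattice path of length $k$ emanating from $v$, the probability that the $k$ required updates occur in the correct time order within a window of length $h$ is at most the probability that a Poisson process of rate $\Delta = 2d$ (the total clock rate available to extend the path by one step — each site has rate $1$ and $\Delta$ neighbors) produces $k$ points in time $h$, which is $\le (\Delta h)^k/k!$; summing over the at most $\Delta^k$ lattice paths of length $k$ from $v$ gives $\P(u\text{ reached via length-}k\text{ chain}) \le (\Delta^2 h)^k/k!$ for each such $u$, and summing the number of such $u$ (at most $\Delta^k$ again) and over $k\ge\ell$ yields a bound of the form $\sum_{k\ge\ell}(c d^3 h)^k/k!$. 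Using $\ell > 20 d h$, so that $k \ge \ell$ forces $k \gg d^3 h$ (after absorbing constants; one should be slightly more careful with the exact constant $20d$, but the structure is $k! \ge (k/e)^k$ beats $(\text{const}\cdot d\cdot h)^k$ once $k$ is a large multiple of $dh$) gives the geometric-type tail $\le e^{-\ell}$.

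The main obstacle is bookkeeping the combinatorial/Poisson estimate in \eqref{e:BasicUpdate} with the \emph{explicit} constant $20d$: one must be honest about how many lattice paths of each length there are, that the $k$ updates along a path must be time-ordered (a factor $1/k!$, which is what makes the sum converge), and that extending the active frontier of the history involves the neighbors of the currently-active sites. A convenient packaging is to dominate $|\fupd(v,t-h,t)\setminus B(v,\ell)|>0$ by the event that a continuous-time branching random walk on $\Z^d$ with branching/step rate $\Delta$, run for time $h$, reaches $L^\infty$-distance $\ell$ from the origin, and then apply a standard exponential-martingale (Biggins-type) bound; the inequality $\ell>20dh$ gives the necessary slack for the exponential rate to be at least $1$. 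The identity in \eqref{e:BasicSupport} is essentially definitional once the grand coupling is set up, so I expect no difficulty there beyond correctly invoking the high-temperature decay of $\sm_h$ for all $\beta<\beta_c$.
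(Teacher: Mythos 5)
Your proposal follows essentially the same route as the paper for both estimates: monotone (grand) coupling plus the classical high-temperature decay of the magnetization for \eqref{e:BasicSupport}, and a union bound over nearest-neighbor paths combined with a Poisson tail estimate for \eqref{e:BasicUpdate}. The paper's version of the second bound is slightly tighter in its bookkeeping --- for a fixed length-$\ell$ path it bounds the activation probability by $\P(\operatorname{Po}(h)\geq\ell)$ (rate-$1$ clocks, since the path is fixed) and multiplies by the $(2d)^\ell$ path count, then applies Bennett's inequality; your version assigns each path step a rate-$\Delta$ Poisson and then also multiplies by the $\Delta^k$ path count, which double-counts the branching and gives extra factors of $d$, but this is harmless since $\ell>20dh$ leaves plenty of slack. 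The branching-random-walk / exponential-martingale packaging you sketch is a valid alternative to the paper's direct union bound.

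One concrete error to fix: in your chain of equalities you write
$\P(X_t^+(v)\neq X_t^-(v)) = \E X_t^+(v) - \E X_t^-(v) = 2\sm_h$.
For a monotone coupling of $\{\pm1\}$-valued spins one has $X_t^+(v)-X_t^-(v)\in\{0,2\}$, so
$\P(X_t^+(v)\neq X_t^-(v)) = \tfrac12\,\E\big[X_t^+(v)-X_t^-(v)\big] = \tfrac12(\sm_h+\sm_h)=\sm_h$,
in agreement with the statement; your displayed chain is off by a factor of $2$. You appear to notice this (``gives the identity with $\sm_h$'') but never correct the computation. The rest of \eqref{e:BasicSupport} --- the bound $\sm_h\le 2e^{-c_{\beta,d}h}$ --- is quoted, as in the paper, from the Martinelli--Olivieri exponential decay valid for all $\beta<\beta_c$.
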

\begin{proof}
The left-hand equality in~\eqref{e:BasicSupport} is by definition, whereas the right-hand inequality was derived from the weak spatial mixing property of the Ising model using the monotonicity of the model in the seminal works of Martinelli and Olivieri~\cites{MO,MO2} (see Theorem 3.1 in~\cite{MO} as well as Theorem 4.1 in~\cite{Martinelli97});
we note that this is the main point where our arguments rely on the monotonicity of the Ising model.
 As it was shown in~\cite{Holley}*{Theorem 2.3} that $\lim_{t\to\infty} \frac{-1}{t} \log \sm_t = \gap$ where $\gap$ is the smallest positive eigenvalue of the generator of the dynamics, this is equivalent to having $\gap$ be bounded away from 0.

  We therefore turn our attention to~\eqref{e:BasicUpdate}, which is a consequence of the finite speed of information flow vs.\ the amenability of lattices.
Let $\cW$ denote the set of sequences of vertices
\[
\cW=\big\{\tilde{w}=(w_1,w_2,\ldots,w_{\ell}):w_1=v,\, \|w_{i-1}-w_{i}\|_1=1\big\}\,.
\]
For $\fupd(v,t-h,t)\not\subset B(v, \ell)$ to hold there must be some $w\in \cW$  and a sequence $t>t_1>\ldots > t_{\ell} > t-h$ so that vertex $w_i$ was updated at time $t_i$.  If this event holds call it $M_{\tilde{w}}$.
It is easy to see that
\begin{align*}
\P(M_{\tilde{w}}) = \P(\Po(h)\geq \ell)\leq  e^{- \ell (\log(\ell/h) - 1)}\,,
\end{align*}
where the last transition is by Bennet's inequality.  By a union bound over $\cW$ we have that for $\ell>20 dh$,
\begin{equation*}
\P\left(\fupd(v,t-h,t)\not\subset B(v, \ell)\right) \leq  (2d)^{\ell} \exp(- \ell (\log(\ell/h) - 1))\leq  e^{-\ell}\,,
\end{equation*}
thus establishing~\eqref{e:BasicUpdate} and completing the proof.
\end{proof}

\subsection{Red, green and blue clusters}
In what follows, we describe the basic setting of the framework, which will be enhanced in \S\ref{sec:lattice-framework} to support all $\beta<\beta_c$.
Consider some designated target time $\tpluss$ for analyzing the distribution of the dynamics on $\Lambda=(\Z/n\Z)^d$.
The \emph{update support} of $X_{\tpluss}(v)$ at time $t$ is
 \[ \sH_v(t) = \fsup(v,t,\tpluss)\,,\]
 i.e., the minimum subset of sites whose spins at time $t$ determine $X_{\tpluss}(v)$ given the updates along $(t,\tpluss]$.
Developing $\{\sH_v(t):t\leq \tpluss\}$ backward in time, started at time $\tpluss$, gives rise to a subgraph $\sH_v$ of the space-time slab $\Lambda\times[0,\tpluss]$, where we connect $(u,t)$ with $(u,t')$ (a temporal edge) if $u\in\sH_v(t)$ and there are no updates along $(t',t]$, and connect $(u,t)$ with $(u',t)$ (a spatial edge) when $u\in\sH_v(t)$, $u'\notin\sH_v(t)$ and $u'\in \sH_v(t-\delta)$ for any small enough $\delta>0$ due to an update at $(u,t)$
(see Figure~\ref{fig:supp1d}).

\begin{figure}[t]
\begin{center}
  \begin{tikzpicture}[font=\tiny,plotd/.style={draw=black,dotted}]
    \newcommand{\hsep}{7.5cm}
    \node (plot1) at (0,0) {
      \includegraphics[width=.4\textwidth]{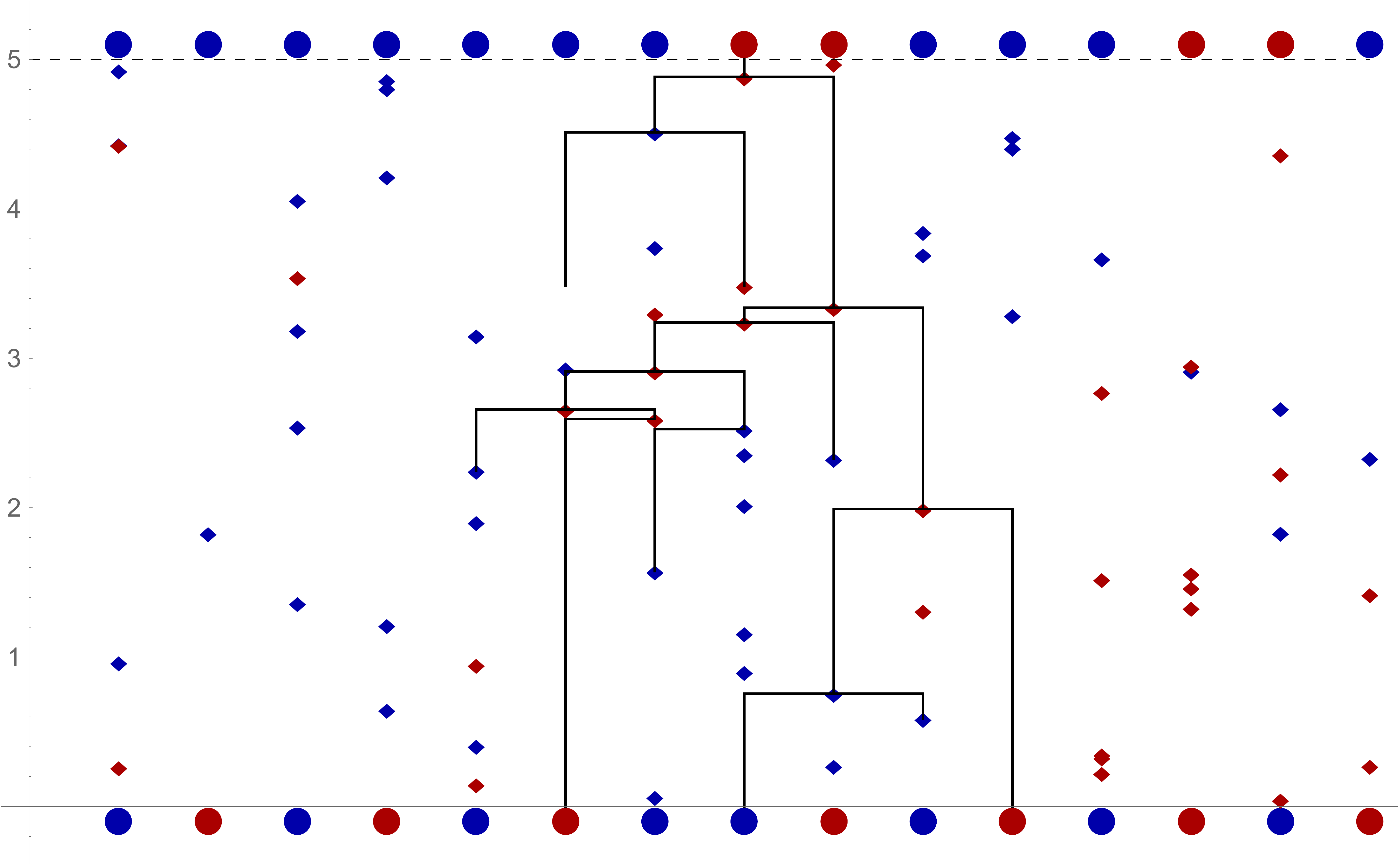}};
    \node[plotd] (plot2) at (\hsep,0) {
      \includegraphics[width=.4\textwidth]{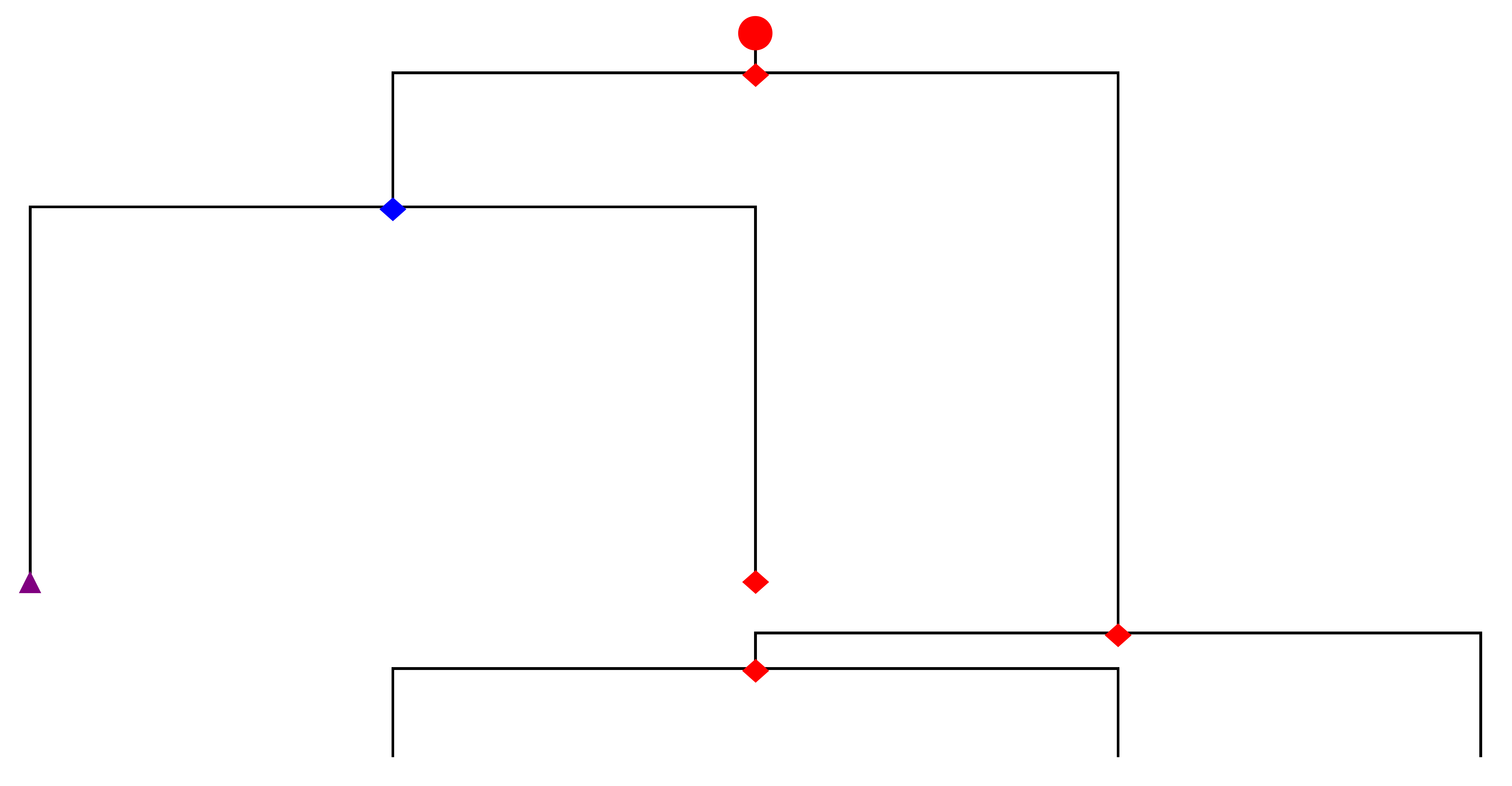}};
    \begin{scope}[shift={(plot2.south west)}]
    \node at (3.5,3.15) {$0.41$};
    \node at (8,3.15) {$x_2 \wedge x_4$};
    \node at (1.88,2.55) {$0.74$};
    \node at (8,2.55) {$(x_1 \vee x_3)\wedge x_4$};
    \node at (3.2,1.05) {$0.59$};
    \node at (8,1.05) {$x_4$};
    \node at (5.55,0.7) {$0.25$};
    \node at (8,0.7) {$x_3\wedge x_5$};
    \node at (3.53,0.45) {$0.19$};
    \node at (8,0.4) {$x_1 \wedge x_3\wedge x_5$};
    \node at (0.3,-0.13) {$x_1$};
    \node at (1.9,-0.13) {$x_2$};
    \node at (3.55,-0.13) {$x_3$};
    \node at (5.25,-0.13) {$x_4$};
    \node at (6.85,-0.13) {$x_5$};
    \end{scope}
    \begin{scope}[shift={(plot1.south west)}]
    \draw[black,dotted] (2.75,2.7) -- (2.75,4.25) -- (4.8,4.25) -- (4.8,2.7) -- cycle;
    \end{scope}
  \end{tikzpicture}
\end{center}
\caption{Update support for heat-bath dynamics for the \oned Ising model at $\beta=0.4$ ($\theta\approx 0.34$); zoomed-in part shows the update history with the root spin as a deterministic function of the leaves.}
\label{fig:supp1d}
\end{figure}

\begin{figure}
\begin{center}
  \begin{tikzpicture}[font=\tiny]
  \node (plot1) at (0,0) {
  \includegraphics[width=.265\textwidth]{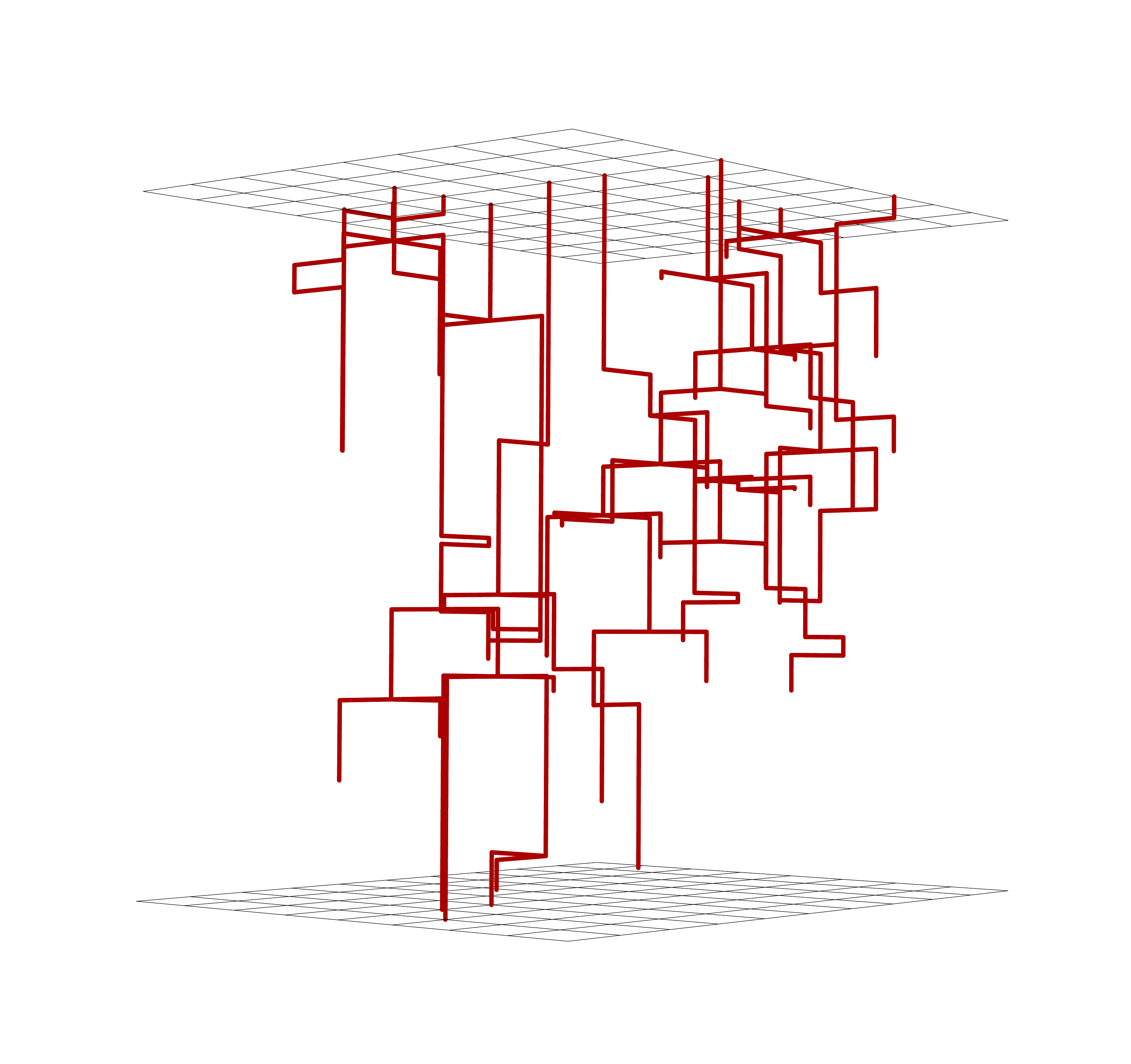}};
  \node (plot2) at (3.2,.1) {
  \includegraphics[width=.12\textwidth]{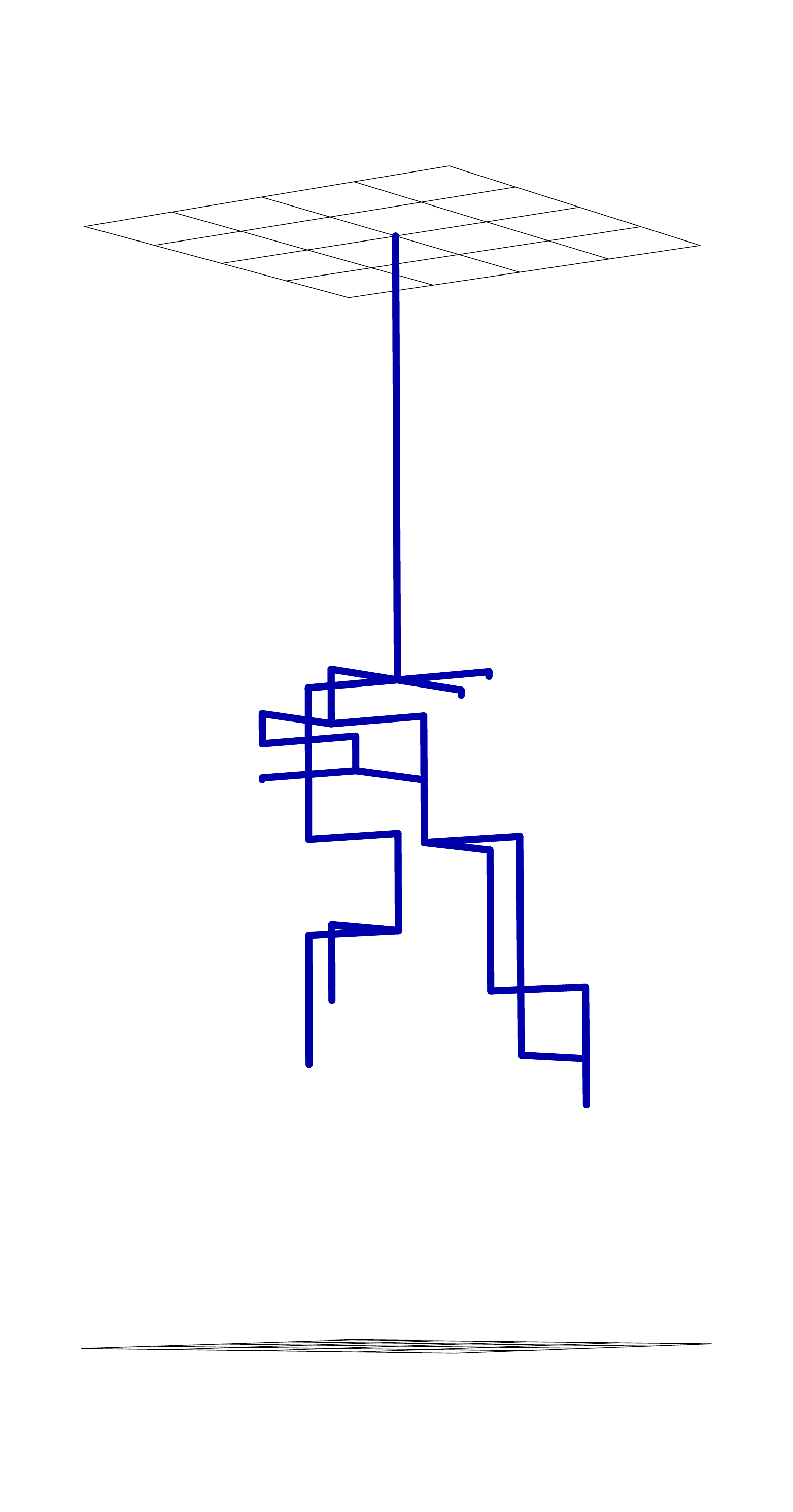}};
  \node (plot3) at (6,0.15) {
  \includegraphics[width=.21\textwidth]{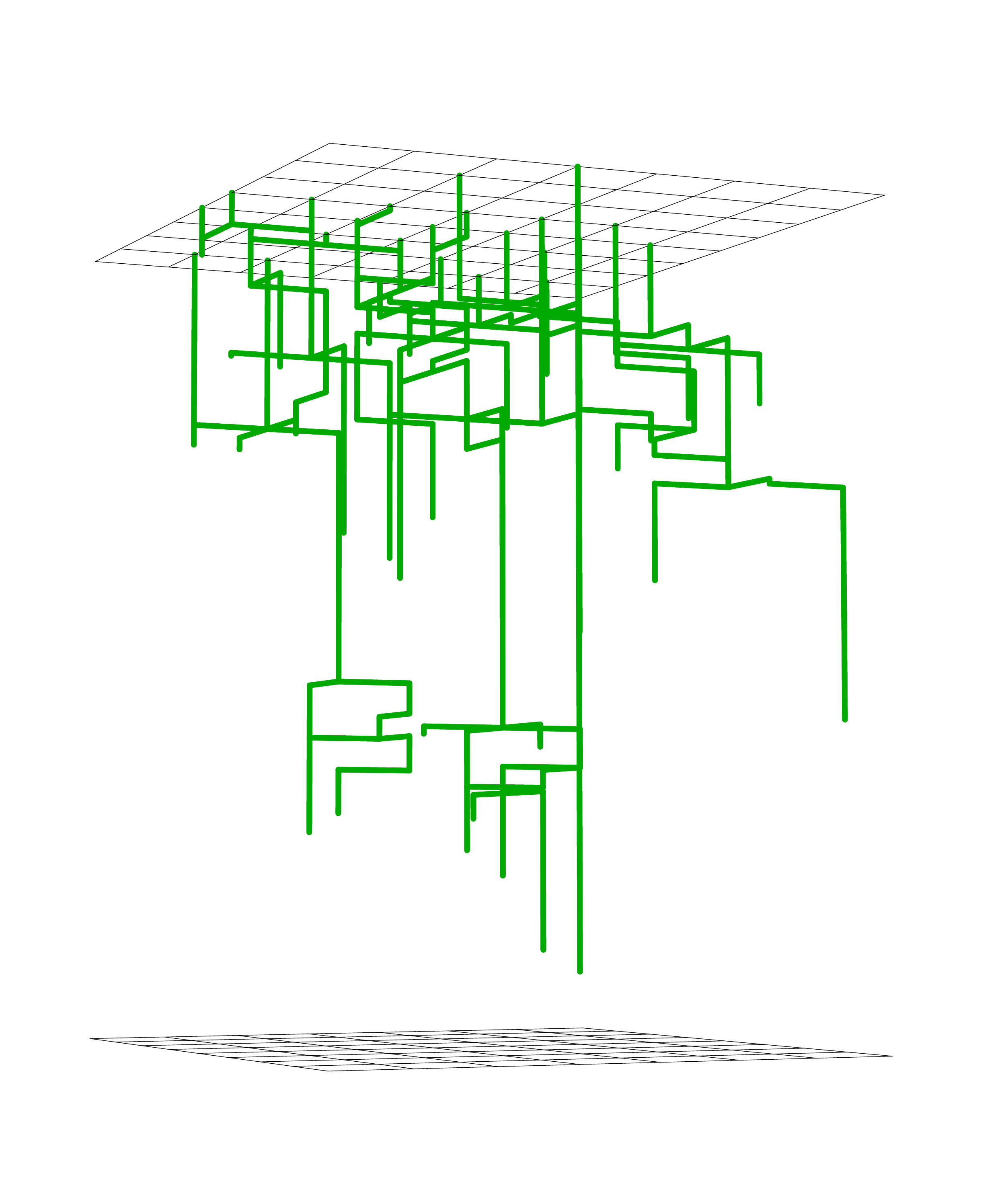}};
  \begin{scope}[shift={(-2.5,-2.33)}]
  \draw[black,->] (-.5,0.75) -- (-.5,3.9) [above];
  \draw[black,thin] (-.55,.85) -- (-.45,.85);
  \draw[black,thin] (-.55,3.7) -- (-.45,3.7);
  \draw[black,thin,dashed] (-.45,.85) -- (.45,.85);
  \draw[black,thin,dashed] (-.45,3.7) -- (.45,3.7);
    \node at (-.7,0.86) {$0$};
    \node at (-.7,3.7) {$\tpluss$};
  \end{scope}
  \end{tikzpicture}
\end{center}
\vspace{-0.25cm}
\caption{Red, blue and green information percolation clusters as per Definition~\ref{def:rgb-simple}.}
\label{fig:rgbclusters}
\end{figure}

\begin{remark}\label{rem:far-update}
  An oblivious update at $(u,t)$ clearly removes $u$ from $\sH_v(t-\delta)$; however, the support may also shrink due to non-oblivious updates: the zoomed-in update history in Figure~\ref{fig:supp1d} shows $x_1,x_3$ being removed from $\sH_v(t)$ due to the update $x_3 \mapsto x_2\vee x_4$, as the entire function then collapses to $x_4$.
\end{remark}

The \emph{information percolation clusters} are the connected components in the space-time slab $\Lambda\times[0,\tpluss]$ of the aforementioned subgraphs $\{\sH_v : v\in\Lambda\}$.

\begin{definition}\label{def:rgb-simple}
An information percolation cluster is marked \red\ if it has a nonempty intersection with the bottom slab $\Lambda\times\{0\}$; it is \blue\ if it does not intersect the bottom slab and has a singleton in the top slab, $v \times\{\tpluss\}$ for some $v\in\Lambda$; all other clusters are classified as \green. (See Figure~\ref{fig:rgbclusters}.)
\end{definition}
Observe that if a cluster is blue then the distribution of its singleton at the top does not depend on the initial state $x_0$; hence, by symmetry, it is $(\frac12,\frac12)$ plus/minus.

Let $\Lambda_\red$ denote the union of the red clusters, and let $\sH_\red$ be the its collective history --- the union of $\sH_v(t)$ for all $v\in\Lambda_\red$ and $0\leq t<\tpluss$ (with analogous definitions for blue/green).
A beautiful short lemma of Miller and Peres~\cite{MP} shows that, if a measure $\mu$ on $\Omega$ is constructed by sampling a random variable $R\subset \Lambda$ and using an arbitrary law for its spins and a product of Bernoulli($\frac12$) for $\Lambda\setminus R$, then the $L^2$-distance of $\mu$ from the uniform measure is bounded by $\E 2^{|R\cap R'|}-1$ for two i.i.d.\ copies $R,R'$. (See Lemma~\ref{lem:MP} below for a generalization of this, as we will have a product of complicated measures.)
Applied to our setting, if we condition on $\sH_\green$ and look at the spins of $\Lambda\setminus\Lambda_\green$ then $\Lambda_\red$ can assume the role of the variable $R$, as the remaining blue clusters are a product of Bernoulli($\frac12$) variables.

In this conditional space, since the law of the spins of $\Lambda_\green$, albeit potentially complicated, is independent of the initial state, we can safely project the configurations
on $\Lambda\setminus\Lambda_\green$ without increasing the total-variation distance between the distributions started at the two extreme states.
Hence, a sharp upper bound on worst-case mixing will follow by showing for this exponential moment
\begin{equation}
  \label{eq-exp-moment-bound}
  \E \Big[ 2^{|\Lambda_\red \cap\Lambda_\red'|} \;\big|\; \sH_\green\Big] \to 1\quad\mbox{ in probability as }n\to\infty\,,
\end{equation}
by coupling the distribution of the dynamics at time $\tpluss$ from any initial state to the uniform measure.
Finally, with the green clusters out of the picture by the conditioning (which has its own toll, forcing various updates along history so that no other cluster would intersect with those nor become green), we can bound the probability that a subset of sites would become a red cluster by its ratio with the probability of all sites being blue clusters. Being red entails connecting the subset in the space-time slab, hence the exponential decay needed for~\eqref{eq-exp-moment-bound}.

\begin{example}[Red, green and blue clusters in the \oned Ising model]
 Consider the relatively simple special case of $\Lambda=\Z/n\Z$ to illustrate the approach outlined above. Here, since the vertex degree is 2, an update either writes a new spin independently of the neighbors (with probability $\theta$) or, by symmetry, it takes the spin of a uniformly chosen neighbor. Thus, the update history from any vertex $v$ is simply a continuous-time simple random walk that moves at rate $1-\theta$ and dies at rate $\theta$; the collection of these for all $v\in\Lambda$ forms coalescing (but never splitting) histories (recall Figure~\ref{fig:clusters-1d}).

The probability that $\fsup(v,0,t)\neq \emptyset$ (the history of $X_t(v)$ is nontrivially supported on the bottom of the space-time slab) is therefore $e^{-\theta t}$, which becomes $1/\sqrt{n}$  once we take $t=\tcut=\frac1{2\theta}\log n$.
If we ignore the conditioning on the green clusters (which poses a technical difficulty for the analysis---as the red and blue histories must avoid them---but does not change the overall behavior by much), then
$ \P(v\in \Lambda_\red \cap \Lambda'_\red) = \P(\sH_v(0)\neq\emptyset)^2 = e^{-2\theta \tpluss}$
by the independence of the copies $\Lambda_\red,\Lambda'_\red$. Furthermore, if the events $\{v\in\Lambda_\red\cap\Lambda'_\red\}_{v\in\Lambda}$ were mutually independent (of course they are not, yet the intuition is still correct) then $\E[ 2^{|\Lambda_\red\cap\Lambda'_\red|}] = \E \big[ \prod_{v}(1+\one_{\{v\in\Lambda_\red\cap\Lambda'_\red\}})\big]$ would translate into
\[ \prod_{v}\E\left[1+\one_{\{v\in\Lambda_\red\cap\Lambda'_\red\}}\right] = \left(1+e^{-2\theta\tpluss}\right)^n \leq \exp\left(n e^{-2\theta\tpluss}\right)\,,\]
which for $\tpluss=\tcut+s$ is at most $\exp(e^{-2\theta s})$. As we increase the constant $s>0$ this last quantity approaches 1, from which the desired upper bound on the mixing time will follow via~\eqref{eq-exp-moment-bound}.
\end{example}

The above example demonstrated (modulo conditioning on $\sH_\green$ and dependencies between sites) how this framework can yield sharp upper bounds on mixing when the update history corresponds to a subcritical branching process.
However, in dimension $d\geq 2$, this stops being the case midway through the high temperature regime in lattices, and in \S\ref{sec:lattice-framework} we describe the additional ideas that are needed to extend the framework to all $\beta<\beta_c$.

\section{Enhancements for the lattice up to criticality}\label{sec:lattice-framework}
To extend the framework to all $\beta<\beta_c$ we will modify the definition of the support at time $t$ for $t>\tcut$, as well as introduce new notions in the space-time slab both for $t>\tcut$ and for $t<\tcut$ within the cutoff window. These are described in \S\ref{sec:above-ground} and \S\ref{sec:below-ground}, resp., along with three key lemmas (Lemmas~\ref{l:Ai-to-Bi}--\ref{l:redConnection}) whose proofs are postponed to \S\ref{sec:cluster-analysis}.

\begin{figure}
\begin{center}
  \begin{tikzpicture}[font=\tiny]
  \node (plot1) at (0,0) {
  \includegraphics[width=0.9\textwidth]{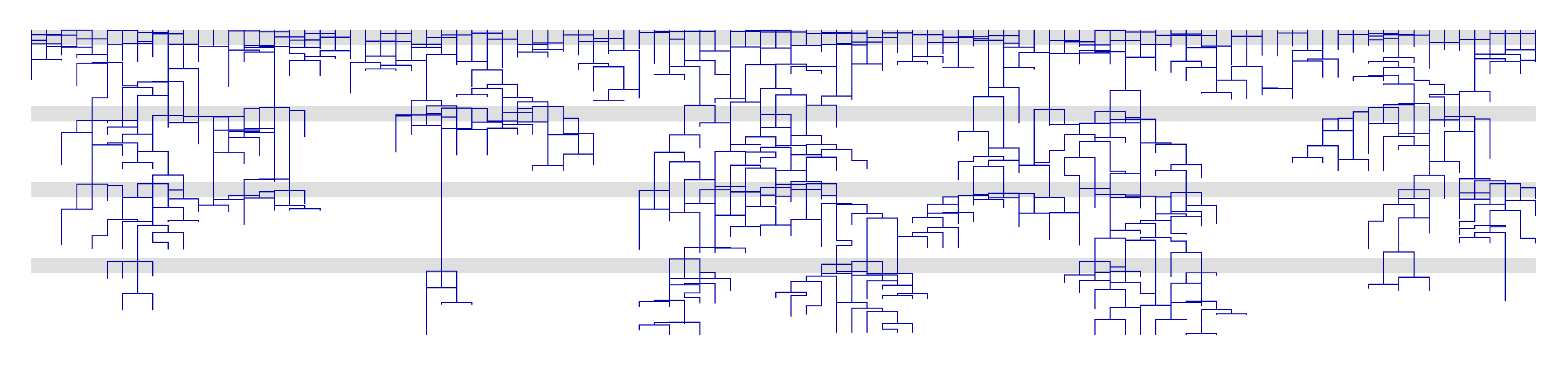}};
  \begin{scope}[shift={(7.8,-1.94)}]
  \node at (-0.1,3.35) {$\cI'_\lambda$};
  \node at (-0.1,2.95) {$\cI_{\lambda}$};
  \node at (0.05,2.58) {$\cI'_{\lambda-1}$};
  \node at (0.05,2.2) {$\cI_{\lambda-1}$};
  \node at (-0.1,1.7) {$\vdots$};
  \node at (-0.1,1.1) {$\cI'_{1}$};
  \node at (-0.1,.7) {$\cI_{1}$};
  \end{scope}
  \begin{scope}[shift={(-7.6,-2.2)}]
  \draw[black,->] (-.5,0.55) -- (-.5,3.9) [above];
  \draw[black,thin] (-.55,3.7) -- (-.45,3.7);
  \draw[black,thin] (-.55,2.95) -- (-.45,2.95);
  \draw[black,thin] (-.55,2.2) -- (-.45,2.2);
  \draw[black,thin] (-.55,1.45) -- (-.45,1.45);
  \draw[black,thin] (-.55,.7) -- (-.45,.7);
  \draw[black,thin,dotted] (-.45,3.7) -- (.15,3.7);
  \draw[black,thin,dotted] (-.45,2.95) -- (.15,2.95);
  \draw[black,thin,dotted] (-.45,2.2) -- (.15,2.2);
  \draw[black,thin,dotted] (-.45,1.45) -- (.15,1.45);
  \draw[black,thin,dotted] (-.45,.7) -- (15.,.7);
  \draw[black,thin,dotted] (-.45,2.05) -- (.15,2.05);
  \draw[black,thin,<->] (-.1,2.05) -- (-.1,2.2);
  \draw[black,thin,<->] (-.1,.7) -- (-.1,1.45);
  \node at (0.07,2.13) {${}_1$};
  \node at (0.3,1.05) {$\scut/\lambda$};
  \node at (-1,3.7) {$\tau_\lambda=\tpluss$};
  \node at (-1,2.95) {$\tau_{\lambda-1}$};
  \node at (-1,2.) {$\vdots$};
  \node at (-1,1.4) {$\tau_{1}$};
  \node at (-1,.7) {$\tau_0=\tcut$};
  \end{scope}
  \end{tikzpicture}
\end{center}
\vspace{-0.25cm}
\caption{Regular phases ($\cI_k$) and deferred phases ($\cI'_k$) of the update history in the range $t>\tcut$: no vertices are removed from the support along the deferred phases (marked by shaded regions).}
\label{fig:deferred}
\end{figure}

\subsection{Post mixing analysis: percolation components}\label{sec:above-ground}
Let $\lambda>0$ be some large enough integer, and let $\scut>0$ denote some larger constant to be set last. As illustrated in Figure~\ref{fig:deferred}, set
\[ \tpluss = \tcut + \scut\,, \]
for $k=0,1,\ldots,\lambda$ let
\[ \tau_k = \tcut + k \scut/\lambda\,,\]
and partition each interval $(\tau_{k-1},\tau_k]$ for $k=1,\ldots,\lambda$ into the subintervals
\[
\cI_k =(\tau_{k-1},\tau_k-1 ]\,,\qquad \cI_k' =(\tau_k-1, \tau_k]\,.
\]
We refer to $\cI_k$ as a {\it regular phase} and to $\cI'_k$ as a {\it deferred phase}.

\begin{definition*}[the support $\sH_v(t)$ for $t>\tcut$]
Starting from time $\tpluss =\tcut+\scut $ and going backwards to time $\tcut$ we develop the history of a vertex $v\in V$ rooted at time $\tpluss$ as follows:
\begin{itemize}[\indent$\bullet$]
\item Regular phases ($\cI_k$ for $k=1,\ldots,\lambda$):
 For any $\tau_{k-1} < t \leq \tau_k - 1$,
\[ \sH_v(t) = \fsup(\sH_v(\tau_{k}-1),t,\tau_k-1)\,.\]
Note that an oblivious update at time $t$ to some $w\in\sH_v(t)$ will cause it to be removed from the corresponding support (so $w\notin\sH_v(t-\delta)$ for any small enough $\delta>0$), while a non-oblivious update replaces it by a subset of its neighbors. We stress that $w$ may become irrelevant (thus ejected from the support) due to an update to some other, potentially distant, vertex $z$ (see Remark~\ref{rem:far-update}).
\item Deferred phases ($\cI'_k$ for $k=1,\ldots,\lambda$): For any $\tau_{k}-1 < t\leq\tau_k$,
\[ \sH_v(t) = \fupd(\sH_v(\tau_{k}),t,\tau_k)\,.\]
Here vertices do not leave the support: an update to $w\in\sH_v(t)$ adds its $2d$ neighbors to $\sH_v(t-\delta)$.
\end{itemize}
Recalling the form $(J_i,U_i,T_i)$ of updates (see~\S\ref{sec:update-hist}), let the \emph{undeferred randomness} $\cU$ be the updates along $(\tcut,\tpluss]$ excluding the uniform unit variables $U_i$ when $T_i\in\cup_k\cI'_k$, and let the \emph{deferred randomness} $\cU'$ denote this set of excluded uniform unit variables (corresponding to updates in the deferred phases).
\end{definition*}

\begin{remark*}
Observe that this definition of $\{\sH_v(t) : t>\tcut\}$ is  a function of the undeferred randomness $\cU$ alone (as the deferred phases $\cI'_k$ involved $\fupd$ as opposed to $\fsup$); thus, $X_{\tpluss}$ may be obtained from $X_{\tcut}$ by first exposing $\cU$, then incorporating the deferred randomness $\cU'$ along the deferred phases $\cI'_k$.
\end{remark*}

\begin{remark*}
The goal behind introducing the deferred phases $\cI'_k$ is to boost the subcritical behavior of the support $\sH_v(t)$ towards an analog of the exponential moment in~\eqref{eq-exp-moment-bound}. In what follows we will describe how the set of sites with $\sH_v(\tcut)\neq\emptyset$ are partitioned into components (according to proximity and intersection of their histories); roughly put, by exposing $\cU$ but not $\cU'$ one can identify, for each set of vertices $B$ in such a component, a time $t$ in which $\sH_B(t)$ is suitably ``thin'' (we will refer to it as a ``cut-set'') so that --- recalling that a branch of the history is killed at rate $\theta>0$ via oblivious updates --- one obtains a good lower bound on the probability of arriving at any configuration for the spin-set $\sH_B(t)$ (which then determines $X_{\tpluss}(B)$) once the undeferred randomness $\cU'$ is incorporated.
\end{remark*}

\begin{figure}
\begin{center}
  \begin{tikzpicture}[font=\tiny,clip=false]
  \node (plot1) at (-1,0) {
  \includegraphics[width=\textwidth]{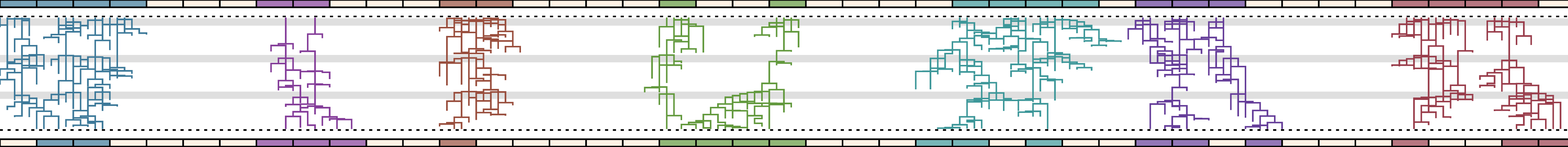}};
  \begin{scope}[shift={(plot1.south west)}]
  \node at (17.43,1.54) {$\tpluss$};
  \node at (17.45,.35) {$\tcut$};
  \node at (1.,1.9) {$B_1$};
  \node at (1.,-0.1) {$A_1$};
  \node at (3.35,1.9) {$B_2$};
  \node at (3.55,-0.1) {$A_2$};
  \node at (5.35,1.9) {$B_3$};
  \node at (5.2,-0.1) {$A_3$};
  \node at (8.1,1.9) {$B_4$};
  \node at (8.2,-0.1) {$A_4$};
  \node at (11.4,1.9) {$B_5$};
  \node at (11.1,-0.1) {$A_5$};
  \node at (13.2,1.9) {$B_6$};
  \node at (13.4,-0.1) {$A_6$};
  \node at (16.15,1.9) {$B_7$};
  \node at (16.1,-0.1) {$A_7$};
  \draw[darkgray,thin,<->] (9.35,1.82) -- (9.755,1.82);
  \draw[darkgray,thin] (9.35,1.87) -- (9.35,1.77);
  \draw[darkgray,thin] (9.755,1.87) -- (9.755,1.77);
  \node[darkgray] at (9.55,2) {$\scut^2$};
  \end{scope}
  \end{tikzpicture}
\end{center}
\vspace{-0.25cm}
\caption{\!\!Components $\cA=\{A_i\}$ and $\cB=\{B_i\}$ corresponding to $\{\Upsilon_i\}$. Each $\Upsilon_i$ joins vertices of $\Upsilon$ via \emph{history intersection}; $\Upsilon_4$ and $\Upsilon_7$ also join vertices via \emph{final proximity} and \emph{initial proximity}, resp.}
\label{fig:blocks}
\end{figure}

\subsubsection*{Blocks of sites and components of blocks}
Partition $\Z^d$ into boxes of side-length $\scut^2$, referred to as {\em blocks}. We define {\em block components}, composed of subsets of blocks, as follow (see Figure~\ref{fig:blocks}).

\begin{definition*}
  [Block components]
Given the undeferred update sequence $\cU$, we say that $u\sim v$ if one of the following conditions holds:
\begin{compactenum}
  \item History intersection: $\sH_{u}(t) \cap \sH_v(t)\neq\emptyset$ for some $t\in (\tcut,\tpluss]$.
  \item Initial proximity: $u,v$ belong to the same block or to adjacent ones.
  \item Final proximity: there exist $u'\in \sH_u(\tcut)$ and $v'\in \sH_v(\tcut)$ belonging to the same block or to adjacent ones.
\end{compactenum}
Let $\Upsilon = \{ v : \sH_v(\tcut) \neq \emptyset\}$ be the vertices whose history reaches $\tcut$.
We partition $\Upsilon$ into components $\{\Upsilon_i\}$ via the transitive closure of $\sim$.
Let $\sH_{\Upsilon_i}(t) = \cup_{v\in\Upsilon_i} \sH_v(t)$, let $A_i$ be the minimal set of blocks covering $\sH_{\Upsilon_i}(\tcut)$ and let $B_i$ be the minimal set of blocks covering $\sH_{\Upsilon_i}(\tpluss) = \Upsilon_i$.
The collection of all components $\{A_i\}$ is denoted $\cA=\cA(\cU)$ and the collection of all components $\{B_i\}$ is denoted $\cB=\cB(\cU)$.
\end{definition*}

We now state a bound on the probability for witnessing a given set of blocks.
In what follows, let $\anim(S)$ denote the size, in blocks, of the minimal lattice animal\footnote{A lattice animal is a connected subset of sites in the lattice.}
containing the block-set $S$. Further let $\{R\conn S\}$ denote the event for some $i$ the block-sets $(R,S)$ satisfy $R=A_i\in \cA$ and $S=B_i\in\cB$; i.e., $R,S$ correspond to the same component, some $\Upsilon_i$, as the minimal block covers of $\sH_{\Upsilon_i}(t)$ at $t=\tcut,\tpluss$.
\begin{lemma}
  \label{l:Ai-to-Bi}
Let $\beta<\beta_c$.   There exist constants $c(\beta,d),\lambda_0(\beta,d)$ such that, if $\lambda > \lambda_0$ and $\scut > \lambda^2$, then for every collection of pairs of block-sets $\{(R_i,S_i)\}$,
  \[ \P\bigg(\bigcap_i \{R_i \conn S_i\}\bigg)  \leq \exp\bigg[ -c \frac{\scut}\lambda \sum_i \anim(R_i\cup S_i)\bigg]\,.\]
\end{lemma}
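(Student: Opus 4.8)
The plan is to build, for each component $\Upsilon_i$, a "skeleton" event that forces the history $\sH_{\Upsilon_i}$ to be spatially spread across the block-set $\anim(R_i\cup S_i)$, and then to pay an exponential price for each block in this animal, gaining a factor $e^{-c\scut/\lambda}$ per block because every regular phase $\cI_k$ offers an independent $\Theta(\scut/\lambda)$ units of time in which a branch of the history can be killed by an oblivious update (rate $\theta>0$), yet on the event $\{R_i\conn S_i\}$ it must survive long enough to link the blocks of $R_i$ (at time $\tcut$) to those of $S_i$ (at time $\tpluss$). The key structural fact to extract first is that, given $\cU$, the event $\{R_i\conn S_i\}$ implies the existence of a connected space-time "trace" of the support visiting (a neighborhood of) every block in the minimal animal $\anim(R_i\cup S_i)$: either because the history itself physically moves between adjacent blocks, or through one of the three adjacency rules (history intersection, initial/final proximity), each of which only links blocks that are already within $O(1)$ of one another. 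So the first step is a deterministic/combinatorial lemma: on $\{R_i\conn S_i\}$ one can exhibit a set of at least $c\,\anim(R_i\cup S_i)$ disjoint "witness segments" of the support, each of temporal length $\gtrsim \scut/\lambda$ (living within a single regular phase) and each responsible for a distinct block.

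Second, I would bound, for a single witness segment, the probability that the support stays nonempty throughout a regular phase $\cI_k$ of length $\scut/\lambda-1$. Since within $\cI_k$ we use $\fsup$ (vertices leave the support at an oblivious update), and an oblivious update at a vertex in a singleton support kills it, the probability that a given branch of the history survives the whole phase without being annihilated is at most $e^{-c'\scut/\lambda}$ for a constant $c'=c'(\beta,d)>0$ — this is exactly the content of the magnetization decay in Lemma~\ref{l:BasicEstimates}, eq.~\eqref{e:BasicSupport}, applied on a window of length $\scut/\lambda - 1$ (taking $\scut>\lambda^2$ ensures this window is $\gg 1$, so the $-1$ is harmless). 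One also needs eq.~\eqref{e:BasicUpdate} to control the spatial spread: within a regular phase the history cannot wander more than $O(\scut/\lambda)$ blocks except with probability $e^{-\ell}$, which together with $\scut>\lambda^2$ makes the animal-counting entropy $(2d)^{\anim}$ (number of lattice animals of a given size through the origin) negligible against the per-block decay $e^{-c'\scut/\lambda}$.

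Third, I would assemble these into the product bound. The witness segments are chosen to lie in disjoint space-time regions, and the update sequence $\cU$ is a Poisson process with i.i.d.\ marks, so conditional on the coarse geometry the survival events are independent (or can be stochastically dominated by independent ones via a standard exploration/peeling argument over the components $\Upsilon_i$, revealing one block's worth of history at a time). Multiplying $e^{-c'\scut/\lambda}$ over $\sum_i c\,\anim(R_i\cup S_i)$ segments, then absorbing the animal-enumeration factor $\prod_i (2d)^{\anim(R_i\cup S_i)}$ (legitimate once $\scut/\lambda$ is large, i.e.\ $\lambda>\lambda_0$ and $\scut>\lambda^2$) into a slightly smaller constant $c$, yields
\[
\P\Bigl(\bigcap_i \{R_i\conn S_i\}\Bigr) \le \exp\Bigl[-c\,\frac{\scut}{\lambda}\sum_i \anim(R_i\cup S_i)\Bigr]\,,
\]
as claimed. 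The main obstacle is the second half of step one: the event $\{R_i\conn S_i\}$ is defined through the transitive closure of $\sim$, so a careful argument is needed to show that one genuinely gets $\Omega(\anim(R_i\cup S_i))$ many \emph{temporally long} and \emph{disjoint} witness segments — as opposed to a long chain of short "history intersection" or "proximity" links that could in principle connect many blocks cheaply. Handling this requires exploiting that initial/final proximity only occurs at the two endpoints $t=\tcut,\tpluss$ (so it contributes $O(1)$ per component to the block count, not more), while history-intersection links between far-apart blocks themselves force long surviving branches; one then charges each block of the animal to a distinct phase-$\cI_k$ survival event along a spanning tree of the component, and the fact that there are $\lambda$ phases but the animal can be large is reconciled by noting that a branch spanning $m$ blocks within one phase already costs $e^{-c' m}$ by the spatial-spread bound~\eqref{e:BasicUpdate}.
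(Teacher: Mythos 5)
Your proposal correctly identifies all the raw ingredients (the magnetization decay~\eqref{e:BasicSupport}, the finite-speed bound~\eqref{e:BasicUpdate}, independence of well-separated space-time regions, and lattice-animal entropy $(2d)^{O(\anim)}$), and you correctly sense the obstacle at the end: the event $\{R_i\conn S_i\}$ does not actually force a temporally long strand of history at every block of the animal. But your Step~1 claim---that one can extract $\Omega(\anim(R_i\cup S_i))$ disjoint witness segments \emph{each of temporal length $\gtrsim\scut/\lambda$}---is false as stated. A history strand linking $R_i$ to $S_i$ can pass through a given block of the animal very quickly (spending $o(\scut/\lambda)$ time there and then jumping on spatially), so there is no per-block temporal survival event to charge. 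Your sketched fix (``a branch spanning $m$ blocks within one phase costs $e^{-c'm}$'') is the right second ingredient, but you never resolve the tension between ``charge each block to one of $\lambda$ phases along a spanning tree'' and ``the animal may have $\gg\lambda$ blocks''; that extraction is exactly where the work lies, and the two cases (slow spread versus fast spread) need to be encoded in a single per-block event, otherwise you cannot multiply.

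The paper closes this gap by defining, for each vertex $v$, a local event: the \emph{column} $Q_v=B(v,\scut^{3/2})\times(\tcut,\tpluss]$ is \emph{exceptional} if it contains either a spatial crossing (a path between two space-time points at $L^\infty$-distance $\geq\frac12\scut^{3/2}$, expensive by~\eqref{e:BasicUpdate}) or a temporal crossing (a path from $(v,\tpluss)$ down to $B(v,\scut^{3/2})\times\{\tcut\}$, which must survive some full regular phase $\cI_k$ and hence is expensive by~\eqref{e:BasicSupport}); either way $\P(Q_v\text{ exceptional})\leq e^{-c\scut/\lambda}$. Because a column has spatial radius $\scut^{3/2}$ while a block has side $\scut^2$, any path from $S_i$ to $R_i$ through a block $X$ either stays in some column $Q_v$ near $X$ (temporal crossing at $v$) or leaves $B(x,\frac12\scut^{3/2})$ for some $x\in X$ (spatial crossing at $x$). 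Thus every block of the animal is adjacent to one with an exceptional column, and since exceptional-column events in blocks at block-distance $\geq 2$ are determined by disjoint update sets, one can greedily extract $\geq\frac{1}{(2d+1)^2}\sum_i\anim(R_i\cup S_i)$ \emph{independent} exceptional-column events. This dichotomy is the formal device you are missing; with it the remainder of your proposal (multiplying the $e^{-c\scut/\lambda}$ factors and absorbing the $(2d)^{2\anim}$ animal entropy for $\scut/\lambda$ large) goes through exactly as you outline.
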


\subsubsection*{Cut-sets of components}
The cut-set of a component $A_i$ is defined as follows.
For $k=1,\ldots,\lambda$ let
\begin{align*}
  \chi_{i,k} &= \sH_{\Upsilon_i}(\tau_{k})\,,\\
  \Xi_{i,k} &= \prod_{v\in \chi_{i,k}} \left(\tfrac14 \theta T_{v,k}\right)
\end{align*}
where $\theta = 1 - \tanh(2d\beta)$ is the oblivious update probability, and
$T_{v,k}$ is the time elapsed since the last update to the vertex $v$ within
the deferred phase $\cI_k'$ until $\tau_k$, the end of that interval. That is, $T_{v,k}=(\tau_k - t) \wedge 1$ for the maximum time $t < \tau_k $ at which there was an update to $v$.
With this notation, the \emph{cut-set} of $A_i$ is the pair $(k_i,\chi_i)$ where $k_i$ is the value of $1\leq k \leq \lambda$ minimizing $\Xi_{i,k}$ and $\chi_i=\chi_{i,k_i}$.
The following lemma will be used to estimate these cut-sets.
\begin{lemma}\label{l:exp-cut-set}
Let $\beta<\beta_c$. Let $S$ be a set of blocks, let $\chi_k(S)=\sH_{S}(\tau_{k})$ and
$\Xi_{k}(S) = \prod_{v\in \chi_{k}(S)} \left(\tfrac14 \theta T_{v,k} \right)$
where $T_{v,k}$ is the time that elapsed since the last update to the vertex $v$ within
the deferred phase $\cI_k'$ until $\tau_k$.
If $\lambda$ is large enough in terms of $\beta,d$ and $\scut$ is large enough in terms of $\lambda$
then
\[ \E \left[\min_k \left\{ (\Xi_k(S))^{-4} : 1 \leq k \leq \lambda \right\} \right]\leq 2^{\lambda+3} e^{|S|}\,.\]
\end{lemma}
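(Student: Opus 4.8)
The plan is to bound $\E[\min_k (\Xi_k(S))^{-4}]$ by analyzing the random variables $\Xi_k(S)$ one phase at a time, exploiting two independent sources of smallness: the shrinkage of $|\chi_k(S)|=|\sH_S(\tau_k)|$ as $k$ decreases (the support is subcritical, so it contracts across each regular phase $\cI_k$), and the fact that each factor $\tfrac14\theta T_{v,k}$ is bounded below with good probability since $T_{v,k}$ is the (truncated) time since the last update within the length-$1$ deferred phase $\cI_k'$, whose law is explicit. First I would record that by the remark following the definition of $\sH_v(t)$ for $t>\tcut$, the sets $\chi_{i,k}$ depend only on the undeferred randomness $\cU$, whereas the $T_{v,k}$ are functions of the update times in the deferred phases; in particular, conditionally on the sizes $m_k:=|\chi_k(S)|$ and on which vertices lie in $\chi_k(S)$, the variables $\{T_{v,k}\}_{v\in\chi_k(S)}$ are i.i.d.\ copies of $T:=(1-\xi)\wedge 1$ where $\xi$ is the time back to the last Poisson point in a unit-rate process on $[0,1]$ (so $\P(T> x) = e^{-(1-x)}$ for $0<x<1$, with an atom at $1$ of mass $e^{-1}$). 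One computes that $\E[T^{-4}]$ is a finite absolute constant $c_0$ — here the truncation $T_{v,k}=(\tau_k-t)\wedge 1$ is essential to prevent a heavy tail at $0$ — and hence, conditionally, $\E[(\Xi_k(S))^{-4} \mid m_k] = \big(\tfrac{16}{\theta^4}\,\E[T^{-4}]\big)^{m_k} =: C_1^{m_k}$ for an absolute constant $C_1=C_1(\beta,d)>1$.

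Next I would control the sizes $m_k = |\sH_S(\tau_k)|$. Across a regular phase $\cI_k = (\tau_{k-1},\tau_k-1]$ of length $\scut/\lambda - 1$, each branch of the support is killed at rate at least $\theta$ by oblivious updates, while the branching (replacement of a vertex by a subset of its $2d$ neighbors under a non-oblivious update) is controlled by the subcriticality built into Lemma~\ref{l:BasicEstimates}: the expected support size contracts by a factor $e^{-c_{\beta,d}(\scut/\lambda - 1)}$ per phase, using that $\E|\sH_v(t-h,t)| = \sm_h \le 2e^{-c_{\beta,d}h}$ together with subadditivity of $\fsup$ over the starting set. Choosing $\lambda$ large in terms of $\beta,d$ and then $\scut$ large in terms of $\lambda$ makes $\scut/\lambda$ large, so $\E[m_{k-1} \mid m_k] \le \tfrac12 m_k$, say; actually I want a multiplicative-along-the-chain statement, so I would prove the stronger $\E[C_1^{m_{k-1}} \mid \chi_k(S)] \le C_1^{m_k/2}$ or similar for $\scut/\lambda$ large, by a branching-process generating-function estimate (each of the $m_k$ branches independently evolves and, with overwhelming probability over the long regular phase, dies; the exponential moment of what survives is controlled because a single branch contributes a bounded exponential moment that is $<1$ on average once the phase is long enough). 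Iterating from $k=\lambda$ down to $k=1$ with $m_\lambda = |\sH_S(\tpluss)| \le |S|\scut^{2d}$ (the number of sites in the block set $S$), and using $\min_k (\Xi_k)^{-4} \le (\Xi_1)^{-4}$ as the crudest bound, would already give a bound of the form $C_1^{(\text{something})\cdot |S|}$ — but with the wrong constant in the exponent, not $e^{|S|}$.

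To get the clean bound $2^{\lambda+3}e^{|S|}$ I would not use $\min_k \le (\Xi_1)^{-4}$ but rather the true minimum. The key point is that the $\lambda$ phases give $\lambda$ independent chances for some $\Xi_k$ to be large (equivalently $\Xi_k^{-4}$ small): writing $M := \min_k (\Xi_k(S))^{-4}$, for any threshold we have $\P(M > a) \le \prod_{k=1}^\lambda \P((\Xi_k(S))^{-4} > a \mid \text{earlier phases})$ only if the phases were independent, which they are not directly — so instead I would peel one phase at a time, $\E[M] \le \E[\,(\Xi_\lambda)^{-4}\wedge \E[M' \mid \chi_\lambda]\,]$ where $M'$ is the min over $k<\lambda$, and use that conditionally on $\chi_\lambda(S)$ being small the remaining minimum is controlled, while conditionally on $\chi_\lambda(S)$ being large the single factor $(\Xi_\lambda)^{-4}$ is already small. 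Concretely: either $|\sH_S(\tcut)| = |\chi_0|$ — wait, the relevant dichotomy is whether the support has already contracted to size $O(1)$ by some phase. If at phase $k$ we have $m_k \le 1$, then $(\Xi_k)^{-4} \le C_1$ deterministically after taking expectation over the single $T_{v,k}$, contributing $O(1)$; and since $m_k$ is non-increasing in expectation and starts at $\le|S|\scut^{2d}$, after $O(\log(|S|\scut^{2d})/\log 2)$ phases it is $\le 1$ with probability $\ge 1/2$, each failed phase costing a factor bounded by $2^{?}$ — this is where the $2^{\lambda+3}$ comes from, as there are $\lambda$ phases and each contributes at most a factor $2$ to the union-type bound after the exponential-moment shrinkage has been accounted for, with the residual $e^{|S|}$ absorbing the initial exponential moment $C_1^{|S|\scut^{2d}}$ once we note $\scut^{2d}\log C_1$ worth of contraction happens in a single long phase (so one long regular phase already beats the initial size). \textbf{The main obstacle} I anticipate is precisely this last bookkeeping: arranging the phase-by-phase peeling so that (a) the subcritical contraction over one regular phase of length $\approx \scut/\lambda$ dominates the exponential-moment blow-up $C_1^{m}$ from the deferred phase (which forces $\scut/\lambda$ large in a way that depends on $C_1$, hence on $\beta,d$ — consistent with "$\lambda$ large in terms of $\beta,d$, $\scut$ large in terms of $\lambda$"), and (b) the accumulated constants across the $\le\lambda$ phases telescope to exactly $2^{\lambda+3}e^{|S|}$ rather than something like $C_1^{\lambda|S|}$. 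Handling the dependence — that $\chi_k(S)$ and $T_{v,k}$ for the *same* $k$ share no randomness (deferred vs.\ undeferred), but $\chi_k$ for different $k$ are nested and dependent — via conditioning on $\cU$ and the tower property is the technical heart.
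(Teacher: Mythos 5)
Your high-level plan has the right elements — the truncated-exponential law of $T_{v,k}$, the subcritical contraction across regular phases, and the idea that the minimum over $\lambda$ phases provides "multiple chances" — but the very first concrete computation is wrong, and the paper's proof is designed around precisely the fact that makes it wrong. You claim $\E[T^{-4}]$ is a finite constant, crediting the truncation $T_{v,k}=(\tau_k-t)\wedge 1$ with "preventing a heavy tail at $0$." But truncation at $1$ caps $T$ from \emph{above}; it does nothing near $T=0$, where the density of ${\rm Exp}(1)\wedge 1$ is bounded below by $e^{-1}$. Hence $\E[T^{-4}]\geq e^{-1}\int_0^1 x^{-4}\,dx=\infty$, and therefore $\E[(\Xi_k)^{-4}\mid\chi_k(S)]=\infty$ for every $k$. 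The whole "peel one phase at a time, bound $\E[(\Xi_k)^{-4}\mid m_k]=C_1^{m_k}$, iterate" scheme collapses at step one. This is exactly why the paper never takes a fourth negative moment of a single $\Xi_k$: it takes the $-\tfrac12$ power, for which $\E[T^{-1/2}]<\infty$, applies Markov to get $\P\big((\Xi^+_k)^{-1/2}\geq t^{1/8}\big)\leq C^{b}t^{-1/8}$, and then — and only then — multiplies across \emph{ten conditionally independent phases} (which exist because at least $(\lambda-1)/2$ phases have $M_k$ at most the median $b$) to obtain the tail $C^{10b}t^{-5/4}$, which is integrable. The finiteness of $\E[\min_k(\Xi_k)^{-4}]$ is an emergent property of the minimum over several phases; it is not available for any fixed $k$.

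There is a second, subtler gap that you partially sense ("Handling the dependence\ldots is the technical heart") but do not resolve, and which the paper resolves with a device you did not anticipate. You correctly note that $\{T_{v,k}\}_{v\in\chi_k(S)}$ is independent of $\chi_k(S)$ (disjoint update windows), but the same update times in $\cI_k'$ that determine the $T_{v,k}$ also determine $\sH_S(\tau_k-1)$ and hence feed forward into $\chi_{k-1},\chi_{k-2},\ldots$. Thus the $\Xi_k$ for different $k$ are \emph{not} conditionally independent given the sizes $\{M_k\}$, and the multiplicative tail bound across phases (which is what you need to beat the divergent fourth moment) cannot be obtained by naive peeling. The paper's remedy is the stochastic domination by a modified process in which every vertex in the history receives an artificial extra update at time $\tau_k$: this both makes the $M_k$'s larger (so $\Xi_k\succcurlyeq\Xi_k^+$) and, crucially, severs the influence of the $\cI_k'$-update times on the later history, rendering the $\Xi_k^+$'s conditionally independent given $\{M_k\}$. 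Without this decoupling, plus the median-over-phases step and the fractional-moment/Markov argument, the target bound $2^{\lambda+3}e^{|S|}$ is out of reach by your route, and the claimed identity $\E[(\Xi_k)^{-4}\mid m_k]=C_1^{m_k}$ is not merely imprecise but false.
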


\subsection{Pre mixing analysis: percolation clusters}\label{sec:below-ground}

Going backwards from time $\tcut$ to time $0$, the history is defined in the same way as it was in the regular phases (see \S\ref{sec:above-ground}); that is,
for any $0 < t \leq \tcut$,
\[ \sH_v(t) = \fsup(\sH_v(\tcut),t,\tcut)\,.\]
Further recall that the set of sites $\Upsilon=\{v : \sH_v(\tcut)\neq\emptyset\}$ were partitioned into components $\Upsilon_i$ (see~\S\ref{sec:above-ground}), and for each $i$ we let $A_i$ be the minimal block-set covering $\sH_{\Upsilon_i}(\tcut)$.

\begin{definition*}
  [Information percolation clusters]
  We write $A_i \sim A_j$ if the supports of these components satisfy either one of the following conditions (recall that $B(V,r)=\{x\in\Z^d : \min_{v\in V}\|x-v\|_\infty<r\}$).
\begin{enumerate}
  \item Intersection:  $\fsup(A_i,t,\tcut)\cap \fsup(A_j,t,\tcut)\neq\emptyset$
  for some $0 \leq t<\tcut$.

  \item Early proximity:
       $\fsup(A_i,t,\tcut) \cap B(A_j,\scut^2/3) \neq\emptyset$  for some $\tcut-\scut \leq t \leq \tcut$, or the analogous statement when the roles of $A_i,A_j$ are reversed.
\end{enumerate}
We partition $\cA=\{A_i\}$ into clusters $\cC^{(1)},\cC^{(2)},\ldots$ according to the transitive closure of the above relation, and then classify these clusters into three color groups:
\begin{itemize}[\indent$\bullet$]
\item \blue: a cluster $\cC^{(k)}$ consisting of a single $A_i$ (for some $i=i(k)$) which dies out within the interval $(\tcut-\scut,\tcut]$ without exiting the ball of radius $\scut^2/3$ around $A_i$:
\[ \cC^{(k)}=\{A_i\}~,~\bigcup_{v\in A_i} \fsup(v,\tcut-\scut,\tcut) = \emptyset~,~
\bigcup_{t > \tcut-\scut,\; v\in A_i} \fsup(v,t,\tcut) \subset B(A_i,\scut^2/3)\,.\]
\item \red: a cluster $\cC^{(k)}$ containing a vertex whose history reaches time $0$:
\[ \bigcup_{v\in A_i\in\cC^{(k)}} \fsup(v,0,\tcut) \neq \emptyset\,.\]
\item \green: all other clusters (neither red nor blue).
\end{itemize}
\end{definition*}
Let $\cA_\red$ be the set of components whose cluster is red, and let $\sH_\red$ be the collective history of all $v\in\cA_\red$ going backwards from time $\tcut$, i.e.,
\[ \sH_\red = \big\{ \sH_v(t) \;:\; v\in\cA_\red~,~ t \leq \tcut\big\}\,,\]
setting the corresponding notation for blue and green clusters analogously.

For a generic collection of blocks $\cC$, the collective history of all $v\notin \cC$ is defined as
\[ \sH^-_\cC = \big\{ \sH_v(t) : v\notin \cup\{A_i\in\cC\}~,~t\leq \tcut \big\}\,,\]
%
and we say $\sH^-_\cC$ is \emph{$\cC$-compatible} if there is a positive probability that $\cC$ is a cluster conditioned on $\sH^{-}_\cC$.

Central to the proof will be to understand the conditional probability of a set of blocks $\cC$ to be a single red cluster as opposed to a collection of blue ones (having ruled out green clusters by conditioning on $\sH_\green$)
given the undeferred randomness $\cU$ and the history up to time $\tcut$ of all the other vertices:
\begin{equation}\label{eq-def-psi}
\Psi_{\cC} = \sup_{\cX} \P\left(\cC\in \red \mid \sH_\cC^-=\cX\,,\,\{\cC\in\red\} \cup \{\cC \subset \blue\}\,,\,\cU\right)
\end{equation}
The next lemma bounds $\Psi_\cC$ in terms of the lattice animals for $\cC$ and the individual $A_i$'s; note that the dependence of this estimate for $\Psi_\cC$ on $\cU$ is through the geometry of the components $A_i$.
 Here and in what follows we let $|x|^+=x\wedge 0$ denote the positive part of $x$.
\begin{lemma}\label{l:redConnection}
Let $\beta<\beta_c$. There exists $c(\beta,d),s_0(\beta,d)>0$ such that, for any $\scut>s_0$, any large enough $n$ and every $\cC\subset \cA$,  the quantity $\Psi_\cC$ from~\eqref{eq-def-psi} satisfies
\begin{align}\label{e:redConnection}
\Psi_\cC \leq  \frac{\scut^{4d}}{\sqrt{|\Lambda|}} e^{4\sum_i |A_i|-c \scut \left|\anim\left(\cC\right) - \sum_{A_i\in\cC}\anim(A_i)\right|^+ }\,.
\end{align}
\end{lemma}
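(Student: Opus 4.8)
The plan is to bound $\Psi_\cC$ by exhibiting, for each fixing $\sH_\cC^- = \cX$ compatible with the event in question, a ``cost'' paid by the red configuration relative to the all-blue alternative, and then to compare the two probabilities branch by branch. First I would recall that conditionally on $\cU$ and $\sH_\cC^-$, the event $\{\cC\in\red\}\cup\{\cC\subset\blue\}$ fixes the combinatorial shape of the histories $\{\sH_v(t): v\in\cup\{A_i\in\cC\}, t\le\tcut\}$ only up to the still-unexposed unit variables $U_i$ along $(0,\tcut]$; the randomness that remains is exactly the oblivious/non-oblivious decisions (and, for non-oblivious ones, which neighbor is chosen) for updates touching the relevant histories. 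For the cluster to be red, its collective history must survive all the way to time $0$; for it to be all-blue, every $A_i$ must die within $(\tcut-\scut,\tcut]$ inside its $\scut^2/3$-ball. The ratio $\P(\cC\in\red)/\P(\cC\subset\blue)$ is therefore an explicit product over updates of the conditional probabilities of the two branching behaviors, and the key point is that each step of the red history that has to keep going down costs a factor bounded away from $1$ (the oblivious rate $\theta$), while the surviving branch only has $O(1)$-many sites whose histories must coalesce.

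The second step is the geometric accounting. Being red forces the history of $\cC$ to be connected in the space-time slab from near time $\tcut$ down to time $0$; applying a Peierls-type / lattice-animal union bound (in the spirit of Lemma~\ref{l:Ai-to-Bi} and of the $\fupd$ bound~\eqref{e:BasicUpdate} in Lemma~\ref{l:BasicEstimates}) one gets that the extra connectivity needed to glue $\cC$ into one cluster beyond what its pieces $A_i$ already supply is precisely measured by $\anim(\cC)-\sum_{A_i\in\cC}\anim(A_i)$, and each unit of that quantity is traversed at exponential cost $e^{-c\scut}$ (since each block has side $\scut^2$ and a history branch is killed at rate $\theta$, crossing a block-length costs $e^{-\Omega(\scut)}$). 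This yields the $e^{-c\scut|\anim(\cC)-\sum\anim(A_i)|^+}$ factor. The factor $e^{4\sum_i|A_i|}$ absorbs the slack coming from the fact that we bound from above: within each block of $A_i$ we allow the history arbitrary behavior, and there are at most $\scut^{2d}|A_i|$ sites involved, but the relevant loss per block is only a constant, so $e^{4\sum|A_i|}$ is a crude but sufficient bound (one does not try to be sharp in $|A_i|$ here, since those terms get paid for elsewhere).

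The third step produces the leading $\scut^{4d}/\sqrt{|\Lambda|}$. The surviving red history reaching time $0$ is, near its bottom, essentially a single thin strand (a ``cut-set'' in the language of \S\ref{sec:above-ground}), so the probability it reaches $\Lambda\times\{0\}$ at all behaves like the magnetization-type quantity $\sm$ summed over the $O(\scut^{4d})$ possible bottom locations reachable from the component, i.e.\ $\P(\fsup(v,0,\tcut)\neq\emptyset)=\sm_{\tcut}\le 1/\sqrt{|\Lambda|}$ by the very definition of $\tcut$ in~\eqref{eq-t*-def} and the bound~\eqref{e:BasicSupport}; the polynomial prefactor $\scut^{4d}$ bounds the number of such near-bottom candidate sites within the relevant ball (radius $O(\scut^2)$ gives $O(\scut^{2d})$, and a further $\scut^{2d}$ is a safe over-count for the history's spatial spread). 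Combining the three factors gives~\eqref{e:redConnection}.

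The main obstacle will be the first step done correctly: extracting a genuine product lower bound on $\P(\cC\subset\blue\mid\cX,\cU)$ uniformly over all $\cC$-compatible $\cX$, because the conditioning on $\sH_\cC^-$ (the histories of all \emph{other} vertices) is highly non-local --- other histories may force updates to sites that the histories of $\cC$ also want to use, so the ``free'' oblivious-update randomness is not simply a product. The resolution is to note that the event defining $\Psi_\cC$ already isolates the relevant updates: conditioned on $\cU$, the \emph{times and locations} of all updates along $(0,\tcut]$ are fixed, and only the unit variables $U_i$ are random; those are genuinely i.i.d.\ uniform, so the conditional law of the red-vs-blue outcome given $\cX$ is a product over these $U_i$'s, with $\cX$ only restricting \emph{which} update sequences are consistent, not biasing the $U_i$'s themselves. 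Once this is set up, the ``blue is at least as likely per step'' comparison becomes a deterministic inequality between the two branching rules at each update (oblivious kills with probability $\ge\theta$; the red branch must avoid dying at each of the $\Omega(\scut)$-many steps needed to cross each block), and the product telescopes to the claimed bound. I would carry this out by first treating a single component $\cC=\{A_i\}$ (no gluing term), then the general case by induction on the number of components, paying $e^{-c\scut}$ for each ``bridge'' in the minimal animal realizing $\anim(\cC)$ that is not internal to some $A_i$.
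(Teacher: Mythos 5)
Your broad outline is in the right direction: you correctly identify the ratio $\P(\cC\in\red)/\P(\cC\subset\blue)$ as the object to control, the magnetization bound $\sm_{\tcut}=1/\sqrt{|\Lambda|}$ as the source of the leading factor, and a lattice--animal/Peierls cost $e^{-c\scut}$ per bridging block as the source of the $\left|\anim(\cC)-\sum\anim(A_i)\right|^+$ factor. This matches the skeleton of the paper's proof, which writes
\[
\Psi_\cC \le \frac{\P(\cC\in\red\mid\sH^-_\cC,\cU)}{\P(\cap_{A_i\in\cC}\{A_i\in\blue\}\mid\sH^-_\cC,\cU)}
\]
and bounds numerator and denominator separately (Lemmas~\ref{l:redConnectionNoGreen} and~\ref{l:BS}).

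However, there is a genuine gap in the place you yourself flag as the ``main obstacle.'' Your proposed resolution rests on the claim that ``conditioned on $\cU$, the times and locations of all updates along $(0,\tcut]$ are fixed, and only the unit variables $U_i$ are random.'' This is false: $\cU$ is by definition the undeferred randomness along $(\tcut,\tpluss]$ only; every aspect of the updates on $(0,\tcut]$ (Poisson times, sites, and unit variables) remains random. Consequently, conditioning on $\sH^-_\cC$ is a true conditioning on a complicated event in that randomness, not a conditioning that merely restricts a product of i.i.d.\ $U_i$'s. The conditional law of $\sH_\cC$ given a $\cC$-compatible $\sH^-_\cC=\cX$ is \emph{not} a product over updates, and your proposed branch-by-branch telescoping of the red/blue ratio has no starting point.

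Relatedly, the denominator (the blue lower bound) is the hardest ingredient, and your sketch essentially asserts it away with ``blue is at least as likely per step.'' The paper devotes a separate, intricate lemma to it (Lemma~\ref{l:BS}): one tiles $B(A_i,\scut^2/3)$ with tiles of side $\sim\scut^{4d}$, runs an auxiliary dynamics on disjoint isomorphic copies of the tiles, couples boundary updates by forcing them to be oblivious with the matching spin, and uses monotonicity to transfer ``support dies out'' from the auxiliary chain to the real one, at total cost $e^{-|A_i|}$. Nothing in your proposal produces a uniform-in-$\cX$ product lower bound of this kind. The numerator is also handled differently: instead of induction on the number of components, the paper builds a dyadic hierarchy of equivalence classes $V_\ell$ of components, defines per-scale crossing/survival events on disjoint space-time annuli so they are independent, and combines them via Claim~\ref{clm:animalConnection} to realize the $\anim(\cC)-\sum\anim(A_i)$ bookkeeping. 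Your ``pay $e^{-c\scut}$ for each bridge'' intuition is consistent with this, but as stated it neither justifies the independence needed to multiply the costs nor handles the $\sH^-_\cC$-conditioning (which the paper does by noting that $\sH_\cC$ and $\sH^-_\cC$ use disjoint updates until an intersection, so conditioning can only help). In short: the high-level shape is right, but the two central technical mechanisms (the blue lower bound and the decoupling from $\sH^-_\cC$) are missing, and the specific claim about $\cU$ on which you base the resolution is incorrect.
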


\section{Cutoff with a constant window}\label{sec:inf-perc}

\subsection{Upper bound modulo Lemmas~\ref{l:Ai-to-Bi}--\ref{l:redConnection}}
Let $\cU$ be the undeferred randomness along $(\tcut,\tpluss]$ (the update sequence excluding the uniform unit variables of updates in the deferred phases $\cup_{k=1}^\lambda \cI'_k$).
Let $\bar{d}(t,\cU)$ be the coupling time conditioned on this update sequence, that is,
\begin{align*}
  \bar{d}(t,\cU) = \sup_{x_0,y_0} \left\|\P_{x_0}(X_t\in\cdot\mid \cU)-\P_{y_0}(X_t\in \cdot\mid \cU)\right\|_{\tv}\,.
\end{align*}
Towards an upper bound on $\bar{d}(t,\cU)$ (which will involve several additional definitions; see~\eqref{eq-d(t*+s,U)-bound2} below),
our starting point would be to consider the notion of the {\em support of a random map}, first introduced in~\cite{LS1}. Its following formulation in a more general framework appears in~\cite{LS3}.
Let $K$ be a transition kernel of a finite Markov chain. A \emph{random mapping representation} for $K$ is a pair $(g,W)$ where $g$ is a deterministic map and $W$ is a random variable such that $\P(g(x,W)=y)=K(x,y)$ for all $x,y$ in the state space of $K$. It is well-known that such a representation always exists.

\begin{definition}[\emph{Support of a random mapping representation}]\label{def-mc-support} Let $K$ be a Markov chain on a state space $\Sigma^\Lambda$ for some finite sets $\Sigma$ and $\Lambda$. Let $(g,W)$ be a random mapping representation for $K$. The \emph{support} corresponding to $g$ for a given value of $W$ is the minimum subset $\Lambda_{W} \subset \Lambda$ such that $g(\cdot,W)$ is determined by $x(\Lambda_{W})$ for any $x$, i.e.,
\[ g(x,W) = f_{W}(x(\Lambda_{W})) \mbox{ for some $f_{W}:\Sigma^{\Lambda_{W}}\to \Sigma^{\Lambda}$ and all $x$.}\]
That is, $v \in \Lambda_{W}$ if and only if there exist $x,x'\in \Sigma^\Lambda$ differing only at $v$ such that $g(x,W)\neq g(x',W)$.
\end{definition}

\begin{lemma}[\cite{LS3}*{Lemma~3.3}]\label{lem-support} Let $K$ be a finite Markov chain and let $(g,W)$ be a random mapping representation for it. Denote by $\Lambda_{W}$ the support of $W$ w.r.t.\ $g$ as per Definition~\ref{def-mc-support}. Then for any distributions $\phi,\psi$ on the state space of $K$,
\begin{align*}
\left\| \phi K -\psi K\right\|_\tv &\leq \int \left\|\phi|_{\Lambda_{W}}-\psi|_{\Lambda_{W}} \right\|_\tv d \P({W}).
 \end{align*}
\end{lemma}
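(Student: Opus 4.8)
The plan is to condition on the random variable $W$ and combine two elementary facts: total-variation distance is convex under mixtures, and it does not increase under pushing a measure forward by a fixed map (the data-processing inequality). First I would record that the defining property of a random mapping representation, $\P(g(x,W)=y)=K(x,y)$, gives for any distribution $\phi$ on $\Sigma^\Lambda$ the identity $(\phi K)(y)=\sum_x\phi(x)\P(g(x,W)=y)=\int\big(g(\cdot,W)_\ast\phi\big)(y)\,d\P(W)$, where $g(\cdot,W)_\ast\phi$ denotes the law of $g(X,W)$ for $X\sim\phi$ with $W$ held fixed (the interchange of the finite sum over $y$ with the expectation over $W$ is harmless since the state space is finite). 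Hence $\phi K-\psi K=\int\big(g(\cdot,W)_\ast\phi-g(\cdot,W)_\ast\psi\big)\,d\P(W)$, and since $\|\cdot\|_\tv$ is half the $L^1$ norm, the integral triangle inequality yields $\|\phi K-\psi K\|_\tv\le\int\|g(\cdot,W)_\ast\phi-g(\cdot,W)_\ast\psi\|_\tv\,d\P(W)$.

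Next, for each fixed value of $W$, Definition~\ref{def-mc-support} furnishes a map $f_W\colon\Sigma^{\Lambda_W}\to\Sigma^\Lambda$ with $g(x,W)=f_W(x(\Lambda_W))$ for all $x$. Writing $\phi|_{\Lambda_W}$ for the marginal of $\phi$ on the coordinates in $\Lambda_W$, this factorization says precisely that $g(\cdot,W)_\ast\phi=(f_W)_\ast\big(\phi|_{\Lambda_W}\big)$, and likewise for $\psi$. Applying the data-processing inequality for the fixed map $f_W$ (itself a one-line consequence of the triangle inequality for $L^1$) gives $\|g(\cdot,W)_\ast\phi-g(\cdot,W)_\ast\psi\|_\tv=\|(f_W)_\ast(\phi|_{\Lambda_W})-(f_W)_\ast(\psi|_{\Lambda_W})\|_\tv\le\|\phi|_{\Lambda_W}-\psi|_{\Lambda_W}\|_\tv$. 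Substituting this into the displayed bound from the first step completes the proof.

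I do not expect a genuine obstacle here; the statement is a clean abstraction (recorded as \cite{LS3}*{Lemma~3.3}) and its proof is short. The only points needing a word of care are that $\Lambda_W$ is itself random, so $f_W$ and the restriction $\phi|_{\Lambda_W}$ should be understood as defined for each fixed outcome of $W$ (everything is carried out after conditioning on $W$), and that the inequality can be \emph{strict}, precisely because $f_W$ discards all information about the coordinates outside $\Lambda_W$ --- which is exactly what makes isolating the support $\Lambda_W$, and later its space-time structure as an information-percolation cluster, the useful move in the rest of the paper.
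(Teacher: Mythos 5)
Your proof is correct. Note that the paper in question does not actually prove this lemma --- it is imported verbatim as \cite{LS3}*{Lemma~3.3} --- so there is no ``paper's own proof'' here to compare against; you are supplying the (short) argument that the authors chose to delegate to a reference. Your two-step argument is exactly the natural one: (i) write $\phi K = \int g(\cdot,W)_\ast\phi\, d\P(W)$ from the defining identity $\P(g(x,W)=y)=K(x,y)$ and apply joint convexity of total-variation (the integral triangle inequality for $L^1$), and (ii) for fixed $W$ use the factorization $g(x,W)=f_W(x(\Lambda_W))$ from Definition~\ref{def-mc-support} together with the data-processing inequality to drop to $\|\phi|_{\Lambda_W}-\psi|_{\Lambda_W}\|_\tv$. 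Both steps are sound, and the remark that everything is done after conditioning on $W$ (so that $\Lambda_W$, $f_W$, and the marginals are all deterministic objects) is exactly the right caveat. No gaps.
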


To relate this to our context of seeking an upper bound for $\bar{d}(t,\cU)$,
recall that (as remarked following the definition of the regular and deferred phases in~\S\ref{sec:lattice-framework}) $\sH_v(t)$ for $t>\tcut$ is a function of the undeferred randomness $\cU$ alone. Hence, both the $A_i$'s and their corresponding cut-sets $(k_i,\chi_i)$ are completely determined from $\cU$.
Letting $Z_\chi$ denote the joint distribution of $X_{\tau_{k_i}}(\chi_i)$ for all the components $A_i$, we can view $(X_{\tpluss}\in\cdot\mid\cU)$ as a random function of $(\cup_i X_{\tau_{k_i}}(\chi_i)\mid \cU)$ whose randomness arises from the deferred updates $\cU'$ (using which every $X_{\tpluss}(v)$ for $v\in\Upsilon_i$ can be deduced from $X_{\tau_{k_i}}(\chi_i),\cU$, while $X_{\tpluss}(v)$ for $v\notin\Upsilon$ is completely determined by $\cU$). It then follows from Lemma~\ref{lem-support} that
\begin{align}\label{eq-d(t*+s,U)-bound}
  \bar{d}(\tpluss,\cU) &\leq \E\left[\sup_{x_0,y_0}\left\| \P_{x_0}(Z_\chi\in\cdot\mid\cU)-\P_{y_0}(Z_\chi\in\cdot\mid \cU)\right\|_\tv   \;\Big|\;\cU \right] \,.
\end{align}
Conditioning on $\sH_\green$ in the main term in~\eqref{eq-d(t*+s,U)-bound},
then taking expectation,
\begin{align}
\sup_{x_0,y_0}&\left\|\P_{x_0}(Z_\chi\in\cdot\mid\cU)-\P_{y_0}(Z_\chi\in\cdot\mid \cU)\right\|_\tv \nonumber\\
 \leq \sup_{\sH_\green} \sup_{x_0,y_0}&\left\|\P_{x_0}\left(Z'_\chi \in \cdot\mid \sH_\green,\,\cU\right) -\P_{y_0}\left(Z'_\chi \in \cdot\mid \sH_\green,\,\cU\right) \right\|_\tv\label{eq-P(Zchi)-dist}
\end{align}
where $Z'_\chi$ is the joint distribution of $X_{\tau_{k_i}}(\chi_i)$ for $A_i\notin\cA_\green$, i.e., the projection onto cut-sets of blue or red components. (The inequality above replaced the expectation over $\sH_\green$ by a supremum, then used the fact that the values of $\{ X_{\tau_{k_i}}(\chi_i) : A_i\in\cA_\green\}$ are independent of the initial condition, and so taking a projection onto the complement spin-set does not change the total-variation distance.)

Now let $\nu_i$ be the distribution of the spins at the cut-set of $A_i$ when further conditioning that $A_i$ is blue, i.e.,
\[ \nu_i = \left( X_{\tau_{k_i}}(\chi_i)\in\cdot \;\big|\; \sH_\green,\, \cU,\, A_i\in\cA_\blue\right)\,,\]
and further set
\[ \nu^*_i = \min_x \nu_i(x)\,,\quad \nu=\!\!\!\prod_{i:A_i\notin\cA_\green}\!\!\! \nu_i\,.\]
The right-hand side of~\eqref{eq-P(Zchi)-dist} is then clearly at most
\begin{align}
2&\sup_{\sH_\green} \sup_{x_0} \left\|\P_{x_0}\left(Z'_\chi \in \cdot\mid \sH_\green,\,\cU\right) - \nu\right\|_\tv\nonumber \\
\leq 2& \sup_{\sH_\green} \sup_{x_0}\Big[ \left\|\P_{x_0}\left(Z'_\chi \in \cdot\mid \sH_\green,\,\cU\right)  - \nu\right\|_{L^2(\nu)} \;\wedge\;1\Big]\,.\label{eq-P(Zchi)-dist2}
\end{align}

At this point we wish to appeal to the following lemma --- which generalizes~\cite{MP}*{Proposition~3.2}, via the exact same proof, from unbiased coin flips to a general distribution ---
bounding the $L^2$-distance in terms of an exponential moment of the intersection between two i.i.d.\ configurations.
\begin{lemma}\label{lem:MP}
Let $\{\Lambda_i : i\in \cI\}$ be a partition of $\Lambda$, and let $\nu_i$ ($i\in\cI$) be a measure on $\{\pm1\}^{\Lambda_i}$. For each $S\subset \cI$, let $\phi_S$ be a measure on $\{\pm1\}^{\cup_{i\in S}\Lambda_i}$.
Let $\mu$ be a measure on configurations in $\Omega=\{\pm1\}^\Lambda$ obtained by sampling a subset $S\subset \cI$ via some measure $\tilde{\mu}$, then sampling
 $\cup_{i\in S}\Lambda_i$ via $\phi_S$ and setting each $\Lambda_i $ for $i\notin S$ via an independent sample of $\nu_i$. Letting $\nu=\prod_{i\in\cI}\nu_i$,
\[ \|\mu - \nu\|^2_{L^2(\nu)}   \leq
\bigg[\sum _{S,S'} \bigg(\prod_{i\in S\cap S'} \min_{x_i\in \{\pm1\}^{\Lambda_i}}  \nu_i(x_i)\bigg)^{-1} \tilde{\mu}(S)\tilde{\mu}(S')\bigg]-1\,.\]
\end{lemma}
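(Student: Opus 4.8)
The plan is to expand the $\chi^2$-type quantity $\|\mu-\nu\|_{L^2(\nu)}^2$ directly, mimicking verbatim the computation of Miller and Peres~\cite{MP} but tracking the non-uniform factors $\nu_i$. First I would record the standard identity
\[ \|\mu-\nu\|_{L^2(\nu)}^2 \;=\; \sum_{\sigma\in\Omega}\frac{\mu(\sigma)^2}{\nu(\sigma)}-1\,, \]
valid since $\mu,\nu$ are probability measures. Writing $\sigma_i$ for the restriction of $\sigma$ to $\Lambda_i$ and $\sigma_S$ for its restriction to $\cup_{i\in S}\Lambda_i$, the construction of $\mu$ reads $\mu(\sigma)=\sum_S\tilde\mu(S)\,\phi_S(\sigma_S)\prod_{i\notin S}\nu_i(\sigma_i)$, while $\nu(\sigma)=\prod_{i\in\cI}\nu_i(\sigma_i)$, so that
\[ \frac{\mu(\sigma)^2}{\nu(\sigma)}\;=\;\sum_{S,S'}\tilde\mu(S)\tilde\mu(S')\,\phi_S(\sigma_S)\,\phi_{S'}(\sigma_{S'})\,\frac{\prod_{i\in\cI}\nu_i(\sigma_i)}{\prod_{i\in S}\nu_i(\sigma_i)\prod_{i\in S'}\nu_i(\sigma_i)}\,. \]

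The core of the argument is to simplify the ratio of products by splitting $\cI$ into the four classes $S\cap S'$, $S\setminus S'$, $S'\setminus S$, and $\cI\setminus(S\cup S')$: the factor attached to index $i$ is $\nu_i(\sigma_i)^{-1}$ if $i\in S\cap S'$, it is $1$ if $i$ lies in exactly one of $S,S'$, and it is $\nu_i(\sigma_i)$ if $i\notin S\cup S'$. Bounding $\nu_i(\sigma_i)^{-1}\le\big(\min_{x_i}\nu_i(x_i)\big)^{-1}$ for $i\in S\cap S'$ and pulling this constant out of the nonnegative sum over $\sigma$, it remains to show, for each fixed pair $(S,S')$, that
\[ \sum_{\sigma\in\Omega}\phi_S(\sigma_S)\,\phi_{S'}(\sigma_{S'})\!\!\prod_{i\notin S\cup S'}\!\!\nu_i(\sigma_i)\;\le\;1\,. \]
Summing out the coordinates indexed by $\cI\setminus(S\cup S')$ contributes a factor $1$ since each $\nu_i$ is a probability measure; summing out those indexed by $S\setminus S'$ (resp.\ $S'\setminus S$) replaces $\phi_S$ (resp.\ $\phi_{S'}$) by its marginal on the coordinates in $S\cap S'$ — here one uses that $\sigma|_{\Lambda_i}$ for $i\in S\setminus S'$ enters only $\phi_S$; and the leftover sum $\sum_{\sigma_{S\cap S'}}(\text{marg }\phi_S)(\sigma_{S\cap S'})(\text{marg }\phi_{S'})(\sigma_{S\cap S'})$ is at most $1$, being the inner product of two probability vectors and hence bounded by the maximum of one of them. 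Assembling the pieces gives $\sum_\sigma\mu(\sigma)^2/\nu(\sigma)\le\sum_{S,S'}\big(\prod_{i\in S\cap S'}\min_{x_i}\nu_i(x_i)\big)^{-1}\tilde\mu(S)\tilde\mu(S')$, and subtracting $1$ is the claim.

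I do not expect a genuine obstacle: the proof is a one-line $L^2$ expansion followed by bookkeeping. The only point requiring care is the case analysis on membership of $i$ in $S$ and $S'$ in the ratio of products above, together with the observation that replacing $\nu_i(\sigma_i)^{-1}$ by the \emph{uniform} lower bound $\big(\min_{x_i}\nu_i(x_i)\big)^{-1}$ on the coordinates $i\in S\cap S'$ is precisely what decouples the remaining $\sigma$-sum into product-measure pieces. As a sanity check, the special case $\Lambda_i=\{i\}$ and $\nu_i=\mathrm{Bernoulli}(\tfrac12)$, where $\min_{x_i}\nu_i(x_i)=\tfrac12$, recovers $\sum_{S,S'}2^{|S\cap S'|}\tilde\mu(S)\tilde\mu(S')-1=\E\,2^{|R\cap R'|}-1$, i.e.\ the original bound of~\cite{MP} quoted in the text.
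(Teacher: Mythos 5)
Your proof is correct and takes essentially the same route as the paper: expand the $\chi^2$-divergence, substitute the mixture form of $\mu$, split the product of $\nu_i$'s according to membership of $i$ in $S\cap S'$, $S\triangle S'$, or $(S\cup S')^c$, replace $\nu_i(\sigma_i)^{-1}$ on $S\cap S'$ by the uniform lower bound, and marginalize the remaining sum to an inner product of two probability vectors, which is at most $1$. The paper performs the marginalization via conditional factorizations of $\phi_S,\phi_{S'}$ before bounding, but this is the same algebra in a marginally different order.
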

\begin{proof}
For any $S\subset\cI$, let $x_S$ denote the projection of $x$ onto $\cup_{i\in S}\Lambda_i$.  With this notation, by definition of the $L^2(\nu)$ metric (see, e.g.,~\cite{SaloffCoste2}) one has that $\|\mu-\nu\|^2_{L^2(\nu)} + 1 = \int |\mu/\nu - 1|^2 d\nu + 1 $ is equal to
  \begin{align*}
  \sum_{x\in\Omega} \frac{\mu^2(x)}{\nu(x)} 
  &=\sum_{x\in\Omega} \frac{1}{\prod\nu_i(x_i)} \sum_{S,S'}
  \tilde{\mu}(S)\tilde{\mu}(S')
  \phi_S(x_S) \phi_{S'}(x_{S'}) \prod_{i\notin S} \nu_i(x_i)\prod_{i\notin S'}\nu_i(x_i)
  \end{align*}
by the definition of $\mu$. This can in turn be rewritten as
\begin{align*}
  \sum_{S,S'} & \bigg(\sum_{x_{S\cap S'}} \frac{\phi_S(x_{S\cap S'})\phi_{S'}(x_{S\cap S'})}{\prod_{i\in S\cap S'}\nu_i(x_{i})} \bigg)
  \bigg(\sum_{x_{(S \cup S')^c}} \prod_{i\in (S \cup S')^c}\nu_i(x_{i})  \bigg)\\
  \cdot &\bigg(\sum_{x_{S\setminus S'}} \phi_S(x_{S\setminus S'}\mid x_{S\cap S'} ) \bigg)
  \bigg(\sum_{x_{S'\setminus S}} \phi_{S'}(x_{S'\setminus S}\mid x_{S'\cap S} ) \bigg)\tilde{\mu}(S)\tilde{\mu}(S')\\
  =\sum_{S,S'}& \bigg(\sum_{x_{S\cap S'}} \frac{\phi_S(x_{S\cap S'})\phi_{S'}(x_{S\cap S'})}{\prod_{i\in S\cap S'}\nu_i(x_{i})} \bigg)\tilde{\mu}(S)\tilde{\mu}(S')\,,
\end{align*}
which is at most
\begin{align*}
  &\sum_{S,S'} \bigg(\prod_{i\in S\cap S'} \min_{x_i\in \{\pm1\}^{\Lambda_i}}  \nu_i(x_i)\bigg)^{-1}  \bigg(\sum_{x_{S\cap S'}} \phi_S(x_{S\cap S'}) \phi_{S'}(x_{S\cap S'}) \bigg)\tilde{\mu}(S)\tilde{\mu}(S')\\
  \leq&\sum_{S,S'} \bigg(\prod_{i\in S\cap S'} \min_{x_i\in \{\pm1\}^{\Lambda_i}}  \nu_i(x_i)\bigg)^{-1}  \tilde{\mu}(S)\tilde{\mu}(S')\,.\qedhere
  \end{align*}
\end{proof}
Applying the above lemma to the quantity featured in~\eqref{eq-P(Zchi)-dist2} yields
\[ \left\|\P_{x_0}\left(Z'_\chi \in \cdot\mid \sH_\green,\,\cU\right)  - \nu\right\|_{L^2(\nu)}^2 \leq \E\bigg[\prod_{A_i \in \cA_\red\cap \cA_{\red'}}\!\! \frac1{\nu_i^*} \,\Big|\, \sH_\green,\, \cU\bigg]-1\,,\]
where $\cA_\red$ and $\cA_{\red'}$ are two i.i.d.\ samples conditioned on $\sH_\green$ and $\cU$.
Combining the last inequality with~\eqref{eq-d(t*+s,U)-bound},\eqref{eq-P(Zchi)-dist} and~\eqref{eq-P(Zchi)-dist2}, we conclude the following.
\begin{align}\label{eq-d(t*+s,U)-bound2}
  \bar{d}(\tpluss,\cU) &\leq 2\sup_{\sH_\green}\sup_{x_0}
  \Bigg[ \Bigg( \E\bigg[\prod_{A_i \in \cA_\red\cap \cA_{\red'}} \!\frac1{\nu_i^*} \;\Big|\; \sH_\green,\, \cU\bigg]-1\Bigg)^{\frac12}\wedge 1\Bigg]\,.
  \end{align}
Note that the expectation above is only w.r.t.\ the update sequence along the interval $(0,\tcut]$. Indeed, the variables $\cA_\red$ and $\cA_{\red'}$ do not depend on the deferred randomness $\cU'$, which in turn is embodied in the measures $\nu_i$ (and consequently, the values $\nu^*_i$).

The expectation in the right-hand side of~\eqref{eq-d(t*+s,U)-bound2} is treated by the following lemma.
\begin{lemma} \label{lem-nu-bound}
Let $\beta>\beta_c$, let $\cA_{\red}$ and $\cA_{\red'}$ denote the collection of components within red clusters in two independent instances of the dynamics, and define $\Psi_{\cC}$ as in~\eqref{eq-def-psi}. Then
\begin{align*} \E\bigg[\prod_{A_i \in \cA_\red\cap \cA_{\red'}} \!\frac1{\nu_i^*} \;\Big|\; \sH_\green,\, \cU\bigg]
\leq \exp\bigg[\sum_{\cC\cap\cC'\neq\emptyset} \Psi_{\cC}\Psi_{\cC'} \prod_{A_j\in\cC}\frac{1}{\nu_j^*}\prod_{A_{j}\in\cC'}\frac{1}{\nu_{j}^*}\bigg]\,.
\end{align*}
\end{lemma}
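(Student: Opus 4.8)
\emph{The plan is} to reduce the product over the components in $\cA_\red\cap\cA_{\red'}$ to a product over \emph{red clusters}, and then to process the two independent copies in turn by a ``reveal one cluster at a time'' argument that converts each red cluster $\cC$ into a factor $\Psi_\cC$ through its definition~\eqref{eq-def-psi}. Write $W_\cC=\prod_{A_j\in\cC}\frac{1}{\nu_j^*}$ (so $W_\cC\ge 2$, since every cut-set is nonempty, hence $\nu_j^*\le\tfrac12$), and let $\cR_1,\cR_2$ denote the collections of red clusters in the two copies. The component family $\cA=\cA(\cU)$ and the family of candidate clusters (the $\sim$-connected sub-collections of $\cA$, with $\sim$ as in~\S\ref{sec:below-ground}) depend only on $\cU$, so they are common to both copies, which are conditionally independent given $(\sH_\green,\cU)$. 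Since the red clusters of the second copy partition $\cA_{\red'}$, we have $\cA_\red\cap\cA_{\red'}=\bigsqcup_{\cC'\in\cR_2}(\cC'\cap\cA_\red)$ with $\cC'\cap\cA_\red\subseteq\cC'$, and as $\tfrac{1}{\nu_i^*}\ge1$ this gives the first reduction
\[
\prod_{A_i\in\cA_\red\cap\cA_{\red'}}\!\frac{1}{\nu_i^*}\ \le\ \prod_{\cC'\in\cR_2:\ \cC'\cap\cA_\red\ne\emptyset}\!\!W_{\cC'}\,.
\]
Keeping ``one factor $W_{\cC'}$ per realized red cluster'' here, rather than immediately expanding over candidate pairs $(\cC,\cC')$, is essential --- the latter over-charges the $W$'s and the bound fails.

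The core input is an iterated application of~\eqref{eq-def-psi}: for pairwise-disjoint candidate clusters $\cC_1,\dots,\cC_m$ one has $\P(\text{all }\cC_j\text{ red}\mid\sH_\green,\cU)\le\prod_j\Psi_{\cC_j}$, and hence, conditionally on $(\sH_\green,\cU)$ and on the below-$\tcut$ randomness of the first copy, for any non-negative weights $a_{\cC'}$ measurable w.r.t.\ that conditioning,
\[
\E\Big[\prod_{\cC'\in\cR_2}(1+a_{\cC'})\Big]\ \le\ \prod_{\cC'}\big(1+\Psi_{\cC'}a_{\cC'}\big)\,.
\]
One proves this by revealing, in an appropriate order, the outside-histories $\sH^-_{\cC'}$ of the candidate clusters of the second copy: at each step either the candidate overlaps a cluster already found to be red, whence it cannot itself be a red cluster (clusters partition $\cA$) and the conditional probability is $0$; or the data revealed so far together with $\sH_\green$ is an admissible value $\cX$ in the supremum defining $\Psi_{\cC'}$, while the conditioning structure of the Miller--Peres reduction (Lemma~\ref{lem:MP}) supplies the dichotomy $\{\cC'\in\red\}\cup\{\cC'\subset\blue\}$ up to $\cC'$-compatibility, so the increment is at most $\Psi_{\cC'}$. \emph{Verifying that this sequential conditioning stays within the scope of}~\eqref{eq-def-psi} --- i.e.\ that what is revealed at each step really amounts only to an outside-history plus the red/all-blue event plus $\cU$, never something finer that would spoil the $\Psi_{\cC'}$ bound --- \emph{is the main obstacle of the proof}.

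To conclude, condition on the first copy and apply the core estimate to the second with $a_{\cC'}=(W_{\cC'}-1)\one\{\cC'\cap\cA_\red\ne\emptyset\}$, which gives $\E[\,\cdot\mid\text{first copy}]\le\prod_{\cC':\,\cC'\cap\cA_\red\ne\emptyset}(1+\Psi_{\cC'}W_{\cC'})$. Now take the expectation over the first copy: using $\{\cC'\cap\cA_\red\ne\emptyset\}\subseteq\bigcup_{\cC:\,\cC\cap\cC'\ne\emptyset}\{\cC\in\cR_1\}$, the identity $(1+y)^{\one}=1+\one y$, Bernoulli's inequality $\prod(1+c_i)\ge1+\sum c_i$, and the core estimate once more for the first copy, one arrives at
\[
\E\bigg[\prod_{A_i\in\cA_\red\cap\cA_{\red'}}\!\frac{1}{\nu_i^*}\ \Big|\ \sH_\green,\cU\bigg]\ \le\ \exp\bigg[\sum_{\cC}\Psi_\cC\Big(\prod_{\cC':\,\cC\cap\cC'\ne\emptyset}\big(1+\Psi_{\cC'}W_{\cC'}\big)-1\Big)\bigg]\,.
\]
Finally, since $W_\cC\ge2$ and $\sum_{\cC':\,\cC\cap\cC'\ne\emptyset}\Psi_{\cC'}W_{\cC'}$ is uniformly small --- this is where Lemmas~\ref{l:Ai-to-Bi},~\ref{l:exp-cut-set} and~\ref{l:redConnection} enter, to control $\Psi_{\cC'}W_{\cC'}$ and the lattice-animal sum over the candidates $\cC'$ meeting a fixed $\cC$ --- one checks $\prod_{\cC':\,\cC\cap\cC'\ne\emptyset}(1+\Psi_{\cC'}W_{\cC'})-1\le W_\cC\sum_{\cC':\,\cC\cap\cC'\ne\emptyset}\Psi_{\cC'}W_{\cC'}$, which turns the exponent into $\sum_{\cC\cap\cC'\ne\emptyset}\Psi_\cC\Psi_{\cC'}W_\cC W_{\cC'}$, as claimed.
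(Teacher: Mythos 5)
Your argument agrees with the paper's in its first ingredient: the ``reveal one cluster at a time'' filtration argument that couples the realized red clusters to a family of independent indicators $\{Y_\cC\}$ with $\P(Y_\cC=1)=\Psi_\cC$ — this is exactly the paper's Eq.~\eqref{eq-couple-Y-C} and it does work. But from there the two proofs diverge, and yours has a genuine gap in the last step.

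The paper treats the two copies \emph{symmetrically}: it constructs a second family of independent indicators $\{Y_{\cC,\cC'}\}$ indexed by \emph{pairs} of candidate clusters, with $\P(Y_{\cC,\cC'}=1)=\Psi_\cC\Psi_{\cC'}$, via a careful sequential pass over ordered pairs $(\cC_l,\cC'_l)$ that skips any pair overlapping an already-accepted one. This yields the pointwise bound~\eqref{eq-Ai-C-C'-dom}
\[
\prod_{A_i\in\cA_\red\cap\cA_{\red'}}\!\frac1{\nu_i^*}\ \le\ \prod_{\cC\cap\cC'\neq\emptyset}\Big(1+Y_{\cC,\cC'}\big(W_\cC W_{\cC'}-1\big)\Big)\,,
\]
and $\prod(1+x)\le\exp(\sum x)$ then gives the lemma as a \emph{clean algebraic inequality with no side condition}. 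Your asymmetric route — condition on copy~1, bound copy~2 by $\prod_{\cC':\cC'\cap\cA_\red\neq\emptyset}(1+\Psi_{\cC'}W_{\cC'})$, then bound copy~1 — lands you on the weaker intermediate
\[
\exp\Big[\sum_\cC\Psi_\cC\Big(\prod_{\cC':\cC\cap\cC'\neq\emptyset}(1+\Psi_{\cC'}W_{\cC'})-1\Big)\Big]\,,
\]
and to finish you must convert the inner bracket into $W_\cC\sum_{\cC'}\Psi_{\cC'}W_{\cC'}$. That step is where the argument breaks: the inequality $\prod_i(1+c_i)-1\le W_\cC\sum_i c_i$ only holds when $\sum_i c_i$ is small (e.g.\ for $c_1=c_2=c_3=1$ and $W_\cC=2$ it already fails), and you cannot assert that $\sum_{\cC':\cC\cap\cC'\neq\emptyset}\Psi_{\cC'}W_{\cC'}$ is \emph{uniformly} small given $(\sH_\green,\cU)$. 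Lemmas~\ref{l:Ai-to-Bi},~\ref{l:exp-cut-set},~\ref{l:redConnection}, which you invoke for this, only control \emph{expectations over $\cU$} (e.g.\ Lemma~\ref{l:exp-cut-set} bounds $\E[\min_k(\Xi_k)^{-4}]$, not $(\Xi_k)^{-1}$ pathwise); on bad realizations of $\cU$ the factors $W_{\cC'}=\prod_j 1/\nu_j^*\ge\prod_j 1/\Xi_j$ can be arbitrarily large. Indeed, in the paper those lemmas enter only \emph{after} Lemma~\ref{lem-nu-bound}, in the proof of Lemma~\ref{lem-Psi-Xi-bound}, precisely because Lemma~\ref{lem-nu-bound} must hold unconditionally. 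Your proof as written would only give a bound on a good event in $\cU$, which is a strictly weaker — and differently-shaped — statement than the lemma. The fix is exactly the paper's device: work with pairs from the start, constructing the independent $Y_{\cC,\cC'}$'s by the skip-overlapping-pairs scan, so that no smallness hypothesis is ever needed.
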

One should emphasize the dependence of the bound given by this lemma on $\sH_\green$ and
$\cU$: the dependence on $\sH_\green$ was eliminated thanks to the supremum in the definition of $\Psi_\cC$. On the other hand, both $\Psi_{\cC}$ and $\nu_j^*$
still depend on $\cU$.
\begin{proof}[\textbf{\emph{Proof of Lemma~\ref{lem-nu-bound}}}]
We first claim that, if $\{Y_{\cC}\}$ is a family of independent indicators given by
\[ \P(Y_{\cC}=1) = \Psi_{\cC}\,,\]
then, conditioned on $\sH_\green$, one can couple the distribution of $\cA_\red$ to $\{Y_{\cC}:\cC\subset\cA\setminus\cA_\green\}$ in such a way that
\begin{equation}
  \label{eq-couple-Y-C}
  \{\cC : \cC\in\red\}\subset \{ \cC: Y_{\cC}=1\}\,.
\end{equation}
To see this, order all $\cC\subset \cA\setminus \cA_\green$ arbitrarily as $\{\cC_l\}_{l\geq 1}$ and let $\cF_l$ correspond to the filtration that successively reveals  $\one_{\{\cC_l\in\red\}}$. Then $\P(\cC_l\in\red\mid\cF_{l-1})\leq \Psi_{\cC_l}$ since
\[ \P(\cC_l\in\red\mid\cF_{l-1})\leq \sup_{\sX} \P\left(\cC_l\in\red \mid \cF_{l-1},\,\{\cC_l\in\red\}\cup\{\cC_l\subset\blue\}\,,\sH_{\cC_l}^-=\sX\right)\,,\]
and in the new conditional space the variables $\{\cC_j\in\red\}_{j<l}$ are measurable since
 \begin{compactenum}[(i)]
 \item the event $\{\cC_j\in\red\}$ for any $\cC_j$ disjoint from $\cC_l$ is determined by the conditioning on $\sH_{\cC_l}^-$;
 \item any $\cC_j$ nontrivially intersecting $\cC_l$ is not red under the conditioning on $\{\cC_l\in\red\}\cup\{\cC_l\subset \blue\}$.
 \end{compactenum}
This establishes~\eqref{eq-couple-Y-C}.

Consequently, our next claim is that for a family $\{Y_{\cC,\cC'}\}$ of independent indicators given by
\[ \P(Y_{\cC,\cC'}=1) = \Psi_{\cC}\Psi_{\cC'}\quad\mbox{for any $\cC,\cC'\subset \cA\setminus\cA_\green$}\,,\]
one can couple the conditional distributions of $\cA_{\red}$ and $\cA_{\red'}$ given $\sH_\green$ in such a way that
\begin{equation}
  \label{eq-Ai-C-C'-dom}
 \prod_{A_i \in \cA_\red\cap \cA_{\red'}} \!\frac1{\nu_i^*} \leq
 \prod_{\substack{\cC,\cC'\subset \cA\setminus \cA_\green \\ \cC\cap \cC'\neq \emptyset}} \Big(1 + Y_{\cC,\cC'} \Big(\prod_{A_j\in\cC}\frac{1}{\nu_j^*}\prod_{A_{j}\in\cC'}\frac{1}{\nu_{j}^*}-1\Big)\Big)  \,.
\end{equation}
To see this, take $\{Y_\cC\}$ achieving~\eqref{eq-couple-Y-C} and $\{Y'_\cC\}$ achieving its analog for $\red'$. Letting $\{(\cC_l,\cC'_l)\}_{l\geq 1}$ be an arbitrary ordering of all pairs of potential clusters that intersect ($\cC,\cC'\subset \cA \setminus \cA_\green$ with $\cC\cap \cC'\neq\emptyset $), associate each pair with a variable $R_l$ initially set to 0, then process them sequentially:
 \begin{itemize}
   \item If $(\cC_l,\cC'_l)$ is such that for some $j<l$ we have $R_j=1$ and either $\cC_j\cap \cC_l\neq\emptyset $ or $\cC'_j\cap \cC'_l\neq\emptyset$, then we skip this pair (keeping $R_l=0$).
   \item Otherwise, we set $R_l$ to be the indicator of $\{\cC_l\in\red,\, \cC'_l\in\red'\}$.
 \end{itemize}
 Observe that, if $\cF_l$ denote the natural filtration corresponding to this process, then for all $l$ we have
 \[\P(R_l=1 \mid \cF_{l-1}) \leq \P(Y_{\cC_l}=1,Y_{\cC'_l}=1)= \Psi_{\cC}\Psi_{\cC'}\,,\]
 since testing if $R_l=1$ means that we received only negative information on $\{Y_{\cC_l}=1\}$ and $\{Y_{\cC'_l}=1\}$; this implies the existence of a coupling in which $\{ l : R_l = 1\}\subset \{ l : Y_{\cC_l,\cC'_l}=1\}$.
Hence, if $A_i\in\cA_{\red}\cap\cA_{\red'}$ then $A_i\in\cC_l\cap\cC'_l$ for some $l$ where $Y_{\cC_l}=Y_{\cC'_l}=1$, so either $Y_{\cC_l,\cC'_l}=1$, or else $Y_{\cC_j,\cC'_j}=1$ for a previous pair $(\cC_j,\cC'_j)$ in which $\cC_j=\cC_l$ or $\cC'_j=\cC'_l$ (a nontrivial intersection in either coordinate will not yield a red cluster). Either way, the term $1/\nu_i^*$ is accounted for in the right-hand of~\eqref{eq-Ai-C-C'-dom}, and~\eqref{eq-Ai-C-C'-dom} follows.

Taking expectations in~\eqref{eq-Ai-C-C'-dom} within the conditional space given $\sH_\green,\cU$, and using the definition (and independence) of the $Y_{\cC,\cC'}$'s, we find that
\begin{align*} \E\bigg[\prod_{A_i \in \cA_\red\cap \cA_{\red'}} \!\frac1{\nu_i^*} \;\Big|\; \sH_\green,\, \cU\bigg]
&\leq \prod_{\cC \cap\cC'\neq\emptyset } \bigg(1 + \Psi_{\cC} \Psi_{\cC'}\prod_{A_j\in\cC}\frac{1}{\nu_j^*}\prod_{A_{j}\in\cC'}\frac{1}{\nu_{j}^*}\bigg)\\
&\leq \exp\bigg[\sum_{\cC\cap\cC'\neq\emptyset} \Psi_{\cC}\Psi_{\cC'} \prod_{A_j\in\cC}\frac{1}{\nu_j^*}\prod_{A_{j}\in\cC'}\frac{1}{\nu_{j}^*}\bigg]\,.\qedhere
\end{align*}
\end{proof}
\begin{corollary}\label{cor-dtv-bound}
Let $\beta>\beta_c$. With the above defined $\Psi_\cC$ and $\Xi_j$'s we have
\begin{align}\label{eq-dtv-psi}
\bar{d}(\tpluss) &\leq 4 \Bigg( \E\bigg[\sum_{\cC\cap\cC'\neq\emptyset} \Psi_{\cC}\Psi_{\cC'} \prod_{A_j\in\cC}\frac{1}{\Xi_j}\prod_{A_{j}\in\cC'}\frac{1}{\Xi_{j}}\bigg] \Bigg)^{1/2}\,.
  \end{align}
  \end{corollary}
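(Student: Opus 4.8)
\medskip
\noindent\emph{Proof proposal for Corollary~\ref{cor-dtv-bound}.}
The plan is to chain together the conditional bound~\eqref{eq-d(t*+s,U)-bound2}, Lemma~\ref{lem-nu-bound}, a scalar linearization of $x\mapsto (e^{x}-1)\wedge 1$, and a final averaging over the undeferred randomness $\cU$. First, by convexity of the total-variation distance: for any two initial states $x_0,y_0$ one has $\P_{x_0}(X_{\tpluss}\in A)-\P_{y_0}(X_{\tpluss}\in A) = \E_\cU\big[\P_{x_0}(X_{\tpluss}\in A\mid\cU)-\P_{y_0}(X_{\tpluss}\in A\mid\cU)\big]$ for every $A\subset\Omega$, so taking absolute values, then a supremum over $A$ (a finite collection), then a supremum over $x_0,y_0$ gives $\bar d(\tpluss)\leq \E_\cU[\bar d(\tpluss,\cU)]$. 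It therefore suffices to bound $\E_\cU[\bar d(\tpluss,\cU)]$ by the right-hand side of~\eqref{eq-dtv-psi}.

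Next I would feed Lemma~\ref{lem-nu-bound} into~\eqref{eq-d(t*+s,U)-bound2}. To convert the $\nu_j^\ast$'s appearing there into the $\Xi_j$'s, I would invoke the lower bound $\nu_j^\ast\geq\Xi_j$ that is built into the definition of the deferred phases in~\S\ref{sec:above-ground}: conditionally on $A_j$ being blue, on $\sH_\green$ and on $\cU$, each configuration of the spins on the cut-set $\chi_j$ at time $\tau_{k_j}$ is attained with probability at least $\prod_{v\in\chi_j}(\tfrac14\theta T_{v,k_j})=\Xi_j$, since the deferred oblivious updates resample each cut-set vertex as an independent fair coin. The decisive feature is that $\Xi_j$ is measurable with respect to $\cU$ alone; together with the fact that $\Psi_\cC$ is itself defined as a supremum over $\sX$ (hence already free of $\sH_\green$), after replacing each $1/\nu_j^\ast$ by $1/\Xi_j$ the bound no longer depends on $\sH_\green$ or on $x_0$, so the outer suprema in~\eqref{eq-d(t*+s,U)-bound2} become vacuous. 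Writing $W(\cU)=\sum_{\cC\cap\cC'\neq\emptyset}\Psi_\cC\Psi_{\cC'}\prod_{A_j\in\cC}\Xi_j^{-1}\prod_{A_j\in\cC'}\Xi_j^{-1}$ for the resulting exponent, this yields $\bar d(\tpluss,\cU)\leq 2\big[(e^{W(\cU)}-1)^{1/2}\wedge 1\big]$.

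Finally, I would use the elementary inequality $(e^{x}-1)\wedge 1\leq 2x$ for $x\geq 0$ (so that $(e^{x}-1)^{1/2}\wedge 1\leq\sqrt{2x}$), obtaining $\bar d(\tpluss,\cU)\leq 2\sqrt{2\,W(\cU)}$, and then take $\E_\cU$ and apply Jensen's inequality $\E\sqrt{\,\cdot\,}\leq\sqrt{\E\,\cdot\,}$ to arrive at $\bar d(\tpluss)\leq 2\sqrt 2\,(\E_\cU W(\cU))^{1/2}\leq 4\,(\E_\cU W(\cU))^{1/2}$, which is exactly~\eqref{eq-dtv-psi}.

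The only genuinely substantive ingredient is the uniform lower bound $\nu_j^\ast\geq\Xi_j$ --- that none of the conditioning (the bluing of $A_j$, the green history $\sH_\green$, the undeferred randomness $\cU$) can destroy the free fair-coin structure that the deferred oblivious updates install at each cut-set vertex. This is precisely the reason the deferred phases were introduced, and it is the step requiring care. Everything else --- convexity of $\|\cdot\|_\tv$, the substitution of Lemma~\ref{lem-nu-bound}, the scalar inequality, and Jensen --- is routine bookkeeping, and the numerical constant $4$ in the statement is comfortably larger than the $2\sqrt 2$ that this argument actually produces.
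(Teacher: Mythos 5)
Your proof is correct and follows essentially the same route as the paper: feed Lemma~\ref{lem-nu-bound} into~\eqref{eq-d(t*+s,U)-bound2}, replace $1/\nu_j^\ast$ by $1/\Xi_j$ via the uniform bound $\nu_j^\ast\geq\Xi_j$ (which you justify just as the paper does), observe that the resulting bound is $\cU$-measurable so the suprema over $\sH_\green$ and $x_0$ drop out, and finish with a scalar inequality plus Jensen. The only cosmetic difference is that you use $(e^x-1)\wedge 1\leq 2x$ (yielding constant $2\sqrt2$) whereas the paper uses $(\sqrt{e^x-1}\wedge 1)\leq 2\sqrt x$ (yielding constant $4$); both are valid and give the stated bound.
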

\begin{proof}[\textbf{\emph{Proof of Corollary~\ref{cor-dtv-bound}}}]
Plugging the bound from Lemma~\ref{lem-nu-bound} into~\eqref{eq-d(t*+s,U)-bound2}, then integrating over the undeferred randomness $\cU$, produces an upper bound on the total-variation distance at time $\tpluss$:
\begin{align*}
  \bar{d}(\tpluss) &\leq 2 \,\E\Bigg[\Bigg(\exp\bigg[\sum_{\cC\cap\cC'\neq\emptyset} \Psi_{\cC}\Psi_{\cC'} \prod_{A_j\in\cC}\frac{1}{\Xi_j}\prod_{A_{j}\in\cC'}\frac{1}{\Xi_{j}}\bigg] -1\Bigg)^{1/2}\wedge 1\Bigg]\,,
  \end{align*}
  where $\E$ denotes expectation w.r.t.\ $\cU$, and we used the observation that $\nu_j^* \geq \Xi_j$ by construction.
  Indeed, $\nu_j^*$ denotes the minimal measure of a configuration of the spins in the cut-set $\chi_j$ of a blue component $A_j$ given $\cU,\sH_\green$ at time $\tau_{k_j}$
  (where $k_j$ is the index of the phase optimizing the choice of the cut-set).
  Clearly, any particular configuration $\eta\in \{\pm 1\}^{\chi_j}$ can occur at time $\tau_{k_j}$ if \emph{every} $x\in\chi_j$ were to receive an \emph{ oblivious deferred update} --- with the appropriate new spin of $\eta_x$ --- before its first splitting point in the deferred phase $\cI'_{k_j}$. Since oblivious updates occur at rate $\theta$, this event has probability at least $\frac12 (1-\exp(-\theta T_x)) \geq \frac14 \theta T_x $ where $T_x$ is the length of the interval between $\tau_{k_j}$ and the first update to $x$ in $\cI'_{k_j}$, and the inequality used $1-e^{-x} \geq x -x^2/2 \geq x/2$ for $x\in[0,1]$ (with $x=\theta T_x \leq 1$). The independence of the deferred updates therefore shows that $\nu_j^* \geq \Xi_j$.

Since $\sqrt{e^x - 1} \leq 2\sqrt{x}$ for $x\in[0,1]$, the inequality $(\sqrt{e^x-1} \wedge 1) \leq 2\sqrt{x}$ holds for all $x\geq 0$; thus, Jensen's inequality allows us to derive~\eqref{eq-dtv-psi} from the last display, as required.
\end{proof}

It now remains to show that the expectation over $\cU$ on the right-hand side of~\eqref{eq-dtv-psi} is at most $\epsilon(\scut)$ for some $\epsilon(\scut)>0$ that is exponentially small in $\scut$, which will be achieved by the following lemma.
\begin{lemma}\label{lem-Psi-Xi-bound}
Let $\beta>\beta_c$. With the above defined $\Psi_\cC$ and $\Xi_j$'s we have
  \[  \E\bigg[\sum_{\cC\cap \cC'\neq\emptyset} \Psi_{\cC}\Psi_{\cC'} \prod_{A_i\in \cC} \frac1{\Xi_i}
  \prod_{A_i\in \cC'} \frac1{\Xi_i}\bigg] \leq e^{-\tfrac15c \scut/\lambda} \,.
\]
\end{lemma}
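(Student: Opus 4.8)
The plan is to expand the expectation over $\cU$ as a sum over all possible realizations of the pair of clusters, estimate each term by combining Lemmas~\ref{l:Ai-to-Bi}--\ref{l:redConnection}, and verify that the resulting series is dominated by a geometric series in $\scut/\lambda$. A pair $\cC,\cC'$ of potential clusters with $\cC\cap\cC'\neq\emptyset$ is specified by a shared component $A_0$, of bottom/top block-covers $(R_0,S_0)$, together with the other components of $\cC$ and of $\cC'$, each again a pair of block-sets $(R,S)$; since all summands are nonnegative, summing over all such data (overcounting when several components are shared) dominates the sum in the lemma, so it suffices to bound a single such term and then sum.

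Fix such a configuration and condition on the geometric data determining the component structure (which pairs $R\conn S$ occur and where each component's history sits in the slab $\Lambda\times[0,\tpluss]$). On the event that all prescribed pairs occur, Lemma~\ref{l:redConnection} makes $\Psi_\cC$ and $\Psi_{\cC'}$ deterministic, contributing a prefactor $\scut^{8d}/|\Lambda|$, a gain $e^{4|R|}$ per component, and a decay $e^{-c\scut\,b}$, where $b$ is the total ``bridging'' needed to link the components of each cluster into one. For the cut-set factors, $0<\Xi_i\le 1$ gives $\prod_{A_i\in\cC}\Xi_i^{-1}\prod_{A_i\in\cC'}\Xi_i^{-1}\le\prod_{A_i\in\cC\cup\cC'}\Xi_i^{-4}$ (a component shared by both contributes $\Xi_i^{-2}\le\Xi_i^{-4}$), while monotonicity of the support in its top set gives $\Xi_i\ge\min_k\Xi_k(S_i)$; since distinct components have pairwise disjoint histories above $\tcut$ (by the history-intersection rule), Lemma~\ref{l:exp-cut-set} may be applied componentwise in the conditional space and yields at most $2^{\lambda+3}e^{|S_i|}$ per component. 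Finally Lemma~\ref{l:Ai-to-Bi}, applied to the distinct components of $\cC\cup\cC'$, bounds the probability that they all occur by $\exp[-c\tfrac{\scut}{\lambda}\sum_i\anim(R_i\cup S_i)]$.

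Multiplying these bounds and summing, set $a=\anim(R\cup S)\ge 1$ for a generic component. Its net contribution --- the cost $e^{-c\scut a/\lambda}$ from Lemma~\ref{l:Ai-to-Bi}, the $\Psi$-gain $e^{4|R|}\le e^{4a}$ (counted once or twice), the cut-set gain $2^{\lambda+3}e^{|S|}\le e^{2\lambda}e^{a}$, and the $C(d)^{a}$ ways to place a lattice animal of size $a$ about a given block --- is at most $e^{-(c\scut/\lambda-O_d(1)-2\lambda)a}$, which is summable and small once $\scut\gg\lambda^2$; likewise the bridging decay $e^{-c\scut b}$ beats the entropy of the bridges, so each extra component (beyond the shared one) contributes a factor that sums to less than $1$ and the geometric series over the remaining components of $\cC$ and $\cC'$ converges. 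The shared component carries the extra $\scut^{8d}/|\Lambda|$, which is cancelled by the $|\Lambda|$ choices for its location, leaving a polynomial $\scut^{O_d(1)}$; summing over its shape as well gives a total bound $\scut^{O_d(1)}e^{-c'\scut/\lambda}\le e^{-\frac15 c\scut/\lambda}$ once $\scut/\lambda$ is large, which holds by taking $\lambda$ large in terms of $\beta,d$ and then $\scut$ large in terms of $\lambda$.

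The main obstacle is reconciling the three input lemmas, which describe different parts of the picture --- Lemmas~\ref{l:Ai-to-Bi} and~\ref{l:redConnection} are percolation estimates above and below time $\tcut$, while Lemma~\ref{l:exp-cut-set} is a moment bound on the deferred-phase cut-sets --- so one must freeze the component geometry (to make the $\Psi$-bounds deterministic and to decouple the $\Xi_i$'s across components) before taking expectations, and then check that \emph{every} gain produced by the expansion (the $e^{4|A_i|}$, the $e^{|B_i|}$, the $2^{\lambda+3}$, and the animal-counting entropy, over an a priori unbounded number of components and bridges) is strictly beaten by the $e^{-c\scut\anim/\lambda}$ decay, uniformly in the cluster. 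This last point is precisely where the separation of scales $1\ll\lambda\ll\scut$ is used.
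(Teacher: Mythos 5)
Your plan follows the paper's architecture closely --- root a shared component $A$, expand over the partitions of $\cC$ and $\cC'$ into components and over the corresponding top block-sets $\{B_j\}$, feed in Lemma~\ref{l:redConnection} for a deterministic (uniform in $\cU$) bound on $\Psi_\cC$, Lemma~\ref{l:Ai-to-Bi} for the survival of the components, Lemma~\ref{l:exp-cut-set} for the cut-set moments, and then beat the lattice-animal entropy by the $e^{-c\scut/\lambda}$ decay. The final counting and the role of $1\ll\lambda\ll\scut$ are essentially what the paper does.

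The gap is in how you decouple the survival indicators from the cut-set weights. After bounding $\Psi_\cC\Psi_{\cC'}$ uniformly, what remains to estimate is
$\E\big[\prod_j\one_{\{A_{i_j}\conn B_j\}}\,\Xi_{i_j}^{-1}\cdot\prod_k\one_{\{A_{i_k}\conn B'_k\}}\,\Xi_{i_k}^{-1}\big]$.
You propose to ``condition on the geometric data determining the component structure'' and then apply Lemma~\ref{l:exp-cut-set} componentwise in the conditional space, arguing that the components' histories above $\tcut$ are disjoint. But Lemma~\ref{l:exp-cut-set} is an \emph{unconditional} bound on $\E[\min_k\Xi_k(S)^{-4}]$, and conditioning on the percolation events $\{A_i\conn B_i\}$ --- which constrain both how far each support travels spatially and how long it survives --- re-weights the law of the undeferred randomness $\cU$ and hence of the deferred-phase hitting times $T_{v,k}$ entering $\Xi_k$. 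Nothing in the proposal shows that this conditioning does not inflate $\E[\Xi^{-4}\mid\cdot]$. The paper sidesteps this cleanly via H\"older's inequality with exponent $4$: it writes $\E[XYZW]\le(\E X^4\,\E Y^4\,\E Z^4\,\E W^4)^{1/4}$ with $X,Z$ the indicator products (so $\E X^4=\P(\cap_j\{A_{i_j}\conn B_j\})$, estimable by Lemma~\ref{l:Ai-to-Bi}) and $Y,W$ the $\prod\Xi^{-1}$ products, whose fourth moments are then estimated by Lemma~\ref{l:exp-cut-set} unconditionally. This is what actually produces the $\Xi^{-4}$ exponent; your pointwise $\Xi_i^{-2}\le\Xi_i^{-4}$ step is not the mechanism. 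A secondary difference: rather than applying Lemma~\ref{l:exp-cut-set} componentwise and invoking independence across components, the paper notes that one may force all cut-sets to use the same phase $k$ (only worsening the bound), so $\prod_j\Xi_{i_j}^{-4}$ is dominated by a single $\Xi$-quantity for the union block-set $\cup_j B_j$, to which Lemma~\ref{l:exp-cut-set} is applied once. You should replace the conditioning heuristic with the H\"older step, and the componentwise application with the single-phase union trick.
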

\begin{proof}
  [\textbf{\emph{Proof of Lemma~\ref{lem-Psi-Xi-bound}}}]
We begin by breaking up the sum over potential clusters $\cC,\cC'$ in the left-hand side of the sought inequality as follows: first, we will root a single component $A \in \cC\cap \cC'$; second, we will enumerate over the partition of these clusters into components: $\cC=\{A_{i_j}\}$ and $\cC' = \{A_{i_k}\}$; finally, we will sum over the
the block-sets $\{B_i\}$ that are the counterparts (via $\cU$) at time $\tpluss$ to the block-sets $\{A_i\}$ at time $\tcut$. Noting that the event $\{A_i\conn B_i\}$ --- testing the consistency of $\{A_i\}$ and $\{B_i\}$ --- is $\cU$-measurable, we have
\begin{align}
  \E\Bigg[\sum_{\cC\cap \cC'\neq\emptyset} \Psi_{\cC}\Psi_{\cC'} &\prod_{A_j\in \cC} \frac1{\Xi_j}
  \prod_{A_j\in \cC'} \frac1{\Xi_j}\Bigg] \nonumber\\
\leq \sum_{A} \sum_{\substack{\cC=\{A_{i_j}\}\ni A \\ \cC'=\{A_{i_k}\}\ni A}} \sum_{\substack{\{B_j\} \\ \{B'_k\}}} \E\bigg[&
\bigg(\prod_{j}\one_{\{A_{i_j}\conn B_j\}} \frac1{\Xi_{i_j}}\bigg)
\Psi_\cC
\bigg(\prod_{k}\one_{\{A_{i_k}\conn B'_k\}} \frac1{\Xi_{i_k}}\bigg)
\Psi_{\cC'}
\bigg]\,.\label{eq-Exp-Psi-Psi'-1}
\end{align}
Recall from~\eqref{eq-def-psi} that Lemma~\ref{l:redConnection} provides us with an upper bound on $\Psi_{\cC}$ in terms of the components $\{A_{i_j}\}$ of $\cC$ and uniformly over $\cU$. Letting $\bar{\Psi}_{\{A_{i_j}\}}$ denote this bound (i.e., the right-hand side of~\eqref{e:redConnection}) for brevity, we can therefore deduce that the expectation in the last display is at most
 \[ \E\Bigg[
\bigg(\prod_{j}\one_{\{A_{i_j}\conn B_j\}} \frac1{\Xi_{i_j}}\bigg)
\bigg(\prod_{k}\one_{\{A_{i_k}\conn B'_k\}} \frac1{\Xi_{i_k}}\bigg)
\Bigg]\bar{\Psi}_{\{A_{i_j}\}}\bar{\Psi}_{\{A_{i_k}\}}\,.\]
H\"older's inequality now implies that the last expectation is at most
\begin{align*}
\Bigg[\P\bigg(\bigcap_{j}\{A_{i_j} \conn B_j\}\bigg)\E\bigg[\prod_{j} \frac1{\Xi_{i_j}^4}\bigg]
\P\bigg(\bigcap_{k}\{A_{i_k} \conn B'_k\}\bigg)
  \E\bigg[\prod_{k} \frac1{\Xi_{i_k}^4}\bigg]\Bigg]^{\frac14}\,,
\end{align*}
and when incorporating the last two steps in~\eqref{eq-Exp-Psi-Psi'-1} it becomes
possible to factorize the terms involving $\cC,\cC'$ and altogether obtain that
\begin{align}\label{e:factorizationBound}
  \E\Bigg[&\sum_{\cC\cap \cC'\neq\emptyset} \Psi_{\cC}\Psi_{\cC'} \prod_{A_i\in \cC} \frac1{\Xi_i}
  \prod_{A_i\in \cC'} \frac1{\Xi_i}\Bigg] \leq \sum_{A} \Bigg[ \sum_{\substack{\cC=\{A_{i_j}\}\\ A\in\cC} } \sum_{\{B_j\}}
  \P\bigg(\bigcap_{j}\{A_{i_j} \conn B_j\}\bigg)^{\frac14}
  \E\bigg[\prod_{j} \frac1{\Xi_{i_j}^4}\bigg]^{\frac14}
  \bar{\Psi}_{\{A_{i_j}\}}\Bigg]^2\,.
\end{align}
The term $\P(\bigcap_{j}\{A_{i_j} \conn B_j\})$ is bounded via Lemma~\ref{l:Ai-to-Bi}.
The term $\E[\prod_{j} \Xi_{i_j}^{-4}]$ is bounded via Lemma~\ref{l:exp-cut-set} using the observation that one can always restrict the choice of phases for the cut-sets (only worsening our bound) to be the same for all the components, whence $\prod_j \Xi_{i_j}$ identifies with a single variable $\Xi$ whose source block-set at time $\tpluss$ is $\cup_j B_j$.
Finally, $\bar{\Psi}_{\{A_{i_j}\}}$ corresponds to the right-hand side of~\eqref{e:redConnection} from Lemma~\ref{l:redConnection}, in which we may decrease the exponent by a factor $\lambda$ (only relaxing the bound as $\lambda>1$).
Altogether, for some $c=c(\beta,d)>0$
(taken as $\frac{c_1}4 \wedge c_2$ where $c_1,c_2$ are the constants from Lemmas~\ref{l:Ai-to-Bi} and~\ref{l:redConnection}, respectively)
the last expression is at most
\begin{align}
\sum_{A} \Bigg( \sum_{\substack{\cC=\{A_{i_j}\}\\ A\in\cC}} \sum_{\{B_j\}}
 \frac{2^{\lambda+3} \scut^{4d}}{\sqrt{|\Lambda|}} &e^{-c  (\scut/\lambda)\left(\left|\anim\left(\cC\right) - \sum\anim(A_{i_j})\right|^+ + \sum \anim(A_{i_j}\cup B_j) \right) + 4\sum |A_{i_j}| +  \frac14 \sum |B_j| }\Bigg)^2\,.
 \label{eq-sum-A-via-anim}
\end{align}
It is easy to see that since $|\anim\left(\cC\right) - \sum_{j}\anim(A_{i_j})|^+ \geq \frac12(\anim\left(\cC\right) - \sum_{j}\anim(A_{i_j}\cup B_j))$, we have that
\[ \sum_j \anim(A_{i_j} \cup B_j) + \Big|\anim\left(\cC\right) - \sum_{j}\anim(A_{i_j})\Big|^+ \geq \frac{1}2  \anim(\cC) + \frac12\sum_j \anim(A_{i_j} \cup B_j)\,.\]
Since each of the summands $|A_{i_j}|$ and $|B_j|$ in the exponent above is readily canceled by $\anim(A_{i_j}\cup B_j)$, we deduce that if $c \scut/\lambda$ is large enough then~\eqref{eq-sum-A-via-anim} is at most
\begin{align}
&\frac{2^{2\lambda+6}\scut^{8d}}{|\Lambda|} \sum_{A} \bigg( \sum_{\substack{\cC=\{A_{i_j}\}\\ A\in\cC}} \sum_{\{B_j\}}
 \exp\bigg[ -\frac{c}4 \frac{\scut}\lambda \anim\Big(\cC \cup \bigcup_j B_j\Big)\bigg]\bigg)^2\,.\label{eq-sum-A-one-animal}
 \end{align}
Now, the number of different lattice animals containing $\kappa$ blocks and rooted at a given block $X$ is easily seen to be at most $(2d)^{2(\kappa-1)}$, since these correspond to trees on $\kappa$ vertices containing a given point in $\Z^d$, and one can enumerate over such trees by traveling along there edges via a depth-first-search: beginning with $2d$ options for the first edge from the root, each additional edge has at most $2d$ options (at most $2d-1$ new vertices plus one edge for backtracking, where backtracking at the root is regarded as terminating the tree). The bound on the number of rooted trees (and hence the number of rooted lattice animals) now follows from the fact that each edge is traversed precisely twice in this manner.

Next, enumerate over collections of blocks $\{A_{i_j},B_j\}$ with $\anim(\cC)=\cS$ and $\sum_j \anim(A_{i_j} \cup B_j) = \cR$:
\begin{compactitem}
\item There are at most $(2d)^{2(\cS-1)}$ ways to choose $\cC$ 
    containing $A$ by the above lattice animal bounds.
\item There are at most $2^{\cS}$ choices of blocks $D_j \in \cC$ so that each $D_j\in A_{i_j}$ will be in a distinct $A_{i_j}$.
\item There are at most $2^{\cR}$ choices of $r_j$ representing $\anim(A_{i_j} \cup B_j)$ since $\sum_j r_j = \cR$.
\item For each $j$ there are at most $(2d)^{2(r_j-1)}$ choices of minimal lattice animals of size $r_j$ rooted at $D_j$ which will contain $A_{i_j} \cup B_j$.  Together, this is at most $(2d)^{\sum_j 2(r-1)} \leq 2^{\cR}$.
\item For each lattice animal there are $2^{r_j}$ ways to assign the vertices to be either in or not in $A_{i_j}$ and $2^{r_j}$ choices to be either in or not in $B_j$.  In total this gives another $4^{\cR}$ choices.
\end{compactitem}
Altogether, we have that the number of choices of the $\{A_{i_j},B_j\}$ is at most $2^{\cS}8^{\cR}(2d)^{2(\cS+\cR)}$.  Thus,
\[
\sum_{\substack{\cC=\{A_{i_j}\}\\ A\in\cC}} \sum_{\{B_j\}}
 \exp\bigg[ -\frac{c}4 \frac{\scut}\lambda  \Big(\anim(\cC) + \sum_j \anim(A_{i_j} \cup B_j)\Big)\bigg] \leq \sum_{\cS,\cR \geq 1} e^{-\frac{c}4 \frac{\scut}\lambda (\cS + \cR)} 2^{\cS}8^{\cR}(2d)^{2(\cS+\cR)} \leq e^{-\frac{c}5 \frac{\scut}\lambda}
\]
provided $\scut$ is large enough compared to $d$.
Plugging this in~\eqref{eq-sum-A-one-animal} finally gives
\[  \E\bigg[\sum_{\cC\cap \cC'\neq\emptyset} \Psi_{\cC}\Psi_{\cC'} \prod_{A_i\in \cC} \frac1{\Xi_i}
  \prod_{A_i\in \cC'} \frac1{\Xi_i}\bigg] \leq \frac{2^{2\lambda+6}\scut^{8d}}{|\Lambda|} \sum_{A} e^{-\tfrac25 c \scut/\lambda} \leq e^{-\tfrac15c \scut/\lambda} \,,
\]
where the last inequality holds whenever, e.g., $\scut\geq \lambda^2$ and $\lambda$ is large enough in terms of $\beta,d$.
\end{proof}
Combining Corollary~\ref{cor-dtv-bound} and Lemma~\ref{lem-Psi-Xi-bound} shows that
$\bar{d}(\tpluss) \leq 4 \exp[ -\tfrac1{10} c\scut/\lambda]$, and so in particular, once we fix $\lambda$ larger than some $\lambda_0(\beta,d)$, the total-variation distance at time $\tpluss$ will decrease in $\scut$ as $O(\exp[- c' \scut])$ for some $c'(\beta,d)>0$, concluding the proof.
\qed

\subsection{Lower bound on the mixing time}
We begin with two simple lemmas, establishing exponential decay for the magnetization in time and for the correlation between spins in $X_t$ in space.
\begin{lemma}\label{l:MagnetizationLB}
There exist $c_1(\beta,d)$ and $c_2(\beta,d)$ such that for all $0<h<t$,
\[
\sm_t \leq c_1 e^{-c_2 h} \sm_{t-h}.
\]
\end{lemma}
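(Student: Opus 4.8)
The plan is to compare $\sm_t$ with $\sm_{t-h}$ through the monotone grand coupling and the update support $\fsup$ introduced in~\S\ref{sec:update-hist}, and then to feed in the exponential decay provided by Lemma~\ref{l:BasicEstimates}. Concretely, I would show
\[ \sm_t \;\le\; \E\big[\,|\fsup(v,t-h,t)|\,\big]\,\sm_{t-h}\,, \]
and then bound $\E|\fsup(v,t-h,t)| \le c_1 e^{-c_2 h}$.

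For the first (coupling) step, run the dynamics from the all-plus and all-minus states, $(X_s^+)$ and $(X_s^-)$, on a common realization of the Poisson clocks and unit variables. By the $\pm$-symmetry of the model $\E X_s^-(v)=-\sm_s$, and since $X_s^-\preceq X_s^+$ pointwise one gets $\sm_s=\tfrac12\E[X_s^+(v)-X_s^-(v)]=\P(X_s^+(v)\ne X_s^-(v))$ for every $s$ and every $v$ (using translation invariance on the torus). Now expose the update sequence along $(t-h,t]$ only: it is independent of $(X_{t-h}^+,X_{t-h}^-)$, and it alone determines the set $S:=\fsup(v,t-h,t)$ together with the map expressing $X_t(v)$ in terms of the spins of $X_{t-h}$ on $S$. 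Hence $\{X_t^+(v)\ne X_t^-(v)\}\subseteq\{\exists u\in S:\ X_{t-h}^+(u)\ne X_{t-h}^-(u)\}$, so conditioning on the late updates and taking a union bound over $u\in S$ yields $\P(X_t^+(v)\ne X_t^-(v)\mid\text{updates on }(t-h,t])\le |S|\,\sm_{t-h}$; averaging over the late updates gives the displayed inequality.

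It then remains to bound $\E|\fsup(v,t-h,t)|$. Writing it as $\sum_{u}\P(u\in\fsup(v,t-h,t))$, I would split the sum at radius $\ell:=\ceil{20dh}+1$. For $u\in B(v,\ell)$, bound each term by $\P(\fsup(v,t-h,t)\ne\emptyset)=\sm_h\le 2e^{-c_{\beta,d}h}$ via~\eqref{e:BasicSupport}, contributing at most $|B(v,\ell)|\cdot 2e^{-c_{\beta,d}h}=O\big((1+h)^d e^{-c_{\beta,d}h}\big)$. For $u\notin B(v,\ell)$, use $\fsup\subseteq\fupd$: having $u\in\fupd(v,t-h,t)$ forces a chain of updates at increasing times along a walk from $v$ of length $\ge\ell+1>20dh$, so enumerating such walks and applying Bennett's inequality exactly as in the proof of~\eqref{e:BasicUpdate} bounds this part by $\sum_{r>\ell}(2d)^r\,\P(\Po(h)\ge r)\le\sum_{r>\ell}e^{-r}=O(e^{-20dh})$. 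Adding the two parts and enlarging $c_1$ to absorb the polynomial factor gives the claim for $h\ge1$; for $h<1$ one may simply invoke the monotonicity $\sm_t\le\sm_{t-h}$ (which holds since $P_{t-h}f_v$ is nondecreasing and the dynamics started from all-plus stays $\preceq$ all-plus) and absorb $e^{c_2}$ into $c_1$.

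The only step that is not pure bookkeeping is the bound on the \emph{expected} size of the support: Lemma~\ref{l:BasicEstimates} gives the exponential decay of $\P(\fsup\ne\emptyset)$, but one additionally needs that, conditioned on being nonempty, $\fsup$ is typically small. This is where~\eqref{e:BasicUpdate} is essential — it confines $\fsup$ to a ball of polynomial volume $O((1+h)^d)$ except with mass $O(e^{-20dh})$, and that polynomial factor is harmless against the exponential decay of $\sm_h$. Everything else reduces to the conditional independence of updates in disjoint time intervals and routine manipulations with the grand coupling.
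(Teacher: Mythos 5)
Your proof is correct and follows essentially the paper's route: both reduce to the key inequality $\sm_t \le \E\big|\fsup(v,t-h,t)\big|\cdot\sm_{t-h}$ (via independence of updates on $(t-h,t]$ from the state at time $t-h$, translation invariance, and a union bound over the support), and then bound $\E|\fsup(v,t-h,t)|$ by combining the exponential decay of $\P(\fsup\neq\emptyset)=\sm_h$ with the spatial confinement of $\fupd$ from Lemma~\ref{l:BasicEstimates}. The only differences are cosmetic: you obtain the first inequality through the monotone grand coupling and $\sm_s=\P\big(X_s^+(v)\neq X_s^-(v)\big)$ where the paper works directly from $\sm_s=\P(\fsup(v,0,s)\neq\emptyset)$, and you bound $\E|\fsup|$ by splitting $\sum_u\P(u\in\fsup)$ at radius $\approx 20dh$ whereas the paper applies Cauchy--Schwarz, $\E|\fsup|\le\E\big[|\fupd|\one_{\{\fsup\neq\emptyset\}}\big]\le\big(\E|\fupd|^2\cdot\sm_h\big)^{1/2}$, with $\E|\fupd|^2=O(h^{2d})$; either computation yields $O(h^d e^{-ch})$ up to constants in the exponent.
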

\begin{proof}
By Lemma~\ref{l:BasicEstimates},
\begin{align*}
\E\big|\fupd(v,t-h,t)\big|^2 &\leq |B(v,20dh)|^2+ \!\!\sum_{k=20dh}^n \!\! k^{2d}\P\left(\fsup(v,\tcut-h,\tcut)\not\subset B(v,k)\right)\\
&\leq  |B(v,20dh)|^2 +  \sum_{k=20dh}^n k^{2d} e^{-c k} =O(h^{2d})\,.
\end{align*}
Then by Cauchy-Schwarz,
\begin{align*}
\E\big|\fsup(v,t-h,t)\big|&\leq\E\left [\big|\fupd(v,t-h,t)\big|\one_{\{\fsup(v,t-h,t)\neq \emptyset\}}\right] \\ &
 \leq \left(\E\left[\big|\fupd(v,t-h,t)\big|^2\right] \P(\fsup(v,t-h,t)\neq \emptyset)\right)^{1/2} \leq O(h^d e^{-c h /2})\,.
\end{align*}
If $\fsup(v,0,t)\neq \emptyset$ then $\fsup(u,t-h,t)\neq \emptyset$ for some $u\in \fupd(v,t-h,t)$.  Using the translational invariance of the torus (which implies that all vertices have the same magnetization),
\begin{align*}
\sm_t = \P(\fsup(v,0,t)\neq \emptyset) &\leq \E\left[\big|\fupd(v,t-h,t)\big|\right] \P(\fsup(v,t-h,t)\neq \emptyset) \leq  O(h^d e^{-c h /2}) \sm_{t-h}\,,
\end{align*}
as claimed.
\end{proof}

\begin{lemma}\label{l:Correlation}
There exist $c_1(\beta,d),c_2(\beta,d)>0$ so that starting from any initial condition $X_0$,
\[
\Cov(X_{t}(u), X_t(v)) \leq c_1 \exp(-c_2 |u-v|)\quad\mbox{ for any $t>0$ and $u,v\in\Lambda$}\,.
\]
\end{lemma}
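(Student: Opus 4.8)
The plan is to exploit the finite speed of information propagation in the graphical construction, together with the exponential decay of the magnetization from Lemma~\ref{l:BasicEstimates}, to show that with overwhelming probability $X_t(u)$ and $X_t(v)$ are deterministic functions of the updates in two \emph{disjoint} space-time boxes, hence independent. Fix $u,v$ and write $r=|u-v|$ (measured in $L^\infty$, say; a change of norm only rescales the final constant). If $r$ is below some absolute constant we are done by the crude bound $|\Cov(X_t(u),X_t(v))|\le \var(X_t(u))^{1/2}\var(X_t(v))^{1/2}\le 1$, so assume $r$ large. Set $h=\min\{t,\ r/(50d)\}$, put $s_0=t-h\ge 0$, choose an integer $\ell$ with $2r/5<\ell<r/2$ (possible for $r$ large, and then automatically $\ell>2r/5\ge 20dh$), and let $Q_u=B(u,\ell)\times(s_0,t]$ and $Q_v=B(v,\ell)\times(s_0,t]$, which are disjoint since $B(u,\ell)\cap B(v,\ell)=\emptyset$.

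First I would introduce the good event $E_u$: the event that $\fupd(u,s_0,t)\subseteq B(u,\ell)$ and, in the case $s_0>0$, additionally $\fsup(u,s_0,t)=\emptyset$; define $E_v$ analogously using $Q_v$. The key structural point is that $E_u$, and also the random variable $X_t(u)\one_{E_u}$, are measurable with respect to the updates inside the box $Q_u$ alone. Indeed, the backward exploration that builds $\fupd(u,\cdot,t)$ only ever inspects updates at sites already reached from $u$, and it can leave $B(u,\ell)$ only through an update at a site on the inner boundary of $B(u,\ell)$ --- which lies in $Q_u$; hence $\{\fupd(u,s_0,t)\subseteq B(u,\ell)\}$ is $Q_u$-measurable, and on this event $\fsup(u,s_0,t)\subseteq\fupd(u,s_0,t)\subseteq B(u,\ell)$, so $\{\fsup(u,s_0,t)=\emptyset\}$ is $Q_u$-measurable too. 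Moreover, on $E_u$ the spin $X_t(u)$ is a deterministic function of the updates in $Q_u$ together with $X_0|_{B(u,\ell)}$ (the latter enters trivially when $s_0=0$, since $X_{s_0}=X_0$, and not at all when $s_0>0$, since then the support at time $s_0$ is empty); because $X_0$ is a fixed configuration, this makes $X_t(u)\one_{E_u}$ a function of the updates in $Q_u$. The same discussion applies to $v$ with $Q_v$. Since updates in the disjoint space-time regions $Q_u$ and $Q_v$ are independent, the pairs $(\one_{E_u},X_t(u)\one_{E_u})$ and $(\one_{E_v},X_t(v)\one_{E_v})$ are independent.

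What remains is bookkeeping. By~\eqref{e:BasicUpdate} (applicable since $\ell>20dh$) and~\eqref{e:BasicSupport}, $\P(E_u^c)\le e^{-\ell}+\one_{\{s_0>0\}}\sm_h\le e^{-2r/5}+2e^{-c_{\beta,d}h}\le c_1'e^{-c_2'r}$ for suitable $c_1'(\beta,d),c_2'(\beta,d)>0$, and likewise for $E_v$. Writing $Y_u=X_t(u)$, $Y_v=X_t(v)$ and using $|Y_u|,|Y_v|\le 1$: from $\E[Y_uY_v]=\E[Y_uY_v\one_{E_u\cap E_v}]+O(\P(E_u^c)+\P(E_v^c))$ the independence above gives $\E[Y_uY_v\one_{E_u\cap E_v}]=\E[Y_u\one_{E_u}]\,\E[Y_v\one_{E_v}]$, and replacing each indicator by $1$ costs a further $O(\P(E_u^c))$, $O(\P(E_v^c))$; hence $\E[Y_uY_v]=\E[Y_u]\E[Y_v]+O(\P(E_u^c)+\P(E_v^c))$, which is the asserted bound after adjusting the constants $c_1,c_2$ and folding in the trivial small-$r$ case.

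I expect the one genuinely delicate point to be the measurability claim in the second paragraph --- namely that revealing $\fupd(u,\cdot,t)$ and $\fsup(u,\cdot,t)$ can be carried out by inspecting only the updates in $B(u,\ell)\times(s_0,t]$ once these objects are confined to $B(u,\ell)$, so that $E_u$ and $X_t(u)\one_{E_u}$ depend on strictly fewer updates than the full sequence. This is precisely the finite-speed-of-propagation phenomenon already underlying~\eqref{e:BasicUpdate}; once it is phrased carefully the independence, and therefore the covariance estimate, follow mechanically.
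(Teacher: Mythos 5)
Your proof is correct and follows essentially the same route as the paper's: both arguments use Lemma~\ref{l:BasicEstimates} with $h,\ell \asymp |u-v|$ to show that, except on an event of probability $O(e^{-c|u-v|})$, the histories of $u$ and $v$ are confined to disjoint space-time boxes, and then decouple. The only cosmetic difference is the final step — the paper couples $X_t$ to two independent copies of the dynamics so that $\Cov(X_t(u),X_t(v))\le 2\P(\text{histories intersect})$, whereas you argue directly that $X_t(u)\one_{E_u}$ and $X_t(v)\one_{E_v}$ are measurable with respect to independent update sets — and both are valid.
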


\begin{proof}
Let $\cE$ denote the event that the supports of $u$ and $v$  intersect, that is
\[
\cE=\bigg\{\bigcup_{0<t'<t} \left(\fsup(v,t',t) \cap \fsup(v,t',t)\right) =\emptyset  \bigg \}.
\]
Let $X_{t}'$ and $X_{t}''$ be two independent copies of the dynamics.  By exploring the histories of the support we may couple $X_t$ with $X_{t}'$ and $X_{t}''$ so that on the event $\cE$ the history of $v$ in $X_t$ is equal to the history of $v$ in $X_{t}'$ and the history of $u$ in $X_t$ is equal to the history of $u$ in $X_{t}''$.  Hence,
\begin{align*}
\E\left[X_t(v) X_t(u)\right] &= \E\left[ X_t'(v) X_t''(u) + \big(X_t(v) X_t(u) - X_t'(v) X_t''(u)\big)\one_{\cE}\right]\\
&\leq \E\left[ X_t'(v)\right]\E\left[ X_t''(u)\right] + 2\P(\cE)
\end{align*}
and so $\Cov(X_t(v), X_t(u)) \leq 2\P(\cE)$.  Define the event
\[
K_{v,r}=\left\{\fsup\left(v,t-\tfrac{r}{40ed},t\right) = \emptyset,\, \fupd\left(v, t-\tfrac{r}{40ed},t\right)\subset B\big(v,\tfrac{r}{2}\big)\right\}\,.
\]
By Lemma~\ref{l:BasicEstimates},
\[
\P(K_{v,r}) \geq 1 - \exp(-c' \tfrac{r}{40ed}) \,.
\]
If $K_{v,|u-v|}$ and $K_{u,|u-v|}$ both hold then the histories of $u$ and $v$ do not intersect and so
\[
\P(\cE) \leq \P(K_{v,|u-v|}^c \cup K_{u,|u-v|}^c) \leq 2 \exp\left( -c' \tfrac{|u-v|}{40ed}\right)\,.
\]
This completes the proof, as it implies that
\[
\Cov(X_t(v), X_t(u)) \leq 2\P(\cE) \leq 4 \exp\left(-c' \tfrac{|u-v|}{40ed}\right)\,.\qedhere
\]
\end{proof}

We are now ready to prove the lower bound for the mixing time.
To lower bound the total variation distance at time $\tcut-h$ we take the magnetization as a distinguishing statistics.  By Lemma~\ref{l:MagnetizationLB},
\[
\E\bigg[\sum_{v\in\Lambda} X^+_{\tcut-h}(v)\bigg] = |\Lambda|  \sm_{\tcut-h} \geq c_1 e^{c_2 h} |\Lambda|  \sm_{\tcut} = c_1 e^{c_2 h} \sqrt{|\Lambda|}\,,
\]
while Lemma~\ref{l:Correlation} implies that
\begin{align*}
\var\bigg(\sum_{v\in\Lambda} X^+_{\tcut-h}(v)\bigg) &= \sum_{u,v\in\Lambda} \Cov(X^+_t(u), X^+_t(v))
=|\Lambda| \sum_{u\in\Lambda} c_1 e^{-c_2 |u-v|} \leq c' |\Lambda|
\end{align*}
for some $c'=c'(\beta,d)>0$. By Chebyshev's inequality,
\[
\P\bigg( \sum_{v\in\Lambda} X^+_{\tcut-h}(v) >\frac12 |\Lambda|  \sm_{\tcut-h}\bigg) \geq 1- \frac{\var\left(\sum_{v\in\Lambda} X^+_{\tcut-h}(v)\right) }{\frac12\E\left[\sum_{v\in\Lambda} X^+_{\tcut-h}(v)\right]} \geq 1-c' e^{-2 c_2 h}.
\]
Now if $\sigma$ is a configuration drawn from the stationary distribution then $\E[\sum_{v\in\Lambda} \sigma(v)]=0$, and since $X^+_t$ converges in distribution to the stationary distribution,
\[
\var\bigg(\sum_{v\in\Lambda} \sigma(v)\bigg)=\lim_{t\to\infty} \var\bigg(\sum_{v\in\Lambda} X^+_{t}(v)\bigg) \leq   c' |\Lambda|\,.
\]
Hence, by Chebyshev's inequality, the probability that the magnetization is at least $\frac12|\Lambda|\sm_{\tcut-h}$ satisfies
\[
\P\bigg( \sum_{v\in\Lambda} \sigma(v) >\frac12 |\Lambda|  \sm_{\tcut-h}\bigg) \leq  c' e^{-2 c_2 h}.
\]
Thus, considering this as the distinguishing characteristic yields
\begin{align*}
d_{\tv}(X^+_t,\pi) &\geq \P\bigg( \sum_{v\in\Lambda} X^+_{\tcut-h}(v) >\tfrac12 |\Lambda|  \sm_{\tcut-h}\bigg) - \P\bigg( \sum_{v\in\Lambda} \sigma(v) >\tfrac12 |\Lambda|  \sm_{\tcut-h}\bigg) \geq 1 - 2 c' e^{-2 c_2 h}\,,
\end{align*}
concluding the proof of the lower bound.
\qed

\section{Analysis of percolation components and clusters}\label{sec:cluster-analysis}

\subsection{Percolation component structure: Proof of Lemma~\ref{l:Ai-to-Bi}}

To each $v\in\Lambda$, associate the {\em column} $Q_v = B(v,\scut^{3/2})\times (\tcut,\tpluss]$ in the space-time slab $\Z^d \times (\tcut,\tpluss]$.
Recall that the history of vertices gives rises to edges in the above space-time slab as per the description in~\S\ref{sec:lattice-framework}. Namely,
if at time $t$ there is a non-oblivious update at site $x$ we mark up to $2d$ intervals $[(x,t),(y,t)]$ for $x\sim y$, and if a site $x$ is born at time $t'$ and dies at time $t''$ we mark the interval $[(x,t),(x,t'')]$. Given these marked intervals, we say that a column $Q_v$ is \emph{exceptional} if it contains one of the following:
\begin{itemize}[\indent$\bullet$]
  \item {\em spatial crossing}: a path connecting $(x,t)$ to $(y,t')$ for $|x-y|\geq \frac12\scut^{3/2}$ and some $t,t'\in(\tcut,\tpluss]$.
  \item {\em temporal crossing}: a path connecting $(v,\tpluss)$ to $B(v,\scut^{3/2})\times \{\tcut\}$.
\end{itemize}
Eq.~\eqref{e:BasicUpdate} from Lemma~\ref{l:BasicEstimates} tells us that, even if all phases were deferred (i.e., the update support were ignored and vertices would never die) then
the probability of witnessing a spatial crossing of length $\scut^{3/2}$ starting from a given site $x$ during a time interval of $\scut$ is at most
$\exp(-\scut^{3/2})$ provided that $\scut > (20 d)^2 $.
In lieu of such a spatial crossing, the number of points reachable from $(v,\tpluss)$ at time $\tau_k-1 = \tpluss-1$ (marking the transition between the deferred phase $\cI_k'$ and the regular phase $\cI_k$) is $O(\scut^{3d/2})$. By Eq.~\eqref{e:BasicSupport} from that same lemma, there exists some $c_1=c_1(\beta,d)>0$ so that
the probability that the history of a given $u$ would survive the interval $\cI_k$ is at most $2\exp(-c_1\frac{\scut}\lambda)$.
A union bound now shows that, overall, the probability that $Q_v$ is exceptional is $O(\scut^{3d/2} \exp(-c_1 \frac{\scut}\lambda))$, which is at most $\exp(-\frac{c_1}2  \frac{\scut}\lambda)$ if, say, $\scut \geq \lambda^2$ and $\lambda$ is large enough in terms of $\beta,d$.

Consider now the collection of block-set pairs $\{(R_i,S_i)\}$. If $R_i \conn S_i $ on account of some component $\Upsilon_{j_i}$ at times $\tcut$ and $\tpluss$ (i.e., $\Upsilon_{j_i}$ is minimally covered by $S_i$ while $\sH_{\Upsilon_{j_i}}(\tcut)$ is minimally covered by $R_i$) then every block $S\in S_i$ contains some $v\in\Upsilon_{j_i}$ such that $(v,\tpluss)$ is connected by a path (arising from the aforementioned marked intervals) to $(R_i,\tcut)$ and every $R_i$ contains some $w\in\sH_{\Upsilon_{j_i}}(\tcut)$ such that $(w,\tcut)$ is connected to $(S_i,\tpluss)$. Moreover, the set of blocks traversed by these paths necessarily forms a lattice animal (by our definition of the component $\Upsilon_{j_i}$ via the equivalence relation on blocks according to intersecting histories or adjacency at times $\tcut$ or $\tpluss$).
We claim that for any block $X$ in this lattice animal, either $X$ contains some vertex $v$ such that $Q_v$ is exceptional, or one of its $2d$ neighboring blocks does (and belongs to the lattice animal).
Indeed, take $x\in X$ such that there is a path $P$ from some $v\in S_i$ to some $w\in R_i$ going through $x$ (such a path exists by the construction of the lattice animal). If $P$ is contained in $B(x,\frac12 \scut^{3/2}) \times (\tcut,\tpluss]$, and hence also in $B(v,\scut^{3/2})$, then it gives rise to a temporal crossing in $Q_v$ and $v$ belongs either to a neighboring block of $X$ or to $X$ itself. Otherwise, $P$ visits both $x$ and some $y\in\partial B(x,\frac12 \scut^{3/2})$ and in doing so gives rise to a spatial crossing in $Q_x$, as claimed.

It follows that if $(R_i,S_i)$ are the blocks corresponding to the components $\Upsilon_{j_i}$ for all $i$ then there are pairwise disjoint lattice animals, with $m_i \geq \anim(R_i\cup S_i)$ blocks each (recall that $\anim(S)$ is the smallest number of blocks in a lattice animal containing $S$), such that each block either contains some $v$ for which $Q_v$ is exceptional, or it has a neighboring block with such a vertex $v$. Therefore, by going through the blocks in the lattice animals according to an arbitrary ordering, one can find a subset $S$ of at least $\sum m_i / (2d+1)$ blocks, such that each block in $S$ contains a vertex with an exceptional column. Similarly, we can arrive at a subset $S'\subset S$ of size at least $\sum m_i / (2d+1)^2$ such that every pair of blocks in it has distance (in blocks) at least $2$. Since the event that $Q_v$ is exceptional depends only on the updates within $B(v,\scut^{3/2})$, the distances between the blocks in $S'$ ensure that the events of containing such a vertex $v$ are mutually independent. Hence, the probability that a given collection of lattice animals complies with the event $\{R_i\conn S_i\}$ for all $i$ is at most $\exp[-\frac{c_1}2 \frac{\scut}\lambda (2d+1)^{-2} \sum m_i]$, or $\exp(-c_2\frac{\scut}\lambda \sum m_i)$ for $c_2=c_2(\beta,d)$.

Finally, recall from the discussion below~\eqref{eq-sum-A-one-animal} that the number of different lattice animals containing $m$ blocks and rooted at a given block is at most $(2d)^{2(m-1)}$. Combined with the preceding discussion, using $m_i \geq \anim(R_i\cup S_i)$ we find that
\begin{align*} \P(\cap_i\{R_i\conn S_i\}) &\leq \prod_{i} \sum_{m_i \geq \anim(R_i \cup S_i)} (2d)^{2m_i} e^{-c_2 \frac{\scut}\lambda m_i}
\leq \exp\bigg[-\frac{c_2}2  \frac{\scut}\lambda \sum_i \anim(R_i\cup S_i)\bigg]\end{align*}
if for instance $\scut \geq 4\lambda \log(2d)/c_2$, readily guaranteed when $\scut \geq \lambda^2 $ for any $\lambda$ that is sufficiently large in terms of $\beta,d$.
\qed

\subsection{Cut-sets estimates: Proof of Lemma~\ref{l:exp-cut-set}}

Partition the space-time slab $\Lambda \times (\tcut,\tpluss]$ into {\em cubes} of the form $Q \times (t,t+r]$ where $r$ is some large integer to be later specified (its value will depend only on $\beta$ and $d$) and $Q\subset \Z^d$ is a box of side-length $r^2$. We will refer to $Q^+ \times (t,t+r]$ for $Q^+ := B(Q,r^{3/2})$ as the corresponding {\em extended cube}.
Let us first focus on some regular phase $\cI_k$. Similar to the argument from the proof of Lemma~\ref{l:Ai-to-Bi} (yet modified slightly), we will say that a given cube $Q \times (t,t+r] $ is {\em exceptional} if one of the following conditions is met:
\begin{itemize}[\indent$\bullet$]
  \item {\em spatial crossing}: the cube has a path connecting $(x,t')$ to $(y,t'')$ for some $x,y\in Q$ such that $|x-y|\geq r^{3/2}$.
  \item {\em temporal crossing}: the extended cube has a path connecting $(x,t+r)$ to $(y,t)$ for some $x,y\in Q^+$.
\end{itemize}
As before, the probability that a given cube contains a spatial crossing is $O(r^{2d} \exp(-r^{3/2}))$ provided that $r > (20 d)^2 $, by the bound from Eq.~\eqref{e:BasicUpdate}. Similarly, the probability of the aforementioned temporal crossing within the regular phase is $O(r^{2d} \exp(-c_1 r))$ for some $c_1=c_1(\beta,d)>0$ by Eq.~\eqref{e:BasicSupport}.
Combining the two, the probability that a cube is exceptional is at most $\exp(-c_2 r)$ for some $c_2=c_2(\beta,d)>0$ if $r$ is a large enough in terms of $\beta,d$.

Next, break the time interval $\cI_k=(\tau_{k-1},\tau_k-1]$ into length-$r$ subintervals $\cI_{k,1},\ldots,\cI_{k,m}$ (so that $m = \frac{\scut}{(\lambda - 1)r}$) in reverse chronological order, i.e.,
\[ \cI_{k,l}=(\tau_{k}-l r-1,\tau_k-(l-1)r-1]\qquad(l=1,\ldots,m)\,.\]
Further let $\cI_{k,0}=\tau_k-1$, and let $Y_{k,l}$ for $l=0,\ldots,m$ count the number of cubes $Q\times\cI_{k,l}$ (boxes when $l=0$) that, at time $t=\tau_k-(l-1)r-1$ (the end of the subinterval $\cI_{k,l}$), intersect the history of $S$.

Our next goal is now to bound the exponential moments of $Y_{k,m}$, the number of cubes intersecting the history of $S$ at time $\tau_{k-1}$, which will be achieved by the following claim:
\begin{claim}\label{clm:Y-k-m-laplace}
For any $k=1,\ldots,\lambda$, the above defined variables $(Y_{k,l})$ satisfy
\begin{align}\label{eq-laplace-regular}
  \E\left[e^{a Y_{k,m}} \right] \leq 1 + \left(\tfrac34\right)^{m} \exp\left[ (\tfrac23)^{m} a Y_{k,0}\right]\quad\mbox{ for any }0<a<\tfrac13\,.
\end{align}
\end{claim}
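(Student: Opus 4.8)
The plan is to prove the inequality by induction on $l=0,1,\ldots,m$, conditioning throughout on the randomness used to develop the history down to time $\tau_k-1$ (in particular fixing the value of $Y_{k,0}$ and the support $\sH_S$ at that time); the asserted bound is the case $l=m$. The base case $l=0$ is trivial since $e^{aY_{k,0}}-1\le e^{aY_{k,0}}$. The heart of the matter is the one-step estimate
\[ \E\big[e^{aY_{k,l}}\mid \cF_{l-1}\big]-1 \;\le\; \tfrac34\Big(e^{\frac23 a Y_{k,l-1}}-1\Big)\qquad\big(0<a<\tfrac13\big)\,, \]
where $\cF_{l-1}$ denotes the history developed down to the end of $\cI_{k,l-1}$ (so that $Y_{k,l-1}$ is $\cF_{l-1}$-measurable). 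Granting this, the induction closes by applying the hypothesis with the shifted parameter $b=\tfrac23 a\in(0,\tfrac13)$: taking $\E[\,\cdot\mid\cF_0]$ in the one-step estimate gives $\E[e^{aY_{k,l}}\mid\cF_0]-1\le\tfrac34\big(\E[e^{bY_{k,l-1}}\mid\cF_0]-1\big)\le\tfrac34(\tfrac34)^{l-1}e^{(2/3)^{l-1}bY_{k,0}}=(\tfrac34)^l e^{(2/3)^l aY_{k,0}}$, since $(2/3)^{l-1}b=(2/3)^l a$.

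For the one-step estimate, write $Y_{k,l}\le\sum_{Q'}\xi_{Q'}$, the sum over boxes $Q'$ occupied at level $l-1$ (i.e.\ meeting $\sH_S$ at the end of $\cI_{k,l-1}$), where $\xi_{Q'}$ counts the level-$l$ cubes met by the backward history of $\sH_S\cap Q'$ across $\cI_{k,l}$; this holds because, by definition of the support, every site of $\sH_S$ at the end of $\cI_{k,l}$ is joined through the updates of $\cI_{k,l}$ to an occupied box at level $l-1$. The key tail bound is $\P(\xi_{Q'}\ge j\mid\cF_{l-1})\le e^{-c_3 j r}$ for all $j\ge1$, valid for $r$ large in terms of $\beta,d$: the event $\xi_{Q'}\ge1$ forces the history of $Q'$ to survive the length-$r$ interval $\cI_{k,l}$, which by~\eqref{e:BasicSupport} and a union bound over the $r^{2d}$ sites of $Q'$ has probability at most $r^{2d}\sm_r\le 2r^{2d}e^{-c_{\beta,d}r}$, while $\xi_{Q'}\ge j$ forces the history to traverse a connected family of cubes; since a non-exceptional cube kills any history entering it, this forces a lattice animal of order $j$ exceptional cubes, whose probability is controlled — exactly as in the proof of Lemma~\ref{l:Ai-to-Bi} — by passing to a sub-collection of cubes at pairwise block-distance at least a constant (on which exceptionality is independent), using $\P(\text{exceptional})\le e^{-c_2 r}$ and a lattice-animal union bound. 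Dependence between the $\xi_{Q'}$ for different $Q'$ is handled by revealing histories sequentially (merging of histories only decreases $Y_{k,l}$), giving $\E[e^{aY_{k,l}}\mid\cF_{l-1}]\le\E\big[\prod_{Q'}e^{a\xi_{Q'}}\mid\cF_{l-1}\big]\le\phi(a)^{Y_{k,l-1}}$, where $\phi(a):=1+\sum_{j\ge1}(e^{aj}-e^{a(j-1)})e^{-c_3 jr}\le 1+3e^{-c_3 r}a$ for every $0<a<\tfrac13$ once $r$ is large.

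It remains to pass from $\phi(a)^{Y_{k,l-1}}$ to the claimed recursion. Using $\phi(a)^{Y}-1\le e^{(\phi(a)-1)Y}-1$ for $Y\in\Z_{\ge0}$, together with the elementary fact that $Y\mapsto(e^{uY}-1)/(e^{vY}-1)$ is nonincreasing on $(0,\infty)$ whenever $0<u\le v$ — hence bounded above by its limit $u/v$ at $0^+$ — and the bound $\phi(a)-1\le 3e^{-c_3 r}a\le\tfrac34\cdot\tfrac23\,a$ valid for $r$ large, we obtain $e^{(\phi(a)-1)Y_{k,l-1}}-1\le\tfrac34\big(e^{\frac23 aY_{k,l-1}}-1\big)$, which is the one-step estimate. (The monotonicity fact follows since $u\mapsto u/(1-e^{-uY})$ is nondecreasing, as $1-e^{-uY}(1+uY)\ge0$.)

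The main obstacle is the conditional tail estimate on $\xi_{Q'}$ and, more delicately, the sequential-revealing coupling that turns the dependent per-box contributions into the clean product bound $\phi(a)^{Y_{k,l-1}}$ while keeping the tail estimate valid after conditioning on $\cF_{l-1}$ and on the already-revealed histories — the same flavor of bookkeeping as in the proofs of Lemmas~\ref{l:Ai-to-Bi} and~\ref{lem-nu-bound}. One must also verify that the updates along $\cI_{k,l}$ are genuinely independent of $\cF_{l-1}$, which holds because the histories revealed through level $l-1$ never consulted updates with time in $\cI_{k,l}$.
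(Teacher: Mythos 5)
Your proof follows the same overall scheme as the paper's — iterate a one‑step Laplace estimate of the form $\E[e^{aY_{k,l}}\mid\cF_{l-1}]-1\leq\tfrac34\big(e^{\frac23 aY_{k,l-1}}-1\big)$ — but the way you reach that one‑step estimate is genuinely different, and arguably cleaner. The paper first obtains an exponential‑moment bound at the \emph{specific} large parameter $\delta r$, namely $\E[e^{\delta r Y_{l+1}}\mid\cF_{k,l}]\leq\exp(e^{-\delta r}Y_l)$, converts this into a tail bound via Markov's inequality, re-integrates to get $\E[e^{aY_{l+1}}]\leq e^{aY_l/4}/(1-a)$ for all small $a$, and then closes the recursion through a case split on $Y_l\geq 6$ versus $Y_l\leq 6$ (each case invoking a different elementary inequality). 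You instead bound the exponential moment directly at every $a\in(0,\tfrac13)$ by a product $\phi(a)^{Y_{k,l-1}}$ with $\phi(a)\leq 1+3ae^{-c_3 r}$, coming from a per‑box tail estimate $\P(\xi_{Q'}\geq j)\leq e^{-c_3 jr}$, and then apply once the monotonicity of $Y\mapsto(e^{uY}-1)/(e^{vY}-1)$ — this is essentially the same elementary fact the paper uses in its $Y_l\geq 6$ branch, but you apply it uniformly, so no case split is needed. Your closing induction (passing $a\mapsto\tfrac23 a$ at each level) and all the elementary estimates check out. The one genuinely delicate point, which you correctly identify, is the step $\E\big[\prod_{Q'}e^{a\xi_{Q'}}\mid\cF_{l-1}\big]\leq\phi(a)^{Y_{k,l-1}}$: the paper sidesteps this by working directly with the union of exceptional cubes and a lattice‑animal union bound rooted at the $Y_l$ boxes (passing to a pairwise non‑adjacent sub-collection to get independence), rather than attempting a per‑box product of Laplace transforms. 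To make your version airtight you would need to define the $\xi_{Q'}$ so that each uses fresh randomness — e.g.\ counting only the new cubes reached before a merge with an already‑explored history, and re‑running the lattice‑animal argument conditionally — which in the end reproduces the paper's counting; alternatively you could keep your per‑box framing but derive the product bound directly by the same global lattice‑animal enumeration the paper performs. This is a bookkeeping matter rather than a gap in the strategy, and your convexity trick is a real simplification of the paper's case analysis.
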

\begin{proof}
Throughout the proof of the claim 
we drop the subscript $k$ from the $Y_{k,l}$'s and simply write $(Y_l)$.

If $v\in Q\times \tau_k-(l-1)r-1$ belongs to the history-line, we can trace its origin in the cube $Q\times \cI_{k,l-1}$ and necessarily either that cube is exceptional or one of its $2d$ neighbors is (as otherwise there will not be a path from $v$ making it to time $\tpluss$). Hence, $Y_{l+1} \leq (2d+1)X_{l+1}$, where $X_{l+1}$ counts the number of exceptional cubes in the $k$-th subinterval. Moreover, starting from $Y_{l}$ cubes covering the history, the set of exceptional cubes counted by $X_{l+1}$ is comprised of $Y_{l}$ lattice animals --- each rooted at one of those $Y_l$ cubes. So, if $Y_{l}=a$ for some integer $a$ and we consider lattice animals of sizes $w_1,\ldots,w_a$ (cubes) for each of these, the number of configurations for these lattice animals would be at most $(2d)^{2\sum w_i}$ as was noted in the proof of Lemma~\ref{l:Ai-to-Bi}. Out of these, we can always extract a subset of $(\sum w_i)/(2d+1)$ cubes which are pairwise non-adjacent, whereby the events of being exceptional are mutually independent.

Combining these ingredients, and setting $\delta = \frac12 c_2 (2d+1)^{-2}$, if $\cF_{k,l}$ is the $\sigma$-algebra generated by the updates in the subintervals $\cI_{k,l'}$ for $l'\leq l$ then
\begin{align}
  \E &\left[ e^{\delta r  Y_{l+1}}\mid\cF_{k,l}\right]  \leq
  \E \left[ e^{(2d+1)\delta r X_{l+1}}\mid\cF_{k,l}\right] \nonumber\\
  &\leq \sum_{w_1,\ldots, w_{Y_{l}}} \exp\left[\left[(2d+1)\delta r + 2\log(2d) - c_2 (2d+1)^{-1} r \right]\sum w_i\right]\nonumber\\
    &= \bigg[ \sum_{w} e^{-\left[(2d+1)\delta r - 2\log(2d)\right] w}\bigg]^{Y_{l}} \leq \bigg[ \sum_{w} e^{-2d \delta r w}\bigg]^{Y_{l}}
    \leq \exp\left[e^{-\delta r} Y_{l} \right]\,,\label{eq-laplace-large}
\end{align}
where the last two inequalities hold provided that $\delta r$ is sufficiently large, i.e., when $r$ a large enough function of $\beta$ and $d$.
In particular, by Markov's inequality this implies that for any $y>0$,
\begin{align*}
  \P \left( Y_{l+1} \geq y \mid\cF_{k,l}\right) \leq \exp\left[ e^{-\delta r} Y_{l} - \delta r y\right] \leq \exp\left[ \tfrac1{4} Y_{l} - y\right]\,,
\end{align*}
provided that $\delta r$ is large. This enables us to complement the bound in~\eqref{eq-laplace-large} when taking a small factor instead of $\delta r$; namely,
for any $0<a< 1$ we have
\begin{align*}
  \E \left[ e^{a Y_{l+1}}\mid\cF_{k,l}\right]  &= \int_0^\infty \P\left( e^{a Y_{l+1}}\geq t \mid\cF_{k,l}\right)
  \leq \int_0^\infty \bigg(1 \wedge \frac{\exp\left[\frac{1}{4}Y_l\right]}{t^{1/a}}\bigg)dt \\
  &= e^{\frac{a}{4} Y_l } + e^{\frac14 Y_l} \int_{e^{\frac{a}{4} Y_l}}^{\infty} t^{-\frac{1}a} dt
  = e^{\frac{a}4 Y_l} + \frac{a}{1-a} e^{\frac{a}4 Y_l} = \frac{e^{\frac{a}4 Y_l}}{1-a}\,.
\end{align*}
When $Y_l \geq 6$ we can upper bound the last exponent by $\exp(-\frac32 a + \frac12 a Y_l)$ and get
 that for any $0<a<\frac13$,
\begin{align*}
  \E\left[e^{a Y_{l+1}} - 1 \mid \cF_{k,l}\,,\,Y_l\geq 6\right] \leq
\frac{e^{-\frac32 a}}{1-a}e^{\frac{a}2 Y_{l}} - 1 \leq \frac34 \left( e^{\frac23 a Y_{l}} - 1 \right)\,,
\end{align*}
where the last inequality used $1- a \geq \exp(-\frac{a}{1-a})\geq \exp(-\frac32 a)$ for $0< a< \frac13$, followed by the fact that $e^{x/\alpha }-1 \leq \alpha(e^{2x}-1)$ for any $0<\alpha \leq 1$ and $x\geq 0$ thanks
to Jensen's inequality. On the other hand, if $Y_l \leq 6$ then
again by Jensen's inequality (now taking $\alpha = a/\delta r$) and Eq.~\eqref{eq-laplace-large},
\begin{align*}
  \E\left[e^{a Y_{l+1}} - 1 \mid \cF_{k,l}\,,\,Y_l \leq 6\right] \leq
\frac{a}{\delta r}\left( e^{e^{-\delta r} Y_{l}} - 1\right) \leq \frac34 \left( e^{\frac23 a Y_{l}} - 1 \right)\,,
\end{align*}
with the last inequality justified since $\exp(e^{-\delta r}Y_l) \leq \exp(6e^{-\delta r})\leq 2$ for large $\delta r$,
so its left-hand side is at most $a/\delta r \leq a/3$ (again for large $\delta r$), while using $Y_l \geq 1$ in its right-hand side (when $Y_l=0$ both sides are 0)
shows it is always at least $a/2$.

We have thus established the above relation for all values of $Y_l$; iterating it through the $m$ subintervals of $\cI_k$ yields~\eqref{eq-laplace-regular}, as required.
\end{proof}

Moving our attention to the deferred phase $\cI'_k$, here we would like to stochastically dominate the number of vertices in the history at any given time by a rescaled pure birth process $Z_{k,t}$ along a unit interval, where each particle adds $2d$ new ones at rate 1 (recall that by definition particles do not die in deferred phases, and their splitting rate is $1+\tanh(-2d\beta) < 1$) and furthermore, every vertex receives an \emph{extra update} at time $\tau_k$.
Indeed, these can only increase the size of the history at $\tau_k-1$, which in turn can only increase the quantity $\exp(\sum_i 4\Xi_i)$
(by introducing additional cut-vertices in deferred phases further down the history) that we ultimately wish to bound.

Overestimating the splitting rate suffices for our purposes and simplifies the exposition. On the other hand, introducing the extra update at time $\tau_k$ plays a much more significant role:
Let $M_k$ denote the number of vertices in the history at the beginning of each phase $\cI'_k$. By the discussion above, the variables $M_k$ in our process dominate those in the original dynamics, and so $(\Xi_{k}) \succcurlyeq ({\Xi}^+_{k})$ jointly, where
\begin{equation}\label{eq-def-vartheta+} {\Xi}^+_{k} = \prod_{v\in M_k}\left(\tfrac14 \theta T_{v,k}\right)\quad\mbox{ for }
\quad T_{v,k}\sim ({\rm Exp}(1)\wedge 1)\end{equation}
is the analog of $\Xi_{k}$ in the modified process (the variable $T_{v,k}$
corresponding to what would be the update time to $v\in M_k$ nearest to $\tau_k$ in $\cI'_k$ in lieu of the extra update at time $\tau_k$).
Crucially, thanks to the extra updates, $\Xi^+_{k}$ depends only on $M_k$ and has no effect on the history going further back (and in particular on the $M_j$'s for $j<k$).
Therefore, we will (ultimately) condition on the values of all the $M_k$'s, and thereafter the variables $({\Xi}^+_{k})$ will be readily estimated, being conditionally independent.

Indexing the time $0\leq t\leq 1$ of the process $Z_{k,t}$ in reverse chronological order along $\cI'_k$ (identifying $t=0$ and $t=1$ with $\tau_k,\tau_k-1$, resp.), the exponential moments of $Z_{k,1}$ can be estimated as follows.
\begin{claim}
  \label{clm:Z-k-t-laplace}
  For any $k=1,\ldots,\lambda$, the above defined variables $(Z_{k,t})$ satisfy
\begin{align}\label{eq-laplace-Z}
 \E \left[e^{a_1 Z_{k,1}}\mid Z_{k,0} \right] \leq \exp\left[2 e^{2d} a_1 Z_{k,0}\right]~\mbox{ for any $0 < a_1 \leq \frac1{2d}\log\big(\frac1{1-e^{-3d}}\big)$}\,.
\end{align}
\end{claim}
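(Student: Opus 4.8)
The plan is to use the branching structure of the process together with an explicit solution of the associated Kolmogorov equation. Since $(Z_{k,t})$ is the pure birth process in which each present particle independently spawns $2d$ new particles at rate $1$ (the extra update at $\tau_k$ plays no role in this claim, as it only affects the $\Xi^+_k$'s), the process started from $Z_{k,0}$ particles is a sum of $Z_{k,0}$ independent copies of the process started from a single particle. Hence, writing $G(t) = \E\big[e^{a_1 Z_{k,t}} \mid Z_{k,0}=1\big]$, we have $\E[e^{a_1 Z_{k,1}}\mid Z_{k,0}] = G(1)^{Z_{k,0}}$, so it suffices to show that $G(1) \le \exp[2e^{2d}a_1]$ for every $0<a_1\le \frac1{2d}\log\frac1{1-e^{-3d}}$.

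To compute $G$, I would condition on the time $\sigma\sim\mathrm{Exp}(1)$ of the first branching event, at which the single particle is replaced by $2d+1$ particles whose subtrees are i.i.d.\ copies of the whole process; this gives, for $t\in[0,1]$,
\[ G(t) = e^{-t}e^{a_1} + \int_0^t e^{-s}\,G(t-s)^{2d+1}\,ds\,. \]
Multiplying by $e^{t}$ and differentiating yields $G'(t)=G(t)^{2d+1}-G(t)$ with $G(0)=e^{a_1}$. Separating variables, with antiderivative $\int \frac{dG}{G(G^{2d}-1)}=\frac1{2d}\log(1-G^{-2d})$ (valid for $G>1$), one obtains the closed form
\[ G(t)=\Big(1-e^{2dt}\big(1-e^{-2da_1}\big)\Big)^{-1/(2d)}\,. \]
The hypothesis on $a_1$ reads $e^{-2da_1}\ge 1-e^{-3d}$, i.e.\ $1-e^{-2da_1}\le e^{-3d}$, so that for all $t\in[0,1]$ the base above is at least $1-e^{2d}\cdot e^{-3d}=1-e^{-d}>0$; in particular the solution does not blow up on $[0,1]$, which legitimizes the above manipulations and gives the stated value of $G(1)$.

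It then remains to prove $G(1)\le\exp[2e^{2d}a_1]$. Writing $b=2da_1\ge0$ and $c=e^{2d}\ge1$ and raising to the $(-2d)$-th power, this is equivalent to $1-e^{-2bc}\ge c\,(1-e^{-b})$. I would prove it by studying $\psi(b):=(1-e^{-2bc})-c(1-e^{-b})$: one has $\psi(0)=0$ and $\psi'(b)=c\big(2e^{-2bc}-e^{-b}\big)$, whose sign changes exactly once, from $+$ to $-$, at $b=\frac{\log 2}{2c-1}$. Thus $\psi$ is unimodal and nonnegative precisely on $[0,b_*]$ for $b_*$ its second zero, so it suffices to verify $\psi(b_{\max})\ge0$ at the right endpoint $b_{\max}=\log\frac1{1-e^{-3d}}$ of the allowed range of $b$. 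Since $1-e^{-b_{\max}}=e^{-3d}$, this amounts to $(1-e^{-3d})^{2e^{2d}}\le 1-e^{-d}$; taking logarithms and using $x\le-\log(1-x)\le\frac{x}{1-x}$ for $x\in(0,1)$ reduces it to $2e^{-d}\ge\frac{e^{-d}}{1-e^{-d}}$, i.e.\ $e^{-d}\le\tfrac12$, which holds as $d\ge1$.

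The routine parts are the branching decomposition, the Kolmogorov equation, and integrating the resulting ODE (with the standard check, supplied by the hypothesis on $a_1$, that $G$ stays finite on $[0,1]$). The only genuine point is the last step — verifying $1-e^{-2bc}\ge c(1-e^{-b})$ over the entire admissible window for $b$ — where the precise shape of the range for $a_1$ (the exponent $3d$ rather than the borderline $2d$) is exactly what provides the slack, the inequality ultimately boiling down to $e^{-d}\le\frac12$.
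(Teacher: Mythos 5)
Your proof is correct, and it gets to the same explicit moment generating function as the paper, but by a slightly different route. The paper makes $e^{\alpha(t)Z_{k,t}}$ a martingale by choosing $\alpha$ to solve $\alpha'(t)+e^{2d\alpha(t)}-1=0$, giving $\E[e^{\alpha(1)Z_1}\mid Z_0]=e^{\alpha(0)Z_0}$ with $\alpha(t)=\tfrac1{2d}\log\tfrac1{1-\zeta e^{-2dt}}$; setting $\alpha(1)=a_1$ fixes $\zeta=e^{2d}(1-e^{-2da_1})$, and $\alpha(0)$ is then bounded directly via $-\log(1-\zeta)\le\zeta/(1-\zeta)$, $1-e^{-x}\le x$, and $1/(1-e^{-d})\le2$. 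You instead use the branching decomposition $\E[e^{a_1 Z_{k,1}}\mid Z_{k,0}]=G(1)^{Z_{k,0}}$ and solve the Kolmogorov ODE $G'=G^{2d+1}-G$ to obtain the closed form $G(1)=\big(1-e^{2d}(1-e^{-2da_1})\big)^{-1/(2d)}$, which is exactly $e^{\alpha(0)}$ — so the core computations agree. Where you diverge is the final elementary inequality: the paper's pointwise chain of bounds on $\alpha(0)$ applies directly to every admissible $a_1$, whereas you argue unimodality of $\psi(b)=(1-e^{-2bc})-c(1-e^{-b})$ and then check only the right endpoint $b_{\max}=\log\frac1{1-e^{-3d}}$, where it again reduces to $e^{-d}\le\tfrac12$. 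The unimodality step is a small detour the paper avoids, but your version is equally valid and the check $e^{-2b_{\max}c}=(1-e^{-3d})^{2e^{2d}}\le1-e^{-d}$ is carried out correctly.
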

\begin{proof}
Throughout the proof of the claim, put $Z_{t}$ as short for $Z_{k,t}$ for brevity.

One easily sees that for any $\alpha>0$,
\[ \frac{d}{dt}\E \left[e^{\alpha(t)Z_t}\right] = \E \left[\big[\alpha'(t)Z_t +(e^{2d \alpha(t)}-1)Z_t\big]e^{\alpha(t)Z_t}\right]\]
since $\frac{d}{dt} \E[ e^{\alpha Z_t}\mid Z_t] = \lim_{h\to0} e^{\alpha Z_t}(e^{2d\alpha}-1)\P(Z_{t+h}\neq Z_t\mid Z_t)$ for fixed $\alpha$.
Taking $\alpha(t)$ to be the solution to $\alpha'(t) + \exp[2d\alpha(t)] - 1 = 0$, namely
\[ \alpha(t) = \frac{1}{2d} \log\left(\frac1{1-\zeta e^{-2d t}}\right)\qquad\mbox{for $\zeta>0$}\,,\]
we find that $\exp[\alpha(t)Z_t]$ is a martingale and in particular
\begin{equation}
  \label{eq-alpha}
  \E \left[e^{\alpha(1)Z_1}\mid Z_0 \right] = e^{\alpha(0)Z_0}\,.
\end{equation}
Therefore, if we set
\[  \zeta=e^{2d} \left(1-e^{-2d a_1}\right) \quad\mbox{ for } \quad0 < a_1 \leq \frac1{2d}\log\left(\frac1{1-e^{-3d}}\right)
\]
then $0<\zeta \leq e^{-d}$ and so $\alpha(t) $ is real and decreasing along $[0,1]$ to $\alpha(1)=a_1$.
For this choice of parameters we obtain that
\[ \alpha(0) \leq \frac1{2d}\log\left(\frac{1}{1-\zeta}\right) \leq \frac1{2d} \frac{e^{2d}(1-e^{-2d a_1})}{1-e^{-d}} \leq 2 e^{2d} a_1\,,
\]
using that $1/(1-e^{-d}) \leq 2$ for any $d\geq 1$ and $1-x\leq e^{-x}$ for $x>0$. Overall, for any small enough $a_1$ in terms of $d$ (as in the condition above, matching the one in~\eqref{eq-laplace-Z})
we have by~\eqref{eq-alpha} that $ \E [e^{a_1 Z_1}\mid Z_0 ] \leq \exp[2 e^{2d} a_1 Z_0]$.
\end{proof}

Going through the regular phase will enable us to apply Claim~\ref{clm:Z-k-t-laplace} with a value of $a_1$ which is exponentially small in $\scut$, let alone small enough in terms of $d$, easily satisfying the upper bound of roughly
$e^{-3d}/2d$ from the condition in~\eqref{eq-laplace-Z}.

Putting together the analysis of the deferred and regular phases $\cI'_k,\cI_k$ in the last two claims, we can establish a recursion for $M_k$, the number of vertices in $\sH_{S}(\tau_k)$. Using $Y_{k,0}\leq Z_{k,1}$ while
$M_k \leq r^{2d} Y_{k,m}$ (by crudely taking the entire volume of each of the cubes that survived to that point), and recalling~\eqref{eq-laplace-regular}, gives
\begin{align*}
  \E\left[ e^{a M_{k}}\mid \cF_{k+1}\right] &\leq 1 + \left(\tfrac34\right)^m \left(\exp\left[2(er)^{2d}\left(\tfrac23\right)^m a M_{k+1}\right]-1\right)
\end{align*}
as long as $a < \frac13 r^{-2d}$ (to have $a'=r^{2d}a$ qualify for an application of~\eqref{eq-laplace-regular}).
Setting
\begin{align}\label{eq-a-choice}
  \hat{a} = \tfrac14 r^{-2d}
\end{align}
and seeing as for large enough $\scut$ (and therefore large enough $m$) compared to $r$ and $d$, the pre-factor of $M_{k+1}$ is at most $(\frac34)^m$, we finally arrive at
\begin{align}\label{eq-Mk-recursion}
\begin{array}{rcl}  \E\left[ e^{\hat{a} M_{k}}\mid \cF_{k+1}\right] &\leq & 1 + \left(\tfrac34\right)^m \left(\exp\left[\left(\tfrac34\right)^m \hat{a} M_{k+1}\right]-1\right) \,,\\
\noalign{\medskip}
    M_\lambda &\leq& \scut^{2d} |S|\,.
    \end{array}
\end{align}

We will now utilize~\eqref{eq-Mk-recursion} for a bound on the probability that the median of the $M_k$'s exceeds a given integer $b\geq 0$.
More precisely, consider the event that the median of $\{M_0,M_1,\ldots,M_{\lambda-2}\}$,
 which we denote as $\med_{k<\lambda-1}M_k$, exceeds $b$ (it will suffice for our purpose
to consider this event --- which excludes $M_{\lambda-1}$ before taking the median --- and it is convenient to do so since $M_{\lambda}$ was pre-given as input, and hence $\cI_{\lambda}$ is exceptional compared to any other $\cI_k$, where we have better control over $M_k$).
To this end, notice that if $\max\{ M_k : k < \lambda\} \leq  \lambda b$ then the event $\{ \med_{k<\lambda-1}M_k > b\}$ necessitates at least $(\lambda-1)/2$ values of $1\leq k\leq \lambda-1$ for which $M_{k-1}\geq b$ even though $M_k \leq \lambda b$. Therefore,
\begin{align}
  \P\left(\underset{k<\lambda-1}{\med}M_k > b\right) &\leq \P\left(\max_{k < \lambda} \!M_k \geq \lambda b\right)
  + 2^{\lambda}\bigg[\sup_{k < \lambda} \P\left(M_{k-1}> b\mid M_k \leq \lambda b\right)\bigg] ^{\frac{\lambda-1}2}\,.\label{eq-med-1}
\end{align}
The first term in the right-hand side of~\eqref{eq-med-1} can be estimated via~\eqref{eq-Mk-recursion}:
\begin{align*}
\P\left(\max_{k <\lambda} \!M_k \geq \lambda b\right) &\leq \sum_{k<\lambda} \P\left(e^{\hat{a} M_k} \geq e^{\hat{a}\lambda b}\right) \leq \lambda \exp\left[- \lambda \hat{a} b
+ (\tfrac34)^m \hat{a} M_\lambda \right]
\nonumber \\
&\leq \lambda \exp\bigg[-\lambda \hat{a} b + (\tfrac34)^m (\tfrac{\scut}r)^{2d} |S| \bigg]\,.
\end{align*}
Similarly, for the second term, we get from~\eqref{eq-Mk-recursion} that for any $k<\lambda$,
\begin{align*}
 \P\left(M_{k-1}> b\mid M_k \leq \lambda b\right) &\leq \exp\left[-\hat{a} b + (\tfrac34)^m \hat{a} \lambda b\right]
 \leq\exp\left[-\tfrac12 \hat{a} b \right]
\end{align*}
provided that $\scut$ (and hence $m$) is large enough in terms of $\lambda$ (so $(\frac34)^m\lambda < \frac12$).
Plugging these two inequalities in~\eqref{eq-med-1}, while using that $(\lambda-1)/2 > \lambda/3$ for $\lambda$ large
and $(\tfrac34)^m \scut^{2d} \leq 1$ for $\scut$ large enough in terms of $r$ and $\lambda$, yields
\begin{align}
  \label{eq-med-2}
  \P\left(\underset{k<\lambda-1}{\med} M_k > b\right) &\leq 2^{\lambda+1} \exp\bigg[-\tfrac16 \lambda\hat{a} b   + |S|\bigg]\,.
\end{align}

The final step is to derive the desired upper bound on $\min_k\{ (\Xi^+_k)^{-4}\}$ from the estimate~\eqref{eq-med-2}
on the median of the $M_k$'s.  Write
\begin{align}\label{eq-E-vartheta+}
\E\Big[ \min_{k}&\left\{(\Xi^+_k)^{-4}\right\} \;\big|\; \{M_k\}\,,\, \underset{k<\lambda-1}{\med} M_k = b \Big]
= \int dt\, \P\bigg(\min_k \Big\{ (\Xi^+_{k})^{-\frac12}\Big\} \geq t^{\frac18} \,\Big|\, \{M_k\},\,\underset{k<\lambda-1}{\med} M_{k}= b\bigg)\,,
\end{align}
consider some $t>1$ and $b\geq 0$ and condition on the event $\med_{k<\lambda-1} M_k = b$. Revisiting~\eqref{eq-def-vartheta+}, there are at least $\frac{\lambda-1}2$ values of $k\in\{0,\ldots, \lambda-1\}$ such that
\[ \Xi^+_{k} \succcurlyeq \prod_{j=1}^b \left(\tfrac14 \theta T_{v_{j},k}\right)\quad\mbox{ for i.i.d.\ } T_{v_{j},k}\sim ({\rm Exp}(1)\wedge 1)\,,\]
whence, by the independence of the $T_{v_j,k}$'s, if $T \sim {\rm Exp}(1)\wedge 1$ then
\[ \E \left[(\Xi^+_{k})^{-\frac12}\right]\leq \bigg( \E \bigg[\frac{2}{\sqrt{ \theta T}}\bigg]\bigg)^b\,.
\]
The expectation above (involving a single $T$) is easily seen to be equal to
\[\int dx \P\bigg( T < \frac{4}{\theta x^2}\bigg) = O\bigg(\int \frac{dx}{\theta x^2 }\bigg) < C_{\beta,d}\,,\]
for some $C_{\beta,d}>1$ depending only on $\theta$. Hence, by Markov's inequality, under the above conditioning we have
\[ \P\left( (\Xi^+_{k})^{-\frac12} \geq t^{1/8} \right) \leq C_{\beta,d}^b t^{-1/8} \,,
\]
and already the first 10 (say) out of these $\frac{\lambda-1}2$ values of $k$ show that
\[ \P\left( \min_{k<\lambda-1} \left\{(\Xi^+_{k})^{-\frac12}\right\} \geq t^{1/8}\right) \leq (C_{\beta,d})^{10b} t^{-5/4}\,. \]
(Here we could replace 10 by any integer larger than 8, and it is convenient to use an absolute constant rather than a function of $\lambda$
so as to keep the effect of the constant $C_{\beta,d}$ under control). Using~\eqref{eq-E-vartheta+} we find that
\[ \E\left[ \min_{k<\lambda-1} \left\{(\Xi^+_k)^{-4}\right\} \;\big|\; \{M_k\},\,\underset{k<\lambda-1}{\med} M_k = b \right] \leq \int dt \frac{(C_{\beta,d})^{10b}}{t^{5/4}} = 4 (C_{\beta,d})^{10b}\,,
\]
and an integration with respect to $\P(\med_{k<\lambda-1} M_k =b)$ via~\eqref{eq-med-2} establishes that
as long as, say, $\lambda\hat{a} > 100 \log C_{\beta,d}$, we have
\[ \E\left[ \min_{k<\lambda-1} \left\{(\Xi^+_k)^{-4}\right\} \right]\leq 2^{\lambda+3} e^{|S|}\,.\]

In summary, the required result holds for a choice of $r=\lambda^{3d}$ provided that $\lambda$ is large enough in terms of $d,\beta$ (so $r$ is large enough in terms of these as well, while (as we recall that $\hat{a}=\frac14 r^{-2d}$) in addition $\lambda \hat{a} = \frac14 r^d$ is large) and that $\scut$ is then large enough in terms of $\lambda$ (so $m \geq \scut/(r \lambda)$ is large). For these choice, we may take, e.g., $\scut \geq \lambda^{10d}$, whence $m \geq s \lambda^{-(3d+1)} \geq \sqrt{s}$ and all the requirements  above are met for $\lambda$ large enough in terms of $d,\beta$.
\qed

\subsection{Blue percolation clusters given the history of their exterior}

In this section we prove the following \emph{lower bound} on the probability of a cluster to be \blue\ given the update sequence $\cU$ along the $(\tcut,\tpluss]$ and the complete history up to time $\tcut$ of every vertex in its exterior:
\begin{lemma}\label{l:BS}
There exists $s_0(\beta,d)>0$ so that for any $\scut>s_0$, any sufficiently large $n$ and any $\cC\subset\cA$,
\[
\inf_{\sH^{-}_\cC}\P\bigg(\bigcap_{A_i \in \cC} \{A_i\in \blue\}\;\Big|\; \sH^{-}_\cC,\,\cU\bigg) \geq e^{-\sum_{A_i\in\cC}|A_i|}\,,
\]
where the infimum is over all $\cC$-compatible histories.
\end{lemma}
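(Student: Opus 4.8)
The plan is to exhibit, inside the conditional space, an explicit local event $G$ that forces every $A_i\in\cC$ to be a blue singleton cluster, and to bound $\P(G\mid\sH^{-}_\cC,\cU)$ from below by $e^{-\sum_i|A_i|}$ block by block. The first thing I would record is a deterministic separation fact: two distinct block-components $\Upsilon_i,\Upsilon_j$ can never have $\sH_{\Upsilon_i}(\tcut)$ and $\sH_{\Upsilon_j}(\tcut)$ in the same block or in adjacent blocks (otherwise the equivalence relation defining $\cA$ merges them), so the block-sets $\{A_i\}$ sit at pairwise $L^\infty$-distance at least $\scut^2$; hence the balls $B(A_i,\scut^2/3)$ are pairwise disjoint and each lies at distance $\ge 2\scut^2/3$ from every other $A_j$. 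Two uses of this: confining each $A_i\in\cC$ to $B(A_i,\scut^2/3)$ kills the early-proximity relation among all components in play, and --- crucially --- $\cC$-compatibility forces every exterior history $\fsup(A_j,t,\tcut)$ with $A_j\notin\cC$ and $t\in[\tcut-\scut,\tcut]$ to stay out of $\bigcup_{A_i\in\cC}B(A_i,\scut^2/3)$ (an intrusion would, by the reversed early-proximity relation, put $A_j$ in the cluster of some $A_i\in\cC$, contradicting compatibility). So it suffices to lower-bound $\P(G\mid\sH^{-}_\cC,\cU)$ for
\[
G:=\bigcap_{A_i\in\cC}\Big\{\bigcup_{v\in A_i}\fupd(v,\tcut-\scut,\tcut)\subset B(A_i,\scut^2/3)\ \text{ and }\ \bigcup_{v\in A_i}\fsup(v,\tcut-\scut,\tcut)=\emptyset\Big\},
\]
since on $G$ each $A_i\in\cC$ has its history dead below $\tcut-\scut$ and confined to $B(A_i,\scut^2/3)$ for $t\ge\tcut-\scut$ (using $\fupd(v,t,\tcut)\subset\fupd(v,\tcut-\scut,\tcut)$ for $t\ge\tcut-\scut$), hence --- by the separation and compatibility just described --- is a singleton cluster, and therefore blue.

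For the block-by-block estimate I would enumerate the blocks of $\bigcup_{A_i\in\cC}A_i$ as $X_1,\dots,X_N$, where $N=\sum_{A_i\in\cC}|A_i|$, expose the update sequence on $\Lambda\times(0,\tcut]$ in this order (revealing at stage $m$ the $\scut^{2d}$ backward histories rooted at the vertices of $X_m$), and set $\mathrm{Bad}_m$ to be the event that some $v\in X_m$ has $\fupd(v,\tcut-\scut,\tcut)\not\subset B(X_m,\scut^2/3)$ or $\fsup(v,\tcut-\scut,\tcut)\ne\emptyset$; then $G^c\subset\bigcup_m\mathrm{Bad}_m$ because $B(X_m,\scut^2/3)\subset B(A_i,\scut^2/3)$ whenever $X_m\subset A_i$. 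Lemma~\ref{l:BasicEstimates} bounds, for a single vertex, the first alternative by $e^{-\scut^2/3}$ (valid once $\scut>60d$, via~\eqref{e:BasicUpdate}) and the second by $\sm_\scut\le 2e^{-c_{\beta,d}\scut}$ (via~\eqref{e:BasicSupport}), so a union bound over the $\scut^{2d}$ vertices of $X_m$ gives $\P(\mathrm{Bad}_m\mid\text{stages }1,\dots,m-1,\ \sH^{-}_\cC,\ \cU)\le\delta(\scut):=C_{\beta,d}\,\scut^{2d}\,e^{-c_{\beta,d}\scut}$, uniformly. Taking $\scut\ge s_0(\beta,d)$ large enough that $\delta(\scut)\le\tfrac12<1-e^{-1}$, a product over the $N$ stages yields
\[
\inf_{\sH^{-}_\cC}\P\Big(\bigcap_{A_i\in\cC}\{A_i\in\blue\}\ \Big|\ \sH^{-}_\cC,\cU\Big)\ \ge\ \inf_{\sH^{-}_\cC}\P\big(G\mid\sH^{-}_\cC,\cU\big)\ \ge\ \big(1-\delta(\scut)\big)^{N}\ \ge\ e^{-N}\ =\ e^{-\sum_{A_i\in\cC}|A_i|}.
\]

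The hard part is the uniformity of the conditional bound $\P(\mathrm{Bad}_m\mid\cdots)\le\delta(\scut)$: I must be certain that conditioning on $\sH^{-}_\cC$ and on the histories already exposed at earlier stages does not spoil the per-vertex decay estimates. For the fixed exterior history this is precisely what the separation argument and $\cC$-compatibility buy --- the clocks governing $G$ in $\bigcup_{A_i\in\cC}B(A_i,\scut^2/3)\times(\tcut-\scut,\tcut]$ lie on no exterior history, so $\sH^{-}_\cC$ reveals nothing about them --- whereas for histories exposed at earlier stages belonging to the \emph{same} component (which may share clocks with $B(X_m,\scut^2/3)$) I would need a short stochastic-domination argument showing that conditioning on a tame history already dead by $\tcut-\scut$ only perturbs the law of the fresh clocks by a bounded factor, to be absorbed into $C_{\beta,d}$. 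Everything else is a union bound over the $\scut^{2d}$ vertices of one block and a product over $\sum_{A_i\in\cC}|A_i|$ blocks; the block side-length $\scut^2$ is exactly the parameter that must be taken large enough to drive the per-block failure probability below $\tfrac12$, at which point $s_0(\beta,d)$ in the statement is determined.
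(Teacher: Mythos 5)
Your setup matches the paper's: the separation of the block-sets $A_i$ (forcing the balls $B(A_i,\scut^2/3)$ to be pairwise disjoint), the observation that $\cC$-compatibility keeps every exterior history out of those balls, and the reduction to the event that each $A_i$'s history is confined to $B(A_i,\scut^2/3)$ and dead by $\tcut-\scut$ are exactly the paper's opening moves (culminating in its product over $A_i\in\cC$). After that point, however, the proofs diverge, and the step you defer --- the uniform conditional bound $\P(\mathrm{Bad}_m\mid\text{stages }1,\ldots,m-1,\ \sH^-_\cC,\cU)\le\delta(\scut)$ across blocks within the same component --- is not a loose end to be cleaned up with a ``short stochastic-domination argument''; it is the entire difficulty, and stochastic domination does not resolve it.

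The obstacle is that $\{\fsup(v,\tcut-\scut,\tcut)=\emptyset\}$ is \emph{not} a monotone event in the updates: the support can shrink or fail to shrink due to delicate cancellations (see Remark~\ref{rem:far-update}), so there is no FKG inequality, no positive association across overlapping blocks, and no ordering of histories that lets you say conditioning on $\mathrm{Bad}_j^c$ for the neighboring blocks $X_j$ only ``perturbs the fresh clocks by a bounded factor.'' Two blocks $X_j,X_m$ in the same $A_i$ whose balls $B(X_j,\scut^2/3)$ and $B(X_m,\scut^2/3)$ overlap genuinely share updates; revealing that $X_j$'s history died before $\tcut-\scut$ fixes a constellation of updates in the overlap on which $X_m$'s support event depends, and there is no a priori reason the conditional failure probability stays below any fixed $\delta<1$ uniformly over those revealed constellations. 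The paper's resolution is a different decomposition altogether: after the exact factorization over $A_i$'s, each ball $B(A_i,\scut^2/3)$ is tiled with boxes of side $\scut^{4d}$ (with a shift $u$ chosen to minimize total boundary), an auxiliary dynamics $\tilde X_t$ is run from all-plus on \emph{disconnected} copies $\tilde V_k$ of the tiles (so the tiles really are independent), the interior unit variables are coupled to the true chain while the boundary ones are left free, and one then \emph{pays} for all boundary updates to be oblivious and agree (the event $\cR_k$, costing $\exp[-C_1\scut|\partial V_k\cap B(A_i,\scut^2/3)|]$). Monotonicity of the Ising update rule is then used to transfer the death of the auxiliary support to the death of the true support. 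The $e^{-|A_i|}$ bound comes out because the boundary cost is only $O(|A_i|/\scut^{2d}\cdot\scut)$ while the per-tile confinement cost is $O(2^{-d}|\cV|)\le O(|A_i|)$. None of this can be shortcut by a per-vertex union bound with a product of conditional probabilities: without the oblivious-boundary decoupling there is no independence to iterate, and without an argument bypassing the non-monotonicity of $\fsup=\emptyset$ there is no stochastic domination. So the proposal has a genuine gap precisely where you flag it, and filling that gap is most of the content of the paper's proof.
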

\begin{proof}
Since $\sH^{-}_\cC$ is $\cC$-compatible, the histories of all $A\in \cA\setminus \cC$ do not enter $B(\cC,\scut^2/3)$ before time $\scut$.  Therefore, it is enough to verify for all $A_i\in\cC$ that  $\cup_{\tcut-\scut < t < \tcut}\fsup(A_i,t,\tcut) \subset B(A_i, \scut^2/3)$ and that $\fsup(A_i,\tcut-\scut,\tcut)=\emptyset$. Since these  events depend on disjoint updates and do not depend on $\cU$,
\begin{align}\label{e:BSseparateA}
&\inf_{\sH^{-}_\cC}\P\bigg(\bigcap_{A\in\cC} \{A_i\in \blue\}\mid \sH^{-}_\cC\bigg) \nonumber
&= \prod_{A_i\in\cC} \P\bigg(\fsup(A_i,\tcut-\scut,\tcut)=\emptyset~,~ \bigcup_{\tcut-\scut < t < \tcut}\fsup(A_i,t,\tcut) \subset B(A_i, \scut^2/3)\bigg)\,,
\end{align}
and so we will treat the $A_i$'s separately.  For any $A_i$ we cover $B(A_i, \scut^2/3)$ with a set of tiles as follows.
Let $0=r_0<r_1<\ldots<r_\ell=n$ be such that $r_k-r_{k-1}\in \{\scut^{4d},2 \scut^{4d}\}$.  For each $u\in [\scut^{4d}]^d$ and $k\in[\ell]^d$ denote
\[
V_{k,u} := \{u_1+r_{k_1-1}+1,\ldots,u_1 + r_{k_1}\}\times\ldots\times\{u_d+r_{k_d-1}+1,\ldots,u_d + r_{k_d}\}
\]
where we embed $\{u_j+r_{k_j-1}+1,\ldots,u_j + r_{k_j}\}$ into $\{1,\ldots,n\}$ modulo $n$.  Let $\partial V_{k,u}$ denote the interior boundary of $V_{k,u}$, that is the subset of vertices of $V_{k,u}$ adjacent to a vertex in its complement.  Then by construction
\[
\frac1{\left|[\scut^{4d}]^d\right|}\sum_{u\in [\scut^{4d}]^d} \sum_{k\in [\ell]^d} |\partial V_{k,u} \cap B(A_i,\scut^2/3)| \leq \frac{2d |B(A_i,\scut^2/3)|}{\scut^{4d}} \,,
\]
since in each vertex $v$ and each coordinate $i$ there are at most two choices of $u_i$  for which $v$ will be on the boundary of a block in coordinate $i$.  Hence, it is possible for us to choose some $u\in[\scut^{4d}]^d$ such that $\sum_{k\in [\ell]^d} |\partial V_{k,u} \cap B(A_i,\scut^2/3)| \leq \frac{2d |B(A_i,\scut^2/3)|}{\scut^{4d}}$. Let $\cV$ denote the set of tiles $ V_{k,u}$ such that $ V_{k,u} \cap B(A_i,\scut^2/3) \neq \emptyset$.  Each block of $A_i$ is in at most $2^d$ tiles, so $|\cV| \leq 2^d |A_i|$.

For each $V_k \in \cV$, let $\tilde{V}_k$ denote an isomorphic copy of the graph induced by $V_k$ disconnected from everything else together with a graph bijection $\phi_k:\tilde{V}_k \to V_k$.  Let $\tilde{\Lambda}_i = \cup_{V_k \in \cV} \tilde{V}_k$ and let $\tilde{X}_t$ denote the Glauber dynamics on $\tilde{\Lambda}_i$ started from the all-plus configuration at time $\tcut - \scut$ and run until time $\tcut$. Since the $\tilde{V}_k$ are disconnected, the projections of the chain onto each $\tilde{V}_k$ are independent.  We define the update and support functions $\tfupd$ and $\tfsup$ analogously.  Let $\tilde{\cE}_k$ denote the event that for all $v\in \tilde{V}_k$ the following hold.
\begin{enumerate}
\item The support function dies out by time $\scut$,  $\tfsup(v,\tcut-\scut,\tcut)=\emptyset$.
\item The update function does not travel too far,
\[
\tfupd\left(v,\tcut-\scut,\tcut\right) \subset \phi_k^{-1}\left(V_k\cap B(v, \scut^2/4)\right)\,.
\]
\item All vertices have at most $10 \scut$ updates in the interval $[\tcut-\scut,\tcut]$.
\end{enumerate}
By Lemma~\ref{l:BasicEstimates} and the fact that the number of updates of a vertex in time $\scut$ is $\Po(\scut)$,
\[
\P(\tilde{\cE}_k) \geq 1-|\tilde{V}_k| C e^{-c \scut} \geq \exp(2^{-d-1})\,,
\]
for large enough $\scut$.

Recall that we encode the dynamics $X_t$ by a series of updates $(J_i,U_i,T_i)$ for vertices $J_i\in \Lambda$, unit variables $U_i$ and times $T_i$.  If $S_i$ is the sum of spins of the neighbors of $J_i$ at time $T_i$, then the update sets the new spin of $J_i$ to  $-1$ if $U_i < \frac{e^{-S_i\beta}}{e^{-S_i\beta}+e^{S_i\beta}}$ and to $+1$ otherwise.
We couple the updates of $\tilde{X}_t$ to those of $X_t$ as follows.  For $v\in \tilde{V}_k$ such that $\phi_k(v)\in B(A_i,\scut^2/3)$, we couple the update times, i.e., $v$ has an update at time $t\in[\tcut-\scut,\tcut]$ in $\tilde{X}$ if and only if $\psi_k(v)$ has one in $X$. Furthermore, if in addition $\phi_k(v)\not\in \partial V_k$ then we also couple the unit variable of the update. Otherwise (the case $\phi_k(v)\in \partial V_k$), the unit variables of the updates are taken as independent.

Further recall that an update is oblivious if either $U_i\in[0,\frac{e^{-2d\beta}}{e^{-2d\beta}+e^{2d\beta}}]$ (the new spin is $-1$ irrespective of the neighbors of $J_i$) or $U_i\in[\frac{e^{2d\beta}}{e^{-2d\beta}+e^{2d\beta}},1]$ (similarly, the new spin is $+1$). Let $\cR_k$ denote the event that all updates of $\phi_k(v)\in \partial V_k \cap B(A_i,\scut^2/3)$ are oblivious updates and that the updated values $\tilde{X}_{t}(v)$ and $X_{t}(\phi(v))$ agree.  This has probability $\frac{e^{-2d\beta}}{e^{-2d\beta}+e^{2d\beta}}$ for each update.  Since on $\tilde{\cE}_k$ there are at most $10 \scut |\partial V_k \cap B(A_i,\scut^2/3)|$ updates on $\partial V_k \cap B(A_i,\scut^2/3)$, we have that
\[
\P(\cR_k \mid\tilde{\cE}_k) \geq \exp\left[-C_1 \scut \left|\partial V_k \cap B\big(A_i,\scut^2/3\big)\right|\right]\,,
\]
where $C_1 = 10\log \frac{e^{-2d\beta}+e^{2d\beta}}{e^{-2d\beta}}$.  Since these are independent for each $k$,
\begin{align*}
\P(\cap_{k\in \cV} (\cR_k\cap \tilde{\cE}_k)) &\geq \exp\left[-2^{-d-1}|\cV|-C_1 \scut \left|\partial V_k \cap B\big(A_i,\scut^2/3\big)\right|\right]\nonumber\\
&\geq \exp\left(-\frac12|A_i|-C_1 \scut \frac{2d \left|B\left(A_i,\scut^2/3\right)\right|}{\scut^{4d}}\right)
\geq \exp(-|A_i|)\,,
\end{align*}
provided that $\scut$ is sufficiently large, as $|B(A_i,\scut^2/3)|\leq \left(\frac53\right)^d \scut^{2d}|A_i|$ and $2^d |\cV| \leq |A_i|$.  By Eq.~\eqref{e:BSseparateA}, to complete the lemma it therefore suffices to show that the event $\cap_{k\in \cV} (\cR_k\cap \tilde{\cE}_k)$  implies
\begin{align}
\fsup\left(A_i,\tcut-\scut,\tcut\right)&=\emptyset\,,\label{e:BSDiesOut}\\
\bigcup_{\tcut-\scut < t < \tcut}\!\!\fsup\left(A_i,t,\tcut\right) &\subset B\left(A_i, \scut^2/3\right)\label{e:BSShortRange}\,.
\end{align}
The updates on $\partial V_k\cap B(A_i, \scut^2/3)$ are oblivious updates and hence do not examine the values of their neighbors on the event $\cR_k$.  Combining this with property (2) of the definition of $\tilde{\cE}_k$ and the construction of the coupling implies that for $v\in \tilde{V}_k$ such that $\phi_k(v)\in A_i$, the support of $\phi_k(v)$ is contained in $V_k$.  Hence, by the coupling it follows that
\[
\fsup\left(\phi_k(v),t,\tcut\right)\subset \phi_k\left(\tfupd\left(v,t,\tcut\right)\right) \subset V_k\cap B(\phi_k(v), \scut^2/3)\,,
\]
which implies~\eqref{e:BSShortRange}.  It remains to prove~\eqref{e:BSDiesOut}.

Knowing the updates of course allows one to determine the configuration at a later time from the configuration of an earlier time.
We then define $\tilde{Y}^\eta_{t}(w)$ as follows.  It is the the spin at time $t\in[\tcut-\scut,\tcut]$ of the vertex $w\in  \tfupd(v,t,\tcut)$ generated from the Glauber dynamics with initial configuration $\eta$ on $\tfupd(v,\tcut-\scut,\tcut)$ at time $\tcut-\scut$  using the updates of $\tilde{X}_t$.
Note that, by the definition of $\tfupd$, these are the only initial  values that need to be specified.  Define $Y^\eta_{t}$ in the same way except with the updates of $X_t$ instead of $\tilde{X}_t$, where we take the domain of $\eta$  to be $\phi_k(\tfsup(v,\tcut-\scut,\tcut))$.  As usual, $+$ and $-$ denote the all $+1$ and $-1$ initial conditions, respectively.

Since the initial condition for $\tilde{X}_t$ is all-plus, by the construction of the coupling for every time $t\in[t,\tcut]$ and vertex $w\in  \tfupd(v,t,\tcut)$ we have that
\[
\tilde{X}_{t}(w) = \tilde{Y}^+_{t}(w) = Y^+_{t}(\phi_k(w))\,.
\]
We claim that for all $t$ and $w\in  \tfupd(v,t,\tcut)$, $\tilde{Y}^-_{t}(w) \leq Y^-_{t}(\phi_k(w))$.  This can be seen by induction applying the updates in turn.  Let $\{(t_i,w_i)\}$ denote the set of updates in the update history of $v$ in the interval $[\tcut-\scut,\tcut]$ ordered so that  $\tcut-\scut< t_1<t_2<\ldots<t_q<\tcut$.
For all updates with $w_i \in \tilde{V}_k\setminus \partial  \tilde{V}_k$ this follows by the fact that the updates use the same unit variables, monotonicity of the update rule and the inductive assumption on the values of the neighbors.  For updates $w_i\in\partial\tilde{V}_k$ note that
\[
\tilde{Y}^-_{t_i}(w_i)\leq \tilde{Y}^+_{t_i}(w) = Y^+_{t_i}(\phi_k(w_i)) = Y^-_{t_i}(\phi_k(w_i))\,,
\]
where the first inequality is by monotonicity while the final equality is by the fact that the boundary updates are oblivious ones.  Hence, by induction, $\tilde{Y}^-_{\tcut}(v) \leq Y^-_{\tcut}(\phi_k(v))$.
We know that $\tilde{Y}^+_{\tcut}(v)= \tilde{Y}^-_{\tcut}(v)$ by the definition of the support and $\cE_k$ and so combining the above results yields
\[
Y^+_{\tcut}(\phi_k(w)) = \tilde{Y}^+_{\tcut}(v)= \tilde{Y}^-_{\tcut}(v)\leq Y^-_{\tcut}(\phi_k(v)) \leq Y^+_{\tcut}(\phi_k(v))\,,
\]
so $Y^+_{\tcut}(\phi_k(v))= Y^-_{\tcut}(\phi_k(v))$.  This verifies~\eqref{e:BSDiesOut}, completing the proof.
\end{proof}

\subsection{Red percolation clusters given the history of their exterior}\label{s:RedBound}

This section is devoted to the proof of the following \emph{upper bound} on the probability of a cluster to be \red\ given the update sequence $\cU$ along the $(\tcut,\tcut+\scut]$ and the history up to time $\tcut$ of every vertex in its exterior:

For any cluster of components $\cC$ and each $\ell \geq 1$ we define the relation
\[ A_i \sim_\ell A_j\quad\mbox{ iff } \quad B\Big(A_i,\scut^2 \big(2^{\ell-2}+\tfrac14\big)\Big)\cap B\Big(A_j,\scut^2 \big(2^{\ell-2}+\tfrac14\big)\Big) \neq \emptyset\,,\]
and extend the relation to an equivalence relation.  Let $V_\ell$ denote the set of equivalence classes given by the equivalence relation and for each $v\in V_\ell$ let $A_v$ denote the union of the components in $v$.  We define $\mathfrak{L}$ to be the largest $\ell$ such that $|V_\ell|>1$.  We let $V_0=\cC$ be the set of $A_i$.

\begin{claim}\label{clm:animalConnection}
For any cluster of components $\cC$,
\[
\sum_{\ell=0}^{\mathfrak{L}} 2^{\ell} |V_\ell| \geq  \anim(\cup_{A_i\in\cC} A_i) - \sum_{A_i\in\cC}\anim(A_i)\,.
\]
\end{claim}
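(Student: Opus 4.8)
The plan is to produce a lattice animal of blocks containing $\bigcup_{A_i\in\cC}A_i$ by gluing together the minimal animals of the individual $A_i$, scale by scale along the hierarchy $\ell=1,\dots,\mathfrak{L}+1$, and to bound the number of blocks used for the glue by $\sum_{\ell=0}^{\mathfrak{L}}2^\ell|V_\ell|$. Concretely, for each $i$ fix a minimal lattice animal $\Gamma_i\supseteq A_i$ (so $|\Gamma_i|=\anim(A_i)$) and maintain a partition of $\cC$ into \emph{groups}, each carrying a connected block-set containing the $A_i$ in that group; initially the groups are the singletons $\{A_i\}$ with block-sets $\Gamma_i$. Observe that $\sim_{\ell-1}$ implies $\sim_\ell$ (the radii $\scut^2(2^{\ell-2}+\tfrac14)$ increase with $\ell$), so $V_{\ell-1}$ refines $V_\ell$; process $\ell=1,2,\dots,\mathfrak{L}+1$ in turn, keeping the invariant that the groups at the start of stage $\ell$ are precisely the classes of $V_{\ell-1}$. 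At stage $\ell$, for each $v\in V_\ell$ that is the union of $k\ge 2$ current groups $w_1,\dots,w_k\in V_{\ell-1}$: since $v$ is a single $\sim_\ell$--equivalence class, the graph on the components of $v$ with an edge between every \emph{directly} $\sim_\ell$--related pair is connected, and contracting each $w_j$ to a single vertex preserves connectivity; take a spanning tree of the resulting $k$--vertex graph, whose $k-1$ edges each have the form $(A_a,A_b)$ with $A_a\in w_j$, $A_b\in w_{j'}$, $A_a\sim_\ell A_b$. For each such edge adjoin a geodesic block-path joining the current block-set of $w_j$ to that of $w_{j'}$, and then merge the $k$ groups (now connected through the adjoined paths) and their block-sets into the single group $v$. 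After stage $\mathfrak{L}+1$ a single group remains, since $|V_\ell|$ is nonincreasing in $\ell$ and $|V_{\mathfrak{L}+1}|=1$ by maximality of $\mathfrak{L}$; its block-set is connected and contains $\bigcup_iA_i$.

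Two estimates then finish the count. First, a geometric bound on a bridge: if $A_a\sim_\ell A_b$ then $B(A_a,\scut^2(2^{\ell-2}+\tfrac14))$ and $B(A_b,\scut^2(2^{\ell-2}+\tfrac14))$ intersect, so $A_a$ and $A_b$ lie within $L^\infty$-distance $\scut^2(2^{\ell-1}+\tfrac12)$, i.e.\ within $2^{\ell-1}+1$ blocks; hence the geodesic block-path joining the two already-assembled block-sets (which contain $A_a$ and $A_b$) adds at most $2^{\ell-1}$ new blocks (up to a dimensional constant coming from the adjacency notion for lattice animals; see the last paragraph). Second, each spanning-tree edge used at stage $\ell$ decreases the number of groups by one, so exactly $|V_{\ell-1}|-|V_\ell|$ bridges are adjoined at stage $\ell$, with the convention $|V_{\mathfrak{L}+1}|=1$. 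Consequently the total number of glue blocks is at most $\sum_{\ell=1}^{\mathfrak{L}+1}(|V_{\ell-1}|-|V_\ell|)2^{\ell-1}$, and a summation by parts gives
\[
\sum_{\ell=1}^{\mathfrak{L}+1}\bigl(|V_{\ell-1}|-|V_\ell|\bigr)2^{\ell-1}
\;=\;|V_0|-2^{\mathfrak{L}}+\tfrac12\sum_{\ell=1}^{\mathfrak{L}}|V_\ell|\,2^\ell
\;\le\;\sum_{\ell=0}^{\mathfrak{L}}2^\ell|V_\ell|\,.
\]
Since the final block-set is contained in $\bigl(\bigcup_i\Gamma_i\bigr)\cup(\text{glue})$ and $\bigl|\bigcup_i\Gamma_i\bigr|\le\sum_i|\Gamma_i|=\sum_i\anim(A_i)$, we conclude $\anim\bigl(\bigcup_iA_i\bigr)\le\sum_i\anim(A_i)+\sum_{\ell=0}^{\mathfrak{L}}2^\ell|V_\ell|$, as claimed. (The degenerate case $|\cC|=1$ is trivial, the two sides being $0$ and $1$.)

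The step I expect to be delicate is the geometric bridge bound together with the bookkeeping that makes the coefficient land at exactly $2^\ell$: a geodesic lattice animal between two blocks at $L^\infty$-distance $m$ may contain up to a dimensional factor times $m$ blocks, so in $d\ge 3$ one really gets $C_{\beta,d}\,2^\ell$ rather than $2^\ell$ in the final bound. This is harmless, since the only use of the claim is inside Lemma~\ref{l:redConnection}, where the relevant exponent carries a free constant $c=c(\beta,d)$; it therefore suffices (and is what the argument above delivers) to prove the inequality with $2^\ell$ replaced by $C_{\beta,d}\,2^\ell$. If one wants the bare coefficient $2^\ell$, one sharpens the bridge bound by noting that a bridge need only span the \emph{residual} gap between the two assembled block-sets --- already at most $2^{\ell-1}+1$ blocks --- retains the $-2^{\mathfrak{L}}$ saving in the summation by parts, and treats the finitely many smallest scales $\ell$ directly. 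The remaining ingredients --- the transitive-closure/contraction argument yielding a spanning tree whose edges are directly $\sim_\ell$-related pairs, and the summation by parts itself --- are routine.
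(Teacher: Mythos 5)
Your proof follows the same construction as the paper's: fix minimal lattice animals $\hat A_i\supseteq A_i$, then proceed scale-by-scale for $\ell=1,\dots,\mathfrak L+1$ adding bridges of length $O(2^{\ell-1})$ to merge groups, noting that exactly $|V_{\ell-1}|-|V_\ell|$ merges occur at stage $\ell$, and telescoping. Your remark about the hidden dimensional constant is a legitimate observation the paper glosses over --- ``connected with at most $2^{\ell-1}$ blocks'' silently absorbs a factor of $d$ when converting $L^\infty$-distance into $L^1$-path length in the block graph --- but, as you correctly note, this is harmless because the claim's only downstream use (inside Lemma~\ref{l:redConnection} and~\eqref{e:redConnectionNoGreen}) already carries a free constant $c(\beta,d)$ multiplying the relevant exponent.
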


\begin{proof}
Let $\hat{A}_i$ denote a minimal lattice animal containing $A_i$ so that $A_i\subset \hat{A}_i$ and $|\hat{A}_i|= \anim(A_i)$.  We construct a lattice animal covering $\cup_i A_i$ by adding blocks to $\cup_i \hat{A}_i$ as follows.  Starting with $\ell=1$ we add the minimum number of blocks needed so that for all $v\in V_\ell$ all the $A_i \in v$ are connected together.  By definition, if $A_i\sim_\ell A_j$ then these can be connected with at most $2^{\ell-1}$ blocks. Thus, after connecting together all sets of components at level $\ell-1$ for each $v\in V_\ell$ we need to add at most $2^{\ell-1}\left(|\{v'\in V_{\ell-1}: v' \subset v\}|-1\right)$ additional blocks to connect together all the components of $V_\ell$.  Summing over $\ell$ from 1 to $\mathfrak{L} +1$ we add a total number of blocks of
\begin{align*}
\sum_{\ell=1}^{\mathfrak{L}+1} \sum_{v\in V_\ell} 2^{\ell-1}\left(|\{v'\in V_{\ell-1}: v' \subset v\}|-1\right) &= \sum_{\ell=1}^{\mathfrak{L}+1} 2^{\ell-1}(|V_{\ell-1}| - |V_\ell|)
\leq \sum_{\ell=0}^{\mathfrak{L}} 2^{\ell} |V_\ell|\,.
\end{align*}
Since adding $\sum_{\ell=0}^{\mathfrak{L}} 2^{\ell} |V_\ell|$ blocks to the $\hat{A}_i$ yields a connected component, the desired result follows.
\end{proof}

\begin{lemma}\label{l:redConnectionNoGreen}
There exists $c(\beta,d),s_0(\beta,d)>0$ such that, for any $\scut>s_0$, any large enough $n$ and every $\cC\subset \cA$,  the quantity $\Psi_\cC$ from~\eqref{eq-def-psi} satisfies
\begin{align}\label{e:redConnectionNoGreen}
\P\left(\cC\in \red\mid \sH^-_\cC,\,\cU\right) \leq  \frac{\scut^{4d}}{\sm_{\tcut}} e^{3\sum_i |A_i|-c \scut \left|\anim\left(\cC\right) - \sum_{A_i\in\cC}\anim(A_i)\right|^+ }\,.
\end{align}
\end{lemma}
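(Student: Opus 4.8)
The plan is to condition on $\cU$ (which fixes the blocks of each component $A_i\in\cC$) and on a $\cC$-compatible exterior history $\sH^-_\cC$, and to note that $\{\cC\in\red\}$ forces, about the updates below time $\tcut$ not yet revealed by $\sH^-_\cC$: \emph{(i)} that the components $A_i\in\cC$ are all linked by the relation $\sim$ of~\S\ref{sec:below-ground} --- intersection, or early proximity within $\scut^2/3$, of their supports --- so that $\cC$ is a single cluster; and \emph{(ii)} that some vertex $v_0\in\bigcup_{A_i\in\cC}A_i$ has $\fsup(v_0,0,\tcut)\neq\emptyset$. I would bound $\P(\mathrm{(i)})$ by an exponential in the excess block count $\anim(\cC)-\sum_i\anim(A_i)$, bound $\P(\mathrm{(ii)})$ by $\sm_\tcut$ times the number of candidate vertices, and assemble the two --- after absorbing combinatorial overhead --- into~\eqref{e:redConnectionNoGreen}.

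Concretely, I would mimic the proofs of Lemmas~\ref{l:Ai-to-Bi} and~\ref{l:BS}: tile the slab $\Lambda\times[0,\tcut]$ into cubes (boxes of side $\scut^2$ over time windows) and call a cube \emph{exceptional} if it contains either a \emph{spatial crossing} of length $\gtrsim\scut^{3/2}$ or a \emph{temporal crossing} joining level $\tcut$ to level $0$. By~\eqref{e:BasicUpdate}, supplemented by the survival estimate~\eqref{e:BasicSupport} to cover histories that traverse a long distance only over a long time, a spatial crossing of length $\scut^{3/2}$ in a given cube has probability at most $e^{-c\scut^{3/2}}$; and by~\eqref{e:BasicSupport}, a temporal crossing down to level $0$ from a given site has probability at most $\sm_\tcut$. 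When $\cC$ is a red cluster, some column meeting $\bigcup_{A_i\in\cC}A_i$ carries a temporal crossing to $0$, realizing (ii); and --- arguing as in Lemma~\ref{l:Ai-to-Bi} --- the histories realizing (i) must, to link far-apart components, traverse a set of ``bridging'' blocks each of which (or a neighbour of it) hosts a spatial crossing, and by Claim~\ref{clm:animalConnection}, organized by the scales $V_\ell$ at which components merge and the telescoping $\sum_\ell 2^\ell|V_\ell|\ge\anim(\cC)-\sum_i\anim(A_i)$, the number of such bridging blocks is at least $\anim(\cC)-\sum_i\anim(A_i)$. Passing to a pairwise-distant sub-family renders the corresponding exceptional events independent, so their joint probability is at most $\big(\scut^{2d}\textstyle\sum_i|A_i|\big)\,\sm_\tcut\cdot e^{-c\scut^{3/2}\,|\anim(\cC)-\sum_i\anim(A_i)|^+}$.

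It then remains to perform the enumeration: over the candidate $v_0$ (at most $\scut^{2d}\sum_i|A_i|$, already counted); over the lattice animals traced by the linking histories, their partition into per-component pieces, and the in/out assignment of blocks to the $A_i$'s versus the bridges; and over which histories realize the merges at each scale. As in the computation below~\eqref{eq-sum-A-one-animal}, each of these contributes at most $\scut^{O(d)}e^{O(\sum_i|A_i|)}$ (lattice-animal counts give $(2d)^{O(\#\mathrm{blocks})}$, block-assignment choices give $2^{O(\#\mathrm{blocks})}$, and so on), and since the decay obtained is at scale $\scut^{3/2}$ rather than $\scut$, taking $\scut$ large absorbs this overhead while still leaving a factor $e^{-c\scut\,|\anim(\cC)-\sum_i\anim(A_i)|^+}$. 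Collecting everything and crudely bounding $\scut^{2d}\sum_i|A_i|$ together with the residual overhead by $\scut^{4d}e^{3\sum_i|A_i|}$ gives~\eqref{e:redConnectionNoGreen}; in the degenerate case $\anim(\cC)=\sum_i\anim(A_i)$ (all components in mutually adjacent blocks) the exponential factor is trivial and the bound is simply the union bound $\scut^{2d}(\sum_i|A_i|)\,\sm_\tcut$ for event~(ii).

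The hard part is to extract exactly one temporal-crossing factor $\sm_\tcut$ while charging every bridging block its own spatial-crossing factor, all from the single collective history of $\cC$ below $\tcut$: one must choose the order in which updates are revealed --- exploiting the $\scut^2/3$-separation of exterior histories guaranteed by $\cC$-compatibility of $\sH^-_\cC$ --- so that the temporal and spatial crossing events genuinely decouple into a product, and invoke the two regimes of Lemma~\ref{l:BasicEstimates} with the right constants: finite propagation speed~\eqref{e:BasicUpdate} for the bridging, magnetization decay~\eqref{e:BasicSupport} for the descent to time $0$ --- the latter including the nontrivial case of a history that reaches level $0$ only after wandering over a time of order $\log n$. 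Keeping the combinatorial enumeration of merges and sub-animals comfortably below the $\scut^{3/2}$-scale of the decay, so as to be able to downgrade to the $\scut$-scale appearing in~\eqref{e:redConnectionNoGreen}, is the bookkeeping to be carried out with care.
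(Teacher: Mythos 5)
Your skeleton does match the paper's: reduce to the unconditional probability $\P(\cC\in\red(\cC))$ using that the history of $\cC$ is independent of a $\cC$-compatible exterior until they meet, charge the excess $\anim(\cC)-\sum_i\anim(A_i)$ via the two estimates of Lemma~\ref{l:BasicEstimates} organized by the scales $V_\ell$ of Claim~\ref{clm:animalConnection}, extract a single factor $\sm_{\tcut}$ for reaching time $0$, and close with a lattice-animal enumeration. However, the mechanism you propose for charging the excess --- every bridging block (or a neighbour of it) hosts a spatial crossing, and a pairwise-distant sub-family of such blocks yields independent exceptional events as in Lemma~\ref{l:Ai-to-Bi} --- does not work below $\tcut$, and this is not a bookkeeping issue. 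In the slab $\Lambda\times[0,\tcut]$ the available time is of order $\log n$, so a history can traverse a block of side $\scut^2$ slowly, with no fast crossing in any time window; the only cost of such a traversal is that the support stays alive for a long time, and the survival "events" attached to different bridging blocks along one connected history are not independent --- they are all facets of the single event that the same history has not yet died. Spatial separation of the blocks buys nothing here, in contrast to Lemma~\ref{l:Ai-to-Bi}, where each exceptional column is determined by the updates in its own bounded space-time region of temporal length $\scut$.

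What the paper does instead, and what is missing from your proposal, is to convert the excess into \emph{one} event per equivalence class $v\in V_\ell$ per scale $\ell$, namely $R_{v,\ell}=K_{v,\ell}\cup J_{v,\ell}$: either the update function of an annulus around $A_v$ spreads distance of order $2^{\ell}\scut^2$ within time $2^{\ell}\scut$ (cost $e^{-c2^{\ell}\scut^2}$ by~\eqref{e:BasicUpdate}), or the support of a slightly enlarged ball survives throughout the time window $[\tcut-2^{\ell}\scut,\,\tcut-2^{\ell-1}\scut]$ (cost $e^{-c2^{\ell}\scut}$ by~\eqref{e:BasicSupport}). These events are supported on pairwise disjoint space-time regions, so the product bound is genuine independence, and the weaker branch gives $e^{-c\scut\sum_\ell 2^{\ell}|V_\ell|}\leq e^{-c\scut|\anim(\cC)-\sum_i\anim(A_i)|^+}$ --- note the rate is $\scut$ per excess block, not the $\scut^{3/2}$ you claim, precisely because the binding constraint is temporal survival rather than spatial crossing. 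The same disjointness (the reach-time-$0$ events $\breve{J}_{v,\mathfrak{L}}$ and $J_\Gamma$ are supported on $[0,\tcut-2^{\mathfrak{L}-1}\scut]$ and on the updates before $\tcut-\kappa 2^{\mathfrak{L}}\scut$, respectively) is what allows the single factor $\sm_{\tcut}$ to be multiplied in without double-charging the survival used for bridging. You explicitly defer this decoupling as ``the hard part,'' but it is the entire content of the lemma; as written, your product bound over bridging blocks is unjustified.
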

\begin{proof}
The bound is trivial for histories which are not $\cC$-compatible so we may restrict our attention to the supremum over $\cC$-compatible histories.
Denote the event $\cE$ that the history of $\cC$ does not intersect the history $\sH^{-}_\cC$.
The set of clusters $\cA$ depends only on $\cU$ and this is the only dependence on $\cU$ in the bound.  Given $\cA$, the partition into clusters and their colors depends only on the updates in $[0,\tcut]$.  Hence, we can view $\red$ as a function $\red(\cA)$.  We can extend this definition to any set of components and write $\red(\cA')$ to denote the set of red clusters had the set of components instead been $\cA'$.  Now if $\cC\in\red(\cA)$ then one also has $\cC\in\red(\cC)$.

We let $\cE(H)$ denote the event that the history of $\cC$ does not intersect $H\subset \Lambda\times[0,\tcut]$, a space-time slab.  If $\cC$ is a cluster then $\cE(\sH^{-}_\cC)$ must hold.  Exploring the history of $\cC$ we see that it does not depend on the history of its complement, $\sH^{-}_\cC$, until the point at which they intersect (since they depend on disjoint updates) and hence
\[
\P(\sH_{\cC}\in \cdot,\, \cE(H)\mid \sH^{-}_\cC=H,\,\cU)  = \P(\sH_{\cC}\in\cdot,\, \cE(H)\mid\cU) \leq   \P(\sH_{\cC}\in\cdot)\,.
\]
Since the event $\cC\in\red(\cC)$ is $\sH_{\cC}$-measurable we have that,
\begin{align*}
\P(\cC\in\red\mid \sH^{-}_\cC,\,\cU) &\leq \P( \cC \in \red(\cC),\, \cE(\sH^{-}_\cC) \mid \sH^{-}_\cC,\,\cU)  
\leq \P(\cC \in \red(\cC))\,.
\end{align*}

Next, we bound $\P(\cC \in \red(\cC))$.
At least one vertex of $\cC$ must have support at time 0 so by a union,
\begin{align*}
\P(\cC \in \red(\cC)) =\P(\fsup(\cup_i A_i,0,\tcut)) \leq \scut^{2d} \sm_{\tcut} \sum_{i} |A_i| = \scut^{2d} \frac1{\sqrt{|\Lambda|}} \sum_{i} |A_i| \,;
\end{align*}
this implies~\eqref{e:redConnection} in the case $\anim(\cup_{i} A_i) - \sum_{i}\anim(A_i)\leq 0$, which we note includes the case $|\cC|=1$.
We may thus restrict our attention to the case $\anim(\cup_{i} A_i) - \sum_{i}\anim(A_i)>0$, in which $|\cC| \geq 2$ and so $|V_{\mathfrak{L}}|\geq 2$.

Our approach will be to define a collection of events that must hold if $\cC\in\red(\cC)$, one that depends on the structure of $\cC$ and its above defined decomposition.
For $\ell \leq \mathfrak{L}$ and $v\in V_\ell$ define the event $K_{v,\ell}$, which roughly says that the update set of $A_v$  spreads unexpectedly quickly, as
\[
K_{v,\ell}= \left\{\exists u\in B(A_v, \tfrac{3}{7} 2^{\ell} \scut^2)\setminus B(A_v, \tfrac{2}{5} 2^{\ell} \scut^2): \fupd(u ,\tcut- 2^{\ell}\scut,\tcut)\not\subset B( u,\tfrac{2^\ell}{100} \scut^2)\right\}\,,
\]
and the event
\begin{align*}
J_{v, \ell} = \bigg\{\fsup\left( B\big(A_v, \left(\tfrac{2}{5} +\tfrac1{100}\right) 2^{\ell} \scut^2\big) ,\tcut- 2^{\ell}\scut,\tcut-2^{\ell-1}\scut\right)\neq\emptyset,\, K_{v,\ell}^c\bigg\}\,,
\end{align*}
which roughly says that the support of $A_v$ lasts for a large time.  Combined, we define $R_{v,\ell}=K_{v,\ell}\cup J_{v, \ell}$.  For the final level $\ell=\mathfrak{L}$ we define a slight variant of these in terms of $\kappa$, some large positive constant to be fixed later,
\[
\hat{K}_{v,\ell}= \left\{\exists u\in B(A_v, \tfrac{3}{7} 2^{\ell} \scut^2)\setminus B(A_v, \tfrac{2}{5} 2^{\ell} \scut^2): \fupd(u ,\tcut- \kappa  2^{\ell}\scut,\tcut)\not\subset B( u,\tfrac{2^\ell}{100} \scut^2)\right\}\,,
\]
as well as
\begin{align*}
\hat{J}_{v, \ell} = \bigg\{\fsup\left( B\big(A_v, \left(\tfrac{2}{5} +\tfrac1{100}\right) 2^{\ell} \scut^2\big) ,\tcut- \kappa 2^{\ell}\scut,\tcut-2^{\ell-1}\scut\right)\neq\emptyset,\, \hat{K}_{v,\ell}^c\bigg\}
\end{align*}
and $\hat{R}_{v,\ell}=\hat{K}_{v,\ell}\cup \hat{J}_{v, \ell}$.
Finally,  we need to consider events describing how the history connects to time 0.  For $v\in V_{\mathfrak{L}}$, denote
\begin{align*}
\breve{J}_{v, \ell} = \bigg\{\fsup\left( B\big(A_v, \left(\tfrac{2}{5} +\tfrac1{100}\right) 2^{\ell} \scut^2\big) ,0,\tcut-2^{\ell-1}\scut\right)\neq\emptyset,\, \hat{K}_{v,\ell}^c\bigg\}\,.
\end{align*}
Define the set
\[
\Gamma=\fupd\left(B(\cup_{A_i\in\cC}A_i, 2^{\mathfrak{L}} \scut^2) ,\tcut- \kappa  2^{\mathfrak{L}}\scut,\tcut\right)
\]
and the event
\[
J_{\Gamma}=\bigg\{\fsup\left(\Gamma,0,\tcut- \kappa  2^{\mathfrak{L}}\scut\right)\neq\emptyset \bigg\}\,.
\]
\begin{claim}
If $\cC \in \red(\cC)$ then the events $R_{v,\ell}$ and $\hat{R}_{v,\ell}$ hold for all $0\leq\ell\leq \mathfrak{L}$ and $v\in V_\ell$. Furthermore, either $\cup_{v\in V_{\mathfrak{L}}} \breve{J}_{v, \mathfrak{L}}$ or $J_{\Gamma}\cap (\cup_{v\in V_{\mathfrak{L}}} \hat{K}_{v,\mathfrak{L}})$ hold.
\end{claim}
\begin{proof}[Proof of claim]
If $\cC \in \red(\cC)$ then the history of $A_v$ must connect to the remainder of the component.  This can occur either by the support of $A_v$ and $\cC\setminus A_v$ meeting in the interval $[\tcut-2^{\ell}\scut,\tcut]$ in which case $K_{v,\ell}$ holds.  Alternatively, the support of $A_i\in A_v$ may enter $B(A_j,\scut^2/3)$ for some $A_j\in\cC\setminus A_v$ in the interval $[\tcut-\scut,\tcut]$, in which case again $K_{v,\ell}$ holds.  Another option is that the support of $A_j\in\cC\setminus A_v$ enters $B(A_i,\scut^2/3)$ for some $A_i\in A_v$ in the interval $[\tcut-\scut,\tcut]$, whence again $K_{v,\ell}$ holds.

The final possibility is that the support survives until  time $\tcut-2^{\ell}\scut$ (i.e., $\fsup( A_v, ,\tcut- 2^{\ell}\scut,\tcut)\neq\emptyset$). In this case one of the following must hold:
\begin{itemize}[$\bullet$]
\item The support travels far in space by time $\tcut- 2^{\ell-1}\scut$ and
\[\fsup\left( A_v,\tcut- 2^{\ell-1}\scut,\tcut\right)\not\subset B\big(A_v, \left(\tfrac{2}{5} +\tfrac1{100}\right) 2^{\ell} \scut^2\big)\,,\]
in which case $K_{v,\ell}$ holds.
\item The support survives until time $\tcut- 2^{\ell}\scut$, given it did not travel too far by time $\tcut- 2^{\ell-1}\scut$, and
    \[ \fsup\left( B\big(A_v, \big(\tfrac{2}{5} +\tfrac1{100}\big) 2^{\ell} \scut^2\big) ,\tcut- 2^{\ell}\scut,\tcut-2^{\ell-1}\scut\right)\neq\emptyset\,,\] in which case $R_{v,\ell} $ holds.
\end{itemize}
This set of possibilities is exhaustive and completes the claim.  The other claims follow similarly.
\end{proof}
We now observe that the events $R_{v, \ell}$ are independent as they depend on disjoint sets of updates.  The event $K_{v,\ell}$ depends only on updates in the space time block
\[
\bigg(B(A_v, \Big(\tfrac{3}{7} +\tfrac1{100} \Big) 2^{\ell} \scut^2)\setminus B(A_v, \Big(\tfrac{2}{5}-\tfrac1{100} \Big) 2^{\ell} \scut^2)\bigg)\times[\tcut-2^\ell \scut, \tcut]
\]
while $J_{v,\ell}$ depends only on the same set (through the event $K_{v,\ell}^c$ in its definition) plus updates in
\[
\left(B\big(A_v,\left(\tfrac{2}{5} +\tfrac2{100}\right) 2^{\ell} \scut^2\big)\right) \times[\tcut- 2^{\ell}\scut,\tcut-2^{\ell-1}\scut]\,.
\]
Since these sets are disjoint for different $v$ it follows that the $R_{v,\ell}$'s are independent.  Similarly, the $\hat{R}_{v,\ell}$'s are independent for $\ell=\mathfrak{L}$ and independent of the $R_{v,\ell}$'s for $\ell < \mathfrak{L}$.  The event $\breve{J}_{v, \ell}$ is also independent of the $R_{v,\ell}$'s and $\hat{R}_{v,\ell}$'s.  The set $\Gamma$ depends only on updates in $(\Lambda\setminus B(A_v, 2^{\ell} \scut^2))\times (\tcut- \kappa  2^{\ell}\scut,\tcut)$ and hence independent of all our constructed events except $J_{\Gamma}$, and $J_{\Gamma}$ is independent of all our constructed events except $\breve{J}_{v, \ell}$.

We now estimate the probability of the above events using Lemma~\ref{l:BasicEstimates}.  Noting that
\[
\left| B\big(A_v, \alpha 2^{\ell} \scut^2\big)\right|\leq (1+2\alpha)^d 2^{\ell d} \scut^{2d} |A_v| \leq e^{C(\ell + \log \scut) + 2^{-\ell} |A_v|}
\]
for some  $c'>0$, we have that
\begin{align*}
\P(K_{v,\ell} ) \leq \left| B\big(A_v, \tfrac{3}{7} 2^{\ell} \scut^2\big)\right|  e^{-c 2^{\ell} \scut^2 }\leq e^{-c' 2^{\ell} \scut^2  + 2^{-\ell} |A_v|}
\end{align*}
for large enough $\scut$.  Similarly,
\begin{align*}
\P(\hat{K}_{v,\ell} )\leq e^{-c' 2^{\ell} \scut^2  + 2^{-\ell} |A_v|}\,.
\end{align*}
Again by Lemma~\ref{l:BasicEstimates} it follows that
\begin{align*}
\P(J_{v,\ell} ) \leq \left| B\big(A_v, \left(\tfrac{2}{5} +\tfrac1{100}\right)2^{\ell} \scut^2\big)\right|  e^{-c 2^{\ell} \scut}\leq e^{-c' 2^{\ell} \scut + 2^{-\ell} |A_v|}\,,
\end{align*}
and similarly
\begin{align*}
\P(\hat{J}_{v,\ell} ) \leq \left| B\big(A_v, \left(\tfrac{2}{5} +\tfrac1{100}\right)2^{\ell} \scut^2\big)\right|  e^{-c \kappa 2^{\ell} \scut}\leq e^{-c' \kappa 2^{\ell} \scut + 2^{-\ell} |A_v|}\,.
\end{align*}
As $\P(\fsup(u,0,t)\neq\emptyset)=\sm_t$ and $\sm_{t+h} \geq e^{-h}\sm_t$ we have that
\begin{align*}
\P(\breve{J}_{v,\mathfrak{L}} ) &\leq \left| B\big(A_v, \left(\tfrac{2}{5} +\tfrac1{100}\right)2^{\mathfrak{L}} \scut^2\big)\right|  \sm_{\tcut-2^{\mathfrak{L}-1}\scut}
\leq \exp(2^{\mathfrak{L}}\scut + 2^{-\ell} |A_v|)\sm_t\,.
\end{align*}
For $h\geq 1/3$ and $\scut$ large enough we have that
\[
\P\left(\fupd(v,\tcut-\kappa 2^\ell \scut,\tcut)\not\subset B\big(v,h 2^{\ell} \scut^2\big)\right) \leq e^{-\scut 2^{\ell} h}\,,
\]
and so $\E |\fupd(v,\tcut -\kappa 2^\ell \scut,\tcut)| \leq \scut^{2d} 2^{\ell d}$.  Hence,
\begin{align*}
\E|\Gamma| &\leq \left|B\big(\cup_{A_i\in\cC}A_i, 2^{\mathfrak{L}} \scut^2\big)\right| \scut^{2d} 2^{\ell d} \leq \left(3 \cdot 2^{2 \mathfrak{L}}\scut^4\right)^d \sum_i |A_i|\,.
\end{align*}
Finally, by a union bound over $u\in \Gamma$,
\[
\P\left(J_{\Gamma}\mid \Gamma\right)\leq  \sm_{\tcut-\kappa 2^\mathfrak{L} \scut} |\Gamma| \leq  \sm_{\tcut} e^{\kappa 2^\mathfrak{L} \scut} |\Gamma| \,.
\]
Combining the above estimates with the claim and the independence of the events we have that
\begin{align*}
\P\left(\cC \in \red(\cC)\right)& \leq \P\Bigg(\bigg(\bigcap_{\ell=0}^{\mathfrak{L}-1}\bigcap_{v\in V_\ell} R_{v,\ell}  \bigg)\cap \bigg( \bigcup_{v\in V_{\mathfrak{L}}} \breve{J}_{v, \mathfrak{L}}  \!\!\bigcap_{u\in V_{\mathfrak{L}}\setminus \{v\}} \!\!\hat{R}_{u,\mathfrak{L}} \bigg) \Bigg)\\
&+ \P\Bigg(\bigg(\bigcap_{\ell=0}^{\mathfrak{L}-1}\bigcap_{v\in V_\ell} R_{v,\ell}\bigg)\cap \bigg( \bigcup_{v\in V_{\mathfrak{L}}} \hat{K}_{v, \mathfrak{L}} \!\! \bigcap_{u\in V_{\mathfrak{L}}\setminus \{v\}} \!\!\hat{R}_{u,\mathfrak{L}} \bigg)\cap J_{\Gamma} \Bigg)\,.
\end{align*}
The right-hand side, in turn, is at most
\begin{align*}
& \exp\left( -c' \scut \sum_{\ell=0}^{\mathfrak{L}-1} 2^{\ell} |V_\ell| - c'\kappa 2^{\mathfrak{L}}(|V_{\mathfrak{L}}|-1) +2 \sum_{i} |A_i|\right)
|V_{\mathfrak{L}}| \sm_{\tcut} \Big(e^{2^{\mathfrak{L}}\scut}+ e^{-c' 2^{\ell} \scut^2 }\E \Gamma e^{\kappa 2^\ell \scut}  \Big)\\
& \leq \frac1{\sqrt{|\Lambda|}}\exp\left( -c' \scut \sum_{\ell=0}^{\mathfrak{L}} 2^{\ell} |V_\ell| + 3 \sum_{i} |A_i|\right)\,,
\end{align*}
where the  final inequality comes from the fact that $|V_{\mathfrak{L}}|\geq 2$ and
\[
|V_{\mathfrak{L}}|  \Big(e^{2^{2\mathfrak{L}}\scut}+ e^{-c' 2^{\ell} \scut^2 } (3 \cdot 2^{\mathfrak{L}}\scut^4)^d   e^{\kappa 2^\ell \scut}\sum_i |A_i|   \Big)  e^{- c'(\kappa-1)2^{\mathfrak{L}} - \sum_{i} |A_i|}\leq 1\,,
\]
provided that $c'(\kappa-1)>1$ and $\scut$ is large.  Combining this with Claim~\ref{clm:animalConnection} gives~\eqref{e:redConnectionNoGreen}, as required.
\end{proof}

\subsection{Proof of Lemma~\ref{l:redConnection}}
The desired bound in~\eqref{e:redConnection} is similar to the one we obtained in~\eqref{e:redConnectionNoGreen}, with the difference being an extra conditioning on $\cG_{\cC}:=\{\cC\in\red\}\cup\{\cC\subset\blue\}$. Since $\{\cC\in\red\} \subset \cG_{\cC}$,
\begin{align*}
\P\left(\cC\in \red\mid \sH^-_\cC,\,\cG_\cC\,,\cU\right) &= \frac{\P\left(\cC\in \red\mid \sH^-_\cC, \,\cU\right)}{\P\left(\cG_\cC \mid \sH^-_\cC,\,\cU\right)} \leq
\frac{\P\left(\cC\in \red \mid \sH^-_\cC, \,\cU\right)}
{\P(\bigcap_{A_i \in \cC}   \{A_i\in \blue\}  \mid \sH^{-}_\cC,\,\cU)}\,,
\end{align*}
from which Eq.~\eqref{e:redConnection} follows after applying Lemmas~\ref{l:BS} and~\ref{l:redConnectionNoGreen} to the numerator and denominator
of the right-hand side, respectively. \qed

\section{Random vs.\ deterministic initial states}
\label{sec:ann-que}
In this section we consider the Ising model on the cycle $\Z/n\Z$ for small $\beta>0$ with a random initial state, both quenched and annealed.  Rather than comparing two worst case initial states we will compare a random one directly with the stationary distribution using coupling from the past.  Recall that for the \oned Ising model we can give a special update rule:  with probability $\theta=\theta_{\beta,d}=1 - \tanh(2\beta)$ update to a  uniformly random value and with probability $1-\theta$ copy the spin of a random neighbor.  The history of a vertex is simply a continuous time random walk which takes steps at rate $1-\theta$ and dies at rate $\theta$;  when such walks meet they coalesce and continue together until dying.  Each component can only decrease in size over time and all vertices in the component receive the same spin.

In the sequel, the annealed setting (Part~\ref{it-ann} of Theorem~\ref{mainthm-Z-ann-que}) is studied in \S\ref{sec:ann-upper} and \S\ref{sec:ann-lower} (upper and lower bounds on mixing, respectively), whereas \S\ref{sec:que} focuses on the quenched setting (Part~\ref{it-que} of Theorem~\ref{mainthm-Z-ann-que}).

\subsection{Annealed analysis: upper bound}\label{sec:ann-upper}

We define $Y_t$ to be the process coupled from the past, so the spins at time $\tcut$ are independent on each of the components with equal probability.  Now let $X_t$ be the process started from an i.i.d.\ configuration at time 0.
Since the magnetization is simply the probability that the walk has not yet died it follows that $\sm_t= e^{-\theta t}$.  We will consider the total variation distance at time $\tcut' = \frac1{4\theta}\log n + \frac1{\theta} \log\log n-\scut$ for some large constant $\scut>0$.
\begin{theorem}
With $\tpluss' = \frac1{4\theta}\log n + \frac1{\theta} \log\log n$ we have that
$
\|\P(X_{\tpluss'}\in\cdot) - \pi(\cdot) \|_\tv \to 0
$.
\end{theorem}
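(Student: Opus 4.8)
The plan is to bound $\|\P(X_{\tpluss'}\in\cdot)-\pi\|_\tv$ by comparing the dynamics started from the i.i.d.\ uniform configuration \emph{directly} with a stationary version $Y_t$ produced by Coupling From The Past, rather than through a worst-case coupling. I would couple the two update sequences so that they agree on $(0,\tpluss']$, while $Y$ carries in addition the updates on $(-\infty,0]$ needed to resolve all histories. In dimension one the update history of a site, read backwards in time, is a continuous-time random walk that moves at rate $1-\theta$ and dies (at an oblivious update) at rate $\theta$, with distinct histories coalescing on meeting; in particular a single history reaches time $0$ with probability $\sm_{\tpluss'}=e^{-\theta\tpluss'}$.

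First I would locate where the two processes can disagree. If the history of a vertex $v$ at time $\tpluss'$ receives an oblivious update above time $0$, then $X_{\tpluss'}(v)$ and $Y_{\tpluss'}(v)$ are the \emph{same} fresh fair coin. If instead the history reaches time $0$ at a landing point $w$, then $X_{\tpluss'}(v)=X_0(w)$ is an i.i.d.\ fair bit, whereas for $Y$ the walk is continued below $0$ and eventually resolved by an oblivious update. Declare $v$ \emph{red} if its history reaches time $0$ and, continued below $0$ by CFTP, coalesces with the history of another vertex that also reached time $0$ before either of the two walks next hits an oblivious update; this is exactly the annealed red-cluster notion of \S\ref{sec:methods} (a CFTP cluster whose intersection with $x_0$ has size at least two and which coalesces below time $0$). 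For $v\notin\Lambda_\red$ the below-$0$ continuation of its history gets its own oblivious update---an independent fair coin---which can be coupled to $X_0(w)$; hence, off $\Lambda_\red$, $X_{\tpluss'}$ and $Y_{\tpluss'}$ can be coupled to agree, and their common conditional law there is a product of fair coins over coalescence classes.

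Next, following the pattern of \S\ref{sec:inf-perc}, I would invoke the Miller--Peres bound (Lemma~\ref{lem:MP}, through a suitable intermediate product measure $\nu$ and a triangle inequality) with $\Lambda_\red$ in the role of the exceptional set, obtaining for two i.i.d.\ realizations of the dynamics and their red sets $\Lambda_\red,\Lambda_\red'$
\[
 \|\P(X_{\tpluss'}\in\cdot)-\pi\|_\tv^{2}\;\le\; C\Big(\E\big[\,2^{|\Lambda_\red\cap\Lambda_\red'|}\,\big]-1\Big)\,.
\]
Coupling the events $\{$class is red$\}$ to independent indicators and using that a coalescence class at time $\tpluss'$ has an exponential tail (the history being a subcritical coalescing walk, which is why no deferred phases are needed in one dimension), this exponential moment is $1+o(1)$ once $\E|\Lambda_\red\cap\Lambda_\red'|\to0$; by translational invariance $\E|\Lambda_\red\cap\Lambda_\red'|=\sum_v\P(v\in\Lambda_\red)^2=n\,\P(v_0\in\Lambda_\red)^2$.

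It then remains to estimate $\P(v_0\in\Lambda_\red)$, which is the crux. It factorizes, up to constants and at most a power of $\log n$, as $\P(v_0\text{'s history reaches time }0)=e^{-\theta\tpluss'}$ times the chance that, given this, some other surviving history lands within coalescence range of $v_0$'s landing point and the two below-$0$ walks meet before either dies: the density of surviving landing points at time $0$ is at most of order $e^{-\theta\tpluss'}$, while two independent below-$0$ walks at distance $d$ coalesce before either dies with probability decaying geometrically in $d$, so the second factor is $O_\theta(e^{-\theta\tpluss'})$ and $\P(v_0\in\Lambda_\red)=O_\theta(e^{-2\theta\tpluss'})$ up to a power of $\log n$. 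Hence $\E|\Lambda_\red\cap\Lambda_\red'|=O_\theta\big((\log n)^{O(1)}\,n\,e^{-4\theta\tpluss'}\big)$, and since $4\theta\tpluss'=\log n+4\log\log n$ we get $n\,e^{-4\theta\tpluss'}=(\log n)^{-4}$, so the whole bound tends to $0$. (Note the threshold $4\theta\tpluss'\approx\log n$, i.e.\ $\tpluss'\approx\tfrac12\tcut$: pairs of surviving histories now drive the estimate, in contrast with the worst-case analysis where a single surviving history already contributes---this is the source of the factor $2$.) The main obstacle is exactly this last step: one must show that an isolated surviving history contributes nothing, its bit $X_0(w)$ being statistically indistinguishable from a fresh CFTP resample (which is where the i.i.d.\ initial law and the symmetry of oblivious updates enter), and then control, uniformly on the torus, both the density of surviving landing points and the below-time-$0$ coalescence probabilities; a secondary difficulty, handled as in the general framework, is the conditioning on green clusters and the passage from the cluster-level exponential moment to the vertex-level first moment without spoiling the $\log\log n$ gain.
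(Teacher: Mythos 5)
Your high-level plan is the same as the paper's: compare the uniformly-started chain directly with a Coupling-From-The-Past realization, couple the updates on $(0,\tpluss']$, declare \red\ the clusters in which two histories reach time $0$ and coalesce below it, and then bound the resulting exponential moment $\E 2^{|\Lambda_\red\cap\Lambda_\red'|}-1$ via Lemma~\ref{lem:MP}. Your one genuine simplification is correct and worth flagging: in dimension one the history of a site is a \emph{single} coalescing random walk (not a branching tree), every cut-set at time $\tcut'$ is a singleton with $\nu_i^*=\tfrac12$, and so the deferred/undeferred phase machinery from \S\ref{sec:lattice-framework} is not actually needed to lower-bound $\nu_i^*$; the paper nevertheless invokes it to keep a uniform framework. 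You also correctly locate the heart of the computation: $\P(v\in\Lambda_\red)\approx e^{-2\theta\tpluss'}$ because one needs a surviving walk \emph{and} a second surviving walk nearby that coalesces below $0$ before dying, and thus $\E|\Lambda_\red\cap\Lambda_\red'|\approx n\,e^{-4\theta\tpluss'}$ vanishes precisely when $\tpluss'\approx\tfrac12\tcut+\Theta(\log\log n)/\theta$, which is the source of the factor $2$.

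Where your argument remains a sketch rather than a proof is exactly the two points you flag at the end, and neither is ``secondary'' in the paper's execution. First, Lemma~\ref{lem:MP} needs a \emph{fixed} partition $\{\Lambda_i\}$ and a product reference measure; the paper obtains this by conditioning on $\sH_\green$ (and $\cU$), which in turn forces the red/blue clusters to avoid green histories and introduces the conditional probability $\Psi_\cC$ of~\eqref{eq-def-psi}. Your per-vertex heuristic $\P(v_0\in\Lambda_\red)=O(e^{-2\theta\tpluss'})$ bypasses this conditioning, so it is not yet the quantity that feeds into Lemma~\ref{lem:MP}. Second, turning $\E|\Lambda_\red\cap\Lambda_\red'|\to0$ into $\E 2^{|\Lambda_\red\cap\Lambda_\red'|}\to1$ requires the cluster-size / lattice-animal control that the paper carries out via Claim~\ref{clm:animalConnection2}, Lemma~\ref{l:redConnectionNoGreen2} (which produces the $e^{-2\theta\tcut'}$ factor together with the $\anim_2$ penalty), and the subsequent enumeration; this is where the paper's explicit $(\tcut')^2$ polylog loss comes from, and it is what the $\log\log n$ shift in $\tpluss'$ is there to absorb. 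In short: same route, valid 1D simplification, and you've named the right places where work remains, but those places are the substance of the paper's \S\ref{sec:ann-upper}.
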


Set $\tcut' = \tpluss' - \scut$ where $\scut>0$ is a large constant. In order to couple the process with the stationary distribution we consider updates in the range $t\in(-\infty,\tpluss']$ with the block components constructed using the updates in the range $[\tcut',\tpluss']$ with deferred and undeferred randomness similarly as before.

In this analysis it is necessary to directly compare the annealed distribution with the stationary distribution and for this we use the coupling from the past paradigm and hence consider updates before time 0.   We modify the \emph{Intersection} property of the construction of clusters to identify $A_i$ and $A_j$ if $\fsup(A_i,t,\tcut)\cap \fsup(A_j,t,\tcut)\neq\emptyset$  for some $-\infty < t<\tcut$ (in place of the condition $0 \leq t<\tcut$).

We also redefine the notion of a red cluster to be one containing two vertices whose history reaches time $0$ without coalescing, that is,
\begin{equation}
  \label{eq-red-def-ann}
  \cC\in\red \quad\mbox{ iff }\quad \Big|\bigcup_{v\in A_i\in\cC} \fsup(v,0,\tcut)\Big | \geq 2
\end{equation}
(note that the histories of vertices are always of size one). We define blue clusters as before and green clusters as the remaining clusters.
We can thus couple $X_t$ and $Y_t$ so that they agree on the blue and green components at time $\tcut$ but possibly not on the red components.

Recalling that $\anim(S)$ is the smallest lattice animal of blocks covering $S$, we let
\[
\anim_2(S):= \min_{S_1,S_2\,:\,S_1\cup S_2=S} \anim(S_1)+\anim(S_2)\,.
\]
We denote $V_\ell$, equivalence classes of components of $\cC$ as in \S\ref{s:RedBound} and this time define $\mathfrak{L}'$ to be the largest $\ell$ such that $|V_\ell|>2$.  The following claim is a simple extension of Claim~\ref{clm:animalConnection}.
\begin{claim}\label{clm:animalConnection2}
For any cluster of components $\cC$,
\[
\sum_{\ell=0}^{\mathfrak{L}'} 2^{\ell} |V_\ell| \geq  \anim_2(\cup_{A_i\in\cC} A_i) - \sum_{A_i\in\cC}\anim(A_i)\,.
\]
\end{claim}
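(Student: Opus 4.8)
The plan is to mimic the proof of Claim~\ref{clm:animalConnection} essentially verbatim, changing only the level at which the merging procedure is halted: here I would stop after processing level $\mathfrak{L}'+1$ rather than $\mathfrak{L}+1$, so that the block configuration produced consists of \emph{at most two} lattice animals instead of one, which is precisely what bounds $\anim_2$.

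In detail: for each $A_i\in\cC$ fix a minimal lattice animal $\hat A_i\supseteq A_i$ with $|\hat A_i|=\anim(A_i)$, and grow a configuration of blocks starting from $\bigcup_i\hat A_i$. Processing $\ell=1,2,\ldots,\mathfrak{L}'+1$ in order, at stage $\ell$ I add the fewest blocks needed so that, within each class $v\in V_\ell$, the $|\{v'\in V_{\ell-1}:v'\subset v\}|$ pieces into which $v$ was already decomposed at stage $\ell-1$ become joined together; since $A_i\sim_\ell A_j$ forces the balls $B(A_i,\scut^2(2^{\ell-2}+\tfrac14))$ and $B(A_j,\scut^2(2^{\ell-2}+\tfrac14))$ to intersect, any two such pieces can be linked using at most $2^{\ell-1}$ blocks, so stage $\ell$ costs at most $\sum_{v\in V_\ell}2^{\ell-1}\bigl(|\{v'\in V_{\ell-1}:v'\subset v\}|-1\bigr)=2^{\ell-1}(|V_{\ell-1}|-|V_\ell|)$ blocks. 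Summing over $\ell=1,\ldots,\mathfrak{L}'+1$ and telescoping exactly as in Claim~\ref{clm:animalConnection} bounds the total number of added blocks by $\sum_{\ell=0}^{\mathfrak{L}'}2^\ell|V_\ell|$.

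The point of stopping at level $\mathfrak{L}'+1$ is that, by the definition of $\mathfrak{L}'$ as the largest $\ell$ with $|V_\ell|>2$, one has $|V_{\mathfrak{L}'+1}|\le 2$; hence at the end of the procedure the blocks form a union of at most two lattice animals, each covering one class of $V_{\mathfrak{L}'+1}$ and together covering $\bigcup_{A_i\in\cC}A_i$. This exhibits a partition of $\bigcup_iA_i$ into (at most) two sets, each contained in a lattice animal, whose total size is at most $\sum_i\anim(A_i)+\sum_{\ell=0}^{\mathfrak{L}'}2^\ell|V_\ell|$; by the definition of $\anim_2$ this is exactly the claimed inequality. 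The degenerate case $|\cC|\le 2$, in which $\mathfrak{L}'$ need not be defined, is harmless: then $\anim_2(\bigcup_iA_i)\le\sum_{A_i\in\cC}\anim(A_i)$ by taking the components themselves as the partition, so the right-hand side is nonpositive while the (empty) left-hand sum equals $0$.

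There is no genuine obstacle here: the sole new ingredient relative to Claim~\ref{clm:animalConnection} is the observation that $|V_{\mathfrak{L}'+1}|\le 2$, which terminates the merging with two animals rather than one, and the telescoping bookkeeping is identical to the one already carried out there.
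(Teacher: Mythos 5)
Your proof is correct and matches the paper's intent exactly: the paper gives no separate argument for this claim, stating only that it is ``essentially the same as Claim~\ref{clm:animalConnection},'' and the one genuinely new point --- that stopping the merging at level $\mathfrak{L}'+1$ leaves at most two lattice animals because $|V_{\mathfrak{L}'+1}|\le 2$, which is what bounds $\anim_2$ rather than $\anim$ --- is precisely the observation you supply, together with the correct handling of the degenerate case $|\cC|\le 2$.
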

The proof is essentially the same as Claim~\ref{clm:animalConnection}.
\begin{lemma}\label{l:redConnectionNoGreen2}
There exists $c(\beta,d),s_0(\beta,d)>0$ such that, for any $\scut>s_0$, any large enough $n$ and every $\cC\subset \cA$,  the quantity $\Psi_\cC$ from~\eqref{eq-def-psi} satisfies
\begin{align}\label{e:redConnectionNoGreen2}
\P\left(\cC\in \red\mid \sH^-_\cC,\,\cU\right) \leq  \frac{\scut^{4d}}{e^{2 \theta\tcut'}} e^{3\sum_i |A_i|-c \scut \left|\anim_2\left(\cC\right) - \sum_{A_i\in\cC}\anim(A_i)\right|^+ }\,.
\end{align}
\end{lemma}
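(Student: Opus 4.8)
The plan is to follow the proof of Lemma~\ref{l:redConnectionNoGreen} almost verbatim, changing only the two places where the definition of a red cluster enters: the final connection to time $0$ (now requiring two \emph{non-coalescing} strands rather than one) and the combinatorial connection cost (now $\anim_2$ rather than $\anim$). First I would carry out the same opening reductions: since the history of $\cC$ depends on updates disjoint from those appearing in $\sH_\cC^-$ up to the first time the two intersect, and on a $\cC$-compatible history this cannot occur before $\cC$ is a cluster, one gets $\P(\cC\in\red\mid\sH_\cC^-,\cU)\leq\P(\cC\in\red(\cC))$ exactly as before, so it remains to bound the latter.

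Next I would split into the same two cases. When $\anim_2(\cup_i A_i)-\sum_i\anim(A_i)\leq 0$ --- which in particular covers $|\cC|\leq 2$ --- a red cluster must contain two \emph{distinct} vertices $u,v\in\bigcup_i A_i$ whose histories reach time $0$; since in \oned the histories of two distinct vertices evolve independently until they coalesce, the probability of this for a fixed pair is at most $\P(\fsup(u,0,\tcut')\neq\emptyset)\,\P(\fsup(v,0,\tcut')\neq\emptyset)=\sm_{\tcut'}^2=e^{-2\theta\tcut'}$, and a union bound over the at most $\scut^{4d}(\sum_i|A_i|)^2$ such pairs gives $\P(\cC\in\red(\cC))\leq\scut^{4d}e^{2\sum_i|A_i|}e^{-2\theta\tcut'}$, which is within~\eqref{e:redConnectionNoGreen2} in this regime. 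For the remaining case $\anim_2(\cup_i A_i)-\sum_i\anim(A_i)>0$, hence $|V_{\mathfrak{L}'}|\geq 3$, I would reuse the hierarchy $\{V_\ell\}_{\ell\leq\mathfrak{L}'}$ and the spread/survival events $R_{v,\ell},\hat R_{v,\ell}$ (and their time-$0$ variants $\breve J_{v,\ell},J_\Gamma,\hat K_{v,\mathfrak{L}'}$) from \S\ref{s:RedBound}, with $\mathfrak{L}$ replaced by $\mathfrak{L}'$. The necessary-condition event now reads: $R_{v,\ell}$ holds for all $\ell<\mathfrak{L}'$ and $v\in V_\ell$, and at the top level there is a pair $\{v_1,v_2\}\subset V_{\mathfrak{L}'}$ for which the "reaches time $0$" events hold for \emph{both} $v_1$ and $v_2$, while $\hat R_{u,\mathfrak{L}'}$ holds for the remaining $|V_{\mathfrak{L}'}|-2$ classes $u$ (so those classes still join the cluster). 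As in the original proof these events depend on pairwise disjoint space-time regions, so the probability factorizes; multiplying the estimates of Lemma~\ref{l:BasicEstimates} produces $\prod_{\ell<\mathfrak{L}'}\prod_{v\in V_\ell}e^{-c'2^\ell\scut+2^{-\ell}|A_v|}$ times $\prod_u e^{-c'\kappa 2^{\mathfrak{L}'}+2^{-\mathfrak{L}'}|A_u|}$ times $\sum_{\{v_1,v_2\}}O(|V_{\mathfrak{L}'}|^2\,\sm_{\tcut'}^2\,e^{\cdots})$, the key point being that the two-strand requirement contributes $\sm_{\tcut'}^2$ in place of the single power $\sm_{\tcut'}$ of~\eqref{e:redConnectionNoGreen}. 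Collecting exponents yields $e^{-c'\scut\sum_{\ell\leq\mathfrak{L}'}2^\ell|V_\ell|+3\sum_i|A_i|}\,\sm_{\tcut'}^2$, and applying Claim~\ref{clm:animalConnection2} to replace $\sum_{\ell\leq\mathfrak{L}'}2^\ell|V_\ell|$ by $\anim_2(\cC)-\sum_{A_i\in\cC}\anim(A_i)$ gives~\eqref{e:redConnectionNoGreen2}.

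The main obstacle I anticipate is getting the top-level bookkeeping exactly right. Because we work under coupling from the past, the connectivity of $\cC$ may be realized entirely through coalescences at \emph{negative} times, so as a necessary condition for $\cC\in\red$ it already suffices to connect the components into at most \emph{two} macroscopic groups (each carrying one of the two surviving strands); this is precisely what caps the cost at $\anim_2$ and explains why the hierarchy is stopped at $\mathfrak{L}'$ rather than $\mathfrak{L}$. One must also verify that demanding two strands reaching time $0$ \emph{without coalescing} genuinely contributes $\sm_{\tcut'}^2$ rather than $\sm_{\tcut'}$, which again rests on the independence of distinct-vertex histories up to their coalescence time. Everything else --- the independence of the events across levels and classes, the lattice-animal enumeration bounds, and the choice of constants $c',\kappa,s_0$ --- is identical to \S\ref{s:RedBound} and requires no new idea.
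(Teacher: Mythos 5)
Your proposal matches the paper's argument essentially step for step: reducing to $\P(\cC\in\red(\cC))$, handling $\anim_2(\cup_i A_i)-\sum_i\anim(A_i)\leq 0$ by a direct union bound over pairs of vertices, replacing $\mathfrak{L},\tcut$ by $\mathfrak{L}',\tcut'$, requiring a pair of top-level classes to carry the two non-coalescing strands (events $\breve J'_{v,v',\mathfrak{L}'}$ and $J'_\Gamma$ with $|\fsup|\geq 2$, contributing $\sm_{\tcut'}^2$), and converting $\sum_{\ell\leq\mathfrak{L}'}2^\ell|V_\ell|$ via Claim~\ref{clm:animalConnection2}. The one point you could make more precise is the phrasing "histories of two distinct vertices evolve independently until they coalesce": what the argument actually uses is that the two branches occupy distinct sites before coalescence, hence their death times are driven by oblivious updates on disjoint sites and so are stochastically dominated by independent $\mathrm{Exp}(\theta)$ variables, giving the $\sm_{\tcut'}^2$ bound — but this is exactly the standard coalescing-walk fact the paper invokes, so the approach is the same.
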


\begin{proof}
The proof is similar to Lemma~\ref{l:redConnectionNoGreen} and we describe the necessary changes.  Throughout the proof we change all instances of $\mathfrak{L}$ to $\mathfrak{L}'$ and $\tcut$ to $\tcut'$.  We must incorporate the fact that two histories independently reaching time 0 are required for red so we denote
\begin{align*}
\breve{J}_{v, v', \ell}' = \bigg\{\bigg|\fsup\left( B\big(A_v\cup A_{v'}, \left(\tfrac{2}{5} +\tfrac1{100}\right) 2^{\ell} \scut^2\big) ,0,\tcut'-2^{\ell-1}\scut\right)\bigg| \geq 2, \hat{K}_{v,\ell}^c, \hat{K}_{v',\ell}^c\bigg\}\,.
\end{align*}
Since the probability that two separate histories reach time 0 without coalescing is bounded by the square of the probability of a single walk reaching time 0, we have that
\begin{align*}
\P(\breve{J}_{v_1,v_2,\mathfrak{L}}' )&\leq e^{-2\theta \tcut'} \prod_{i=1}^2 |B\big(A_{v_i}, \left(\tfrac{2}{5} +\tfrac1{100}\right) 2^{\ell} \scut^2\big)| \leq \exp\big(2^{\mathfrak{L}}\scut + 2^{-\ell} (|A_{v_1}|+|A_{v_{2}}|)\big) e^{-2\theta \tcut'}\,.
\end{align*}
By essentially the same proof as above we have that
$\E|\Gamma| \leq  \left(3 \cdot 2^{2 \mathfrak{L}}\scut^4\right)^{2d} (\sum_i |A_i|)^2$.
Also, as we require two histories to reach time 0, we let
\[
J_{\Gamma}'=\bigg\{\bigg|\fsup\left(\Gamma,0,\tcut' - \kappa  2^{\mathfrak{L}}\scut\right)\bigg| \geq 2 \bigg\}
\]
and
\[
\P\left(J_{\Gamma}\mid \Gamma\right)\leq  \sm_{\tcut-\kappa 2^\mathfrak{L} \scut}^2 |\Gamma|^2 \leq  e^{-2\theta \tcut'} e^{2\kappa 2^\mathfrak{L} \scut} |\Gamma|^2 \,.
\]
With this notation,
\begin{align*}
\P\left(\cC \in \red(\cC)\right)& \leq \P\Bigg(\bigg(\bigcap_{\ell=0}^{\mathfrak{L}'-1}\bigcap_{v\in V_\ell} R_{v,\ell}  \bigg)\cap \bigg( \bigcup_{v,v'\in V_{\mathfrak{L}'}} \breve{J}_{v,v', \mathfrak{L}'}'  \!\!\bigcap_{u\in V_{\mathfrak{L}'}\setminus \{v,v'\}} \!\!\hat{R}_{u,\mathfrak{L}'} \bigg) \Bigg)\\
&+ \P\Bigg(\bigg(\bigcap_{\ell=0}^{\mathfrak{L}'-1}\bigcap_{v\in V_\ell} R_{v,\ell}\bigg)\cap \bigg( \bigcup_{v\in V_{\mathfrak{L}'}} \hat{K}_{v, \mathfrak{L}'} \!\! \bigcap_{u\in V_{\mathfrak{L}'}\setminus \{v\}} \!\!\hat{R}_{u,\mathfrak{L}'} \bigg)\cap J_{\Gamma}' \Bigg)\,.
\end{align*}
The result now follows similarly to Lemma~\ref{l:redConnectionNoGreen} by substituting our bounds for each of the events.
\end{proof}
Since red components under our modified definition are also red components under the previous definition we have by Lemma~\ref{l:redConnectionNoGreen},
\begin{align*}
\P\left(\cC\in \red\mid \sH^-_\cC,\,\cU\right) \leq  \frac{\scut^{4d}}{e^{\theta\tcut'}} e^{3\sum_i |A_i|-c \scut \left|\anim\left(\cC\right) - \sum_{A_i\in\cC}\anim(A_i)\right|^+ }\,,
\end{align*}
and combining this with~\eqref{e:redConnectionNoGreen2} yields
\begin{align}\label{e:redWeakerAnnBound}
\P\left(\cC\in \red\mid \sH^-_\cC,\,\cU\right) \leq  \frac{\scut^{4d}}{e^{2\theta\tcut'}} e^{3\sum_i |A_i|-c \scut \left|\anim_2\left(\cC\right) - \sum_{A_i\in\cC}\anim(A_i)\right|^+ - c\scut \left|\anim\left(\cC\right) - \anim_2\left(\cC\right) - \frac{\theta}{c\scut}\tcut'\right|^+}\,.
\end{align}
Altogether, this translates into the bound
\[
\bar{\Psi}_{\{A_{i_j}\}} \leq  \frac{\scut^{4d}}{e^{2\theta\tcut'}} e^{4\sum_i |A_i|-c \scut \left|\anim_2\left(\cC\right) - \sum_{A_i\in\cC}\anim(A_i)\right|^+ - c\scut \left|\anim\left(\cC\right) - \anim_2\left(\cC\right) - \frac{\theta}{c\scut}\tcut'\right|^+}\,.
\]
Having coupled $X_t$ and $Y_t$ as described above where only the red components differ in the two versions of the chain, we can follow the analysis of \S\ref{sec:inf-perc} and in place of equation~\eqref{eq-sum-A-one-animal} arrive at
\begin{align}
&\frac{2^{2\lambda+6}\scut^{8d}}{e^{4\theta\tcut'}} \sum_{A} \Bigg( \sum_{\substack{\cC=\{A_{i_j}\}\\ A\in\cC}} \sum_{\{B_j\}}
 \exp\Bigg[ -\frac{c}4 \frac{\scut}\lambda \bigg[\anim(\cC) + \sum_j \anim(A_{i_j} \cup B_j) + \Big(\anim(\cC) - \anim_2(\cC)-\tcut'\Big)^+\bigg]\Bigg]\Bigg)^2\,.\label{eq-sum-A-one-animalAnnealed}
 \end{align}
We now count the number of rooted animals $\cC$ with $\anim(\cC) = \cS$ and $\anim_2(\cC) = \cS'$.  We must cover $\cC$ with two lattice animals, one containing the root $A$ and the second rooted at $A'$.  Since the distance from $A$ to $A'$ is at most $\cS$ there are at most $(2\cS+1)^d$ choices for $A'$. There are $\cS'$ ways to choose the sizes of the two animals as some $k$ and $\cS'-k$ and then $(2d)^{2k + 2(\cS'-k)}$ ways of choosing the animals.  In total, we have at most
$\cS' (2\cS+1)^d (2d)^{2\cS'}$ choices of $\cC$.  The total number of choices of $\{A_{i_j}\}$ and $B_j$ with $\anim(\cC)=\cS$, $\anim_2(\cC)=\cS'$ and $\sum_j \anim(A_{i_j} \cup B_j) = \cR$ is therefore $\cS' (2\cS+1)^d 2^{\cS'}8^{\cR}(2d)^{2(\cS'+\cR)}$; thus,
\begin{align*}
&\sum_{\substack{\cC=\{A_{i_j}\}\\ A\in\cC}} \sum_{\{B_j\}}
 \exp\Bigg[ -\frac{c}4 \frac{\scut}\lambda \bigg[\anim(\cC) + \sum_j \anim(A_{i_j} \cup B_j) + \Big(\anim(\cC) - \anim_2(\cC)-\tcut'\Big)^+\bigg]\Bigg] \\
 &\qquad\leq \sum_{\substack{\cS',\cR \geq 1\\ \cS \geq \cS'}} e^{-\frac{c}4 \frac{\scut}\lambda (\cS' + \cR+(\cS-\cS' - \tcut')^+)} \cS' (2\cS+1)^d 2^{\cS'}8^{\cR}(2d)^{2(\cS'+\cR)} \leq \tcut' e^{-\frac{c}5 \frac{\scut}\lambda}
\end{align*}
provided $\scut$ is large enough compared to $d$.  Plugging this into~\eqref{eq-sum-A-one-animalAnnealed} gives an upper bound on the total variation distance of
\[
\frac{2^{2\lambda+6}\scut^{8d}}{e^{4\theta\tcut'}}|\Lambda| (\tcut')^2 e^{-\frac{2c}5 \frac{\scut}\lambda}
\]
which tends to 0 for $\tcut' = \frac{1}{4\theta} \log n + \frac1{\theta}\log\log n - \scut$, establishing the upper bound on the mixing time.

\subsection{Annealed analysis: lower bound}\label{sec:ann-lower}
We now prove a matching lower bounded on the mixing time from an annealed initial configuration.
\begin{theorem}
For $t = \frac1{4\theta}\log n - \frac{2}{\theta}  \log\log n$ we have that
$
\|\P(X_{t}\in\cdot) - \pi(\cdot) \|_{\tv} \to 1\,.
$
\end{theorem}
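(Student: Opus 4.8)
The plan is to run the second-moment (Wilson-type) method with the \emph{number of aligned edges} as the distinguishing statistic,
\[ f(\sigma)=\sum_{v\in\Z/n\Z}\sigma(v)\,\sigma(v+1)\,. \]
Recall that on the cycle the history of a vertex is a symmetric rate-$(1-\theta)$ random walk killed at rate $\theta$, and histories coalesce upon meeting; write $q_s(d)$ for the probability that the histories of two sites at distance $d$ coalesce \emph{before either is killed}, within time $s$. Started from the uniform (product, mean-zero) initial law, the shared-history description gives $\E[X_t(v)X_t(v+1)]=q_t(1)$: if the two adjacent histories coalesce within $[0,t]$ the two spins are equal, and otherwise their product has conditional mean $0$. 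Coupling from the past gives $\langle\sigma_0\sigma_1\rangle_\pi=q_\infty(1)$. By translation invariance,
\[ \E_\pi f-\E f(X_t)=n\big(q_\infty(1)-q_t(1)\big)=n\,\P\big(\text{the two adjacent histories coalesce, but only at a time }>t\big)\ \ge\ 0 . \]

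The key step is a lower bound on this gap. Let $Y$ be the difference of the two adjacent histories, a symmetric walk with $Y_0=1$. Restrict to the event that (a)~$Y$ avoids $0$ throughout $[0,t]$ and $Y_t=1$; (b)~neither of the two original walks is killed during $[0,t]$; (c)~starting from $Y=1$ at time $t$, the walk $Y$ reaches $0$ before either walk is killed. These depend on disjoint sources of randomness, so their probabilities multiply; by the reflection principle for a walk conditioned to stay positive $\P(\mathrm{(a)})\asymp t^{-3/2}$, while $\P(\mathrm{(b)})=e^{-2\theta t}=\sm_t^{\,2}$ and $\P(\mathrm{(c)})\ge c_0(\beta)>0$ (a killed gambler's-ruin probability, strictly positive since $0<\theta<1$). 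On this event the histories coalesce at a time exceeding $t$, so
\[ \E_\pi f-\E f(X_t)\ \ge\ c\,n\,t^{-3/2}\,\sm_t^{\,2}\qquad\text{for some }c=c(\beta)>0 . \]

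For the variances, the coalescing-history coupling (exactly as in the proof of Lemma~\ref{l:Correlation}, now applied to the four histories emanating from $\{u,u+1,v,v+1\}$) shows that if these pairwise avoid one another then $X_t(u)X_t(u+1)$ and $X_t(v)X_t(v+1)$ are independent, and \eqref{e:BasicSupport}--\eqref{e:BasicUpdate} bound the probability of an intersection by $Ce^{-c|u-v|}$ uniformly in $t$ and the initial state; hence $\big|\Cov\!\big(X_t(u)X_t(u+1),X_t(v)X_t(v+1)\big)\big|\le Ce^{-c|u-v|}$, so $\var\big(f(X_t)\big)\le C'n$, and letting $t\to\infty$ gives $\var_\pi(f)\le C'n$ as well. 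Now at $t=\frac1{4\theta}\log n-\frac2\theta\log\log n$ we have $\sm_t=e^{-\theta t}=n^{-1/4}(\log n)^2$, so $\Delta:=\E_\pi f-\E f(X_t)\ge c\,n\cdot n^{-1/2}(\log n)^4\cdot(\log n)^{-3/2}=c\,n^{1/2}(\log n)^{5/2}$, exceeding $\sqrt{\var}=O(\sqrt n)$ by a factor $(\log n)^{5/2}\to\infty$. Taking $A=\{\sigma:f(\sigma)<\E_\pi f-\tfrac12\Delta\}$, Chebyshev's inequality gives $\pi(A)\le 4\var_\pi(f)/\Delta^2\to0$ and $\P(X_t\in A)\ge 1-4\var(f(X_t))/\Delta^2\to1$, whence $\|\P(X_t\in\cdot)-\pi\|_\tv\ge\P(X_t\in A)-\pi(A)\to1$.

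The main obstacle is the sharp lower bound on the mean-gap in the second step: one must show that although the discrepancy between the time-$t$ dynamics and $\pi$ is \emph{diffuse}, it is genuinely of order $n\,\sm_t^{\,2}$ up to poly-logarithmic corrections, which is exactly what the $t^{-3/2}$ small-deviation estimate for the difference walk conditioned to stay positive provides; a cruder bound replacing $t^{-3/2}$ by $e^{-c\sqrt t}$ would be lost against the $\sqrt n$ fluctuations and would not reproduce the $\log\log n$-scale window. Everything else (the identities $\E f(X_t)=n\,q_t(1)$ and $\E_\pi f=n\,q_\infty(1)$, and the two $O(n)$ variance bounds) follows routinely from the coalescing-walk representation together with Lemma~\ref{l:Correlation}.
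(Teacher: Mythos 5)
Your proof is correct and follows essentially the same route as the paper: compare the annealed time-$t$ chain with $\pi$ realized via coupling from the past, use the number of aligned edges as the distinguishing statistic, lower-bound the gap in means by the probability that the two adjacent coalescing histories both survive to time $t$ and only then merge, and conclude via Chebyshev. Your micro-estimates are in fact slightly sharper than the paper's --- you ask only that the difference walk stay positive on $[0,t]$ and return to height $1$ (giving $\asymp t^{-3/2}$) rather than additionally pinning both walks to their starting sites (the paper's $\asymp(\tcut')^{-2}$), and you invoke the exponential-in-distance decorrelation as in Lemma~\ref{l:Correlation} to get $\var(f(X_t))=O(n)$ rather than $O(n\log n)$ --- but these gain only powers of $\log n$ in the signal-to-noise ratio; the architecture is identical.
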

It will be convenient to simply omit the deferred and undeferred regions and directly analyze the update support function from time $t=\frac1{4\theta}\log n - \frac{2}{\theta}  \log\log n$.
Taking the coupling above, two vertices $v$ and $v'$ have the same spins in $X$ if their histories merge before time $0$ and are conditionally independent otherwise. By contrast, for $Y$ the spins are equal if the histories merge at any time in the past and are conditionally independent otherwise.  Defining $\cA_{v,v'}$ as the event that the histories of $v$ and $v'$ survive to time 0 and merge at some negative time, we have that
\[
\P\left(Y_{t}(v)=Y_{t}(v')\right)-\P\left(X_{t}(v)=X_{t}(v')\right) = \frac12  \P\left(\cA_{v,v'}\right)\,.
\]
Now, as the history of $v$ and $v'$ are both continuous time random walks,
\begin{align*}
\P\left(\fsup(v,0,t)=\{v\},\,\fsup(v',0,t)=\{v'\}\right)  \geq c_1 \frac1{(\tcut')^2} e^{-2\theta t}\,,
\end{align*}
since the probability is that two random walks started from neighboring vertices do not intersect until time $\tcut'$ and return to their starting locations is at least $c_1/(\tcut')^2$ and the probability that neither walk dies is at least $e^{-2\theta t}$.  They then have a constant probability of merging for some $t_1<0$ so
\[
\P\left(\cA_{v,v'}\right) \geq c_2 \frac1{(\tcut')^2} e^{-2\theta t}\,,
\]
and if we label the vertices around the cycle as $u_1,\ldots,u_n$ then for large $n$,
\[
\E \bigg[\sum_{i=1}^{n-1} Y_{t}(u_i) Y_{t}(u_{i+1}) - X_{t}(u_i) X_{t}(u_{i+1})\bigg] \geq c_2 \frac1{(\tcut')^2} e^{-2\theta t} n \geq \sqrt{n} \log n\,.
\]
By the exponential decay of correlation in the stationary distribution of the \oned Ising model,
\[
\var\bigg(\sum_{i=1}^{n-1} Y_{t}(u_i) Y_{t}(u_{i+1})\bigg) \leq C n\,.
\]
Since the spins in different clusters are independent and with probability at least $1-n^{-10}$ there are no clusters whose diameter is greater than $C \log \log n$ for some large $C(\beta)$ we have that when $|i-i'|\geq C\log n$ then
\[
\Cov\bigg(X_{t}(u_i) X_{t}(u_{i+1}),X_{t}(u_{i'}) X_{t}(u_{i'+1})\bigg)=O(n^{-10})\,,
\]
and hence
\[
\var\bigg(\sum_{i=1}^{n-1} X_{t}(u_i) X_{t}(u_{i+1})\bigg) \leq C n \log n\,.
\]
It follows by Chebyshev's inequality that
\[
\P\left( \sum_{i=1}^{n-1} Y_{t}(u_i) Y_{t}(u_{i+1}) \geq \E \bigg[\sum_{i=1}^{n-1} Y_{t}(u_i) Y_{t}(u_{i+1})\bigg] - \frac12\sqrt{n} \log n\right) = 1-o(1)
\]
while
\[
\P\left( \sum_{i=1}^{n-1} X_{t}(u_i) X_{t}(u_{i+1}) < \E \bigg[\sum_{i=1}^{n-1} Y_{t}(u_i) Y_{t}(u_{i+1})\bigg] - \frac12\sqrt{n} \log n\right) = 1-o(1)\,,
\]
and hence $\|X_{t} - \pi \|_\tv \to 1$.

\subsection{Quenched analysis}\label{sec:que}

Here we show that on the cycle, there is at most a minor, $O(\log \log n)$ improvement when starting from a typical random initial configuration, since almost all configurations bias the magnetizations of most vertices.  For a configuration $x_0$ denote
\begin{equation}
  \label{eq-Rt(x0)}
  R_t(u,x_0)=\sum_{u' \in V} P_t(u,u') x_0(u')\,,
\end{equation}
where $P_t(u,u')$ is the transition probability of a continuous time walk with jumps at rate $(1-\theta)$. Observe that
\begin{equation}
  \label{eq-E[Xt]-Rt-relation}
  \E_{x_0}[X_t(u)] = e^{-\theta t} R_t(u,x_0)\,.
\end{equation}
\begin{proposition}\label{prop:quenched}
Suppose that there exists some sequence $a_n = n^{o(1)}$ such that for any large $n$,
\begin{equation}\label{e:SignedMagnetization}
\frac1n \sum_{u\in \Lambda} |R_{t}(u,x_0)| \geq \frac1{a_n}
\qquad\mbox{ at }\qquad t= \frac1{2\theta}\log n - \frac{1}{\theta}\log \log n - \frac1{\theta}\log a_n\,.
\end{equation}
Then
$
\|\P_{x_0}(X_{t} \in \cdot) - \pi(\cdot) \|_{\tv} = 1 - o(1)
$
as $n\to\infty$.
Furthermore, if $x_0$ is uniformly chosen over $\{\pm 1\}^\Lambda$ then there exists some $C=C(\beta)>0$ such that
\eqref{e:SignedMagnetization} holds with probability $1-o(1)$ for $a_n = C \log n$.
\end{proposition}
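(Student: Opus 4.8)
The plan is to prove the two assertions separately: the implication (assuming~\eqref{e:SignedMagnetization}) via the signed magnetization as a distinguishing statistic, and the statement for a uniform $x_0$ via a concentration estimate on $\tfrac1n\sum_u |R_t(u,x_0)|$.

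For the implication, fix a deterministic $x_0$ satisfying~\eqref{e:SignedMagnetization} at the stated time $t=\frac1{2\theta}\log n-\frac1\theta\log\log n-\frac1\theta\log a_n$, and take the test function $f(\sigma)=\sum_{u\in\Lambda}\sign\big(R_t(u,x_0)\big)\,\sigma(u)$. Since $e^{-\theta t}=a_n(\log n)/\sqrt n$, relation~\eqref{eq-E[Xt]-Rt-relation} gives
\[
\E_{x_0}\!\big[f(X_t)\big]=e^{-\theta t}\sum_u|R_t(u,x_0)|\ \ge\ \frac{a_n\log n}{\sqrt n}\cdot\frac n{a_n}=\sqrt n\,\log n,
\]
whereas $\E_\pi[f]=0$ by the spin-flip symmetry of $\pi$. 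Because $f$ is a $\{\pm1\}$-weighted sum of single spins, Lemma~\ref{l:Correlation} applied to $X_t$ — and, in the limit $t\to\infty$, to $\pi$, exactly as in the lower bound for Theorem~\ref{mainthm-Zd} — yields $\var_{x_0}(f(X_t))=O(n)$ and $\var_\pi(f)=O(n)$. Chebyshev's inequality then shows that $\{f\ge\tfrac12\sqrt n\,\log n\}$ has probability $1-O(1/\log^2 n)$ under $\P_{x_0}(X_t\in\cdot)$ and $O(1/\log^2 n)$ under $\pi$, so $\|\P_{x_0}(X_t\in\cdot)-\pi\|_\tv=1-o(1)$.

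For the random initial state, with $x_0$ uniform I would show that $S:=\sum_{u}|R_t(u,x_0)|$ has order $n\,t^{-1/4}$ with probability $1-o(1)$ at the relevant $t=\frac1{2\theta}\log n-O(\log\log n)$; since $t^{-1/4}\gg(\log n)^{-1}$, this verifies~\eqref{e:SignedMagnetization} with $a_n=C\log n$ for $C$ large, and the implication above then applies. For a fixed $u$, $R_t(u,x_0)=\sum_{u'}P_t(u,u')x_0(u')$ is a Rademacher sum with $\E R_t(u,x_0)^2=\sum_{u'}P_t(u,u')^2=P_{2t}(0,0)$, and the local central limit theorem for the continuous-time walk on $\Z/n\Z$ (valid as $1\ll t\ll n^2$) gives $P_{2t}(0,0)\asymp t^{-1/2}$; Khintchine's inequality then gives $\E|R_t(u,x_0)|\ge c\,t^{-1/4}$, hence $\E S\ge c\,n\,t^{-1/4}$. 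For concentration, $S$ is a function of the independent signs $\{x_0(u')\}$ whose value changes by at most $\sum_u 2P_t(u,u')=2$ when one sign is flipped, so McDiarmid's bounded-differences inequality gives $\P\big(S\le\tfrac12\E S\big)\le \exp\!\big(-c\,(\E S)^2/n\big)\le\exp(-c'\,n\,t^{-1/2})\to0$. Together these give $\tfrac1n S\ge 1/(C\log n)$ with probability $1-o(1)$, completing the proof.

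The concentration steps (Chebyshev for the implication, McDiarmid for the random state) and the heat-kernel asymptotic $P_{2t}(0,0)\asymp t^{-1/2}$ are routine, and the one point to keep in mind is that the target lower bound $(\log n)^{-1}$ in~\eqref{e:SignedMagnetization} is polynomially smaller than the actual typical value $(\log n)^{-1/4}$ of $\tfrac1n\sum_u|R_t(u,x_0)|$, so the argument is insensitive to the constant $C$. The only genuine input is that the random bias $R_t(u,x_0)\asymp t^{-1/4}$, once multiplied by the magnetization factor $e^{-\theta t}$, still produces a $\sqrt n\,\log n$ deviation of the signed magnetization at a time only $O(\log\log n)$ below $\tcut$.
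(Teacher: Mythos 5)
Your proposal is correct and follows the same overall strategy as the paper: use the signed magnetization $f(\sigma)=\sum_u\sign(R_t(u,x_0))\sigma(u)$ as a distinguishing statistic, exploit the exact identity~\eqref{eq-E[Xt]-Rt-relation} to compute $\E_{x_0}f(X_t)$, control the variance via exponential decay of correlations, and for the random initial state show concentration of $\tfrac1n\sum_u|R_t(u,x_0)|$. The one place your argument diverges technically is the concentration step for the random $x_0$: you invoke McDiarmid's bounded-differences inequality (noting that flipping one $x_0(u')$ changes $\sum_u|R_t(u,x_0)|$ by at most $2\sum_u P_t(u,u')=2$), while the paper instead bounds $\Cov(|R_t(u,x_0)|,|R_t(u',x_0)|)$ for well-separated $u,u'$ and applies Chebyshev. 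Your route is both shorter and quantitatively stronger — it gives exponential rather than polynomial tail bounds — though either suffices here since the target bound $1/a_n=1/(C\log n)$ is far below the typical value. Likewise, your sharper lower bound $\E|R_t(u,x_0)|\gtrsim t^{-1/4}$ via Khintchine and the local CLT (as opposed to the $c/t$ the paper records without derivation) is overkill for the purpose, but your remark that the proof has polynomial room to spare is exactly the right observation. For the implication itself, appealing directly to Lemma~\ref{l:Correlation} for $\var_{x_0}(f(X_t))=O(n)$ is cleaner than the paper's ad hoc coupling bound and is valid here since $d=1$ (so all $\beta<\infty$ are admissible).
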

\begin{proof}
Let $X_0$ be a uniformly chosen initial configuration.  Since $R_t(u,X_0)$ is a sum of independent increments, when $t$ and $n$ are both large, $R_t(u,X_0)$ is approximately $\cN(0, \sum_{u' \in V} P_t(u,u')^2)$ by the Central Limit Theorem; in particular, $\E|R_t(u,X_0)| \geq c/t$ for some fixed $c>0$, and
\[
\E\bigg[\frac1n \sum_{u\in\Lambda} |R_{t}(u,X_0)|\bigg] \geq \frac{c }{\log n}
\]
for another fixed $c>0$ provided $t$ has order $\log n$. From the decay of $P_t(u,u')$ we infer that if $|u-u'|\geq C\log n$ for a large enough $C>0$ then $\Cov(|R_t(u)|,|R_t(u')|) \leq n^{-10}$, thus implying that
\[
\var\bigg( \frac1n\sum_{u\in\Lambda} |R_{t}(u,X_0)| \bigg) = O\bigg( \frac{\log n}n \bigg)\,,
\]
and altogether
\[
\P\bigg( \frac1n \sum_{u\in\Lambda} |R_{t}(u,X_0)| >  \frac{c}{2\log n} \bigg) \to 1\,.
\]
This establishes~\eqref{e:SignedMagnetization} with probability going to 1 for $a_n = C \log n$ with a suitably chosen $C=C(\theta)>0$.

By the same decay of correlations, for vertices with $|u-u'|\geq C\log n$ for some large enough $C>0$ the histories do not merge in the interval $(0,t]$ with probability $1-O(n^{-10})$. In this case,
\[
\Cov_{x_0}\Big(\sign(R_t(u,x_0))X_t(u) \,, \sign(R_t(u',x_0))X_t(u')\Big) = O(n^{-10})\,,
\]
and again we can deduce that
\[ \var\bigg(\frac1n \sum_{u\in\Lambda} \sign(R_t(u,x_0) X_t(u)\bigg) = O\bigg(\frac{\log n}n\bigg)\,.\]
 Recalling from~\eqref{eq-E[Xt]-Rt-relation} that $\E_{x_0} [\sign(R_t(u,x_0)) X_t(u)] = e^{-\theta t} |R_t(u,x_0)|$, Chebyshev's inequality yields
\[
\P_{x_0}\bigg( \frac1n\sum_{u\in\Lambda}\big( \sign(R_{t}(u,x_0)) X_{t}(u) - |R_{t}(u,x_0)|\big) > \frac{1}{2a_n} \bigg) \leq O\bigg(\frac{\log n}n  \big(e^{\theta t} a_n\big)^2 \bigg) = O\bigg(\frac{1}{\log n}\bigg)
\]
by the definition of $t$. Thus, we conclude that for any $x_0$ satisfying~\eqref{e:SignedMagnetization},
\begin{equation}\label{e:quenchedTestA}
\P_{x_0}\bigg( \frac1n \sum_{u\in\Lambda} \sign(R_{t}(u,x_0)) X_{t}(u)  > \frac1{2a_n} \bigg) \to 1\,.
\end{equation}
By contrast, if $Y$ is chosen independently according to the Ising measure then by the exponential decay of spatial correlations we have that
\[
\var \bigg(\frac1n \sum_{u\in\Lambda} \sign(R_{t}(u,x_0)) Y(u) \bigg) = O(1/n)\,,
\]
and since $\E \sum_{u\in\Lambda} \sign(R_{t}(u)) Y(u)=0$ while $1/a_n \gg 1/\sqrt{n}$ we can infer that
\begin{equation}\label{e:quenchedTestB}
\P\bigg( \frac1n \sum_{u\in\Lambda} \sign(R_{t}(u,x_0)) Y(u)  > \frac1{2 a_n} \bigg) \to 0\,.
\end{equation}
Comparing equations \eqref{e:quenchedTestA} and \eqref{e:quenchedTestB} implies that
\[
\|\P_{x_0}(X_{t} \in \cdot) - \pi(\cdot) \|_{\tv} = 1 - o(1)\,,
\]
completing the result.
\end{proof}

\subsection*{Acknowledgements}  The authors thank the Theory Group of Microsoft Research, Redmond, where much of the research has been carried out, as well as P.\ Diaconis and Y.\ Peres for insightful discussions and two anonymous referees for thorough reviews.

\begin{bibdiv}
\begin{biblist}

\bib{AH}{article}{
   author={Aizenman, M.},
   author={Holley, R.},
   title={Rapid convergence to equilibrium of stochastic Ising models in the Dobrushin Shlosman regime},
   conference={
      title={},
      address={Minneapolis, Minn.},
      date={1984--1985},
   },
   book={
      series={IMA Vol. Math. Appl.},
      volume={8},
      publisher={Springer},
      place={New York},
   },
   date={1987},
   pages={1--11},
}

\bib{Aldous}{article}{
  author = {Aldous, David},
  title = {Random walks on finite groups and rapidly mixing {M}arkov chains},
  booktitle = {Seminar on probability, XVII},
  series = {Lecture Notes in Math.},
  volume = {986},
  pages = {243--297},
  publisher = {Springer},
  address = {Berlin},
  year = {1983},
}


\bib{AD}{article}{
  author = {Aldous, David},
  author = {Diaconis, Persi},
  title = {Shuffling cards and stopping times},
  journal = {Amer. Math. Monthly},
  volume = {93},
  pages = {333--348},
  year = {1986},
}


\bib{CM}{article}{
   author={Cesi, Filippo},
   author={Martinelli, Fabio},
   title={On the layering transition of an SOS surface interacting with a
   wall. II. The Glauber dynamics},
   journal={Comm. Math. Phys.},
   volume={177},
   date={1996},
   number={1},
   pages={173--201},
}


\bib{Diaconis}{article}{
  author = {Diaconis, Persi},
  title = {The cutoff phenomenon in finite {M}arkov chains},
  journal = {Proc. Nat. Acad. Sci. U.S.A.},
  volume = {93},
  year = {1996},
  number = {4},
  pages = {1659--1664},
}

\bib{DGM}{article}{
   author={Diaconis, Persi},
   author={Graham, R. L.},
   author={Morrison, J. A.},
   title={Asymptotic analysis of a random walk on a hypercube with many
   dimensions},
   journal={Random Structures Algorithms},
   volume={1},
   date={1990},
   number={1},
   pages={51--72},
}

\bib{DS1}{article}{
   author={Diaconis, Persi},
   author={Saloff-Coste, Laurent},
   title={Comparison techniques for random walk on finite groups},
   journal={Ann. Probab.},
   volume={21},
   date={1993},
   number={4},
   pages={2131--2156},
}

\bib{DS2}{article}{
   author={Diaconis, Persi},
   author={Saloff-Coste, Laurent},
   title={Comparison theorems for reversible Markov chains},
   journal={Ann. Appl. Probab.},
   volume={3},
   date={1993},
   number={3},
   pages={696--730},
}

\bib{DS}{article}{
   author={Diaconis, P.},
   author={Saloff-Coste, L.},
   title={Logarithmic Sobolev inequalities for finite Markov chains},
   journal={Ann. Appl. Probab.},
   volume={6},
   date={1996},
   number={3},
   pages={695--750},
}


\bib{DiSh}{article}{
  author = {Diaconis, Persi},
  author = {Shahshahani, Mehrdad},
  title = {Generating a random permutation with random transpositions},
  journal = {Z. Wahrsch. Verw. Gebiete},
  volume = {57},
  year = {1981},
  number = {2},
  pages = {159--179},
}

\bib{DiSh2}{article}{
   author={Diaconis, Persi},
   author={Shahshahani, Mehrdad},
   title={Time to reach stationarity in the Bernoulli-Laplace diffusion
   model},
   journal={SIAM J. Math. Anal.},
   volume={18},
   date={1987},
   number={1},
   pages={208--218},
}

\bib{DLP}{article}{
   author={Ding, Jian},
   author={Lubetzky, Eyal},
   author={Peres, Yuval},
   title={The mixing time evolution of Glauber dynamics for the mean-field Ising model},
   journal={Comm. Math. Phys.},
   volume={289},
   date={2009},
   number={2},
   pages={725--764},
}

\bib{Holley}{article}{
   author={Holley, Richard A.},
   title={On the asymptotics of the spin-spin autocorrelation function in stochastic Ising models near the critical temperature},
   conference={
      title={Spatial stochastic processes},
   },
   book={
      series={Progr. Probab.},
      volume={19},
      publisher={Birkh\"auser Boston},
      place={Boston, MA},
   },
   date={1991},
   pages={89--104},
}


\bib{HoSt1}{article}{
   author={Holley, Richard A.},
   author={Stroock, Daniel W.},
   title={Logarithmic Sobolev inequalities and stochastic Ising models},
   journal={J. Statist. Phys.},
   volume={46},
   date={1987},
   number={5-6},
   pages={1159--1194},
}



%

\bib{KLW}{collection}{
   title={Markov chain Monte Carlo Innovations and Applications},
   editor={Kendall, W. S.},
   editor={Liang, F.},
   editor={Wang, J.-S.},
   publisher={World Scientific Publishing Co. Pte. Ltd., Hackensack, NJ},
   date={2005},
   pages={xviii+220},
}

\bib{LLP}{article}{
  title   = {Glauber dynamics for the Mean-field Ising Model: cut-off, critical power law, and metastability},
  author  = {Levin, David A.},
  author = {Luczak, Malwina},
  author = {Peres, Yuval},
  journal={Probab. Theory Related Fields},
   volume={146},
   date={2010},
   number={1--2},
   pages={223--265},
}

\bib{LPW}{book}{
  title={{Markov chains and mixing times}},
  author={Levin, D.A.},
  author={Peres, Y.},
  author={Wilmer, E.L.},
  journal={American Mathematical Society},
  year={2008},
}

\bib{Liggett}{book}{
   author={Liggett, Thomas M.},
   title={Interacting particle systems},
   series={Classics in Mathematics},
   publisher={Springer-Verlag},
   place={Berlin},
   date={2005},
   pages={xvi+496},
}


\bib{LS2}{article}{
   author = {Lubetzky, Eyal},
   author = {Sly, Allan},
   title={Cutoff phenomena for random walks on random regular graphs},
   journal={Duke Math. J.},
   volume={153},
   date={2010},
   number={3},
   pages={475--510},
}



\bib{LS1}{article}{
    author = {Lubetzky, Eyal},
    author = {Sly, Allan},
    title = {Cutoff for the Ising model on the lattice},
    journal = {Invent. Math.},
    volume={191},
    date={2013},
    number={3},
    pages={719-–755},
}

\bib{LS3}{article}{
    author = {Lubetzky, Eyal},
    author = {Sly, Allan},
    title = {Cutoff for general spin systems with arbitrary boundary conditions},
    journal = {Comm. Pure. Appl. Math.},
    volume={67},
    date={2014},
    number={6},
    pages={982--1027},
}

\bib{LS4}{article}{
    author = {Lubetzky, Eyal},
    author = {Sly, Allan},
    title = {Universality of cutoff for the Ising model},
    status = {preprint},
    note={Available at \texttt{arXiv:1407.1761} (2014)},
}


\bib{Martinelli97}{article}{
   author={Martinelli, Fabio},
   title={Lectures on Glauber dynamics for discrete spin models},
   conference={
      title={Lectures on probability theory and statistics},
      address={Saint-Flour},
      date={1997},
   },
   book={
      series={Lecture Notes in Math.},
      volume={1717},
      publisher={Springer},
      place={Berlin},
   },
   date={1999},
   pages={93--191},
}


\bib{MO}{article}{
   author={Martinelli, F.},
   author={Olivieri, E.},
   title={Approach to equilibrium of Glauber dynamics in the one phase
   region. I. The attractive case},
   journal={Comm. Math. Phys.},
   volume={161},
   date={1994},
   number={3},
   pages={447--486},
}

\bib{MO2}{article}{
   author={Martinelli, F.},
   author={Olivieri, E.},
   title={Approach to equilibrium of Glauber dynamics in the one phase
   region. II. The general case},
   journal={Comm. Math. Phys.},
   volume={161},
   date={1994},
   number={3},
   pages={487--514},
}

\bib{MOS}{article}{
   author={Martinelli, F.},
   author={Olivieri, E.},
   author={Schonmann, R. H.},
   title={For $2$-D lattice spin systems weak mixing implies strong mixing},
   journal={Comm. Math. Phys.},
   volume={165},
   date={1994},
   number={1},
   pages={33--47},
}

\bib{MP}{article}{
   author={Miller, Jason},
   author={Peres, Yuval},
   title={Uniformity of the uncovered set of random walk and cutoff for
   lamplighter chains},
   journal={Ann. Probab.},
   volume={40},
   date={2012},
   number={2},
   pages={535--577},
}


\bib{PW}{article}{
   author={Propp, James Gary},
   author={Wilson, David Bruce},
   title={Exact sampling with coupled Markov chains and applications to
   statistical mechanics},
   journal={Random Structures Algorithms},
   volume={9},
   date={1996},
   number={1-2},
   pages={223--252},
}
		
\bib{SaloffCoste}{article}{
   author={Saloff-Coste, Laurent},
   title={Lectures on finite Markov chains},
   conference={
      title={Lectures on probability theory and statistics},
      address={Saint-Flour},
      date={1996},
   },
   book={
      series={Lecture Notes in Math.},
      volume={1665},
      publisher={Springer},
      place={Berlin},
   },
   date={1997},
   pages={301--413},
}

\bib{SaloffCoste2}{article}{
  author = {Saloff-Coste, Laurent},
  title = {Random walks on finite groups},
  booktitle = {Probability on discrete structures},
  series = {Encyclopaedia Math. Sci.},
  volume = {110},
  pages = {263--346},
  publisher = {Springer},
  address = {Berlin},
  date = {2004},
}

\bib{Sokal}{article}{
    author = {Sokal, Alan},
    title = {Monte Carlo methods in statistical mechanics: foundations and new algorithms},
    booktitle = {Lecture notes, Ecole Polytechnique de Lausanne},
    date = {1989},
    address= {Lausanne},
}

\bib{SZ1}{article}{
   author={Stroock, Daniel W.},
   author={Zegarli{\'n}ski, Bogus{\l}aw},
   title={The equivalence of the logarithmic Sobolev inequality and the Dobrushin-Shlosman mixing condition},
   journal={Comm. Math. Phys.},
   volume={144},
   date={1992},
   number={2},
   pages={303--323},
}


\bib{SZ3}{article}{
   author={Stroock, Daniel W.},
   author={Zegarli{\'n}ski, Bogus{\l}aw},
   title={The logarithmic Sobolev inequality for discrete spin systems on a lattice},
   journal={Comm. Math. Phys.},
   volume={149},
   date={1992},
   number={1},
   pages={175--193},
}
%
%
%
%
%

\end{biblist}
\end{bibdiv}

\end{document}